\DeclareMathOperator{\erf}{erf}		
\DeclareMathOperator{\lcm}{lcm}		
\DeclareMathOperator{\sgn}{sgn}		
\DeclareMathOperator{\spann}{span}	
\newcommand{\tm}{\times}
\newcommand{\C}{{\mathbb C}}
\newcommand{\N}{{\mathbb N}}
\newcommand{\R}{{\mathbb R}}
\renewcommand{\S}{{\mathbb S}}
\newcommand{\Z}{{\mathbb Z}}
\newcommand{\cB}{{\mathcal B}}
\newcommand{\sF}{{\mathscr F}}
\newcommand{\sG}{{\mathscr G}}
\newcommand{\sK}{{\mathscr K}}
\newcommand{\sU}{{\mathscr U}}
\newcommand{\fA}{{\mathfrak A}}
\newcommand{\fC}{{\mathfrak C}}
\renewcommand{\d}{\,{\mathrm d}}
\newcommand{\vphi}{\varphi}
\newcommand{\eps}{\varepsilon}
\newcommand{\fall}{\quad\text{for all }}
\newcommand{\on}{\quad\text{on }}
\newcommand{\abs}[1]{\left|#1\right|}
\newcommand{\norm}[1]{\left\|#1\right\|}
\newcommand{\set}[1]{\left\{#1\right\}}
\newcommand{\intcc}[1]{\left[#1\right]}
\newcommand{\intoo}[1]{\left(#1\right)}
\newcommand{\iprod}[1]{\langle\!\!\langle#1\rangle\!\!\rangle}		
\newcommand{\sprod}[1]{\langle#1\rangle}		
\newcommand{\svector}[2]{\bigl[\begin{smallmatrix}#1\\ #2\end{smallmatrix}\bigr]}
\newtheorem{theorem}{Theorem}[section]
\newtheorem{corollary}[theorem]{Corollary}
\newtheorem{lemma}[theorem]{Lemma}
\newtheorem{prop}[theorem]{Proposition}
\theoremstyle{definition}
\theoremstyle{remark}
\newtheorem{example}{Example}[section]
\newtheorem{remark}{Remark}[section]
\newcommand{\cref}[1]{Cor.~\ref{#1}}
\newcommand{\eref}[1]{Ex.~\ref{#1}}
\newcommand{\fref}[1]{Fig.~\ref{#1}}
\newcommand{\lref}[1]{Lemma~\ref{#1}}
\newcommand{\pref}[1]{Prop.~\ref{#1}}
\newcommand{\rref}[1]{Rem.~\ref{#1}}
\newcommand{\sref}[1]{Sect.~\ref{#1}}
\newcommand{\tref}[1]{Thm.~\ref{#1}}
\numberwithin{equation}{section}
\renewcommand{\theequation}{\arabic{section}.\arabic{equation}}
\begin{document}
%
\title{Bifurcations in periodic integrodifference equations in $C(\Omega)$ I:\\ Analytical results and applications}
%
\author{Christian Aarset \\ christian.aarset@aau.at 
   \and Christian P\"otzsche \\ christian.poetzsche@aau.at \\ [1ex]
   Institut f\"ur Mathematik, Universit\"at Klagenfurt \\
	 Universit\"atsstra{\ss}e 65--67, 9020 Klagenfurt, Austria}
%
%
%


%
\maketitle
%
%
%
%
\section{Introduction}
Integrodifference equations (abbreviated IDEs) are infinite-di\-mensional dynamical systems in discrete time. They act as iterates of integral operators of e.g.\ Urysohn- or Hammerstein-type on an ambient function space. Beyond early applications in population genetics \cite{slatkin:73}, the latter kind has recently enjoyed wide popularity in spatial ecology \cite{kot:schaeffer:86,lutscher:19}, where it is employed in order to describe the dispersal and growth of populations with nonoverlapping generations over some habitat $\Omega$. Here, the nonlinearity is commonly given by a growth function $g$ of e.g.\ Beverton-Holt, Ricker or logistic type (see \cite{brauer:castillo:01}). Dispersal is realized by means of an integral operator with a kernel $k$. If growth precedes dispersal, one obtains a recursion of Hammerstein form \cite{kot:schaeffer:86,hardin:takac:webb:88,hardin:takac:webb:90}
\begin{equation}
	u_{t+1}(x)=\int_\Omega k(x,y)g(u_t(y),\alpha)\d y\fall x\in\Omega,
	\label{ide1}
\end{equation}
while species first dispersing and then growing can be modeled via \cite{andersen:91,lutscher:petrovskii:08}
\begin{equation}
	u_{t+1}(x)=g\intoo{\int_\Omega k(x,y)u_t(y)\d y,\alpha}\fall x\in\Omega.
	\label{ide2}
\end{equation}
In both cases, the values $u_t(x)\geq 0$ describe the population density at a point $x$ in the habitat $\Omega$ for the $t$-th generation, resp.\ the density vector when various populations interact. 
In such applications modeling dispersal, an advantage of IDEs compared to e.g.\ reaction-diffusion systems is their flexibility. Specifying a particular kernel $k$ allows one to incorporate a variety of different dispersal strategies \cite{hardin:takac:webb:88}. 

Being models in the life sciences, IDEs require detailed information on the robustness and destabilization of solutions as well as the accompanying bifurcations when the environment is varied. Indeed, various papers point out the occurrence of bifurcations in terms of period doubling cascades for Ricker \cite{andersen:91} and logistic nonlinearities \cite{kot:schaeffer:86,kirk:lewis:97}, a fold bifurcation \cite{kirk:lewis:97} for Allee growth, but also the alternately transcritical and pitchfork bifurcations along the trivial branch for Beverton-Holt growth functions \cite{kirk:lewis:97}. While these observations are often based on simulations, for instance \cite[Lemma 4, 5]{lutscher:lewis:04} prove a criterion for transcritical bifurcations in matrix models; global bifurcations are addressed in \cite{alzoubi:10,robertson:cushing:12}. Finally, \cite{bramburger:lutscher:18} give an explicit analysis of a logistic model equipped with a degenerate kernel. Here, the IDEs reduce to polynomial difference equations in $\R^2$ such that classical criteria apply. 

The paper at hand provides a systematic approach to identify and verify (local) bifurcations of codimension $1$ in periodic IDEs on compact habitats $\Omega$. It extends the above contributions in various aspects: 
\begin{itemize}
	\item Seasonality is an important issue in applications from the life sciences and for earlier work on periodic bifurcations in ecology we refer to \cite{bacaer:09,bacaer:dats:12,cushing:ackleh,cushing:henson} or specifically to \cite{zhou:fagan:17} dealing with IDEs. The contribution at hand generalizes the more theoretically oriented references \cite{luis:elaydi:oliveira:12} (equations in $\R$ via period map) or \cite{poetzsche:12} (finite-dimensional systems as operator equations in the space of periodic sequences) to an infinite-dimensional set-up suitable for IDEs. 
	
	\item Our setting applies to models typically studied in theoretical ecology (scalar IDEs \cite{andersen:91,hardin:takac:webb:90,kot:schaeffer:86,reimer:bonsall:maini:16,kirk:lewis:97,zhou:fagan:17}, systems \cite{bramburger:lutscher:18,kot:schaeffer:86}, structured \cite{alzoubi:10,lutscher:lewis:04,robertson:cushing:12} etc.). The considered problem class includes both growth-dispersal equations \eqref{ide1} and dispersal-growth IDEs \eqref{ide2} as special cases although they are conjugated as follows: The right-hand sides are compositions of integral operators $\sK$ (with dispersal kernel $k$) and superposition operators $\sG$ (induced by the growth function $g$). Now for solutions $u_t$ of \eqref{ide1} the sequence $\sG\circ u_t$ solves \eqref{ide2} and conversely, given a solution $v_t$ to \eqref{ide2}, $\sK\circ v_t$ satisfies \eqref{ide1}. In this sense also bifurcating objects for \eqref{ide1} and \eqref{ide2} are conjugated and simply capture different consensus times \cite{lutscher:petrovskii:08}. Nonetheless, both kinds of equations are investigated in the literature and we aim for a flexible, generally applicable approach. 

	\item We work with IDEs involving a general measure-theoretical integral based on a finite measure $\mu$. This unifies the analysis for ``classical IDEs'' (where $\mu$ is the Lebesgue measure) with their Nystr{\"o}m discretizations (important in simulations), and also applies to so-called \emph{metapopulation models} for dispersal between finitely many patches \cite{kot:schaeffer:86} as well as finite-dimensional difference equations \cite{poetzsche:12} (we refer to \eref{exmeasure} for details). Extending this, general difference equations on the space of measures, as well as their applications in population dynamics, were recently studied in \cite{thieme:20}.
\end{itemize}
Finally, our analysis restricts to those solutions whose period is a multiple of the basic period of the difference equation. Other constellations of these two periods require a possibly large codimension as pointed out in \cite{beyn:huels:samtenschnieder:08}.
\subsection{Bifurcations in integrodifference equations}
First of all, when dealing with periodic equations the bifurcating objects are periodic solutions (denoted as \emph{cycles} in \cite{cushing:henson}) rather than fixed points, and the period reflects the seasonal driving. As a rule of thumb, IDEs inherit their bifurcation behavior from the scalar (autonomous) models
\begin{equation}
	u_{t+1}=g(u_t,\alpha)
	\label{ide3}
\end{equation}
given by their growth function $g$. However, the critical values for the parameter $\alpha$ are typically larger due to the loss of population when leaving the bounded habitat $\Omega$ (see \cite{reimer:bonsall:maini:16}). In order to list further similarities and discrepancies between \eqref{ide3} and \eqref{ide1},\eqref{ide2}, two kinds of behavior can be observed:
\begin{itemize}
	\item For monotone growth functions like in the Beverton-Holt case, \cite[Thms.~3.3 and 4.9]{thieme:79} or \cite{hardin:takac:webb:88,hardin:takac:webb:90,zhou:fagan:17} supplemented by strict subhomogeneity \cite{zhao:17} or concavity \cite{krause:15}, global attractivity results hold. Every solution in the cone of nonnegative functions converges to zero, or there exists a unique nonzero globally attractive periodic solution (see Figs.~\ref{figlaplace1} and \ref{figlaplace1s}). These are the only biologically meaningful solutions although, in \eqref{ide1}, \eqref{ide2}, there are countably many more bifurcations along the trivial solution, whose branches consist of functions having negative values and are biologically irrelevant (cf.~\sref{sec52}). 

	\item For non-monotone growth (logistic, Hassell, Ricker) the dynamics of nonnegative solutions is much more complicated \cite{day:junge:mischaikow:04} featuring e.g.\ a period doubling scenario (cf.~\sref{sec53}). Adding dispersal yields even richer dynamics since, different from scalar equations \eqref{ide3}, for instance along the primary nontrivial branch, more than just one period doubling bifurcation occurs in \eqref{ide1} or \eqref{ide2} (see \fref{figgauss2}). In addition, the period doublings can lead to positive solutions with an increasing spatial inhomogeneity \cite{andersen:91,kot:schaeffer:86,kirk:lewis:97}. 
\end{itemize}
\subsection{Contents}
In a quite general framework of periodic IDEs, the paper at hand provides sufficient conditions for local continuation and various branchings of periodic solutions. In practise these criteria must be verified numerically --- particularly when dealing with realistic examples. We accordingly formulate our assumptions such that they can be tackled effectively using numerical tools. 

Although our proofs are purely analytical, we always provide interpretations in terms of dynamical systems, formulate our conclusions accordingly and hint at ecological interpretations. The subsequent \sref{sec2} contains basic notions and ingredients to understand the local dynamics near periodic solutions of periodic IDEs. Although the methods of \cite{poetzsche:12} developed for systems of difference equations in $\R^d$ formally extend to IDEs, we choose a different approach: Rather than fixed points of the period map \cite{luis:elaydi:oliveira:12}, we directly compute periodic solutions. This has at least two advantages when it comes to numerics: Evaluating the period map of a $\theta$-periodic IDE over a domain $\Omega\subset\R^\kappa$ involves $\kappa\theta$-fold integrals which, in turn, require high-dimensional cubature rules. Instead of solving periodic eigenvalue problems, we employ cyclic block operators (see \pref{propspec}), whose evaluation avoids (numerical) stability issues (see e.g.\ \cite[pp.~291ff]{watkins:07}). Under a weak hyperbolicity condition, we provide a persistence (continuation) result for periodic solutions in \sref{sec3}. As illustration serves the effect of dispersal when added to the Allee equation. Afterwards, fold and \emph{crossing curve bifurcations} are addressed in \sref{sec4}. The aforesaid crossing curve bifurcations include classical transcritical and pitchfork bifurcation scenarios as special cases, but the existence of a known solution branch is not assumed. We also give sufficient conditions for stability changes along bifurcating solution branches in terms of their Morse index. By doubling the particular period length, flip bifurcations can be naturally embedded into our framework for pitchfork bifurcations. Finally, certain symmetry properties are discussed, demonstrating that for e.g.\ Hammerstein operators with symmetric kernel, the solution branches bifurcating off the trivial branch either consist of even functions, or appear as pairs of solution branches sharing the same total population.

The concrete applications covered in \sref{sec5} not only illustrate our theoretical results, but additionally demonstrate their applicability. Being scalar IDEs over a symmetric interval as habitat (i.e.\ $\kappa=1$), they are admittedly simple, but nevertheless exhibit essential features and allow numerical simulations of reliable accuracy. The required discretizations are based on Nystr\"om methods \cite{atkinson:92,atkinson:97,kress:14}, that is, one replaces integrals by quadrature rules. Since the integrals become finite weighted sums, this results in parametrized finite-dimensional difference equations approximating the integral operators in \eqref{ide1} and \eqref{ide2}. Due to \eref{exmeasure}(2), the branching behavior of these discretizations is also covered by our abstract framework. The examples are arranged such that their analysis requires an increasing numerical effort: \sref{sec51} discusses IDEs having a degenerate kernel, essentially reducing the problems to finite dimensions. This allows an explicit analysis of fold and $2$-periodic pitchfork bifurcations. While the related paper \cite{bramburger:lutscher:18} studies the (equivalent) finite-dimensional version of a spatial logistic growth model, we work directly with IDEs having polynomial growth functions without ecological motivation. For periodic spatial Beverton-Holt equations of both Hammerstein- and dispersal-growth type, bifurcations along the trivial solution are studied in \sref{sec52}. It is shown that the primary bifurcation is transcritical at a critical parameter value related to the basic reproduction number $R_0$ of the species. The countably many further (but biologically irrelevant) bifurcations along $0$ alternate between being of pitchfork and transcritical type, which confirms a numerical observation from \cite{kirk:lewis:97}. In particular, if we equip the considered IDEs with the Laplace kernel, then the computational effort of our analysis reduces to the numerical solution of a transcendental equation in the reals. Related results concerning branchings along the zero solution also hold for an autonomous Ricker IDE tackled in \sref{sec53}. Whence, our focus is a period doubling scenario along nontrivial solution branches. This eventually requires a fully numerical analysis based on path-following schemes to detect also unstable branches of periodic solutions, corresponding eigenvalue computations and an approximate evaluation of integrals. 

For the reader's convenience we added two appendices. First, App.~\ref{appA} contains the necessary abstract bifurcation results initiated in \cite{crandall:rabinowitz:71,crandall:rabinowitz:73}. We rely on a criterion for \emph{crossing curve bifurcations} due to \cite{shi:99,liu:shi:wang:07} in \tref{thmcrosbif}, which not only contains transcritical and pitchfork patterns as special cases, but also applies without assuming a given known solution branch. Supplementing \cite{shi:99,liu:shi:wang:07}, an ``exchange of stability principle'' for \tref{thmcrosbif} is provided, covering the behavior of the critical eigenvalue $1$ along the bifurcating branches. Second, one goal of this paper is to provide an analytical justification for bifurcations observed in computer simulations of IDEs. This still requires to approximate eigenvalues of integral operators or to compute solution branches by e.g.\ pseudo-arc length continuation. App.~\ref{appB} sketches the central algorithms for this purpose and surveys some of the related literature. Implementations in Matlab are available for interested readers. 
\subsection{Notation}
Let $\R_+:=[0,\infty)$ denote the nonnegative reals and $\S^1$ the unit circle in $\C$. On a real Banach space $X$, $B_r(x):=\set{y\in X:\,\norm{y-x}<r}$ is the open ball with center $x$ and radius $r>0$; $\bar B_r(x)$ is its closure. On the Cartesian product $X\tm Y$ with another real Banach space $Y$ we use the norm 
$$
	\norm{(x,y)}:=\max\set{\norm{x}_X,\norm{y}_Y}.
$$
Moreover, $L_l(X,Y)$, $l\in\N$, is the linear space of bounded $l$-linear operators from $X^l$ to $Y$; $L_0(X,Y):=Y$, $L(X,Y):=L_1(X,Y)$, $L(X):=L(X,X)$ and $I_X$ is the identity map on $X$. We write $\sigma(T)$ for the \emph{spectrum}, $\sigma_p(T)\subseteq\sigma(T)$ for the \emph{point spectrum} of (the complexification of) $T\in L(X)$, and refer to $(\lambda,x)\in\C\tm X$ as \emph{eigenpair} of $T$ if $x\neq 0$ and $Tx=\lambda x$ hold. The null space and range of $T$ are $N(T):=T^{-1}(0)$ resp.\ $R(T):=TX$. 

The spaces $X,Y$ equipped with a bilinear form $\iprod{\cdot,\cdot}:Y\tm X\to\R$ satisfying
\begin{align*}
	\forall x\in X\setminus\set{0}:\exists y\in Y:\,\iprod{y,x}\neq 0,\quad
	\forall y\in Y\setminus\set{0}:\exists x\in X:\,\iprod{y,x}\neq 0
\end{align*}
are called a \emph{duality pairing} $\iprod{Y,X}$ (cf.~\cite[pp.~45ff]{kress:14}). One speaks of a \emph{bounded} bilinear form if there exists a $C\geq 0$ such that $\abs{\iprod{y,x}}\leq C\norm{x}\norm{y}$ for all $x\in X$, $y\in Y$. The \emph{annihilator} of a subspace $X_0\subseteq X$ is denoted by
$$
	X_0^\perp:=\set{y\in Y:\,\iprod{y,x}=0\text{ for all }x\in X_0}. 
$$
Given an operator $T\in L(X)$, its \emph{dual operator} $T'\in L(Y)$ is uniquely determined by $\iprod{y,Tx}=\iprod{T'y,x}$ for all $x\in X$, $y\in Y$ \cite[p.~46, Thm.~4.6]{kress:14}. 

In case $U\subseteq X$ is an (open) subset, then $C^m(U,Y)$ consists of all mappings $f:U\to Y$ whose $m$-th Fr{\'e}chet derivative exists and is continuous, where $m\in\N_0$. 

Norms on finite-dimensional spaces are denoted by $\abs{\cdot}$. In particular, on $\R^d$ we exclusively use the norm induced by the inner product $\sprod{y,x}:=\sum_{j=1}^dx_jy_j$. The space $L(\R^m,\R^d)$ is canonically identified with the $d\tm m$-matrices $\R^{d\tm m}$, and the transpose of $K\in\R^{d\tm m}$ is denoted by $K^T\in\R^{m\tm d}$. 
\section{Periodic difference equations}
\label{sec2}
We first present the basics necessary to keep this paper self-contained, but also to carve out differences and extensions to the finite-dimensional situation \cite{poetzsche:12}. Above all, periodic solutions of difference equations are characterized as zeros of a cyclic operator, to which the abstract bifurcation results from App.~\ref{appA} apply. This is based on the subsequent preparations on differentiability. We moreover state some stability criteria and develop an ambient Fredholm theory. The latter relies on duality pairings (cf.~\cite[pp.~45ff]{kress:14} or \cite[pp.~303ff]{zeidler:95}). 

Abstractly, we are interested in (nonautonomous) difference equations
\begin{equation}
	\tag{$\Delta_\alpha$}
	\boxed{u_{t+1}=\sF_t(u_t,\alpha),}
	\label{deq}
\end{equation}
which depend on a parameter $\alpha\in A$ from an ambient set $A$ to be specified later. The right-hand sides $\sF_t:U_t\tm A\to X$ are defined on subsets $U_t\subseteq X$, $t\in\Z$. A sequence $\phi=(\phi_t)_{t\in\Z}$ satisfying $\phi_t\in U_t$ and $\phi_{t+1}=\sF_t(\phi_t,\alpha)$ for all $t\in\Z$ is called an \emph{entire solution} of \eqref{deq} and, given $\eps>0$, the set
$$
	\cB_\eps(\phi):=\set{(t,x)\in\Z\tm X:\,\norm{x-\phi_t}<\eps}
$$
is its $\eps$-\emph{neighborhood}. One typically identifies $\phi$ and $\set{(t,\phi_t)\in\Z\tm X:\,t\in\Z}$. Keeping $\alpha\in A$ fixed, the solution of \eqref{deq} starting at an initial time $\tau\in\Z$ in the initial state $u_\tau\in U_\tau$ is given by 
\begin{equation*}
	\vphi_\alpha(t;\tau,u_\tau)
	:=
	\begin{cases}
		\sF_{t-1}(\cdot,\alpha)\circ\ldots\circ \sF_\tau(\cdot,\alpha)(u_\tau),&\tau<t,\\
		u_\tau,&t=\tau,
	\end{cases}
\end{equation*}
as long as the compositions stay in $U_t$. We call $\vphi_\alpha$ the \emph{general solution} of \eqref{deq}. 

The paper focuses on periodic difference equations \eqref{deq}, i.e.\ the situation where there exists a \emph{basic period} $\theta_0\in\N$ such that
\begin{align*}
	\sF_{t+\theta_0}=\sF_t:U_t\tm A&\to X,&
	U_{t+\theta_0}=U_t\fall t\in\Z.
\end{align*}
In case $\theta_0=1$, the right-hand sides $\sF_t$ and the subsets $U_t$ are constant in $t$, and one obtains an \emph{autonomous} difference equation \eqref{deq}. 
\subsection{The cyclic operator $G$}
In order to characterize periodic solutions of \eqref{deq} the following notions are fundamental: Given $\theta\in\N$, the linear space of $\theta$-periodic sequences in $X$, 
$$
	\ell_\theta(X):=\set{\phi=(\phi_t)_{t\in\Z}:\,\phi_t\in X\text{ and }\phi_{t+\theta}=\phi_t\text{ for all }t\in\Z},
$$
is equipped with the norm $\norm{\phi}:=\max_{t=0}^{\theta-1}\norm{\phi_t}_X$. We identify $\ell_\theta(X)$ with the $\theta$-fold product $X^\theta$, as these spaces are (topologically) isomorphic by means of the mutually inverse maps $\phi\mapsto\hat\phi:=(\phi_0,\ldots,\phi_{\theta-1})$ and $(\phi_0,\ldots,\phi_{\theta-1})\mapsto(\ldots,\underline{\phi_0},\ldots,\phi_{\theta-1},\ldots)$, the underline indicating the element of index $0$ in a sequence $\phi\in\ell_\theta(X)$. 

We restrict to such solutions, whose period $\theta$ is a multiple of the period $\theta_0$ of the difference equation \eqref{deq}. With the product $\hat U:=U_0\tm\ldots\tm U_{\theta-1}\subseteq X^\theta$ and the cyclic mapping
\begin{align}
	G:\hat U\tm A&\to X^\theta,&
	G(\hat u,\alpha)
	&:=
	\begin{bmatrix}
		\sF_{\theta-1}(u_{\theta-1},\alpha) - u_0\\
		\sF_0(u_0,\alpha) - u_1\\
		\vdots\\
		\sF_{\theta-2}(u_{\theta-2},\alpha) - u_{\theta-1}
	\end{bmatrix}, 
	\label{Gdef}
\end{align}
we arrive at the elementary, yet crucial
\begin{theorem}\label{thmmain}
	Let $\alpha\in A$, $\theta\in\N$ be a multiple of $\theta_0$ and $\phi\in\ell_\theta(X)$. Then $\phi$ is a solution of \eqref{deq} if and only if $G(\hat\phi,\alpha)=0$.
\end{theorem}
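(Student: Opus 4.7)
The plan is to prove Theorem \ref{thmmain} by directly unpacking the definitions of $G$ and of a $\theta$-periodic solution, then using the compatibility between $\theta$-periodicity of $\phi$ and $\theta_0$-periodicity of the right-hand sides. The statement is essentially a bookkeeping lemma, so I expect the argument to be short; the only subtle point is that the cyclic operator $G$ encodes the solution relation only on the index range $\{0,\ldots,\theta-1\}$, while an entire solution requires $\phi_{t+1}=\sF_t(\phi_t,\alpha)$ for every $t\in\Z$. Reconciling these is where the hypothesis ``$\theta$ is a multiple of $\theta_0$'' enters.

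For the forward direction, I would assume $\phi\in\ell_\theta(X)$ is an entire solution of \eqref{deq}. Then for $t=0,\ldots,\theta-2$ the equation $\sF_t(\phi_t,\alpha)-\phi_{t+1}=0$ is immediate from the definition of solution, which handles all but the first component of $G(\hat\phi,\alpha)$. For the first component I use $\sF_{\theta-1}(\phi_{\theta-1},\alpha)=\phi_\theta$ together with the $\theta$-periodicity $\phi_\theta=\phi_0$, hence $\sF_{\theta-1}(\phi_{\theta-1},\alpha)-\phi_0=0$. Reading off the entries of \eqref{Gdef} gives $G(\hat\phi,\alpha)=0$.

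For the converse, I would suppose $G(\hat\phi,\alpha)=0$. Component-wise this yields $\phi_{t+1}=\sF_t(\phi_t,\alpha)$ for $t=0,\ldots,\theta-2$ and $\phi_0=\sF_{\theta-1}(\phi_{\theta-1},\alpha)$. To promote this ``one period'' of relations to the full integer line, I write an arbitrary $t\in\Z$ as $t=s+k\theta$ with $s\in\{0,\ldots,\theta-1\}$ and $k\in\Z$. By $\theta$-periodicity of $\phi$ I have $\phi_t=\phi_s$ and $\phi_{t+1}=\phi_{s+1\bmod\theta}$ (with the index wrap giving exactly the first row of $G$ when $s=\theta-1$). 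Since $\theta$ is a multiple of $\theta_0$, the hypothesis $\sF_{t+\theta_0}=\sF_t$ implies $\sF_t=\sF_s$, and therefore $\sF_t(\phi_t,\alpha)=\sF_s(\phi_s,\alpha)=\phi_{s+1\bmod\theta}=\phi_{t+1}$. Hence $\phi$ is an entire solution.

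The main potential obstacle — which turns out to be minor — is just making the index arithmetic around the ``wrap-around'' row of $G$ transparent; I would likely phrase the final step once using the compact identity $\phi_\theta=\phi_0$ rather than introducing case distinctions, so that the proof remains a few lines long and the role of the period condition on $\theta$ is clearly visible.
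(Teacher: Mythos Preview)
Your proposal is correct and is exactly the kind of direct verification the paper has in mind; the paper itself merely states ``The immediate proof is left to the suspicious reader,'' so your unpacking of the definitions, including the wrap-around via $\phi_\theta=\phi_0$ and the use of $\theta_0\mid\theta$ to transfer $\sF_t=\sF_s$, is precisely the intended argument made explicit.
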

\begin{proof}
	The immediate proof is left to the suspicious reader. 
\end{proof}

\begin{prop}[properties of $G$]\label{propG}
	Let $m\in\N$ and $A$ be an open subset of a Banach space $P$. If each mapping $\sF_t:U_t\tm A\to X$, $0\leq t<\theta_0$, is $m$-times continuously differentiable, then the following holds for all $\hat u\in\hat U$ and $\alpha\in A$:
	\begin{enumerate}
		\item $G:\hat U\tm A\to X^\theta$ is of class $C^m$ with the partial derivatives
		\begin{align}
			D_1G(\hat u,\alpha)\hat v
			&=
			\begin{bmatrix}
				D_1\sF_{\theta-1}(u_{\theta-1},\alpha)v_{\theta-1} - v_0\\
				D_1\sF_0(u_0,\alpha)v_0 - v_1\\
				\vdots\\
				D_1\sF_{\theta-2}(u_{\theta-2},\alpha)v_{\theta-2} - v_{\theta-1}
			\end{bmatrix},
			\label{propG1}\\
			\hspace*{-10mm}
			D_1^iD_2^jG(\hat u,\alpha)\hat v^1\ldots\hat v^ip^1\ldots p^j
			&=
			\begin{bmatrix}
				D_1^iD_2^j\sF_{\theta-1}(u_{\theta-1},\alpha)v^1_{\theta-1}\cdots v^i_{\theta-1} p^1\cdots p^j\\
				D_1^iD_2^j\sF_0(u_0,\alpha)v^1_0\cdots v^i_0p^1\cdots p^j\\
				\vdots\\
				D_1^iD_2^j\sF_{\theta-2}(u_{\theta-2},\alpha)v^1_{\theta-2}\cdots v^i_{\theta-2}p^1\cdots p^j
			\end{bmatrix}
			\label{propG2}
		\end{align}
	for $i,j\in\N_0$, $1\leq i+j\leq m$, $(i,j)\neq(1,0)$ and $\hat v$, $\hat v^k\in X^\theta$, $p^k\in P$, $1\leq k\leq m$. 

		\item If every $D_1\sF_t(u_t,\alpha)\in L(X)$, $0\leq t<\theta$, is compact, then $D_1G(\hat u,\alpha)\in L(X^\theta)$ is a Fredholm operator of index $0$. 
	\end{enumerate}
\end{prop}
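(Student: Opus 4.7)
The plan is to treat (1) and (2) separately, both by exploiting the fact that $G$ is built componentwise from the $\sF_t$'s together with one continuous-linear correction term per component.

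For part (1), I would first note that, after identifying $\ell_\theta(X)$ with $X^\theta$, the $j$-th component of $G$ has the form
$$
	G_j(\hat u,\alpha)=\sF_{(j-1)\bmod\theta_0}\bigl(\pi_{j-1}\hat u,\alpha\bigr)-\pi_j\hat u,
$$
where $\pi_k:X^\theta\to X$ is the continuous linear coordinate projection and the periodicity $\sF_{t+\theta_0}=\sF_t$ combined with $\theta_0\mid\theta$ legitimises the indexing for all $0\leq j<\theta$. Since $\pi_{j-1}\times\id_A$ is continuous and affine (hence $C^\infty$), and the assumption $\sF_t\in C^m$ propagates through composition, each $G_j$ is $C^m$, and therefore so is $G$. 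The formulas \eqref{propG1}, \eqref{propG2} are then read off by the chain rule applied in each slot: the linear term $-\pi_j\hat u$ contributes exactly $-v_j$ to $D_1G$ and vanishes from every higher derivative, which is why the case $(i,j)=(1,0)$ has to be singled out in \eqref{propG2}.

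For part (2), I would rewrite the operator \eqref{propG1} in the block form
$$
	D_1G(\hat u,\alpha)=K-I_{X^\theta},\qquad
	K\hat v:=\bigl(D_1\sF_{\theta-1}(u_{\theta-1},\alpha)v_{\theta-1},\,D_1\sF_0(u_0,\alpha)v_0,\,\ldots,\,D_1\sF_{\theta-2}(u_{\theta-2},\alpha)v_{\theta-2}\bigr).
$$
Each summand of $K$ can be written as $\iota_j\circ D_1\sF_{j-1}(u_{j-1},\alpha)\circ\pi_{j-1}$ with $\iota_j:X\to X^\theta$ the canonical embedding into the $j$-th factor. By hypothesis each $D_1\sF_t(u_t,\alpha)$ is compact, so each summand is compact as a composition of a compact operator with bounded operators, and $K$ is compact as a finite sum of compact operators. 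Hence $D_1G(\hat u,\alpha)=-(I_{X^\theta}-K)$ is, up to the sign $-I_{X^\theta}$, a compact perturbation of the identity. Invoking the Riesz–Schauder theorem, $I_{X^\theta}-K$ is Fredholm of index $0$, and since multiplication by the bounded invertible operator $-I_{X^\theta}$ preserves both Fredholm property and index, so is $D_1G(\hat u,\alpha)$.

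I do not anticipate a genuine obstacle: part (1) is essentially bookkeeping with the chain rule, and part (2) is a textbook Fredholm-alternative argument. The only mildly delicate point is the cyclic wrap-around in \eqref{Gdef} (the first component depends on $u_{\theta-1}$), which has to be carried consistently through the block decomposition of $K$; once the indices are kept straight, the compactness of $K$ and the Fredholm conclusion follow immediately.
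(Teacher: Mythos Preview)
Your proposal is correct and follows essentially the same approach as the paper: part (a) is handled by transferring the $C^m$-property componentwise (the paper just cites a standard reference for this), and part (b) is the same ``compact perturbation of the identity'' argument, with your explicit block decomposition of $K$ simply spelling out what the paper leaves implicit.
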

Note that continuous differentiability of $\sF_t$ in the assumptions of \pref{propG} is to be understood in the sense of Fr{\'e}chet for open sets $U_t$, or as cone differentiability when $U_t$ is a cone in $X$ (cf.~\cite[pp.~225--226]{deimling:85}). 
\begin{proof}
	(a) Thanks to e.g.\ \cite[pp.~246--247, Prop.~4]{zeidler:95} and \eqref{Gdef}, the differentiability properties of $\sF_t$ and of the component functions transfer to $G$. 

	(b) The partial derivative $D_1G(\hat u,\alpha)\in L(X^\theta)$ is Fredholm of index $0$, because due to \eqref{propG1} it is a compact perturbation of the identity, and thus \cite[p.~300, Thm.~5.E]{zeidler:95} implies the claim. 
\end{proof}
\subsection{Periodic linear equations, stability and Fredholm theory}
Our first goal is capturing the local dynamical behavior of \eqref{deq} near given branches $\phi(\alpha)$ of $\theta$-periodic solutions via their linearization. Thereto, suppose the partial derivatives $D_1\sF_t:U_t\tm A\to L(X)$, $t\in\Z$, exist. Given $\alpha\in A$, the linear difference equation
\begin{equation}
	\tag{$V_\alpha$}
	\boxed{v_{t+1}=D_1\sF_t(\phi(\alpha)_t,\alpha)v_t}
	\label{var}
\end{equation}
in $X$ is called the \emph{variational equation} (associated to the $\theta$-periodic solution $\phi(\alpha)$ of \eqref{deq}). It has the \emph{transition operator}
$$
	\Phi_\alpha(t,\tau)
	:=
	\begin{cases}
		D_1\sF_{t-1}(\phi(\alpha)_{t-1},\alpha)\cdots D_1\sF_\tau(\phi(\alpha)_\tau,\alpha),&
		\tau<t,\\
		I_X,&\tau=t,
	\end{cases}
$$
the \emph{period operator} $\Xi_\theta(\alpha):=\Phi_{\alpha}(\theta,0)$ and the \emph{Floquet spectrum} $\sigma_\theta(\alpha):=\sigma(\Xi_\theta(\alpha))$. The elements of $\sigma_\theta(\alpha)$ are called \emph{Floquet multipliers}. 

In this setting, the stability properties of a solution $\phi(\alpha)\in\ell_\theta(X)$ to \eqref{deq} are as follows:
\begin{itemize}
	\item $\sigma_\theta(\alpha)\subseteq B_1(0)$ if and only if $\phi(\alpha)$ is \emph{exponentially stable}, 
	i.e.\ there exist reals $K\geq 1$, $\gamma\in(0,1)$ and $\rho>0$ such that
	$$
		\norm{\vphi_\alpha(t;\tau,u_\tau)-\phi(\alpha)_t}
		\leq
		K\gamma^{t-\tau}\norm{u_\tau-\phi(\alpha)_\tau}
		\fall\tau\leq t,\,u_\tau\in\bar B_\rho(\phi(\alpha)_\tau)
	$$
	(see \cite{gyori:pituk:01}, \cite[p.~2, Thm.~1]{iooss:79}, \cite[Thm.~2.1(a)]{poetzsche:russ:19}),

	\item if there exists a decomposition $\sigma_\theta(\alpha)=\sigma_0\cup\sigma_u$ with $\sup_{\lambda\in\sigma_0}\abs{\lambda}<\inf_{\lambda\in\sigma_u}\abs{\lambda}$, $1<\inf_{\lambda\in\sigma_u}\abs{\lambda}$ and $\sigma_u\neq\emptyset$, then $\phi(\alpha)$ is unstable (see \cite[p.~3, Thm.~2]{iooss:79} or \cite[Thm.~2.1(b)]{poetzsche:russ:19}). 
\end{itemize}

In what follows, let us keep a parameter $\alpha^\ast\in A$ and a solution $\phi^\ast\in\ell_\theta(X)$ to $(\Delta_{\alpha^\ast})$ fixed. A spectral decomposition $\sigma_\theta(\alpha^\ast)=\sigma_s\dot\cup\sigma_c\dot\cup\sigma_u$ into disjoint closed sets $\sigma_s\subseteq B_1(0)$, $\sigma_c\subseteq\S^1$ and $\sigma_u\subseteq\C\setminus\bar B_1(0)$ gives rise to a decomposition
$$
	X=X_s\oplus X_c\oplus X_u
$$
of the space $X$ into closed subspaces $X_s,X_c$ and $X_u$ (cf.~\cite[p.~178, Thm.~6.17]{kato:80}). If $X_c\oplus X_u$ is finite-dimensional, then $m_\ast(\phi^\ast):=\dim X_u$ is called the \emph{Morse index} and $m^\ast(\phi^\ast):=\dim X_c\oplus X_u$ is the \emph{upper Morse index} of a solution $\phi^\ast$. In case
$$
	\sigma_\theta(\alpha^\ast)\cap{\mathbb S}^1=\emptyset,
$$
that is, $\sigma_c=\emptyset$, or equivalently, $X_c=\set{0}$, one denotes the periodic solution $\phi^\ast$ as \emph{hyperbolic}. In this situation we have $m_\ast(\phi^\ast)=m^\ast(\phi^\ast)$, and $\phi^\ast$ is exponentially stable if and only if $m_\ast(\phi^\ast)=0$. Thus, the Morse index is a measure of instability. 

In order to simplify our subsequent analysis, we suppose for the remaining section that every $D_1\sF_t(\phi(\alpha)_t,\alpha)\in L(X)$, $0\leq t<\theta_0$, is compact. We now address the question of how the Floquet spectrum is related to the point spectrum of the derivative of $G$: 
\begin{prop}\label{propspec}
	If $\theta\in\N$ is a multiple of $\theta_0$, then
	$$
		\sigma_p(D_1G(\hat\phi^\ast,\alpha^\ast))
		=
		\bigl\{\lambda-1\in\C:\,\lambda^\theta\in\sigma_p(\Xi_\theta(\alpha^\ast))\bigr\}.
	$$
	Moreover, for any $\lambda\in\C,\hat\xi\in X^\theta$, the following statements are equivalent:
	\begin{enumerate}
		\item $(\lambda-1,\hat\xi)$ is an eigenpair of $D_1G(\hat\phi^\ast,\alpha^\ast)$, 

		\item $(\lambda^\theta,\xi_0)$ is an eigenpair of $\Xi_\theta(\alpha^\ast)$, and $\lambda^t\xi_t=\Phi_{\alpha^\ast}(t,0)\xi_0$ for all $0\leq t<\theta$. 
	\end{enumerate}
\end{prop}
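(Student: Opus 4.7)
The approach is to unfold the eigenvalue equation for $D_1G(\hat\phi^\ast,\alpha^\ast)$ componentwise and recognize that it is a cyclic recursion along the $\theta$-periodic linear cocycle; the wrap-around closure then forces $\xi_0$ to be an eigenvector of the period operator $\Xi_\theta(\alpha^\ast)$. Writing $A_t:=D_1\sF_t(\phi^\ast_t,\alpha^\ast)$ and applying formula \eqref{propG1} from \pref{propG}, the equation $D_1G(\hat\phi^\ast,\alpha^\ast)\hat\xi=(\lambda-1)\hat\xi$ becomes, after cancelling $-\hat\xi$ on both sides, the cyclic linear system
\begin{align*}
  A_t\xi_t&=\lambda\xi_{t+1}\fall 0\leq t\leq\theta-2,&
  A_{\theta-1}\xi_{\theta-1}&=\lambda\xi_0.
\end{align*}

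Next, a straightforward induction on $t$ using the definition of $\Phi_{\alpha^\ast}(t,0)$ shows that the first $\theta-1$ equations are equivalent to $\lambda^t\xi_t=\Phi_{\alpha^\ast}(t,0)\xi_0$ for $0\leq t<\theta$, and substituting $t=\theta-1$ into the wrap-around equation yields $\Xi_\theta(\alpha^\ast)\xi_0=\lambda^\theta\xi_0$. This gives the implication (a)$\Rightarrow$(b) once one checks $\xi_0\neq 0$: for $\lambda\neq 0$ this is automatic because $\xi_0=0$ would force every $\xi_t=0$ through the iteration formula, contradicting $\hat\xi\neq 0$. The converse (b)$\Rightarrow$(a) is an explicit construction: for $\lambda\neq 0$, set $\xi_t:=\lambda^{-t}\Phi_{\alpha^\ast}(t,0)\xi_0$, which yields a nonzero $\hat\xi$ obeying the system above and hence an eigenpair $(\lambda-1,\hat\xi)$ of $D_1G(\hat\phi^\ast,\alpha^\ast)$. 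The claimed set identity $\sigma_p(D_1G(\hat\phi^\ast,\alpha^\ast))=\bigl\{\lambda-1:\lambda^\theta\in\sigma_p(\Xi_\theta(\alpha^\ast))\bigr\}$ then follows by quantifying over $\lambda\in\C$ and noting that every element of $\sigma_p(\Xi_\theta(\alpha^\ast))$ has $\theta$ complex $\theta$-th roots.

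The substantive content of the argument is therefore mostly bookkeeping around the linear cocycle; the one delicate case is $\lambda=0$, where the recursion collapses to $A_t\xi_t=0$ and no longer determines $\xi_t$ from $\xi_0$, so the construction of $\hat\xi$ from $\xi_0$ and vice versa breaks down. Treating this degenerate case, i.e.\ relating nontriviality of $\bigcup_t N(A_t)$ to $0\in\sigma_p(\Xi_\theta(\alpha^\ast))$, is where I expect to spend the most effort; I would lean on the compactness of the $A_t$ together with Riesz theory for compact operators (which ensures that the nonzero point spectra of the cyclic rearrangements of $A_{\theta-1}\cdots A_0$ all coincide), combined with a careful choice of the starting index to convert an element of some $N(A_{t_0})$ into a bona fide eigenvector of $\Xi_\theta(\alpha^\ast)$.
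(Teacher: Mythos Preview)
Your treatment of the case $\lambda\neq 0$ is correct and is essentially the same argument the paper has in mind: the paper's one-line proof (``$[I_{X^\theta}+D_1G(\hat\phi^\ast,\alpha^\ast)]^\theta$ is block diagonal'') is just a repackaging of the cyclic recursion $A_t\xi_t=\lambda\xi_{t+1}$ that you unfold directly. So for nonzero $\lambda$ there is nothing to add.

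Where your proposal goes astray is the plan for $\lambda=0$. Compactness of the $A_t$ does \emph{not} rescue the statement; in fact, in infinite dimensions the equivalence (a)$\Leftrightarrow$(b) and even the set identity can fail at $\lambda=0$. For a concrete obstruction with $\theta=2$: take $A_0\in L(\ell^2)$ to be the compact injective diagonal operator $(A_0x)_n=2^{-n}x_n$, choose $v=(1/n)_{n\ge 1}\notin R(A_0)$, and let $A_1x=Dx-\tfrac{\langle x,v\rangle}{\|v\|^2}Dv$ with $D$ the diagonal $(Dx)_n=x_n/n$. Then $A_1$ is compact with $N(A_1)=\spann\{v\}$, and $\Xi_2=A_1A_0$ is injective because $R(A_0)\cap N(A_1)=\{0\}$; hence $0\notin\sigma_p(\Xi_2)$. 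Yet $\hat\xi=(0,v)$ satisfies $D_1G(\hat\phi^\ast,\alpha^\ast)\hat\xi=-\hat\xi$, so $-1\in\sigma_p(D_1G(\hat\phi^\ast,\alpha^\ast))$. Your Riesz-theory remark that the \emph{nonzero} point spectra of the cyclic rearrangements of $A_{\theta-1}\cdots A_0$ coincide is exactly right, but that is precisely why it cannot help at $0$; and ``choosing a different starting index'' produces an eigenvector of the shifted period operator $\Phi_{\alpha^\ast}(t_0+\theta,t_0)$, not of $\Xi_\theta(\alpha^\ast)$ itself.

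The paper sidesteps this by citing the finite-dimensional cyclic-matrix result, where injectivity implies surjectivity and the $\lambda=0$ issue evaporates. In the paper's applications only $\lambda=1$ (hyperbolicity, \pref{proplin}) and more generally $\lambda\neq 0$ matter, so the gap is harmless there; but you should not expect to close it as stated.
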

\begin{proof}
	The proof is a straightforward generalization of a well-known result for cyclic matrices, found in e.g. \cite[pp.~293--295]{watkins:07}. The argument is based on the fact that the power $[I_{X^\theta}+D_1G(\hat\phi^\ast,\alpha^\ast)]^\theta$ is block diagonal. 
\end{proof}

\begin{prop}\label{proplin}
	If $\theta\in\N$ is a multiple of $\theta_0$ and $\xi^\ast=(\xi_t^\ast)_{t\in\Z}\in\ell_\theta(X)$, then the following statements are equivalent:
	\begin{enumerate}
		\item $\xi^\ast$ is a solution of $(V_{\alpha^\ast})$, 

		\item $\hat\xi^\ast\in N(D_1G(\hat\phi^\ast,\alpha^\ast))$, 

		\item $\xi_t^\ast=\Phi_{\alpha^\ast}(t,0)\xi_0^\ast$ holds for all $0\leq t<\theta$. If $\xi_0^\ast\neq 0$, then $(1,\xi_0^\ast)$ is an eigenpair of $\Xi_\theta(\alpha^\ast)$.
	\end{enumerate}
	In particular, we have $\dim N(D_1G(\hat\phi^\ast,\alpha^\ast))=\dim N(\Xi_\theta(\alpha^\ast)-I_X)$. 
\end{prop}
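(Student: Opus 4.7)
The plan is to prove the chain of equivalences (a)$\Leftrightarrow$(b)$\Leftrightarrow$(c) by two short index chases, and then to deduce the dimension formula by identifying the null space with its ``initial value'' subspace in $X$. In spirit the whole statement is just the special case $\lambda = 1$ of \pref{propspec}, but writing it out directly is cleaner than unfolding the general argument, and does not rely on the full block-diagonalization machinery invoked there.

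First, for (a)$\Leftrightarrow$(b) I would unpack the identity $D_1G(\hat\phi^\ast,\alpha^\ast)\hat\xi^\ast = 0$ using the explicit formula \eqref{propG1}. Componentwise this reads
\[
	\xi_{t+1}^\ast = D_1\sF_{t}(\phi_t^\ast,\alpha^\ast)\xi_t^\ast \text{ for }0 \leq t < \theta - 1,\quad \xi_0^\ast = D_1\sF_{\theta-1}(\phi_{\theta-1}^\ast,\alpha^\ast)\xi_{\theta-1}^\ast.
\]
Invoking $\theta_0 \mid \theta$ to obtain $\theta$-periodicity of the coefficient operators $t\mapsto D_1\sF_t(\phi_t^\ast,\alpha^\ast)$ and extending $\xi^\ast$ cyclically, this is precisely the statement that $\xi^\ast\in\ell_\theta(X)$ solves $(V_{\alpha^\ast})$.

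Next, for (a)$\Rightarrow$(c), I would iterate the variational equation from $t = 0$ to get $\xi_t^\ast = \Phi_{\alpha^\ast}(t,0)\xi_0^\ast$ for $0 \leq t < \theta$, at which point $\theta$-periodicity forces $\xi_0^\ast = \xi_\theta^\ast = \Xi_\theta(\alpha^\ast)\xi_0^\ast$; if $\xi_0^\ast\neq 0$, this exhibits $(1,\xi_0^\ast)$ as an eigenpair. Conversely, given (c), I would extend the formula $\xi_t^\ast = \Phi_{\alpha^\ast}(t,0)\xi_0^\ast$ by $\theta$-periodicity: the recursion in $(V_{\alpha^\ast})$ holds automatically for $0 \leq t < \theta-1$ from the very definition of the transition operator, while at $t = \theta - 1$ it is equivalent to the eigenvalue relation $\Xi_\theta(\alpha^\ast)\xi_0^\ast = \xi_0^\ast$.

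To close, I would consider the evaluation map $E: N(D_1G(\hat\phi^\ast,\alpha^\ast)) \to N(\Xi_\theta(\alpha^\ast) - I_X)$, $E(\hat\xi^\ast) := \xi_0^\ast$. It is well-defined by (c), injective because $\xi_0^\ast$ determines every $\xi_t^\ast$ through the transition operator (again by (c)), and surjective since any $\xi_0^\ast \in N(\Xi_\theta(\alpha^\ast)-I_X)$ admits the preimage $\hat\xi^\ast$ given by $\xi_t^\ast := \Phi_{\alpha^\ast}(t,0)\xi_0^\ast$ (extended $\theta$-periodically). The only mildly subtle point in the whole argument is the wraparound at $t = \theta - 1$, where one genuinely needs $\theta$-periodicity of the coefficient operators; this is guaranteed by the standing assumption $\theta_0 \mid \theta$, and I do not anticipate any substantive obstacle.
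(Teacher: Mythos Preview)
Your proposal is correct and follows essentially the same approach as the paper: both arguments unpack the componentwise content of \eqref{propG1} to identify (a) and (b), iterate the variational recursion to connect with (c), and then read off the dimension formula from the bijection $\hat\xi^\ast \leftrightarrow \xi_0^\ast$. The only organizational difference is that the paper closes the cycle (a)$\Rightarrow$(b)$\Rightarrow$(c)$\Rightarrow$(a) and invokes \pref{propspec} for the step (b)$\Rightarrow$(c), whereas you prove (a)$\Leftrightarrow$(b) and (a)$\Leftrightarrow$(c) directly and self-containedly; neither choice has any real advantage over the other.
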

\begin{proof}
	We suppress the dependence on the fixed parameter $\alpha^\ast$. 
	
	$(a)\Rightarrow(b)$ In particular, $\xi_{t+1-\theta}^\ast=\xi_{t+1}^\ast=D\sF_t(\phi_t^\ast)\xi_t^\ast$ for all $0\leq t<\theta$. Referring to the representation \eqref{propG1}, we now have 
	\begin{align*}
		(DG(\hat\phi^\ast)\hat\xi^\ast)_{t+1} &= D\sF_t(\phi_t^\ast)\xi^\ast_t - \xi^\ast_{t+1} = 0\fall 0\leq t<\theta,\\
		(DG(\hat\phi^\ast)\hat\xi^\ast)_0 &= D\sF_{\theta-1}(\phi_{\theta-1}^\ast)\xi^\ast_{\theta-1} - \xi^\ast_0 = D\sF_{\theta-1}(\phi_{\theta-1}^\ast)\xi^\ast_{\theta-1}-\xi^\ast_{\theta} = 0
	\end{align*}
	yielding $DG(\hat\phi^\ast)\hat\xi^\ast = 0$, as desired.

	$(b)\Rightarrow(c)$ The result is evident for $\xi^\ast=0$. If $\xi^\ast\neq 0$, then the result follows as a special case of \pref{propspec}. 

	$(c)\Rightarrow(a)$ Assume that $(1,\xi^\ast_0)$ is an eigenpair of $\Xi_\theta$ and that $\xi^\ast_t=\Phi(t,0)\xi^\ast_0$ for all $0\leq t<\theta$. For any $t\in\Z$, there exists a uniquely determined $k\in\Z$ such that $\tilde{t}:=t+k\theta\in\set{0,\ldots,\theta-1}$, and by the $\theta$-periodicity of $(\xi^\ast_t)_{t\in\Z}$ we have $\xi^\ast_t=\xi^\ast_{\tilde{t}}$ for every $t\in\Z$. The $\theta$-periodicity of $(D\sF_t(\phi_t^\ast))_{t\in\Z}$ now implies that
$
	\xi^\ast_{t+1} = \xi^\ast_{\tilde{t}+1} = \Phi(\tilde{t}+1,\tilde{t})\xi^\ast_{\tilde{t}} = D\sF_{\tilde{t}}(\phi_{\tilde{t}}^\ast)\xi^\ast_{\tilde{t}} = D\sF_t(\phi_t^\ast)\xi^\ast_{t}
$
whenever $0\leq\tilde{t}\leq\theta-2$, and
$$
	\xi^\ast_{t+1}
	\!=
	\xi^\ast_{\theta} = \xi^\ast_0 = \Xi_\theta\xi^\ast_0 = \Phi(\theta,\theta-1)\Phi(\theta-1,0)\xi^\ast_0 = D\sF_{\theta-1}(\phi_{\theta-1}^\ast)\xi^\ast_{\theta-1}
	= D\sF_t(\phi_t^\ast)\xi^\ast_t
$$
whenever $\tilde{t}=\theta-1$. It follows that $\xi^\ast$ is a $\theta$-periodic solution of $(V_{\alpha})$, as desired.\\
The final remark results from the fact that $\hat\xi^\ast$ is uniquely determined by $\xi^\ast_0$, since $\xi^\ast_t=\Phi(t,0)\xi^\ast_0$ for $0\leq t<\theta$. This implies that there is a one-to-one correspondence between elements $\xi^\ast_0\in N(\Xi_\theta-I_X)$ and $\hat\xi^\ast\in N(DG(\hat\phi))$, yielding the claim. 
\end{proof}

Suppose that we have given a duality pairing $\iprod{Y,X}$ such that the dual operator $D_1\sF_t(\phi(\alpha)_t,\alpha)'\in L(Y)$ of the derivative $D_1\sF_t(\phi(\alpha)_t,\alpha)$ exists for all $0\leq t<\theta$. This allows us to introduce the \emph{dual variational equation} (w.r.t.\ the duality pairing)
\begin{equation}
	\tag{$V_\alpha'$}
	\boxed{v_t=D_1\sF_t(\phi(\alpha)_t,\alpha)'v_{t+1},}
	\label{vard}
\end{equation}
which is a linear (backwards) difference equation in $Y$. Its \emph{dual transition operator} possesses the representation
\begin{equation}
	\Phi_\alpha'(t,\tau)
	:=
	\begin{cases}
		D_1\sF_\tau(\phi(\alpha)_\tau,\alpha)'\cdots D_1\sF_{t-1}(\phi(\alpha)_{t-1},\alpha)',&
		t>\tau,\\
		I_Y,&t=\tau
	\end{cases}
	\label{notransvar}
\end{equation}
and thus $\Phi_\alpha'(t,\tau)=\Phi_\alpha(t,\tau)'$, which results in the \emph{dual period operator}
$$
	\Xi_\theta'(\alpha):=\Phi_{\alpha}'(\theta,0)=\Phi_\alpha(\theta,0)'. 
$$
If we introduce the bilinear form
\begin{align}
	\iprod{\cdot,\cdot}_\theta:Y^\theta \tm X^\theta &\to\R,&
	\iprod{\hat y,\hat x}_\theta&:=\sum_{t=0}^{\theta-1}\iprod{y_t,x_t},
	\label{dpper}
\end{align}
then $\iprod{Y^\theta,X^\theta}_\theta$ becomes a duality pairing as well. Boundedness is inherited from $\iprod{\cdot,\cdot}$. As clearly $I_{X^\theta}'=I_{Y^\theta}$, the associate dual operator of $D_1G(\hat u,\alpha)$ is given by
\begin{align*}
	D_1G(\hat u,\alpha)'&\in L(Y^\theta),&
	D_1G(\hat u,\alpha)'\hat v
	=
	\begin{bmatrix}
		D_1\sF_0(u_0,\alpha)'v_1 - v_0\\
		\vdots\\
		D_1\sF_{\theta-2}(u_{\theta-2},\alpha)'v_{\theta-1} - v_{\theta-2}\\
		D_1\sF_{\theta-1}(u_{\theta-1},\alpha)'v_0 - v_{\theta-1}
	\end{bmatrix}
\end{align*}
for all $\hat u\in\hat U$ and tuples $\hat v\in Y^\theta$. 

The relationship between $(V_{\alpha^\ast}')$, $D_1G(\hat\phi^\ast,\alpha^\ast)'$ and $\Xi_\theta'(\alpha^\ast)$ resembles the relationship between $(V_{\alpha^\ast})$, $D_1G(\hat\phi^\ast,\alpha^\ast)$ and $\Xi_\theta(\alpha^\ast)$ described in \pref{proplin}: 
\begin{prop}\label{proplindual}
	If $\theta\in\N$ is a multiple of $\theta_0$ and $\eta^\ast=(\eta_t^\ast)_{t\in\Z}\in\ell_\theta(Y)$, then the following statements are equivalent:
	\begin{enumerate}
		\item $\eta^\ast$ is a solution of $(V_{\alpha^\ast}')$, 

		\item $\hat\eta^\ast\in N(D_1G(\hat\phi^\ast,\alpha^\ast)')$, 

		\item $\eta^\ast_t=\Phi_{\alpha^\ast}'(\theta,t)\eta_0^\ast$ holds for all $0\leq t<\theta$. If $\eta_0^\ast\neq 0$, then $(1,\eta_0^\ast)$ is an eigenpair of $\Xi_\theta'(\alpha^\ast)$. 
	\end{enumerate}
	In particular, we have $\dim N(D_1G(\hat\phi^\ast,\alpha^\ast)')=\dim N(\Xi_\theta(\alpha^\ast)'-I_Y)$. 
\end{prop}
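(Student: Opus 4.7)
My plan is to mirror the proof of \pref{proplin} essentially verbatim, exploiting the explicit block formula for the dual operator $D_1G(\hat u,\alpha)'$ displayed directly above the statement. Since the dual variational equation $(V_{\alpha^\ast}')$ runs backwards in $t$, the roles of the first and last block rows of the cyclic operator are swapped compared to the primal case, but all algebraic manipulations remain completely parallel; the $\theta$-periodicity of $\eta^\ast$ and of the coefficient operators $D_1\sF_t(\phi_t^\ast,\alpha^\ast)'$ will be used throughout. As in \pref{proplin}, I suppress the fixed parameter $\alpha^\ast$ from the notation.

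For $(a)\Rightarrow(b)$ I write out the $\theta$ block components of $D_1G(\hat\phi^\ast)'\hat\eta^\ast$. The components indexed $0,\ldots,\theta-2$ vanish because $\eta^\ast$ satisfies $\eta_t^\ast=D_1\sF_t(\phi_t^\ast)'\eta_{t+1}^\ast$ for these $t$, while the $(\theta-1)$-th component vanishes upon invoking $\eta_\theta^\ast=\eta_0^\ast$ in the dual equation at $t=\theta-1$. For $(b)\Rightarrow(c)$, reading off the $\theta$ components of $D_1G(\hat\phi^\ast)'\hat\eta^\ast=0$ yields $\eta_t^\ast=D_1\sF_t(\phi_t^\ast)'\eta_{t+1}^\ast$ for $0\leq t\leq\theta-2$ together with the wrap-around identity $\eta_{\theta-1}^\ast=D_1\sF_{\theta-1}(\phi_{\theta-1}^\ast)'\eta_0^\ast$. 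Substituting the wrap-around into the iterated chain of the first $\theta-1$ relations, the representation \eqref{notransvar} of the dual transition operator produces $\eta_t^\ast=\Phi_{\alpha^\ast}'(\theta,t)\eta_0^\ast$ for all $0\leq t<\theta$; specializing to $t=0$ and using $\Phi_{\alpha^\ast}'(\theta,0)=\Xi_\theta'(\alpha^\ast)$ gives the eigenpair statement whenever $\eta_0^\ast\neq 0$.

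For $(c)\Rightarrow(a)$ I extend $(\eta_0^\ast,\ldots,\eta_{\theta-1}^\ast)$ $\theta$-periodically and verify $(V_{\alpha^\ast}')$ at every $t\in\Z$. For arbitrary $t$ choose $k\in\Z$ with $\tilde t:=t+k\theta\in\set{0,\ldots,\theta-1}$; $\theta$-periodicity of both $\eta^\ast$ and the coefficient operators reduces the check to $t=\tilde t$. For $0\leq\tilde t\leq\theta-2$, the factorization $\Phi'_{\alpha^\ast}(\theta,\tilde t)=D_1\sF_{\tilde t}(\phi_{\tilde t}^\ast)'\Phi'_{\alpha^\ast}(\theta,\tilde t+1)$ combined with (c) delivers the claim; for $\tilde t=\theta-1$, the eigenpair identity together with $\Phi'_{\alpha^\ast}(\theta,\theta-1)=D_1\sF_{\theta-1}(\phi_{\theta-1}^\ast)'$ and $\eta_\theta^\ast=\eta_0^\ast$ closes the loop. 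Finally, the dimension identity follows because $\hat\eta^\ast$ is uniquely determined by $\eta_0^\ast$ via (c), giving a bijection between $N(D_1G(\hat\phi^\ast,\alpha^\ast)')$ and $N(\Xi_\theta(\alpha^\ast)'-I_Y)$.

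I expect the entire argument to be routine. The only step requiring genuine care is the index bookkeeping: the dual transition operator composes the factors $D_1\sF_t'$ in the reverse order of the primal one, and the cyclic shift in the block formula for $D_1G(\hat u,\alpha)'$ moves the wrap-around term from the top row (primal) to the bottom row (dual). Once these two cosmetic reversals are tracked carefully, no substantial obstacle remains.
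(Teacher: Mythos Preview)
Your proposal is correct and follows exactly the approach the paper intends: the paper's own proof simply reads ``The proof is dual to that of \pref{proplin} and thus omitted,'' and your argument is precisely that dualization, with the index bookkeeping (reversed composition order in \eqref{notransvar} and the wrap-around row moving from top to bottom) handled correctly.
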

\begin{proof}
	The proof is dual to that of \pref{proplin} and thus omitted. 
\end{proof}

\begin{prop}\label{prop26}
	If $\theta\in\N$ is a multiple of $\theta_0$, then the following holds: 
	\begin{enumerate}
		\item $\dim N(D_1G(\hat\phi^\ast,\alpha^\ast)) = \dim N(D_1G(\hat\phi^\ast,\alpha^\ast)') < \infty$, 

		\item $R(D_1G(\hat\phi^\ast,\alpha^\ast)) = \bigl\{\hat v\in X^\theta:\, \iprod{\hat\eta^\ast,\hat v}_\theta = 0\text{ for all }\hat\eta^\ast\in N(D_1G(\hat\phi^\ast,\alpha^\ast)')\bigr\}$.
	\end{enumerate}
\end{prop}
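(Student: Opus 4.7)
The plan is to reduce the assertion to the classical Fredholm alternative for compact perturbations of the identity, formulated with respect to a bounded duality pairing rather than the full Banach-space dual. From \eqref{propG1} I would first rewrite
\[
	D_1G(\hat\phi^\ast,\alpha^\ast) = -(I_{X^\theta} - M),
\]
where $M\in L(X^\theta)$ sends $\hat v$ to the tuple whose $t$-th component is $D_1\sF_{t-1}(\phi^\ast_{t-1},\alpha^\ast)v_{t-1}$, indices taken cyclically. Because each block $D_1\sF_t(\phi^\ast_t,\alpha^\ast)$ is compact by hypothesis, $M$ is the composition of a coordinate permutation with a block-diagonal compact operator, hence itself compact on $X^\theta$. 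Comparing with the explicit formula for $D_1G(\hat u,\alpha)'$ displayed just above the proposition gives analogously $D_1G(\hat\phi^\ast,\alpha^\ast)' = -(I_{Y^\theta} - M')$.

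Next I would verify that $\iprod{\cdot,\cdot}_\theta$ from \eqref{dpper} genuinely defines a bounded duality pairing on $Y^\theta\tm X^\theta$. Non-degeneracy transfers coordinatewise: for $\hat x\neq 0$, choose an index $j$ with $x_j\neq 0$, pick $y\in Y$ with $\iprod{y,x_j}\neq 0$, and place $y$ in slot $j$ with zeros elsewhere; the dual direction is symmetric. Boundedness is inherited directly from the bound on $\iprod{\cdot,\cdot}$. This places $M$ and $M'$ within the Riesz--Schauder framework for duality pairings developed in \cite{kress:14}.

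With the setup in hand, I would invoke the Fredholm alternative in that framework: for any compact operator $K\in L(X^\theta)$ one has $\dim N(I_{X^\theta}-K)=\dim N(I_{Y^\theta}-K')<\infty$ and $R(I_{X^\theta}-K)=N(I_{Y^\theta}-K')^\perp$. Applying this with $K:=M$, and noting that passing from $I_{X^\theta}-M$ to $-(I_{X^\theta}-M)$ leaves both null spaces and ranges unchanged, yields claims (a) and (b) at once.

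The main obstacle I expect is pinpointing the Fredholm alternative in precisely its duality-pairing form, rather than its standard Banach-space-dual form. Once the version in \cite{kress:14} is identified, everything else is a direct reduction: compactness of the individual blocks $D_1\sF_t(\phi^\ast_t,\alpha^\ast)$ already underlying \pref{propG}(b) drives the argument, and the coordinatewise structure of $\iprod{\cdot,\cdot}_\theta$ makes the annihilator computation in (b) transparent.
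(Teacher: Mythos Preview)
Your proposal is correct and follows essentially the same route as the paper: both reduce to the Fredholm alternative for duality pairings from \cite{kress:14}, using that $D_1G(\hat\phi^\ast,\alpha^\ast)$ is a compact perturbation of the identity (already established in \pref{propG}(b)). The paper's proof is terser, simply citing \cite[p.~53, Thm.~4.15 and p.~55, Thm.~4.17]{kress:14} directly, while you spell out the decomposition $-(I_{X^\theta}-M)$ and the coordinatewise non-degeneracy of $\iprod{\cdot,\cdot}_\theta$ explicitly.
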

\begin{proof}
	The statement follows from the Fredholm alternative for duality pairings, as shown in e.g.\ \cite[p.~53, Thm.~4.15]{kress:14} (for (a)) and \cite[p.~55, Thm.~4.17]{kress:14} (for (b)). 
\end{proof}

Our Props.~\ref{propspec}--\ref{prop26} finally culminate in
\begin{corollary}\label{cordual}
	For any $n\in\N$, the following are equivalent:
	\begin{enumerate}
		\item $(V_{\alpha^\ast})$ has exactly $n$ linearly independent $\theta$-periodic solutions (up to multiples), 

		\item $\dim N(D_1G(\hat\phi^\ast,\alpha^\ast))=n$, 

		\item $\dim N(\Xi_\theta(\alpha^\ast)-I_X)=n$, 

		\item $(V_{\alpha^\ast}')$ has exactly $n$ linearly independent $\theta$-periodic solutions (up to multiples), 

		\item $\dim N(D_1G(\hat\phi^\ast,\alpha^\ast)')=n$, 

		\item $\dim N(\Xi_\theta'(\alpha^\ast)-I_Y)=n$.
	\end{enumerate}
	If any (and thus all) of the above hold with $N(D_1G(\hat\phi^\ast,\alpha^\ast)')=\spann\bigl\{\hat\eta^1,\ldots,\hat\eta^n\bigr\}$, then the linear functionals 
	\begin{align*}
		z_j': X^\theta&\rightarrow\R,&
		z_j'(\hat v)
		&:=
		\iprod{\hat \eta^j,\hat v}_\theta\fall 1\leq j\leq n
	\end{align*}
	satisfy $R(D_1G(\hat\phi^\ast,\alpha^\ast))=\bigcap_{j=1}^nN(z_j')$. 
\end{corollary}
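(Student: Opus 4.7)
The proof is essentially a bookkeeping exercise that knits together the three preceding propositions. My plan is to organize the six equivalences as a short cycle (or a small diagram), then handle the final range statement separately via linearity.

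For the equivalences, I would first establish the chain $(a)\Leftrightarrow(b)\Leftrightarrow(c)$ by direct appeal to \pref{proplin}: that proposition literally states these three equivalences for an individual $\xi^\ast\in\ell_\theta(X)$, and the statement about dimensions in its final line gives $\dim N(D_1G(\hat\phi^\ast,\alpha^\ast))=\dim N(\Xi_\theta(\alpha^\ast)-I_X)$. Counting maximal linearly independent families of $\theta$-periodic solutions on one side and basis vectors of the kernels on the other then pins the common value to $n$. Completely dually, \pref{proplindual} produces $(d)\Leftrightarrow(e)\Leftrightarrow(f)$ in exactly the same way. Finally, I would close the diagram by invoking \pref{prop26}(a), which gives
\[
	\dim N(D_1G(\hat\phi^\ast,\alpha^\ast))=\dim N(D_1G(\hat\phi^\ast,\alpha^\ast)')<\infty,
\]
so that (b) and (e) agree and the two triples are linked.

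For the addendum on the functionals $z_j'$, I would argue as follows. Given a basis $\hat\eta^1,\ldots,\hat\eta^n$ of $N(D_1G(\hat\phi^\ast,\alpha^\ast)')$, every element $\hat\eta^\ast$ of this null space has the form $\hat\eta^\ast=\sum_{j=1}^n c_j\hat\eta^j$ for some $c_j\in\R$. By bilinearity of $\iprod{\cdot,\cdot}_\theta$, the condition $\iprod{\hat\eta^\ast,\hat v}_\theta=0$ for all such $\hat\eta^\ast$ is equivalent to $\iprod{\hat\eta^j,\hat v}_\theta=z_j'(\hat v)=0$ for each $1\leq j\leq n$. Combining this with the range characterization provided by \pref{prop26}(b) yields
\[
	R(D_1G(\hat\phi^\ast,\alpha^\ast))=\bigcap_{j=1}^n N(z_j'),
\]
as claimed. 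The functionals $z_j'$ are manifestly linear; if the duality pairing $\iprod{\cdot,\cdot}$ is bounded, so is $\iprod{\cdot,\cdot}_\theta$, hence they are also continuous, though continuity is not required by the statement.

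No real obstacle is expected here; the only point that needs mild care is to confirm that the dimension statement in \pref{proplin} (and its dual) actually licenses the counting step that converts ``exactly $n$ linearly independent $\theta$-periodic solutions'' into ``$\dim N(\cdot)=n$''. This is immediate from the one-to-one correspondence between $\theta$-periodic solutions $\xi^\ast$ and tuples $\hat\xi^\ast$ recorded in \pref{proplin}, and analogously in \pref{proplindual}.
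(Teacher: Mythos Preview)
Your proof is correct and follows essentially the same approach as the paper: both arguments stitch together \pref{proplin}, \pref{proplindual}, and \pref{prop26} to establish the six equivalences, and both derive the range characterization from \pref{prop26}(b) together with bilinearity of $\iprod{\cdot,\cdot}_\theta$. The only cosmetic difference is that the paper separates $(a)\Leftrightarrow(c)$ (via the isomorphism $\xi\mapsto\Phi_{\alpha^\ast}(\cdot,0)\xi$) from $(c)\Leftrightarrow(b)$, whereas you invoke \pref{proplin} once for the triple $(a)\Leftrightarrow(b)\Leftrightarrow(c)$; this is the same content.
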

\begin{proof}
	We proceed in several steps: 
	
	$(a)\Leftrightarrow(c)$ $\xi\mapsto\Phi_{\alpha^\ast}(\cdot,0)\xi$ is isomorphism from $X$ to the solution space of $(V_{\alpha^\ast})$, thus mapping a basis of $N(\Xi_\theta(\alpha^\ast)-I_X)$ to linearly independent $\theta$-periodic solutions.

	$(c)\Leftrightarrow(b)$ is due to \pref{proplin}. 

	$(b)\Leftrightarrow(e)$ is established in \pref{prop26}. 

	$(e)\Leftrightarrow(f)$ is shown in \pref{proplindual}.

	$(f)\Leftrightarrow(d)$ $\eta^\ast\mapsto\Phi_{\alpha^\ast}'(\theta,\cdot)\eta^\ast$ is an isomorphism between $Y$ and the solution space of $(V_{\alpha^\ast}')$ transferring linearly independent elements of $N(\Xi_\theta'(\alpha^\ast)-I_Y)$ to linearly independent $\theta$-periodic solutions of the dual variational equation $(V_{\alpha^\ast})$. 

	Finally, the Fredholm theory from \cite[pp.~52--58]{kress:14} yields the following equivalences
	\begin{eqnarray*}
		\hat v\in R(T)
		& \Leftrightarrow &
		\hat v\in N(T')^\perp
		\Leftrightarrow
		\iprod{\hat\eta^\ast,\hat v}=0\fall\eta^\ast\in N(T')\\
		& \stackrel{\eqref{notransvar}}{\Leftrightarrow} &
		\iprod{\hat\eta^j,\hat v}=0\fall 1\leq j\leq n\\
		& \Leftrightarrow &
		\sum_{t=0}^{\theta-1}
		\bigl\langle \eta_t^j, v_t\bigr\rangle=0\fall 1\leq j\leq n\\
		& \Leftrightarrow &
		z_j'(\hat v)=0\fall 1\leq j\leq n
		\Leftrightarrow
		\hat v\in\bigcap_{j=1}^nN(z_j')
	\end{eqnarray*}
	with $T:=D_1G(\hat\phi^\ast,\alpha^\ast)$. This leads to our assertion.
\end{proof}
\section{Periodic integrodifference equations}
\label{sec3}
This section applies the above preparations to periodic difference equations \eqref{deq}, whose right-hand side $\sF_t$ is a nonlinear integral operator. We specify concrete mappings $\sF_t$ including both dispersal-growth as well as growth-dispersal (Hammerstein) equations, and formulate standing assumptions guaranteeing sufficient smoothness and complete continuity of $\sF_t$. As an application, a persistence result for periodic solutions of IDEs is given. Furthermore, we determine the dual operator of the linearization of $\sF_t$.

Throughout, we suppose $\Omega$ is a compact metric space such that additionally, $(\Omega,\fA,\mu)$ is a measure space fulfilling $\mu(\Omega)<\infty$, so that the $\sigma$-algebra $\fA$ contains the Borel sets generated by the metric on $\Omega$, and so that $\mu(\Omega')>0$ holds for all nonempty open sets $\Omega'\subseteq\Omega$. In this section, the parameter space $A$ is assumed to be an open subset of a Banach space $P$. It is handy to abbreviate (when $U\subseteq\R^d$)
\begin{align*}
	C(\Omega,U)&:=\set{u:\Omega\to U\,|\,u\text{ is continuous}},&
	C_d&:=C(\Omega,\R^d),
\end{align*}
and we choose $X=C_d$ with the norm $\norm{u}:=\max_{x\in\Omega}\abs{u(x)}$ in what follows. 

The next result motivates our assumption of having no open sets of measure $0$. 
\begin{lemma}\label{lem31}
	The bilinear form 
	\begin{equation}
		\iprod{u,v}:=\int_\Omega\sprod{u(y),v(y)}\d\mu(y)\fall u,v\in C_d
		\label{dpair}
	\end{equation}
	yields a bounded duality pairing $\iprod{C_d,C_d}$. 
\end{lemma}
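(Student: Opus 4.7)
The proof has three components matching the definition of a bounded duality pairing: bilinearity, boundedness, and the two non-degeneracy conditions.

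Bilinearity of $\iprod{\cdot,\cdot}$ is inherited from the linearity of the integral and the bilinearity of the inner product $\sprod{\cdot,\cdot}$ on $\R^d$, so I would merely remark on this. For boundedness I would estimate pointwise via Cauchy--Schwarz in $\R^d$ to obtain
\[
	\abs{\iprod{u,v}}
	\leq\int_\Omega\abs{u(y)}\abs{v(y)}\d\mu(y)
	\leq\mu(\Omega)\norm{u}\norm{v}\fall u,v\in C_d,
\]
which gives the constant $C:=\mu(\Omega)$; note this is where the hypothesis $\mu(\Omega)<\infty$ is used.

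Since the form \eqref{dpair} is symmetric under interchange of its two $C_d$-arguments, the two non-degeneracy conditions coincide, so it suffices to exhibit, for any $u\in C_d\setminus\set{0}$, some $v\in C_d$ with $\iprod{u,v}\neq 0$. The plan is to pick $x_0\in\Omega$ with $u(x_0)\neq 0$ and use continuity: there exists an open neighborhood $\Omega'\subseteq\Omega$ of $x_0$ on which $\sprod{u(y),u(x_0)}>\tfrac{1}{2}\abs{u(x_0)}^2>0$. Since $\Omega$ is a metric space, a continuous cutoff $\chi:\Omega\to[0,1]$ with $\chi(x_0)=1$ and $\chi\equiv 0$ outside $\Omega'$ can be built from the distance function, e.g.\ $\chi(y):=\max\set{0,1-r^{-1}\dist(y,x_0)}$ for a sufficiently small $r>0$ with $B_r(x_0)\subseteq\Omega'$. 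Setting $v(y):=\chi(y)u(x_0)\in C_d$ yields
\[
	\iprod{u,v}
	=\int_\Omega\chi(y)\sprod{u(y),u(x_0)}\d\mu(y)
	\geq\tfrac{1}{2}\abs{u(x_0)}^2\int_{\Omega'}\chi(y)\d\mu(y).
\]

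The main obstacle is then verifying that this last integral is strictly positive. This is precisely where the standing assumption that $\mu(\Omega')>0$ for every nonempty open $\Omega'\subseteq\Omega$ enters: the set $\set{y\in\Omega:\chi(y)>1/2}$ is open, nonempty (it contains $x_0$), and has positive measure, forcing $\int_{\Omega'}\chi\d\mu>0$. Combining this with the preceding estimate gives $\iprod{u,v}>0$, completing the argument. Once the non-degeneracy is established in one argument, the symmetry of \eqref{dpair} delivers the other, so $\iprod{C_d,C_d}$ is indeed a bounded duality pairing.
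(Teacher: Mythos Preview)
Your proof is correct. The boundedness argument matches the paper's exactly. For non-degeneracy, the paper takes the more direct route of simply choosing $v:=u$: continuity of $u$ gives an open neighborhood $\Omega'$ of $x_0$ on which $\abs{u(y)}\geq\tfrac{1}{2}\abs{u(x_0)}$, and then
\[
	\iprod{u,u}\geq\int_{\Omega'}\abs{u(y)}^2\d\mu(y)\geq\tfrac{1}{4}\abs{u(x_0)}^2\mu(\Omega')>0,
\]
again using the standing assumption that nonempty open sets have positive measure. Your cutoff construction works fine and yields the same conclusion, but it is unnecessary here; since the integrand $\sprod{u(y),u(y)}=\abs{u(y)}^2$ is already nonnegative on all of $\Omega$, there is no need to localize via $\chi$. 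Both arguments exploit exactly the same hypotheses (finiteness of $\mu(\Omega)$ for boundedness, positivity on open sets for non-degeneracy), so the difference is one of economy rather than strategy.
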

\begin{proof}
	The proof extends \cite[p.~46, Thm.~4.4]{kress:14} to our more general setting. Begin by noting that for each $u\in C_d\setminus\set{0}$, there exists an $x_0\in\Omega$ with $u(x_0)\neq 0$ and an open neighborhood $\Omega'\subseteq\Omega$ of $x_0$ so that $|u(x)|\geq\tfrac{|u(x_0)|}{2}>0$ for all $x\in\Omega'$. Now
	\begin{align*}
		\iprod{u,u}=\int_\Omega\sprod{u(y),u(y)}\d\mu(y) 
		\geq \sum_{t=0}^{\theta-1}\int_{\Omega'} u_t(y)^2\d\mu(y) \geq \frac{|u(x_0)|^2}{4}\mu(\Omega') > 0
	\end{align*}
	holds via the assumption $\mu(\Omega')>0$. Due to the Cauchy-Schwarz inequality in $\R^d$, we have
$$
	\abs{\iprod{u,v}}
	\leq
	\int_\Omega\abs{\sprod{u(y),v(y)}}\d\mu(y)
	\leq
	\int_\Omega\abs{u(y)}\abs{v(y)}\d\mu(y)
	\leq
	\mu(\Omega)\norm{u}\norm{v}
$$
for all $u,v\in C_d$, and therefore $\iprod{\cdot,\cdot}$ is also bounded. 
\end{proof}

The right-hand side of \eqref{deq} is assumed to be of the form
\begin{equation}
	\sF_t(u,\alpha)(x)
	:=
	G_t\intoo{x,\int_\Omega f_t(x,y,u(y),\alpha)\d\mu(y),\alpha}
	\fall x\in\Omega.
	\label{ury}
\end{equation}
In order to deal with periodic IDEs \eqref{deq}, we assume there exists a \emph{basic period} $\theta_0\in\N$ such that $f_t=f_{t+\theta_0}$ and $G_t=G_{t+\theta_0}$, $t\in\Z$; then $\sF_t=\sF_{t+\theta_0}$ holds for all $t\in\Z$. Given a differentiability order $m\in\N$, the following \textbf{standing assumptions} are supposed for every $0\leq t<\theta_0$: 
\begin{itemize}
	\item[$(H_1)$] $f_t:\Omega\tm\Omega\tm U_t^1\tm A\to\R^p$ is continuous with an open, nonempty and convex $U_t^1\subseteq\R^d$ and the derivatives $D_{(3,4)}^j f_t:\Omega\tm\Omega\tm U_t^1\tm A\to L_j(\R^d\tm P,\R^p)$ for $1\leq j\leq m$ exist as continuous functions.

	\item[$(H_2)$] $G_t:\Omega\tm U_t^2\tm A\to\R^d$ is continuous with an open, nonempty and convex $U_t^2\subseteq\R^p$ and the derivatives $D_{(2,3)}^j G_t:\Omega\tm U_t^2\tm A\to L_j(\R^p\tm P,\R^d)$ for $1\leq j\leq m$ exist as continuous functions. 
\end{itemize}

As a result, the \emph{Urysohn operator}
\begin{align*}
	\sU_t:C(\Omega,U_t^1)\tm A&\to C_p,&
	\sU_t(u,\alpha)&:=\int_\Omega f_t(\cdot,y,u(y),\alpha)\d\mu(y)
\end{align*}
is of class $C^m$ and referring to \cite{poetzsche:18a}\footnote{This reference assumes a globally defined operator $\sF_t$, i.e.\ $U_t=C_d$. Yet, the reader can verify that the corresponding proofs merely require the domains $U_t^1,U_t^2$ to be convex (as assumed above).}, this guarantees that the right-hand side \eqref{ury} of \eqref{deq} defined on an ambient subset
$$
	U_t\subseteq\set{u\in C(\Omega,U_t^1):\,\int_\Omega f_t(x,y,u(y),\alpha)\d\mu(y)\in U_t^2\text{ for all }x\in\Omega}
$$
fulfills for every $t\in\Z$: 
\begin{itemize}
	\item[$(P_1)$] $\sF_t\in C^m(U_t\tm A,C_d)$ (see \cite[Proceed as in the proof of Prop.~2.7]{poetzsche:18a}), 

	\item[$(P_2)$] $D_1\sF_t(u,\alpha)\in L(C_d)$ is compact for all $(u,\alpha)\in U_t\times A$ .
\end{itemize}

Working with a rather general measure $\mu$ in \eqref{ury} allows us to capture both classical IDEs, as well as their spatial discretizations in a unified framework: 
\begin{example}[measures]\label{exmeasure}
	(1) In the applications \cite{andersen:91,kot:schaeffer:86,lutscher:petrovskii:08,reimer:bonsall:maini:16,slatkin:73,kirk:lewis:97}, $\mu$ is simply the $\kappa$-dimensional Lebesgue measure yielding the Lebesgue integral in \eqref{ury} and therefore the IDE
	$$
		u_{t+1}(x)
		=
		G_t\intoo{x,\int_\Omega f_t(x,y,u_t(y),\alpha)\d y,\alpha}\fall x\in\Omega.
	$$

	(2) Suppose that the compact set $\Omega\subset\R^\kappa$ is countable, $\eta\in\Omega$ and $w_\eta$ denote non\-negative reals. Then $\mu(\Omega'):=\sum_{\eta\in\Omega'}w_\eta$ defines a measure on the family of countable subsets $\Omega'\subset\R^\kappa$. The assumption $\sum_{\eta\in\Omega}w_\eta<\infty$ guarantees that $\mu(\Omega)$ is finite. The resulting $\mu$-integral $\int_\Omega u\d\mu=\sum_{\eta\in\Omega}w_\eta u(\eta)$ leads to difference equations
	\begin{equation}
		u_{t+1}(x)
		=
		G_t\intoo{x,\sum_{\eta\in\Omega}w_\eta f_t(x,\eta,u_t(\eta),\alpha),\alpha}\fall x\in\Omega,
		\label{ury2}
	\end{equation}
	which cover \emph{Nystr\"om methods} with \emph{nodes} $\eta$ and \emph{weights} $w_\eta$ as they appear in numerical discretizations \cite{atkinson:92}, \cite[pp.~224ff]{kress:14}. Alternatively, this captures models for populations spread between finitely many different patches (\emph{metapopulation models}, see \cite[Example~1]{kot:schaeffer:86}). For singletons $\Omega$, \eqref{ury2} turns into a system of difference equations in $\R^d$ as studied in \cite{poetzsche:12}. 
\end{example}

Now fix a parameter $\alpha^\ast\in A$ and an associate $\theta_1$-periodic solution $\phi^\ast$ of $(\Delta_{\alpha^\ast})$. From \eqref{ury}, one obtains the partial derivative
\begin{align}
	&[D_1\sF_t(\phi_t^\ast,\alpha^\ast)v](x)
	\label{derf10}\\
	&=D_2G_t\intoo{x,\int_\Omega f_t(x,y,\phi_t^\ast(y),\alpha^\ast)\d\mu(y),\alpha^\ast}
	\int_\Omega D_3f_t(x,y,\phi_t^\ast(y),\alpha^\ast)v(y)\d\mu(y) \notag
\end{align}
for all $t\in\Z$, $x\in\Omega$ and $v\in C_d$. Apparently, \eqref{derf10} is the product of a multiplication operator with a Fredholm integral operator and therefore compact. 

\begin{theorem}[persistence of periodic solutions]\label{poinc}
	Let $\alpha^\ast\in A$, $\theta_1\in\N$ and define $\theta:=\lcm(\theta_0,\theta_1)$. If $\phi^\ast$ is an $\theta_1$-periodic solution of $(\Delta_{\alpha^\ast})$ satisfying the \emph{weak hyperbolicity condition}
	\begin{equation}
		1\not\in\sigma_\theta(\alpha^\ast),
		\label{weakhyp}
	\end{equation}
	then there exist $\rho,\eps>0$ and a $C^m$-function $\phi:B_\rho(\alpha^\ast)\to B_\eps(\phi^\ast)\subseteq\ell_\theta(C_d)$ such that the following statements hold for all $\alpha\in B_\rho(\alpha^\ast)$: 
	\begin{itemize}
		\item[(a)] $\phi(\alpha)$ is the unique $\theta$-periodic solution of \eqref{deq} in $\cB_\eps(\phi^\ast)$ and $\phi(\alpha^\ast)=\phi^\ast$,

		\item[(b)] $D\phi(\alpha^\ast)=(\ldots,\underline{\psi_0},\ldots,\psi_{\theta-1},\ldots)$ with $\psi_0,\ldots,\psi_{\theta-1}\in L(P,C_d)$ uniquely given by the cyclic system of Fredholm integral equations of the second kind
		\begin{equation}
			\begin{cases}
				\psi_0=D_1\sF_{\theta-1}(\phi_{\theta-1}^\ast,\alpha^\ast)\psi_{\theta-1}+D_2\sF_{\theta-1}(\phi_{\theta-1}^\ast,\alpha^\ast),\\
				\psi_1=D_1\sF_0(\phi_0^\ast,\alpha^\ast)\psi_0+D_2\sF_0(\phi_0^\ast,\alpha^\ast),\\
				\quad\vdots\\
				\psi_{\theta-1}=D_1\sF_{\theta-2}(\phi_{\theta-2}^\ast,\alpha^\ast)\psi_{\theta-2}+D_2\sF_{\theta-2}(\phi_{\theta-2}^\ast,\alpha^\ast),\\
			\end{cases}
			\label{poinc2}
		\end{equation}

		\item[(c)] in case the solution $\phi^\ast$ is even \emph{hyperbolic}, i.e.\ $\sigma_\theta(\alpha^\ast)\cap\S^1=\emptyset$, then also $\phi(\alpha)$ is hyperbolic with the same Morse index as $\phi^\ast$, 
	\end{itemize}
	where the occurring derivatives are given by \eqref{derf10} and
	\begin{align}
		&[D_2\sF_t(\phi_t^\ast,\alpha^\ast)p](x)
		=D_2G_t\intoo{x,\int_\Omega f_t(x,y,\phi_t^\ast(y),\alpha^\ast)\d\mu(y),\alpha^\ast}\label{derf01}\\
		&
		\quad
		\int_\Omega D_4f_t(x,y,\phi_t^\ast(y),\alpha^\ast)p\d\mu(y) +
		D_3G_t\intoo{x,\int_\Omega f_t(x,y,\phi_t^\ast(y),\alpha^\ast)\d\mu(y),\alpha^\ast}p\notag
	\end{align}
	for all $t\in\Z$, $x\in\Omega$ and $p\in P$. 
\end{theorem}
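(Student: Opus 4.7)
The plan is to invoke the Implicit Function Theorem on the cyclic map $G$ from \eqref{Gdef}, treating $\theta$-periodic solutions as zeros of $G(\cdot,\alpha)$ via \tref{thmmain}. Since $\phi^\ast$ is $\theta_1$-periodic and $\theta_1\mid\theta$, it is also $\theta$-periodic, so $G(\hat\phi^\ast,\alpha^\ast)=0$. The standing assumptions $(H_1)$--$(H_2)$ together with $(P_1)$ yield $\sF_t\in C^m(U_t\tm A,C_d)$ for each $t$, and \pref{propG}(a) then gives $G\in C^m(\hat U\tm A,C_d^\theta)$ with partial derivatives of the block-cyclic form \eqref{propG1}, \eqref{propG2}.

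Next I would verify that $D_1G(\hat\phi^\ast,\alpha^\ast)\in L(C_d^\theta)$ is a bijection. By \eqref{derf10} and $(P_2)$, each factor $D_1\sF_t(\phi_t^\ast,\alpha^\ast)$ is compact (a multiplication operator composed with a Fredholm integral operator), so \pref{propG}(b) forces $D_1G(\hat\phi^\ast,\alpha^\ast)$ to be Fredholm of index $0$. The weak hyperbolicity condition \eqref{weakhyp} says $\Xi_\theta(\alpha^\ast)-I_{C_d}$ is bijective, in particular $\dim N(\Xi_\theta(\alpha^\ast)-I_{C_d})=0$, and \pref{proplin} transports this zero-nullity to $D_1G(\hat\phi^\ast,\alpha^\ast)$. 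Combined with the index $0$ property, injectivity upgrades to bijectivity.

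The Implicit Function Theorem (see e.g.\ \cite[p.~150, Thm.~4.B]{zeidler:95}) now produces $\rho,\eps>0$ and a $C^m$-branch $\phi:B_\rho(\alpha^\ast)\to B_\eps(\hat\phi^\ast)\subseteq C_d^\theta\cong\ell_\theta(C_d)$ with $\phi(\alpha^\ast)=\hat\phi^\ast$, $G(\hat\phi(\alpha),\alpha)\equiv 0$ on $B_\rho(\alpha^\ast)$, and the local uniqueness claimed in (a); since $\norm{\cdot}_{\ell_\theta}=\max_t\norm{\cdot}_{C_d}$, the $\ell_\theta$-ball coincides with the tube $\cB_\eps(\phi^\ast)$ used in the statement. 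For (b), implicit differentiation of $G(\hat\phi(\alpha),\alpha)\equiv 0$ at $\alpha=\alpha^\ast$ gives
$$
D_1G(\hat\phi^\ast,\alpha^\ast)D\hat\phi(\alpha^\ast)=-D_2G(\hat\phi^\ast,\alpha^\ast),
$$
and reading off the components via \eqref{propG1}, \eqref{propG2} yields exactly the cyclic Fredholm system \eqref{poinc2} for the operators $\psi_0,\ldots,\psi_{\theta-1}\in L(P,C_d)$ making up $D\hat\phi(\alpha^\ast)$; uniqueness follows from the bijectivity established above, while the formula \eqref{derf01} is obtained by applying the chain rule to \eqref{ury} with respect to the parameter $\alpha$.

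The principal expected obstacle is (c). The strategy is to combine joint continuity of $\alpha\mapsto\Xi_\theta(\alpha)$ in operator norm --- which follows because each factor $D_1\sF_t(\phi(\alpha)_t,\alpha)$ depends continuously on $\alpha$ through the $C^m$-branch $\phi$ and the representation \eqref{derf10} --- with spectral stability of compact operators. Since $\sigma_\theta(\alpha^\ast)$ is a compact subset of $\C$ disjoint from $\S^1$, upper semicontinuity of the spectrum (cf.~\cite[Chap.~IV]{kato:80}) lets one shrink $\rho$ so that $\sigma_\theta(\alpha)\cap\S^1=\emptyset$ for all $\alpha\in B_\rho(\alpha^\ast)$, giving hyperbolicity of $\phi(\alpha)$. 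Moreover, the Riesz spectral projector associated to the part of $\sigma_\theta(\alpha)$ lying in $\C\setminus\bar B_1(0)$ varies continuously in $\alpha$, so its range has locally constant (finite) dimension, yielding $m_\ast(\phi(\alpha))=m_\ast(\phi^\ast)$ on a (possibly smaller) neighborhood of $\alpha^\ast$.
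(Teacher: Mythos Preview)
Your proposal is correct and follows essentially the same route as the paper: apply the Implicit Function Theorem to $G$, using compactness of the $D_1\sF_t$ to make $D_1G(\hat\phi^\ast,\alpha^\ast)$ Fredholm of index $0$ and the weak hyperbolicity condition to obtain injectivity (you via \pref{proplin}, the paper via \pref{propspec}), then differentiate the solution identity for (b) and invoke spectral continuity of the compact family $\alpha\mapsto\Xi_\theta(\alpha)$ for (c). The only cosmetic difference is that the paper cites \cite[pp.~213--214, Sect.~5]{kato:80} for the persistence of the spectral splitting and constancy of the unstable dimension, whereas you phrase the same fact in terms of Riesz projectors.
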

It is an immediate consequence of statement (c) that the solutions $\phi(\alpha)$ are exponentially stable, provided $\phi^\ast$ is exponentially stable and has Morse index $0$. 
\begin{proof}
	First of all, $\phi^\ast$ is a $\theta$-periodic solution of $(\Delta_{\alpha^\ast})$, and \tref{thmmain} yields $G(\hat\phi^\ast,\alpha^\ast)=0$. Moreover, due to $(P_1)$ and \pref{propG}(a), the mapping $G:\hat U\tm A\to C_d^\theta$ is of class $C^m$. Thanks to $(P_2)$, we obtain that every $D_1\sF_t(\phi_t^\ast,\alpha^\ast)$ is compact. Hence, \pref{propspec} applies, and so hyperbolicity \eqref{weakhyp} implies that the partial derivative $D_1G(\hat\phi^\ast,\alpha^\ast)\in L(C_d^\theta)$ is invertible. Now the implicit function theorem (e.g.\ \cite[pp.~250--251, Thm.~4.E]{zeidler:95}) guarantees the existence of neighborhoods $B_\rho(\alpha^\ast)\subseteq A$, $B_\eps(\hat\phi^\ast)\subseteq\hat U$ and of a $C^m$-function $\hat\phi:B_\rho(\alpha^\ast)\to B_\eps(\hat\phi^\ast)$ such that
	\begin{equation}
		G(\hat\phi(\alpha),\alpha)\equiv 0\on B_\rho(\alpha^\ast). 
		\label{solid}
	\end{equation}

	(a) This results by \tref{thmmain} with $\phi(\alpha):=(\ldots,\underline{\hat\phi(\alpha)_0},\ldots,\hat\phi(\alpha)_{\theta-1},\ldots)\in\ell_\theta(C_d)$, therefore $\phi:B_\rho(\alpha^\ast)\to\ell_\theta(C_d)$ is also of class $C^m$ and the claimed uniqueness is due to the implicit function theorem. 

	(b) Taking the derivative in \eqref{solid} gives $D_1G(\hat\phi^\ast,\alpha^\ast)D\hat\phi(\alpha^\ast)+D_2G(\hat\phi^\ast,\alpha^\ast)=0$ with derivatives given in \eqref{propG1}, \eqref{propG2}. This yields the cyclic system \eqref{poinc2}, which can be uniquely solved because of the hyperbolicity assumption \eqref{weakhyp} and \pref{propspec}. 

	(c) The hyperbolicity of $\phi^\ast$ implies that there exist disjoint sets $\sigma_s,\sigma_u$ with 
	\begin{align*}
		\sigma_\theta(\alpha^\ast)&=\sigma_u\dot\cup\sigma_s,&
		\sigma_u&\subseteq B_1(0),&
		\sigma_s&\subseteq\C\setminus\bar B_1(0)
	\end{align*}
	and since all $\Xi_\theta(\alpha)$ are compact, $\sigma_u$ consists of finitely many eigenvalues (having finite multi\-plicity). By assumption, $\alpha\mapsto D_1\sF_t(\phi(\alpha)_t,\alpha)$ is continuous on $B_\rho(\alpha^\ast)$; thus, $\Xi_\theta: B_\rho(\alpha^\ast)\to L(C_d)$ is continuous. Hence, \cite[pp.~213--214, Sect.~5]{kato:80} shows that the above spectral splitting persists in a neighborhood of $\alpha^\ast$, while the dimension of the unstable subspace of $\Xi_\theta(\alpha)$ remains constant. This implies the claim. 
\end{proof}

If an IDE \eqref{deq} depends on a real parameter $\alpha$, then the effect of parameter changes to the total population can be determined using the following tool.
\begin{remark}[average population vector]
	Suppose that $P=\R$. The value of the function
	\begin{align*}
		M_\theta:B_\rho(\alpha^\ast)&\to\R^d,&
		M_\theta(\alpha)&:=\frac{1}{\theta}\sum_{t=0}^{\theta-1}\int_\Omega\phi(\alpha)_t(y)\d\mu(y)
	\end{align*}
	is understood as average population vector over one period length $\theta$. If the functions $\psi_1,\ldots,\psi_\theta\in C_d$ are given by \eqref{poinc2}, then $\dot M_\theta(\alpha^\ast)=\frac{1}{\theta}\sum_{t=0}^{\theta-1}\int_\Omega\psi_t(y)\d\mu(y)$ allows one to determine whether changes in $\alpha$ near $\alpha^\ast$ lead to an increase ($\dot M_\theta(\alpha^\ast)_i>0$) or a decrease ($\dot M_\theta(\alpha^\ast)_i<0$) in the $i$-th average population, $1\leq i\leq d$. 
\end{remark}

\begin{figure}
	\includegraphics[width=60mm]{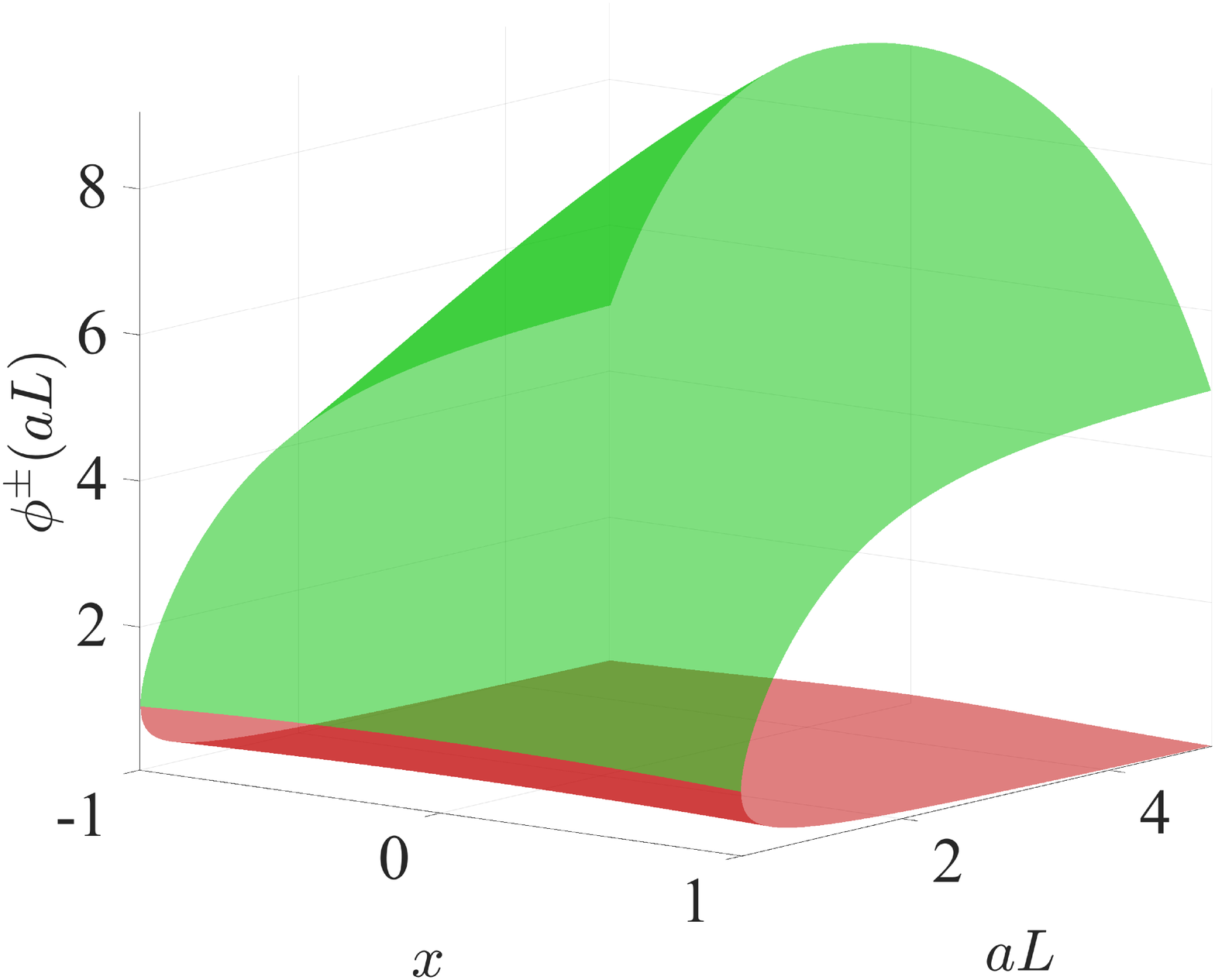}
	\includegraphics[width=60mm]{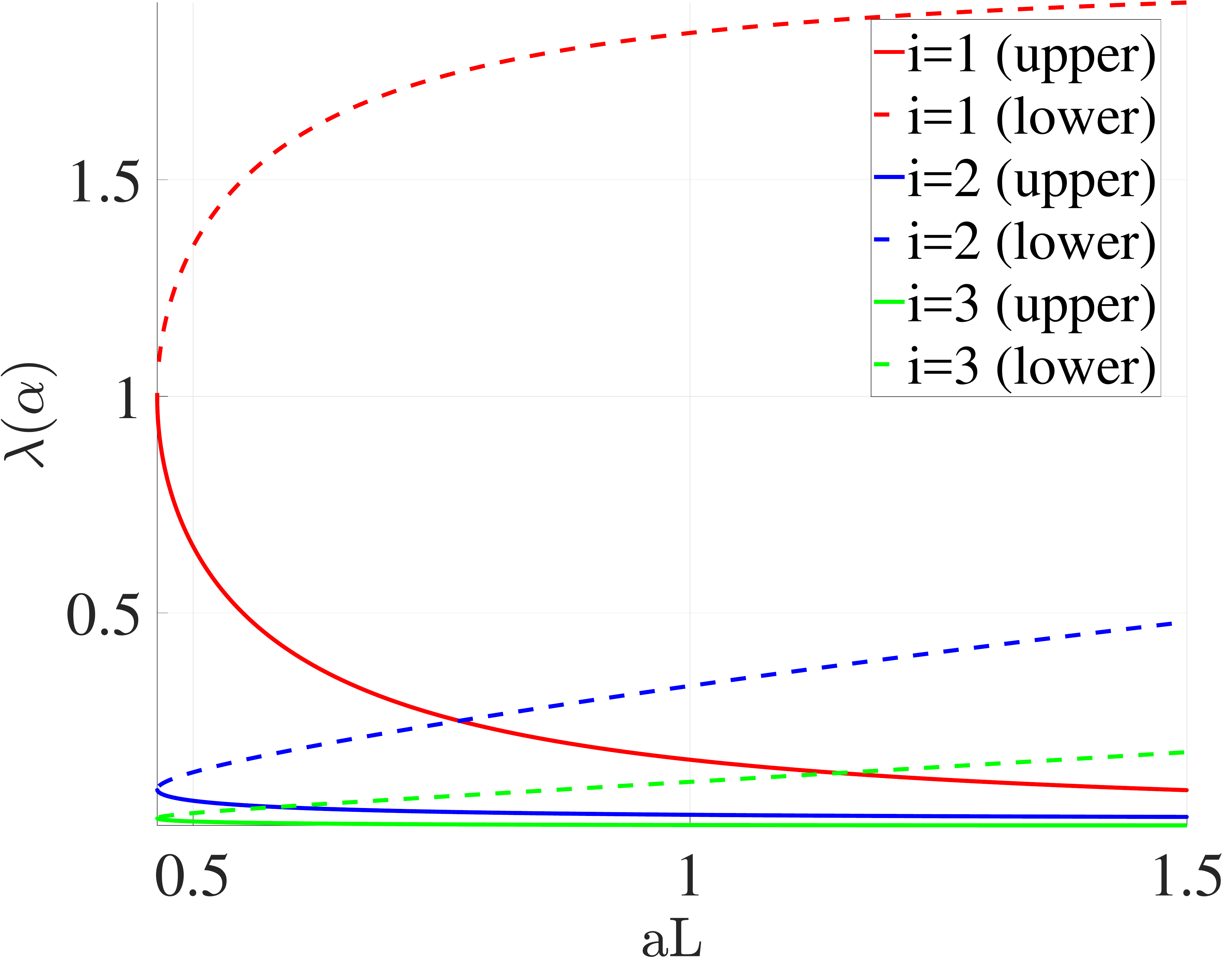}
	\caption{Equilibrium branches $\phi^\pm(aL)$ as functions of the dispersal parameter $\alpha=aL$ (left). Four largest eigenvalues $\lambda^\pm(aL)$ along these two branches of nontrivial solutions to \eqref{noall} (right)}
	\label{figallee}
\end{figure}
The next example relates the dynamics of a scalar difference equation (in $\R$) to its IDE counterpart (in the function space $C_1$). We demonstrate how the size of the habitat can be considered as a parameter (although $\Omega$ is constant in \eqref{ury}) and in which way it affects the stability and bifurcation behavior of our IDE.
\begin{example}[Allee equation]\label{exallee}
	Let $\beta>0$. The scalar \emph{Allee equation} 
	\begin{equation}
		u_{t+1}=\frac{\beta u_t^2}{1+u_t^2}
		\label{allee1}
	\end{equation}
	(cf.~\cite[pp.~54--55]{brauer:castillo:01}) has the trivial solution, which is exponentially stable for every parameter $\beta>0$. However, at $\beta=2$ there is a change in the behavior of \eqref{allee1}:
	\begin{itemize}
		\item For $\beta<2$ it has only the trivial equilibrium.

		\item For $\beta=2$ the nontrivial fixed point $1$ appears. 

		\item For $\beta>2$ there exist two equilibria $\phi^\pm(\beta):=\tfrac{1}{2}(\beta\pm\sqrt{\beta^2-4})\neq 0$, where the lower $\phi^-(\beta)$ is unstable, while the upper one $\phi^+(\beta)$ is exponentially stable. 
	\end{itemize}
	Summarizing, a supercritical fold bifurcation in \eqref{allee1} appears for $\beta=2$. Let us now restrict to the hyperbolic case $\beta=10$, where $\phi^\pm(10)=5\pm 2\sqrt{6}$. We aim to determine the way in which the behavior of \eqref{allee1} changes under the additional effect of dispersal \cite{kirk:lewis:97}. For simplicity, assume an interval $[-\tfrac{L}{2},\tfrac{L}{2}]$ of length $L>0$ as habitat $\Omega$ and consider the corresponding autonomous growth-dispersal IDE
	$$
		u_{t+1}=10\int_{-\tfrac{L}{2}}^{\tfrac{L}{2}}k(\cdot,y)\frac{u_t(y)^2}{1+u_t(y)^2}\d y
	$$
	on $C[-\tfrac{L}{2},\tfrac{L}{2}]$. We introduce the toplinear isomorphism $T_L\in L\bigl(C[-\tfrac{L}{2},\tfrac{L}{2}],C[-1,1]\bigr)$, $(T_Lu)(\xi):=u\bigl(\tfrac{L}{2}\xi\bigr)$ for all $\xi\in[-1,1]$. The change of variables formula implies
	$$
		(T_Lu_{t+1})(\tilde x)
		=
		5L\int_{-1}^{1}k\bigl(\tfrac{L}{2}\tilde x,\tfrac{L}{2}\tilde y\bigr)
		\frac{u_t\bigl(\tfrac{L}{2}\tilde y\bigr)^2}{1+u_t\bigl(\tfrac{L}{2}\tilde y\bigr)^2}\d\tilde y
		\fall\tilde x\in[-1,1].
	$$
	Therefore, the sequence $v_t:=T_Lu_t$ in $C[-1,1]$ satisfies the IDE
	$$
		v_{t+1}(x)
		=
		5L\int_{-1}^{1}k\bigl(\tfrac{L}{2}x,\tfrac{L}{2}y\bigr)
		\frac{v_t(y)^2}{1+v_t(y)^2}\d y
		\fall x\in[-1,1]
	$$
	on the constant habitat $[-1,1]$. In order to become more concrete, choose $k$ as the \emph{Laplace kernel} (cf.~\cite{lutscher:petrovskii:08,reimer:bonsall:maini:16,kirk:lewis:97})
	\begin{equation}
		k(x,y):=\tfrac{a}{2}e^{-a\abs{x-y}}\fall x,y\in\R,
		\label{kerlap}
	\end{equation}
	with some dispersal rate $a>0$. The resulting Hammerstein IDE
	\begin{equation}
		v_{t+1}(x)
		=
		5\frac{aL}{2}\int_{-1}^{1}e^{-\tfrac{aL}{2}\abs{x-y}}
		\frac{v_t(y)^2}{1+v_t(y)^2}\d y
		\fall x\in[-1,1]
		\label{noall}
	\end{equation}
	fits in the setting \eqref{ury} and depends only on the product $aL>0$, which we consider as parameter. Our numerical simulations indicate that \eqref{noall} behaves similarly to its scalar predecessor \eqref{allee1} when $aL$ is varied. For parameters $aL>0.464$ there are two fixed point branches $\phi^\pm(aL)\in C_1$ (see \fref{figallee} (right)\footnote{Here and in the following our coloring scheme is based on stability, where green means exponential stability and increasingly darker tones of red indicate corresponding instability with growing Morse index}) merging at $aL\approx 0.464$. The associate eigenvalue branches depicted in \fref{figallee} (left) indicate that the upper branch $\phi^+$ stays exponentially stable, while the lower branch $\phi^-$ stays unstable. Along $\phi^+$ the total population $M_1(aL)$ is increasing for $aL>0.464$, i.e.\ both larger habitats, as well as larger dispersal rates $a$ are beneficial for the population size. 
\end{example}
\begin{lemma}\label{lemdual}
	The dual operator of $D_1\sF_t(u,\alpha)\in L(C_d)$ exists and is given by
	\begin{align}
		&[D_1\sF_t(u,\alpha)'v](x)
		\label{derf10s}\\
		&=\int_\Omega D_3f_t(y,x,u(x),\alpha)^T
		D_2G_t\intoo{y,\int_\Omega f_t(y,\eta,u(\eta),\alpha)\d\mu(\eta),\alpha}^Tv(y)\d\mu(y)
		\notag
	\end{align}
	for all $t\in\Z$, $x\in\Omega$, $u\in U_t$, $\alpha\in A$ and $v\in C_d$. 
\end{lemma}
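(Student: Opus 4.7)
The plan is to start from the defining identity $\iprod{v,D_1\sF_t(u,\alpha)w} = \iprod{D_1\sF_t(u,\alpha)'v,w}$ required by the duality pairing $\iprod{C_d,C_d}$ from \lref{lem31}, expand the left-hand side using the explicit formula \eqref{derf10} for $D_1\sF_t(u,\alpha)$, and then manipulate using Fubini's theorem and the transpose identity $\sprod{y,Mz}=\sprod{M^Ty,z}$ on Euclidean spaces until the right-hand side of \eqref{derf10s} appears naturally as the operator acting on $v$.

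More concretely, I would first abbreviate the multiplication factor $M(x):=D_2G_t(x,\int_\Omega f_t(x,\eta,u(\eta),\alpha)\d\mu(\eta),\alpha)\in\R^{d\tm p}$, so that \eqref{derf10} reads
$$
  [D_1\sF_t(u,\alpha)w](x) = M(x)\int_\Omega D_3f_t(x,y,u(y),\alpha)w(y)\d\mu(y).
$$
Plugging this into the duality pairing \eqref{dpair} and inserting the inner product yields a double integral of $\sprod{v(x),M(x)D_3f_t(x,y,u(y),\alpha)w(y)}$ against $\d\mu(y)\d\mu(x)$. The first step is to swap the order of integration by Fubini/Tonelli; this is legal because every factor involved is continuous on $\Omega$ or $\Omega\tm\Omega$, those spaces are compact, and $\mu$ is finite, so the integrand is bounded and thus $\mu\otimes\mu$-integrable.

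After swapping, I transpose the matrix inside the inner product, writing $\sprod{v(x),M(x)D_3f_t(x,y,u(y),\alpha)w(y)} = \sprod{D_3f_t(x,y,u(y),\alpha)^TM(x)^Tv(x),w(y)}$, then pull the $x$-integration inside the outer $\R^d$ inner product (which is linear). Relabeling $x\leftrightarrow y$ in the resulting expression gives exactly $\iprod{D_1\sF_t(u,\alpha)'v,w}$ with $[D_1\sF_t(u,\alpha)'v](x)$ as in \eqref{derf10s}. Uniqueness of the dual operator under a duality pairing (\cite[p.~46, Thm.~4.6]{kress:14}) then yields the claimed formula.

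The substantial step is verifying that \eqref{derf10s} really does define a bounded operator from $C_d$ into itself, so that the duality identity $\iprod{v,D_1\sF_t(u,\alpha)w}=\iprod{D_1\sF_t(u,\alpha)'v,w}$ characterizes it: continuity in $x$ of the function $x\mapsto\int_\Omega D_3f_t(y,x,u(x),\alpha)^TM(y)^Tv(y)\d\mu(y)$ follows from uniform continuity of $D_3f_t$ and of $u$ on the compact set $\Omega$, via standard dominated convergence estimates; boundedness by $\norm{v}$ is then immediate from $\mu(\Omega)<\infty$ and the sup-bounds on $D_3f_t$ and $M$. Once these routine regularity checks are in place, the calculation above is essentially a bookkeeping exercise and the result follows.
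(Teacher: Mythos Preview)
Your proposal is correct and follows essentially the same route as the paper's proof: abbreviate the multiplication factor $M(x)$, expand $\iprod{v,D_1\sF_t(u,\alpha)w}$ via \eqref{derf10} and \eqref{dpair}, move the transpose across the Euclidean inner product, apply Fubini, and read off the dual operator. The paper's argument is slightly terser (it omits the explicit justification of Fubini and the regularity check that \eqref{derf10s} lands in $C_d$), but the core computation is identical.
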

\begin{proof}
	Let $t\in\Z$. Notation-wise, it is convenient to neglect the dependence on $\alpha$ in $G_t,f_t,\sF_t$ and to write $M_t(x):=D_2G_t\intoo{x,\int_\Omega f_t(x,\eta,u(\eta))\d\mu(\eta)}\in\R^{d\tm p}$. We have
	\begin{eqnarray*}
		\iprod{w,D\sF_t(u)v}
		& = &
		\int_\Omega\sprod{w(x),[D\sF_t(u)v](x)}\d\mu(x)\\
		& \stackrel{\eqref{derf10}}{=} &
		\int_\Omega\sprod{w(x),M_t(x)\int_\Omega D_3f_t(x,y,u(y))v(y)\d\mu(y)}\d\mu(x)\\
		& = &
		\int_\Omega\sprod{M_t(x)^Tw(x),\int_\Omega D_3f_t(x,y,u(y))v(y)\d\mu(y)}\d\mu(x)\\
		& = &
		\int_\Omega\int_\Omega\sprod{M_t(x)^Tw(x),D_3f_t(x,y,u(y))v(y)}\d\mu(y)\d\mu(x)\\
		& = &
		\int_\Omega\int_\Omega\sprod{D_3f_t(x,y,u(y))^TM_t(x)^Tw(x),v(y)}\d\mu(y)\d\mu(x)
	\end{eqnarray*}
	and Fubini's theorem (e.g. \cite[pp.~159--160, Thm.~5.2.2]{cohn:80}) implies
	\begin{eqnarray*}
		\iprod{w,D\sF_t(u)v}
		& = &
		\int_\Omega\int_\Omega\sprod{D_3f_t(y,x,u(y))^TM_t(y)^Tw(y),v(x)}\d\mu(y)\d\mu(x)\\
		& = &
		\int_\Omega\sprod{\int_\Omega D_3f_t(y,x,u(x))^TM_t(y)^Tw(y)\d\mu(y),v(x)}\d\mu(x)\\
		& = &
		\int_\Omega\sprod{[D\sF_t(u)'w](x),v(x)}\d\mu(x)
		=
		\iprod{D\sF_t(u)'w,v}
	\end{eqnarray*}
	for all $v,w\in C_d$, as well as $u\in U_t$. This proves the claim. 
\end{proof}
\section{Bifurcations in periodic integrodifference equations}
\label{sec4}
Although this section contains only two branching criteria for periodic solutions to IDEs \eqref{deq}, our setting is nevertheless sufficiently flexible to cover fold, transcritical, pitchfork and flip bifurcations. The first three of these address a rather typical feature of models from theoretical ecology, namely monotone right-hand sides. Hence, the Krein-Rutman theorem \cite[p.~228, Thm.~19.3]{deimling:85} guarantees that a simple, real, positive eigenvalue (with positive eigenfunction) is dominant. It crosses the critical value $1$, leading to such a primary bifurcation. 

In the remaining text, we retreat to parameter spaces $A$ being open subsets of the real numbers, i.e.\ $P=\R$. Suppose that $\theta$ is a multiple of both the periods $\theta_0$ of an IDE \eqref{deq} and of a fixed reference solution $\phi^\ast$, and that the assumptions $(H_1$--$H_2)$  on the right-hand side hold. The previous \sref{sec3} showed that qualitative changes in the set of $\theta$-periodic solutions to \eqref{deq} require the weak hyperbolicity condition \eqref{weakhyp} to be violated, i.e.\
\begin{equation}
	1\in\sigma_\theta(\alpha^\ast).
	\label{nohyp}
\end{equation}
Therefore, it is crucial to determine parameter values $\alpha^\ast$ giving rise to such changes, and to understand these changes at least locally. Specifying this, a $\theta_1$-periodic solution $\phi^\ast$ to $(\Delta_{\alpha^\ast})$ \emph{bifurcates} at a parameter $\alpha^\ast\in A$, if there exists a parameter sequence $(\alpha_n)_{n\in\N}$ with limit $\alpha^\ast$ in $A$ and distinct sequences $(\phi_n^1)_{n\in\N}$, $(\phi_n^2)_{n\in\N}$ of $\theta$-periodic solutions to $(\Delta_{\alpha_n})$ satisfying
$
	\lim_{n\to\infty}\phi_n^1
	=
	\lim_{n\to\infty}\phi_n^2.
$

Let us stress that this bifurcation notion is purely "analytical" and that stability changes will be addressed separately. We describe such bifurcations where the pair $(\phi^\ast,\alpha^\ast)$ is contained in a smooth branch of $\theta$-periodic solutions. This means: 
\begin{equation}
	\begin{cases}
		\text{There exist $\eps>0$, open intervals $S\subseteq\R$ containing $0$, $A_0\subseteq A$ containing}\\
		\text{$\alpha^\ast$ and a smooth curve $\svector{\gamma}{\alpha}:S\to B_\eps(\phi^\ast)\tm A\subseteq\ell_\theta(C_d)\tm\R$ such that}\\
		\text{$\gamma(0)=\phi^\ast$, $\alpha(0)=\alpha^\ast$ and each $\gamma(s)$ is an $\theta$-periodic solution of the}\\
		\text{IDE $(\Delta_{\alpha(s)})$ for all $s\in S$. The image $\Gamma=\svector{\gamma}{\alpha}(S)$ is called a \emph{branch}.}
	\end{cases}
	\hspace*{-3mm}
	\label{branch}
\end{equation}
For later use we now abbreviate the solution sets
\begin{align*}
	\Gamma^+&:=\begin{bmatrix}\gamma\\ \alpha\end{bmatrix}(S\cap(0,\infty)),&
	\Gamma^-&:=\begin{bmatrix}\gamma\\ \alpha\end{bmatrix}(S\cap(-\infty,0)),
\end{align*}
obtain $\Gamma=\Gamma^+\dot\cup\set{\svector{\phi^\ast}{\alpha^\ast}}\dot\cup\Gamma^-$, and call them \emph{exponentially stable} or \emph{unstable} if all solutions of \eqref{deq} on them possess the respective stability characteristic. 

In order to deduce sufficient criteria for bifurcation, assume that $\alpha^\ast\in A$ is a \emph{critical parameter} in the sense that the following \textbf{bifurcation conditions} hold: 
\begin{itemize}
	\item[$(B_1)$] $\phi^\ast$ is a $\theta_1$-periodic solution to $(\Delta_{\alpha^\ast})$. 
	
	\item[$(B_2)$] $1$ is a simple Floquet multiplier, i.e.\ there exists $\xi^\ast_0\in C_d\setminus\set{0}$ with
	$$
		N(\Xi_\theta(\alpha^\ast)-I_{C_d})=\spann\set{\xi^\ast_0},
	$$
	giving rise to a $\theta$-periodic solution $\xi^\ast=(\xi^\ast_t)_{t\in\Z}\in\ell_\theta(C_d)$ of the variational equation $(V_{\alpha^\ast})$ (cf.~\pref{proplin}). Furthermore, choose an $\eta^\ast_0\in C_d\setminus\set{0}$ so that
	$$
		N(\Xi_\theta(\alpha^\ast)'-I_{C_d})
		=
		R(\Xi_\theta(\alpha^\ast)-I_{C_d})^\perp
		=
		\spann\set{\eta^\ast_0}
	$$
	(cf.~\cite[p.~294, Prop.~6(ii)]{zeidler:95}), which in turn induces an entire $\theta$-periodic solution $\eta^\ast=(\eta^\ast_t)_{t\in\Z}\in\ell_\theta(C_d)$ of the dual variational equation $(V_{\alpha^\ast}')$ (cf.~\pref{proplindual}). 
\end{itemize}

As final preparation for our subsequent bifurcation criteria we note that a combination of \eqref{dpper} and \lref{lem31} yields the duality pairing $\iprod{C_d^\theta,C_d^\theta}_\theta$ with
$$
	\iprod{\hat w,\hat v}_\theta=\sum_{t=0}^{\theta-1}\int_\Omega\sprod{w_t(y),v_t(y)}\d\mu(y)
	\fall\hat v,\hat w\in C_d^\theta
$$
and we state a technical result. 
\begin{lemma}\label{lemgder}
	If $i,j\in\N_0$ with $1\leq i+j\leq m$, $(i,j)\neq(1,0)$, then
	\begin{multline*}
		\iprod{\hat w,D_1^iD_2^j G(\hat\phi^\ast,\alpha^\ast)\hat v^1\cdots \hat v^{i}}_\theta\\
		=
		\sum_{t=0}^{\theta-1}\int_\Omega\sprod{w_{t+1}(x),\bigl[D_1^iD_2^j\sF_t(\phi_t^\ast,\alpha^\ast)v^1_t\cdots v^{i}_t\bigr](x)}\d\mu(x)
	\end{multline*}
	holds for all $\hat v^k,\hat w\in C_d^\theta$, $1\leq k\leq i$.
\end{lemma}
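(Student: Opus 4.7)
The plan is to treat this as an essentially bookkeeping exercise that combines the cyclic formula \eqref{propG2} for the derivatives of $G$ with the definition \eqref{dpper} of the duality pairing $\iprod{\cdot,\cdot}_\theta$, supplemented by the concrete integral pairing from \lref{lem31}. The only nontrivial feature is the cyclic index shift that appears because the $s$-th component of $D_1^iD_2^jG(\hat\phi^\ast,\alpha^\ast)\hat v^1\cdots\hat v^i$ is controlled by $\sF_{s-1}$ evaluated at $\phi^\ast_{s-1}$ and $v^k_{s-1}$ (where indices are taken modulo $\theta$, by the structure of the cyclic right-hand side \eqref{Gdef}).

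First, I would expand the left-hand side using \eqref{dpper}, writing
$$
\iprod{\hat w,D_1^iD_2^jG(\hat\phi^\ast,\alpha^\ast)\hat v^1\cdots\hat v^i}_\theta
=\sum_{s=0}^{\theta-1}\int_\Omega\sprod{w_s(x),[D_1^iD_2^jG(\hat\phi^\ast,\alpha^\ast)\hat v^1\cdots\hat v^i]_s(x)}\d\mu(x).
$$
Then I would substitute the explicit block formula \eqref{propG2}. Since the first block of the vector corresponds to index $\theta-1$ and the remaining blocks shift by one, the $s$-th block equals $D_1^iD_2^j\sF_{s-1}(\phi^\ast_{s-1},\alpha^\ast)v^1_{s-1}\cdots v^i_{s-1}$ with the convention that $\sF_{-1}=\sF_{\theta-1}$ and $v^k_{-1}=v^k_{\theta-1}$ (and similarly for $\phi^\ast$), which is consistent because all sequences involved lie in $\ell_\theta$.

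Next, I would perform the substitution $t=s-1\pmod{\theta}$. As $s$ ranges over $\set{0,\ldots,\theta-1}$, so does $t$, and in every term $w_s$ becomes $w_{t+1}$ (using the $\theta$-periodic extension $w_\theta:=w_0$, which is guaranteed since $\hat w\in C_d^\theta$ is identified with an element of $\ell_\theta(C_d)$). The resulting sum is precisely the claimed right-hand side
$$
\sum_{t=0}^{\theta-1}\int_\Omega\sprod{w_{t+1}(x),[D_1^iD_2^j\sF_t(\phi^\ast_t,\alpha^\ast)v^1_t\cdots v^i_t](x)}\d\mu(x).
$$

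The only potential obstacle is ensuring the index bookkeeping is carried out cleanly, particularly the identification of $C_d^\theta$ with $\ell_\theta(C_d)$ so that $w_\theta=w_0$ is legitimate. Once this is spelled out, the lemma reduces to a direct substitution with no analysis required, and the hypothesis $(i,j)\neq(1,0)$ ensures we are simply quoting \eqref{propG2} rather than \eqref{propG1} (which has additional identity-operator contributions that would spoil the componentwise reading).
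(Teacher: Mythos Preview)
Your proposal is correct and follows essentially the same approach as the paper: expand the pairing via \eqref{dpper}, insert the componentwise formula \eqref{propG2}, and perform a cyclic index shift using $\theta$-periodicity. The paper writes the shift as $t\mapsto t+1$ rather than your $s\mapsto s-1$, but this is cosmetic; your remark on why the hypothesis $(i,j)\neq(1,0)$ is needed is also accurate.
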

\begin{proof}
	From \pref{propG}(a) we obtain
	$$
		\bigl[D_1^iD_2^j G(\hat\phi^\ast,\alpha^\ast)\hat v^1\cdots\hat v^{i}\bigr]_{t+1}
		=
		D_1^iD_2^j\sF_t(\phi_t^\ast,\alpha^\ast)v^1_t\cdots v^{i}_t
		\fall t\in\Z
	$$
	and it immediately results that
	\begin{align*}
		\iprod{\hat w,D_1^iD_2^j G(\hat\phi^\ast,\alpha^\ast)\hat v^1\cdots\hat v^i}_\theta 
		&=
		\sum_{t=0}^{\theta-1}
		\int_\Omega
		\sprod{w_t(x),\bigl[D_1^iD_2^j G(\hat\phi^\ast,\alpha^\ast)\hat v^1\cdots\hat v^{i}\bigr]_t(x)}\!\d\mu(x) \\
		&\hspace{-10mm}= 
		\sum_{t=0}^{\theta-1}\int_\Omega\sprod{w_{t+1}(x),\bigl[D_1^iD_2^j G(\hat\phi^\ast,\alpha^\ast)\hat v^1\cdots\hat v^{i}\bigr]_{t+1}(x)}\d\mu(x) \\
		&\hspace{-10mm}= 
		\sum_{t=0}^{\theta-1}\int_\Omega\sprod{w_{t+1}(x),\bigl[D_1^iD_2^j\sF_t(\phi_t^\ast,\alpha^\ast)v^1_t\cdots v^{i}_t\bigr](x)}\d\mu(x)
	\end{align*}
	due to the $\theta$-periodicity of \eqref{deq}. 
\end{proof}
\subsection{Fold bifurcation}
\label{sec41}
We start with our possibly simplest bifurcation pattern.
\begin{theorem}[fold bifurcation]\label{thmfold}
	Let $m\geq 2$, and suppose $(B_1$--$B_2)$ are satisfied. If
	\begin{align*}
		g_{01}
		:=&
		\sum_{t=0}^{\theta-1}\int_\Omega\sprod{\eta^\ast_{t+1}(x),D_2\sF_t(\phi_t^\ast,\alpha^\ast)(x)}\d\mu(x)
		\neq
		0,
	\end{align*}
	then there exists a branch $\Gamma$ of $\theta$-periodic solutions of the IDE \eqref{deq} as in \eqref{branch}, with $C^m$-functions $\gamma,\alpha$ satisfying $\dot\gamma(0)=\xi^\ast$ and $\dot\alpha(0)=0$. Moreover, every $\theta$-periodic solution of \eqref{deq} in $\cB_\eps(\phi^\ast)\tm A_0$ is captured by $\Gamma$. Under the additional assumption
	\begin{align*}
		g_{20}
		:=&
		\sum_{t=0}^{\theta-1}\int_\Omega\sprod{\eta^\ast_{t+1}(x),[D_1^2\sF_t(\phi_t^\ast,\alpha^\ast)(\xi^\ast_t)^2](x)}\d\mu(x)
		\neq
		0,
	\end{align*}
	the $\theta$-periodic solution $\phi^\ast$ of $(\Delta_{\alpha^\ast})$ bifurcates at $\alpha^\ast$ into the branch $\Gamma$, $\ddot\alpha(0)=-\tfrac{g_{20}}{g_{01}}$, and locally in $\cB_\eps(\phi^\ast)\tm A_0$ the following hold (cf.~\fref{fig3}): 
	\begin{itemize}
		\item[(a)] \emph{Subcritical case}: If $g_{20}/g_{01}>0$, then \eqref{deq} has no $\theta$-periodic solution for $\alpha>\alpha^\ast$ and exactly two distinct $\theta$-periodic solutions for $\alpha<\alpha^\ast$. 
		
		\item[(b)] \emph{Supercritical case}: If $g_{20}/g_{01}<0$, then \eqref{deq} has no $\theta$-periodic solution for $\alpha<\alpha^\ast$ and exactly two distinct $\theta$-periodic solutions for $\alpha>\alpha^\ast$, 
	\end{itemize}
	where the occurring derivatives are given by \eqref{derf01} and
	\begin{align}
		&[D_1^2\sF_t(\phi_t^\ast,\alpha^\ast)v\bar v](x)
		=D_2^2G_t\intoo{x,\int_\Omega f_t(x,y,\phi_t^\ast(y),\alpha^\ast)\d\mu(y),\alpha^\ast}\label{derf20}\\
		&
		\quad\int_\Omega D_3f_t(x,y,\phi_t^\ast(y),\alpha^\ast)v(y)\d\mu(y)\int_\Omega D_3f_t(x,y,\phi_t^\ast(y),\alpha^\ast)\bar v(y)\d\mu(y)
		\notag\\
		&
		+ D_2G_t\intoo{x,\int_\Omega f_t(x,y,\phi_t^\ast(y),\alpha^\ast)\d\mu(y),\alpha^\ast}\int_\Omega D_3^2f_t(x,y,\phi_t^\ast(y),\alpha^\ast)v(y)\bar v(y)\d\mu(y)
		\notag
	\end{align}
	for all $t\in\Z$, $x\in\Omega$ and $v,\bar v\in C_d$. 
\end{theorem}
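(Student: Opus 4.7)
The plan is to reformulate the search for $\theta$-periodic solutions as zeros of the cyclic operator $G$ from \eqref{Gdef} and then invoke a Crandall--Rabinowitz style abstract fold bifurcation theorem from App.~\ref{appA}. By \tref{thmmain}, $\theta$-periodic solutions of \eqref{deq} correspond bijectively to zeros of $G(\cdot,\cdot)$, with $G(\hat\phi^\ast,\alpha^\ast)=0$. The standing assumptions $(P_1),(P_2)$ combined with \pref{propG} guarantee that $G\in C^m$ and $D_1G(\hat\phi^\ast,\alpha^\ast)\in L(C_d^\theta)$ is Fredholm of index zero. Using $(B_2)$, \pref{proplin} supplies $N(D_1G(\hat\phi^\ast,\alpha^\ast))=\spann\{\hat\xi^\ast\}$, and \cref{cordual} identifies the one-dimensional cokernel via the single functional $z'(\hat v):=\iprod{\hat\eta^\ast,\hat v}_\theta$.

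The abstract fold criterion requires the transversality condition $D_2G(\hat\phi^\ast,\alpha^\ast)\notin R(D_1G(\hat\phi^\ast,\alpha^\ast))$, equivalently $z'(D_2G(\hat\phi^\ast,\alpha^\ast))\neq 0$. Applying \lref{lemgder} with $(i,j)=(0,1)$ yields
$$z'(D_2G(\hat\phi^\ast,\alpha^\ast))=\sum_{t=0}^{\theta-1}\int_\Omega\sprod{\eta^\ast_{t+1}(x),D_2\sF_t(\phi^\ast_t,\alpha^\ast)(x)}\d\mu(x)=g_{01},$$
so the hypothesis $g_{01}\neq 0$ is exactly the abstract transversality. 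The abstract theorem then produces the smooth branch $\Gamma$ as in \eqref{branch}, with $\dot\gamma(0)=\xi^\ast$ (the kernel direction), $\dot\alpha(0)=0$, and local uniqueness of $\theta$-periodic solutions on $\cB_\eps(\phi^\ast)\tm A_0$.

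For the non-degenerate fold, I would differentiate $G(\hat\gamma(s),\alpha(s))\equiv 0$ twice at $s=0$. Using $\dot{\hat\gamma}(0)=\hat\xi^\ast$, $\dot\alpha(0)=0$, together with \eqref{propG1}--\eqref{propG2}, this gives
$$D_1^2G(\hat\phi^\ast,\alpha^\ast)(\hat\xi^\ast)^2+\ddot\alpha(0)D_2G(\hat\phi^\ast,\alpha^\ast)+D_1G(\hat\phi^\ast,\alpha^\ast)\ddot{\hat\gamma}(0)=0.$$
Applying $z'$, which annihilates $R(D_1G(\hat\phi^\ast,\alpha^\ast))$, and invoking \lref{lemgder} with $(i,j)=(2,0)$ on the first term, produces $g_{20}+\ddot\alpha(0)g_{01}=0$, whence $\ddot\alpha(0)=-g_{20}/g_{01}$. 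The sign of $\ddot\alpha(0)$ distinguishes the subcritical case $g_{20}/g_{01}>0$ (so $\ddot\alpha(0)<0$, branch opens toward $\alpha<\alpha^\ast$) from the supercritical case $g_{20}/g_{01}<0$ (so $\ddot\alpha(0)>0$, branch opens toward $\alpha>\alpha^\ast$). Because $\alpha(s)$ has a strict local extremum at $s=0$, the map $s\mapsto\alpha(s)$ is two-to-one on each side, yielding exactly two distinct $\theta$-periodic solutions of \eqref{deq} on the relevant side of $\alpha^\ast$ and none on the other, matching the classical fold picture.

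The main obstacle is not the bifurcation-theoretic reasoning but the bookkeeping: relating the abstract transversality and non-degeneracy conditions to the explicit integral expressions $g_{01},g_{20}$. This translation is handled entirely by \lref{lemgder}, while the concrete expressions \eqref{derf01},\eqref{derf20} follow from direct product- and chain-rule differentiation of the composed Urysohn right-hand side in \eqref{ury}.
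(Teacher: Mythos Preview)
Your proposal is correct and follows essentially the same route as the paper: verify the abstract setup for \tref{asnbif} via \pref{propG}, \pref{proplin}, \cref{cordual}, then use \lref{lemgder} to identify $z'(G_{01})=g_{01}$ and $z'(G_{20}(\hat\xi^\ast)^2)=g_{20}$. The only cosmetic difference is that you re-derive $\ddot\alpha(0)=-g_{20}/g_{01}$ by differentiating $G(\hat\gamma(s),\alpha(s))\equiv 0$ twice and applying $z'$, whereas the paper simply cites this formula as part of the abstract result \tref{asnbif}.
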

\begin{figure}[ht]
	\setlength{\unitlength}{1cm}
	\begin{center}
	\begin{picture}(12,8.5)
		\includegraphics[scale=0.6]{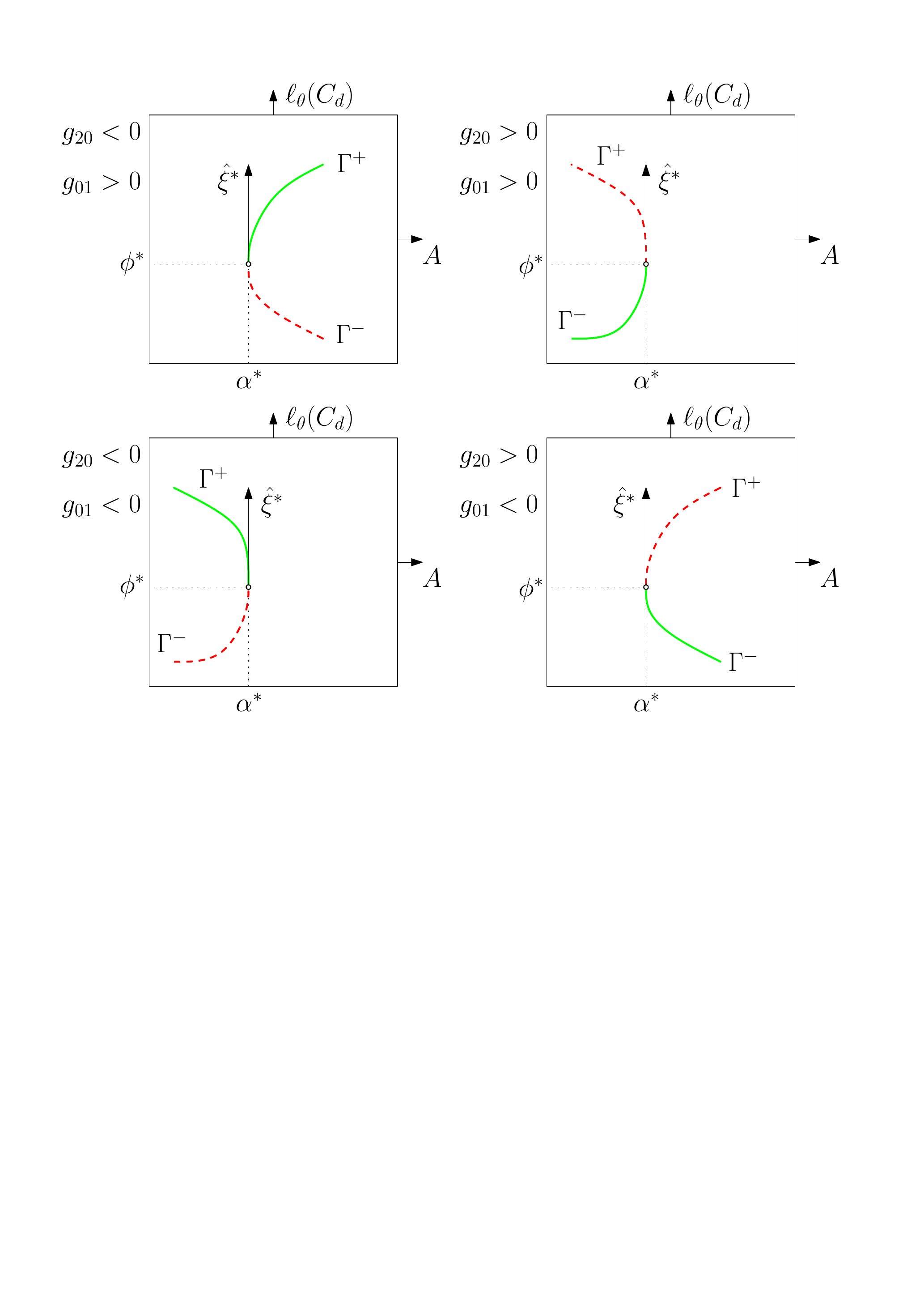}
	\end{picture}
	\end{center}
	\caption{Subcritical ($\tfrac{g_{20}}{g_{11}}>0$) and supercritical ($\tfrac{g_{20}}{g_{11}}<0$) fold bifurcation of $\theta$-periodic solutions to \eqref{deq} described in \tref{thmfold}, as well as the exchange of stability between the branches $\Gamma^+$ and $\Gamma^-$ from unstable (dashed line) to exponentially stable (solid) covered in \cref{bifurmorse}}
	\label{fig3}
\end{figure}
\begin{proof}
	For the sake of brief notation, we will repeatedly employ the abbreviation
	\begin{align*}
		G_{ij}&:=D_1^iD_2^jG(u^\ast,\alpha^\ast),&
		g_{ij}&:=z'(G_{ij}(\xi^\ast)^i)\fall i,j\in\N_0,\,i, j\leq m
	\end{align*}
	with the linear functional $z':C_d^\theta\to\R$ given by $z'(\hat v):=\iprod{\hat\eta^\ast,\hat v}_\theta$. Let us subdivide the proof into two steps: 

	(I) Our goal is to apply \tref{asnbif} to the abstract equation $G(\hat\phi,\alpha)=0$ between the same Banach spaces $X=C_d^\theta$ and $Z=C_d^\theta$. Above all, it follows from $(P_1)$ and \pref{propG}(a) that $G:\hat U\tm A\to C_d^\theta$ is of class $C^m$, $m\geq 2$. Thanks to $(B_1)$, $\phi^\ast$ is a $\theta$-periodic solution of $(\Delta_{\alpha^\ast})$, and \tref{thmmain} implies $G(\hat\phi^\ast,\alpha^\ast)=0$, i.e.\ \eqref{zero} holds. Moreover, due to \pref{propG}(b), the derivative $D_1G(\hat\phi^\ast,\alpha^\ast)$ is Fredholm of index $0$. As a consequence of $(B_2)$ and \pref{proplin}, \ref{proplindual}, we have
	\begin{align*}
		N(G_{10})&=\spann\{\hat\xi^\ast\},&
		N(G_{10}')&=\spann\{\hat\eta^\ast\},
	\end{align*}
	so that \eqref{fred} holds. According to \cref{cordual}, the linear functional $z':C_d^\theta\to\R$ satisfies $N(z')=R(G_{10})$ and is clearly bounded, yielding \eqref{nozero}. In conclusion, we are in the abstract setting of App.~\ref{appA} with $u^\ast=\hat\phi^\ast$. 
	
	(II) \lref{lemgder} immediately guarantees
	\begin{align*}
		z'(G_{01})
		&=
		\iprod{\hat\eta^\ast,G_{01}}_\theta
		=
		\sum_{t=0}^{\theta-1}\int_\Omega\sprod{\eta^\ast_{t+1}(x),D_2\sF_t(\phi_t^\ast,\alpha)(x)}\d\mu(x)
		=
		g_{01},\\
		z'(G_{20}(\xi^\ast)^2)
		&=
		\iprod{\hat\eta^\ast,D_1^2G(\hat\phi^\ast,\alpha^\ast)(\hat\xi^\ast)^2}_\theta\\
		&=
		\sum_{t=0}^{\theta-1}\int_\Omega\sprod{\eta^\ast_{t+1}(x),\left[D_1^2\sF_t(\phi_t^\ast,\alpha)(\xi^\ast_t)^2\right](x)}\d\mu(x)
		=
		g_{20}.
	\end{align*}
	\tref{asnbif} now applies to $G(\hat\phi,\alpha)=0$, and \tref{thmmain} implies the desired results. 
\end{proof}
If the Floquet multiplier $1$ is the unique element of $\sigma_\theta(\alpha^\ast)$ on the unit circle, i.e.\
\begin{equation}
	\sigma_\theta(\alpha^\ast)\cap\S^1=\set{1},
	\label{morse}
\end{equation}
then more can be said on the dynamics near the branch $\Gamma$. More specifically, when the remaining Floquet spectrum is contained in the open disk $B_1(0)$, i.e.\
\begin{equation}
	\sigma_\theta(\alpha^\ast)\setminus\set{1}\subseteq B_1(0),
	\label{stab}
\end{equation}
then a bifurcation goes hand in hand with a stability change for $\phi^\ast$ (see \fref{fig3}): 
\begin{corollary}[stability along $\Gamma$]\label{bifurmorse}
	Suppose that \eqref{morse} holds (cf.\ \fref{fig3}): 
	\begin{itemize}
		\item[(a)] If $g_{20}>0$, then the Morse index along $\Gamma$ increases by $1$ as $s$ grows through $0$ in \eqref{branch}. In particular, under \eqref{stab}, $\Gamma^-$ is exponentially stable and $\Gamma^+$ is unstable.

		\item[(b)] If $g_{20}<0$, then the Morse index along $\Gamma$ decreases by $1$ as $s$ grows through $0$ in \eqref{branch}. In particular, under \eqref{stab}, $\Gamma^+$ is exponentially stable and $\Gamma^-$ is unstable. 
	\end{itemize}
\end{corollary}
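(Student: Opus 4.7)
The plan is to apply the exchange-of-stability companion to the abstract bifurcation machinery from Appendix~\ref{appA} to the cyclic operator equation $G(\hat\phi,\alpha)=0$ --- the setup already being in place from the proof of \tref{thmfold} --- and then translate eigenvalue information about $D_1G$ back to Floquet multipliers via \pref{propspec}. The additional input supplied by \eqref{morse} is that $0$ is an isolated eigenvalue of $D_1G(\hat\phi^\ast,\alpha^\ast)$ with the remainder of $\sigma_p(D_1G(\hat\phi^\ast,\alpha^\ast))$ bounded away from $0$ in the sense dictated by \pref{propspec}, so that Kato-type analytic perturbation theory applies to the $C^{m-1}$-smooth family $T(s):=D_1G(\hat\gamma(s),\alpha(s))\in L(C_d^\theta)$ and yields smooth curves $\lambda(s)\in\R$, $\hat v(s)\in C_d^\theta$ with $T(s)\hat v(s)=\lambda(s)\hat v(s)$, $\lambda(0)=0$ and $\hat v(0)=\hat\xi^\ast$.

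The key computation is then $\dot\lambda(0)$. Differentiating the eigenvalue identity at $s=0$, using $\dot\gamma(0)=\hat\xi^\ast$ and $\dot\alpha(0)=0$ from \tref{thmfold} so that $\dot T(0)\hat\xi^\ast=G_{20}(\hat\xi^\ast)^2$ in the notation of the proof of \tref{thmfold}, and pairing with $\hat\eta^\ast$ under $\iprod{\cdot,\cdot}_\theta$, the term $\iprod{\hat\eta^\ast,T(0)\dot{\hat v}(0)}_\theta$ vanishes by \pref{prop26}(b) while \lref{lemgder} identifies the remaining right-hand side with $g_{20}$, giving
\[
	\dot\lambda(0)\,\iprod{\hat\eta^\ast,\hat\xi^\ast}_\theta=g_{20}.
\]
After fixing the sign of $\hat\eta^\ast$ so that $\iprod{\hat\eta^\ast,\hat\xi^\ast}_\theta>0$, we read off $\sgn\dot\lambda(0)=\sgn g_{20}\neq 0$. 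Via \pref{propspec}, the real positive perturbation of the critical Floquet multiplier is $\mu(s):=(\lambda(s)+1)^\theta$ with $\dot\mu(0)=\theta\dot\lambda(0)$, hence $\mu(s)$ crosses $\S^1$ at $s=0$ in the direction $\sgn g_{20}$. Continuity of the spectrum under perturbation (\cite[pp.~213--214, Sect.~5]{kato:80}) together with \eqref{morse} prevents any other multiplier from hitting $\S^1$ for small $|s|$, so the Morse index of $\gamma(s)$ jumps by exactly $\pm 1$ at $s=0$ with sign $\sgn g_{20}$. Under the stronger \eqref{stab} the remainder of $\sigma_\theta(\alpha(s))$ lies inside $B_1(0)$, upgrading the Morse-index statements in (a) and (b) to the asserted exponential stability/instability of $\Gamma^\pm$.

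The main obstacle is ensuring that $0$ is \emph{algebraically} (not merely geometrically) simple as an eigenvalue of $D_1G(\hat\phi^\ast,\alpha^\ast)$, so that the curve $\lambda(\cdot)$ really is smooth and the identification of $\dot\lambda(0)$ is well-posed; this amounts to $\iprod{\hat\eta^\ast,\hat\xi^\ast}_\theta\neq 0$, which has to be either read into ``$1$ is simple'' in $(B_2)$ or verified separately from $(B_2)$ and \eqref{morse}. In parallel one must manage the coupled sign flips under $\hat\xi^\ast\mapsto-\hat\xi^\ast$, $\hat\eta^\ast\mapsto-\hat\eta^\ast$ and $s\mapsto-s$: together with the orientation $\dot\gamma(0)=\hat\xi^\ast$ inherited from \tref{thmfold}, these are balanced precisely by the convention $\iprod{\hat\eta^\ast,\hat\xi^\ast}_\theta>0$, which is what makes the unsigned statement of the corollary unambiguous.
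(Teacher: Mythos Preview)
Your proposal is correct and follows essentially the same route as the paper. The paper packages the eigenvalue-perturbation step into \lref{lemma:eigder} and then invokes \tref{asnbif}(c) directly to obtain $\dot\lambda(0)=g_{20}$, whereas you re-derive this derivative by hand by differentiating $T(s)\hat v(s)=\lambda(s)\hat v(s)$ and pairing with $\hat\eta^\ast$; both then translate back to Floquet multipliers via \pref{propspec} and finish with Kato's perturbation theory for the rest of the spectrum. Your explicit attention to the normalization $\iprod{\hat\eta^\ast,\hat\xi^\ast}_\theta\neq 0$ (equivalently, algebraic simplicity of the zero eigenvalue of $D_1G(\hat\phi^\ast,\alpha^\ast)$) is exactly what the paper absorbs into the condition $z'(J\xi(s))\equiv 1$ of \lref{lemma:eigder}; once $\hat\xi^\ast$ is rescaled so that $z'(J\hat\xi^\ast)=\iprod{\hat\eta^\ast,\hat\xi^\ast}_\theta=1$, your formula $\dot\lambda(0)\,\iprod{\hat\eta^\ast,\hat\xi^\ast}_\theta=g_{20}$ and the paper's $\dot\lambda(0)=g_{20}$ coincide, and the sign ambiguities you flag are resolved automatically.
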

\begin{proof}
	If $\tilde\Xi_\theta(s)\in L(C_d)$ denotes the period operator of the variational equation $v_{t+1}=D_1\sF_t(\gamma(s)_t,\alpha(s))v_t$, then the following holds true in a vicinity of $s=0$: Since the embedding operator $J$ in \eqref{embed} is the identity on $C_d^\theta$, \lref{lemma:eigder} applies. By the spectral mapping theorem, there is a smooth curve of simple eigenpairs $(\lambda(s),\xi(s))$ for $I_{C_d^\theta}+D_1G(\gamma(s),\alpha(s))$ with $(\lambda(0),\xi(0))=(1,\hat\xi^\ast)$. Therefore, the function $\rho(s):=\abs{\lambda(s)}$ is differentiable in a neighborhood of $0$. Due to \tref{asnbif}(c), one has $\dot\lambda(0)=g_{20}$, and consequently $\dot\rho(0)=g_{20}$.

	(a) For $g_{20}>0$, the function $\rho$ is strictly increasing near $0$. Whence, \pref{propspec} implies that the Morse index along the solution branch $\Gamma$ increases as $s$ grows through the value $0$. Since $s\mapsto\lambda(s)$ is a curve of (algebraically) simple eigenvalues, also their geometric multiplicity is $1$, and so the Morse index increases by $1$. In particular, \eqref{stab} implies that $\sigma(\tilde\Xi_\theta(s))\subseteq B_1(0)$ for $s<0$, while $\tilde\Xi_\theta(s)$ possesses an eigenvalue of modulus $>1$ for $s>0$. 

	(b) For $g_{20}<0$, a dual argument shows that the Morse index along $\Gamma$ drops by $1$ as $s$ increases through $0$. If \eqref{stab} holds, then $\sigma(\tilde\Xi_\theta(s))\subset B_1(0)$ for $s>0$, whereas $s<0$ leads to Floquet spectrum outside the closed unit disk in $\C$. 
\end{proof}

Note that \eref{exallee} and \fref{figallee} feature a supercritical fold bifurcation of fixed points. 
\subsection{Crossing curve bifurcation}
\label{sec42}
The following bifurcation patterns require
\begin{itemize}
	\item[$(B_3)$] For all $0\leq t<\theta$, one has
	\begin{multline*}
		D_2G_t\intoo{x,\int_\Omega f_t(x,y,\phi_t^\ast(y),\alpha^\ast)\d\mu(y),\alpha^\ast}
		\int_\Omega D_4f_t(x,y,\phi_t^\ast(y),\alpha^\ast)\d\mu(y)\\
		+
		D_3G_t\intoo{x,\int_\Omega f_t(x,y,\phi_t^\ast(y),\alpha^\ast)\d\mu(y),\alpha^\ast}
		\equiv 
		0\on\Omega
	\end{multline*}
\end{itemize}
as further \textbf{bifurcation condition}. Note that $(B_3)$ means $D_2\sF_t(\phi_t^\ast,\alpha^\ast)\equiv 0$ on $\Z$. 
\begin{theorem}[crossing curve bifurcation]\label{thmcross}
	Let $m\geq 2$, and suppose $(B_1$--$B_3)$ are satisfied. If
	\begin{align*}
		g_{11}
		:=&
		\sum_{t=0}^{\theta-1}\int_\Omega\sprod{\eta^\ast_{t+1}(x),[D_1D_2\sF_t(\phi_t^\ast,\alpha^\ast)\xi^\ast_t](x)}\d\mu(x)
		\neq
		0,\\
		&
		\sum_{t=0}^{\theta-1}\int_\Omega\sprod{\eta^\ast_{t+1}(x),[D_2^2\sF_t(\phi_t^\ast,\alpha^\ast)](x)}\d\mu(x)
		=0
	\end{align*}
	hold, then the $\theta$-periodic solution $\phi^\ast$ of an IDE $(\Delta_{\alpha^\ast})$ bifurcates at $\alpha^\ast$ as follows: The pair $(\phi^\ast,\alpha^\ast)$ is the intersection of two branches $\Gamma_1,\Gamma_2$ of $\theta$-periodic solutions as in \eqref{branch}, and every $\theta$-periodic solution of \eqref{deq} in $\cB_\eps(\phi^\ast)$ is captured by $\Gamma_1$ or $\Gamma_2$. We have
	\begin{enumerate}
		\item[(a)] $\Gamma_1=\set{(\phi_1(\alpha),\alpha)\in\ell_\theta(C_d)\tm\R:\,\alpha\in A_0}$ with a $C^{m-1}$-function $\phi_1:A_0\to B_\eps(\phi^\ast)$ of $\theta$-periodic solutions $\phi_1(\alpha)$ to \eqref{deq} satisfying
		\begin{align*}
			\phi_1(\alpha^\ast)&=\phi^\ast,&
			\dot\phi_1(\alpha^\ast)&=0,
		\end{align*}
		
		\item[(b)] $\Gamma_2=\svector{\gamma_2}{\alpha_2}(S)$ with a $C^{m-1}$-curve $\svector{\gamma_2}{\alpha_2}:S\to B_\eps(\phi^\ast)\tm A_0$ such that
		\begin{equation*}
			\begin{split}
				\gamma_2(s)&=\phi^\ast+s\hat\xi^\ast+o(s),\\
				\alpha_2(s)&=\alpha^\ast-\frac{s}{2g_{11}}
				\sum_{t=0}^{\theta-1}\int_\Omega\sprod{\eta^\ast_{t+1}(x),[D_1^2\sF_t(\phi_t^\ast,\alpha^\ast)(\xi^\ast_t)^2](x)}\d\mu(x)+o(s),
			\end{split}
		\end{equation*}
	\end{enumerate}
	where the occurring derivatives are given by \eqref{derf20} and
	\begin{align*}
		&[D_1D_2\sF_t(\phi_t^\ast,\alpha^\ast)v](x)
		\\
		&
		=D_2G_t\intoo{x,\int_\Omega f_t(x,y,\phi_t^\ast(y),\alpha)\d\mu(y),\alpha^\ast}\int_\Omega D_3D_4f_t(x,y,\phi_t^\ast(y),\alpha^\ast)v(y)\d\mu(y)\\
		&+
		D_2D_3G_t\intoo{x,\int_\Omega f_t(x,y,\phi_t^\ast(y),\alpha^\ast)\d\mu(y),\alpha^\ast}
		\int_\Omega D_3f_t(x,y,\phi_t^\ast(y),\alpha^\ast)v(y)\d\mu(y)
		\\
		&+
		D_2^2G_t\intoo{x,\int_\Omega f_t(x,y,\phi_t^\ast(y),\alpha)\d\mu(y),\alpha^\ast}
		\int_\Omega D_3f_t(x,y,\phi_t^\ast(y),\alpha^\ast)v(y)\d\mu(y)\\
		&
		\quad\int_\Omega D_4f_t(x,y,\phi_t^\ast(y),\alpha^\ast)\d\mu(y)
	\end{align*}
	for all $t\in\Z$, $x\in\Omega$ and $v\in C_d$. 
\end{theorem}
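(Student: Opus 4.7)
The strategy is to mirror the proof of Thm.~\ref{thmfold}, but now invoking the crossing curve bifurcation criterion from App.~\ref{appA} (Thm.~\ref{thmcrosbif}) instead of the fold result. As before, I would work with the cyclic operator equation $G(\hat\phi,\alpha)=0$ on the Banach spaces $X=Z=C_d^\theta$, set $u^\ast=\hat\phi^\ast$, and use the linear functional $z'(\hat v):=\iprod{\hat\eta^\ast,\hat v}_\theta$ on $C_d^\theta$.

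First I would carry out the abstract setup. From $(P_1)$, $(P_2)$ and \pref{propG} the map $G$ is $C^m$ with $m\geq 2$, and $D_1G(\hat\phi^\ast,\alpha^\ast)$ is Fredholm of index $0$. Condition $(B_1)$ together with \tref{thmmain} gives $G(\hat\phi^\ast,\alpha^\ast)=0$, and $(B_2)$ combined with \pref{proplin} and \pref{proplindual} yields $N(G_{10})=\spann\{\hat\xi^\ast\}$ and $N(G_{10}')=\spann\{\hat\eta^\ast\}$, so that the simplicity and Fredholm hypotheses of \tref{thmcrosbif} are verified. Finally, \cref{cordual} shows $N(z')=R(G_{10})$, so $z'$ is the functional selecting the non-range direction required by the abstract theorem.

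Next I would translate conditions $(B_3)$ and the two scalar hypotheses of the statement into the abstract language. By \lref{lemgder} one has $z'(G_{01})=g_{01}$, $z'(G_{11}\hat\xi^\ast)=g_{11}$, $z'(G_{02})$ equals the second integral expression in the statement, and $z'(G_{20}(\hat\xi^\ast)^2)=g_{20}$. Because $(B_3)$ forces $D_2\sF_t(\phi_t^\ast,\alpha^\ast)\equiv 0$ for every $t\in\Z$, formula \eqref{propG2} gives $G_{01}=0$, hence $g_{01}=0$. The crossing condition $g_{11}\neq 0$ and the degeneracy condition $z'(G_{02})=0$ are the remaining numerical hypotheses of \tref{thmcrosbif}. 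At this point the abstract crossing curve bifurcation theorem applies and supplies the two branches $\Gamma_1,\Gamma_2$ together with the asymptotic expansions of $\gamma_2,\alpha_2$, while \tref{thmmain} re-interprets each abstract solution as a $\theta$-periodic solution of \eqref{deq}.

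The remaining work is bookkeeping: the formula $\dot\alpha_2(0)=-\tfrac{z'(G_{20}(\hat\xi^\ast)^2)}{2g_{11}}$ from the abstract theorem becomes the stated expansion after inserting $z'(G_{20}(\hat\xi^\ast)^2)=g_{20}$, and $\dot\gamma_2(0)=\hat\xi^\ast$ yields the leading-order expansion of $\gamma_2$. The branch $\Gamma_1$ parametrized by $\alpha$ with $\dot\phi_1(\alpha^\ast)=0$ also comes directly out of \tref{thmcrosbif} once $g_{01}=0$ is known, since the implicit differentiation along $\Gamma_1$ produces $D_1G(\hat\phi^\ast,\alpha^\ast)\dot\phi_1(\alpha^\ast)=-G_{01}=0$, and $\dot\phi_1(\alpha^\ast)\in N(G_{10})=\spann\{\hat\xi^\ast\}$ is excluded by the transversality between $\Gamma_1$ and $\Gamma_2$ built into the abstract statement. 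The explicit form of $D_1D_2\sF_t(\phi_t^\ast,\alpha^\ast)$ is finally obtained by direct Fr\'echet differentiation of \eqref{ury}. The main obstacle I anticipate is purely notational: keeping the index shifts in \lref{lemgder} (the $t+1$ on $\eta^\ast$) consistent when identifying $g_{01}$, $g_{11}$, $g_{02}$ and $g_{20}$ with the integral expressions in the statement, so that the translation from the abstract Thm.~\ref{thmcrosbif} to the concrete IDE formulation is transparent.
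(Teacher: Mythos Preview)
Your proposal is correct and follows essentially the same approach as the paper: reduce to the abstract setting of App.~\ref{appA} via step~(I) of the proof of \tref{thmfold}, use \lref{lemgder} to identify $z'(G_{11}\hat\xi^\ast)=g_{11}$ and $z'(G_{02})$ with the given integral expressions, note that $(B_3)$ yields $G_{01}=D_2G(\hat\phi^\ast,\alpha^\ast)=0$ so that \eqref{a6cond} holds, and then invoke \tref{thmcrosbif}. The only minor difference is that the paper obtains $\dot\phi_1(\alpha^\ast)=0$ more directly: \tref{thmcrosbif}(a) already asserts $\dot\gamma_1(0)=0$ with $\alpha_1(s)=\alpha^\ast+s$, so defining $\phi_1(\alpha):=\gamma_1(\alpha-\alpha^\ast)$ immediately gives $\dot\phi_1(\alpha^\ast)=\dot\gamma_1(0)=0$, without your implicit-differentiation and transversality detour.
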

In the bifurcation diagram (cf.\ Figs.~\ref{fig5}--\ref{fig6}), the branch $\Gamma_1$ is graph of a function over the $\alpha$-axis, whose tangent in $(\phi^\ast,\alpha^\ast)$ is $\alpha\mapsto(\phi^\ast,\alpha)$. 
\begin{remark}
	The assumption $(B_3)$ is satisfied, if $\phi^\ast$ is embedded into a constant branch of solutions, i.e.\ $\phi_{t+1}^\ast=\sF_t(\phi_t^\ast,\alpha)$ for all $t\in\Z$, $\alpha\in A$ holds. In this situation, one has $\phi_1(\alpha)\equiv\phi^\ast$ on $A_0$, that is, $\Gamma_1=\set{(\phi^\ast,\alpha)\in\ell_\theta(C_d)\tm\R:\,\alpha\in A_0}$. 
\end{remark}
\begin{proof}
	As shown in step (I) of the proof for \tref{thmfold}, we are in the set-up of App.~\ref{appA}. Under the present assumptions, \lref{lemgder} leads to
	\begin{align*}
		z'(G_{11}\hat\xi^\ast)
		&=
		\iprod{\hat\eta^\ast,G_{11}\hat\xi^\ast}_\theta\\
		&=
		\sum_{t=0}^{\theta-1}\int_\Omega\sprod{\eta^\ast_{t+1}(x),\left[D_1D_2\sF_t(\phi_t^\ast,\alpha)\xi^\ast_t\right](x)}\d\mu(x)
		=
		g_{11},\\
		z'(G_{02})
		&=
		\iprod{\hat\eta^\ast,G_{02}}_\theta
		=
		\sum_{t=0}^{\theta-1}\int_\Omega\sprod{\eta^\ast_{t+1}(x),D_2^2\sF_t(\phi_t^\ast,\alpha)(x)}\d\mu(x).
	\end{align*}
	Therefore, \tref{thmcrosbif} applies and yields a solution branch $\Gamma_1=\svector{\gamma_1}{\alpha_1}(S)$ given by a $C^{m-1}$-curve $\svector{\gamma_1}{\alpha_1}:S\to B_\eps(\phi^\ast)\tm A_0$ such that 
	\begin{align}
		\gamma_1(0)&=\phi^\ast,&
		\alpha_1(s)&=\alpha^\ast+s,&
		\dot\gamma_1(0)&=0,
		\label{gamma1ass}
	\end{align}
	as well as a branch $\Gamma_2$ having the properties claimed in (b). The assertion (a) follows from \eqref{gamma1ass} if we define $\phi_1(\alpha):=\gamma_1(\alpha-\alpha^\ast)$. 
\end{proof}

\begin{corollary}[stability along $\Gamma_1$]\label{trbif}
	Suppose that \eqref{morse} holds (cf.\ \fref{fig5} and \ref{fig6}): 
	\begin{itemize}
		\item[(a)] If $g_{11}>0$, then $m_\ast(\phi_1(\alpha))$ increases by $1$ as $\alpha$ grows through~$\alpha^\ast$. In particular, under \eqref{stab} the $\theta$-periodic solution $\phi_1(\alpha)$ of \eqref{deq} is exponentially stable for $\alpha<\alpha^\ast$ and unstable for $\alpha>\alpha^\ast$. 

		\item[(b)] If $g_{11}<0$, then $m_\ast(\phi_1(\alpha))$ decreases by $1$ as $\alpha$ grows through $\alpha^\ast$. In particular, under \eqref{stab} the $\theta$-periodic solution $\phi_1(\alpha)$ of \eqref{deq} is unstable for $\alpha<\alpha^\ast$ and exponentially stable for $\alpha>\alpha^\ast$. 
	\end{itemize}
\end{corollary}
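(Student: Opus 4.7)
The plan is to adapt the strategy of \cref{bifurmorse} to the crossing-curve setting. Since $\Gamma_1$ is parametrized directly by $\alpha$, I introduce the period operator $\tilde\Xi_\theta(\alpha)\in L(C_d)$ of the variational equation $v_{t+1}=D_1\sF_t(\phi_1(\alpha)_t,\alpha)v_t$ along $\Gamma_1$. From $\phi_1\in C^{m-1}(A_0,\ell_\theta(C_d))$ (\tref{thmcross}(a)) and the smoothness of the $D_1\sF_t$, both $\tilde\Xi_\theta(\cdot)$ and $\alpha\mapsto I_{C_d^\theta}+D_1G(\hat\phi_1(\alpha),\alpha)$ are continuous. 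By $(B_2)$ and \pref{propspec}, the value $1$ is a simple eigenvalue of $\tilde\Xi_\theta(\alpha^\ast)$ and correspondingly of $I_{C_d^\theta}+D_1G(\hat\phi^\ast,\alpha^\ast)$. Invoking the perturbation theory for isolated simple eigenvalues through \lref{lemma:eigder} --- exactly as in \cref{bifurmorse} --- yields a $C^{m-1}$-curve $\mu\colon A_0\to\R$ of simple eigenvalues of $I_{C_d^\theta}+D_1G(\hat\phi_1(\alpha),\alpha)$ with $\mu(\alpha^\ast)=1$, and \pref{propspec} then gives a $C^{m-1}$-curve of simple Floquet multipliers $\lambda(\alpha):=\mu(\alpha)^\theta$ of $\tilde\Xi_\theta(\alpha)$ with $\lambda(\alpha^\ast)=1$ and $\dot\lambda(\alpha^\ast)=\theta\dot\mu(\alpha^\ast)$.

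The crucial next step is to extract the sign of $\dot\mu(\alpha^\ast)$ from the exchange-of-stability principle attached to \tref{thmcrosbif} in App.~\ref{appA}. The abstract data are already matched to the present setting in the proof of \tref{thmcross}: $u^\ast=\hat\phi^\ast$, embedding $J=I_{C_d^\theta}$, functional $z'=\iprod{\hat\eta^\ast,\cdot}_\theta$ with $z'(G_{11}\hat\xi^\ast)=g_{11}$ and $z'(G_{02})=0$. Substituting these into the formula furnished by \tref{thmcrosbif} would give an explicit expression for $\dot\mu(\alpha^\ast)$ whose sign, after normalizing $\hat\xi^\ast,\hat\eta^\ast$ so that $\iprod{\hat\eta^\ast,\hat\xi^\ast}_\theta>0$ (a choice compatible with $(B_2)$, since this pairing is nonzero by the algebraic simplicity of $1$ underlying the abstract Crandall--Rabinowitz framework), coincides with the sign of $g_{11}$.

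From here, the conclusion proceeds exactly as in \cref{bifurmorse}: in case (a) the function $\alpha\mapsto\abs{\lambda(\alpha)}$ is strictly increasing at $\alpha^\ast$, so $\tilde\Xi_\theta(\alpha)$ carries a single Floquet multiplier in $B_1(0)$ for $\alpha<\alpha^\ast$ and outside $\bar B_1(0)$ for $\alpha>\alpha^\ast$; simplicity preserves the geometric multiplicity at $1$, so $m_\ast(\phi_1(\alpha))$ jumps by exactly $1$ as $\alpha$ crosses $\alpha^\ast$. Under \eqref{stab}, upper semicontinuity of the spectrum (\cite[pp.~213--214, Sect.~5]{kato:80}, as already invoked in \tref{poinc}(c)) keeps the remaining Floquet spectrum strictly inside $B_1(0)$ on a sufficiently small neighborhood of $\alpha^\ast$, yielding the exponential stability of $\phi_1(\alpha)$ for $\alpha<\alpha^\ast$ and its instability for $\alpha>\alpha^\ast$; case (b) is symmetric. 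The hardest part will be the clean invocation of the eigenvalue-derivative formula from \tref{thmcrosbif} and the correct bookkeeping between $\mu$ (an eigenvalue of the cyclic $D_1G$-based operator) and the genuine Floquet multiplier $\lambda=\mu^\theta$ via \pref{propspec} --- in contrast to \cref{bifurmorse}, where the eigenvalue derivative was read off directly from \tref{asnbif}(c) along the bifurcating branch, here we need it along the a priori given branch $\Gamma_1$, which is precisely the content of the exchange-of-stability supplement to \tref{thmcrosbif}; once this sign is pinned to $g_{11}$ the remaining Morse-index and stability assertions follow routinely from spectral continuity.
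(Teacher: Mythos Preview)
Your proposal is correct and follows essentially the same route as the paper: apply \lref{lemma:eigder} along $\Gamma_1$ and read off the derivative of the critical eigenvalue from \tref{thmcrosbif}(c), then argue as in \cref{bifurmorse}. The paper's proof is terser --- it simply states $\dot\rho(0)=g_{11}$ via \tref{thmcrosbif}(c) without separately tracking the passage $\mu\mapsto\mu^\theta$ (which is harmless for the sign since $|\mu|^\theta$ crosses $1$ iff $|\mu|$ does) and without the normalization discussion; your extra care about $\iprod{\hat\eta^\ast,\hat\xi^\ast}_\theta>0$ is warranted in principle but is implicitly absorbed in the paper's use of the normalization $z'(J\xi(s))\equiv 1$ built into \lref{lemma:eigder}.
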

\begin{proof}
	The argument parallels the proof for \cref{bifurmorse} (with $\Gamma_1$ instead of $\Gamma$), except we now apply \tref{thmcrosbif}(c) in order to deduce $\dot\rho(0)=g_{11}$: 
	
	(a) In case $g_{11}>0$, the Morse index along $\Gamma_1$ increases by $1$ as $s$ grows through the value $0$. Due to $\phi_1(\alpha)=\gamma_1(\alpha-\alpha^\ast)$ and \eqref{gamma1ass}, this yields 
	\begin{align}
		m_\ast(\phi_1(\alpha))&=m_\ast(\phi^\ast)\text{ for all }\alpha\leq\alpha^\ast,&
		m^\ast(\phi_1(\alpha))&=m^\ast(\phi^\ast)\text{ for all }\alpha\geq\alpha^\ast,
		\label{morse1}
	\end{align}
	the claimed (stability) assertions for the variational equation \eqref{var} along $\phi=\phi_1$. 

	(b) The argument in case $g_{11}<0$ is dual. In particular, 
	\begin{align}
		m_\ast(\phi_1(\alpha))&=m_\ast(\phi^\ast)\text{ for all }\alpha\geq\alpha^\ast,&
		m^\ast(\phi_1(\alpha))&=m^\ast(\phi^\ast)\text{ for all }\alpha\leq\alpha^\ast
		\label{morse2}
	\end{align}
	implies the assertions. 
\end{proof}

Further information on the derivatives of the right-hand sides $\sF_t$ yields a more detailed description of the local branch $\Gamma_2$, and we encounter two well-known bifurcation patterns. In the generic case, also $\Gamma_2$ can be represented as graph of a function over the $\alpha$-axis (see \fref{fig5}):
\begin{figure}[ht]
	\begin{center}
		\includegraphics[scale=0.6]{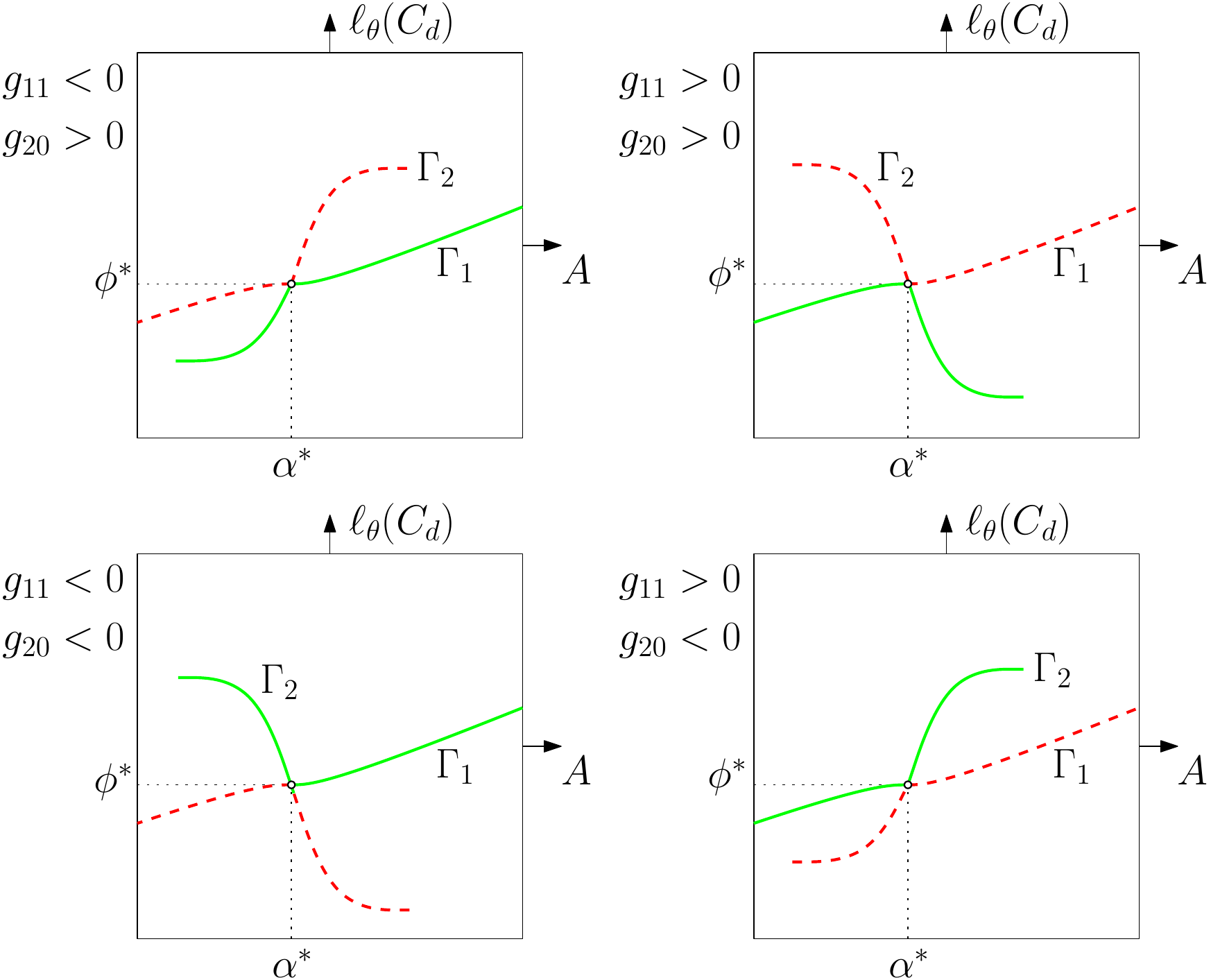}
	\end{center}
	\caption{Transcritical bifurcation of $\theta$-periodic solutions to \eqref{deq} from a branch $\Gamma_1$ into $\Gamma_2$ described in \pref{proptrans}, as well as the exchange of stability from unstable (dashed line) to exponentially stable (solid)}
	\label{fig5}
\end{figure}
\begin{prop}[transcritical bifurcation]\label{proptrans}
	Under the additional assumption
	\begin{align*}
		g_{20}
		=&
		\sum_{t=0}^{\theta-1}\int_\Omega\sprod{\eta^\ast_{t+1}(x),[D_1^2\sF_t(\phi_t^\ast,\alpha^\ast)(\xi^\ast_t)^2](x)}\d\mu(x)
		\neq
		0
	\end{align*}
	we obtain that $\Gamma_2=\set{(\phi_2(\alpha),\alpha)\in\ell_\theta(C_d)\tm\R:\,\alpha\in A_0}$ holds with a $C^{m-1}$-function $\phi_2:A_0\to B_\eps(\phi^\ast)$ of $\theta$-periodic solutions $\phi_2(\alpha)$ to \eqref{deq} satisfying
	\begin{align*}
		\phi_2(\alpha^\ast)&=\phi^\ast,&
		\dot\phi_2(\alpha^\ast)&=-2\tfrac{g_{11}}{g_{20}}\hat\xi^\ast.
	\end{align*}
	Locally in $\cB_\eps(\phi^\ast)\tm A_0$, $\phi_2(\alpha)$ is the unique $\theta$-periodic solution of \eqref{deq} distinct from $\phi_1(\alpha)$ for $\alpha\neq\alpha^\ast$, and $\phi^\ast$ is the unique $\theta$-periodic solution of $(\Delta_{\alpha^\ast})$ (cf.~\fref{fig5}). Furthermore, in case \eqref{morse} one additionally has: 
	\begin{itemize}
		\item[(a)] If $g_{11}>0$, then $m_\ast(\phi_2(\alpha))$ decreases by $1$ as $\alpha$ grows through~$\alpha^\ast$. In particular, under \eqref{stab} the $\theta$-periodic solution $\phi_2(\alpha)$ of \eqref{deq} is unstable for $\alpha<\alpha^\ast$ and exponentially stable for $\alpha>\alpha^\ast$. 

		\item[(b)] If $g_{11}<0$, then $m_\ast(\phi_2(\alpha))$ increases by $1$ as $\alpha$ grows through~$\alpha^\ast$. In particular, under \eqref{stab} the $\theta$-periodic solution $\phi_2(\alpha)$ of \eqref{deq} is exponentially stable for $\alpha<\alpha^\ast$ and unstable for $\alpha>\alpha^\ast$. 
	\end{itemize}
\end{prop}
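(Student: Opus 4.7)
The plan is to build directly on \tref{thmcross}, which already provides the second branch $\Gamma_2=\svector{\gamma_2}{\alpha_2}(S)$ in parametric form with
\begin{equation*}
	\gamma_2(s) = \phi^\ast + s\hat\xi^\ast + o(s),\qquad
	\alpha_2(s) = \alpha^\ast - \tfrac{g_{20}}{2g_{11}}s + o(s).
\end{equation*}
The new hypothesis $g_{20}\neq 0$ immediately forces $\dot\alpha_2(0)=-\tfrac{g_{20}}{2g_{11}}\neq 0$, which opens the door to reparameterizing $\Gamma_2$ by the parameter $\alpha$ instead of by $s$.

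First I would apply the inverse function theorem to the $C^{m-1}$-map $\alpha_2:S\to\R$ at $s=0$ to obtain a local $C^{m-1}$-inverse $\sigma:A_0\to S$ (after shrinking $S,A_0$ if necessary) with $\sigma(\alpha^\ast)=0$ and $\dot\sigma(\alpha^\ast)=1/\dot\alpha_2(0)=-\tfrac{2g_{11}}{g_{20}}$. Setting $\phi_2(\alpha):=\gamma_2(\sigma(\alpha))$ then yields a $C^{m-1}$-function with $\phi_2(\alpha^\ast)=\phi^\ast$, and the chain rule together with $\dot\gamma_2(0)=\hat\xi^\ast$ produces the claimed tangent $\dot\phi_2(\alpha^\ast)=-\tfrac{2g_{11}}{g_{20}}\hat\xi^\ast$. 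Since the image of $\svector{\gamma_2}{\alpha_2}$ coincides with $\set{(\phi_2(\alpha),\alpha):\,\alpha\in A_0}$, this establishes the graph representation of $\Gamma_2$. Uniqueness follows from \tref{thmcross}: every $\theta$-periodic solution of \eqref{deq} inside $\cB_\eps(\phi^\ast)\tm A_0$ belongs to $\Gamma_1$ or $\Gamma_2$, and for $\alpha\neq\alpha^\ast$ the two solutions $\phi_1(\alpha)$ and $\phi_2(\alpha)$ must differ because their tangents at $\alpha^\ast$ disagree (namely $0$ versus the nonzero vector $-\tfrac{2g_{11}}{g_{20}}\hat\xi^\ast$), while at $\alpha=\alpha^\ast$ both collapse to $\phi^\ast$.

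For the stability statements I would mirror the argument of \cref{trbif}, now carried out along $\Gamma_2$. By \lref{lemma:eigder}, there is a smooth curve of simple eigenpairs $(\lambda(s),\hat\xi(s))$ for $I_{C_d^\theta}+D_1G(\gamma_2(s),\alpha_2(s))$ with $(\lambda(0),\hat\xi(0))=(1,\hat\xi^\ast)$, so $\rho(s):=\abs{\lambda(s)}$ is differentiable near $0$. Invoking \tref{thmcrosbif}(c) along the branch $\Gamma_2$ yields $\dot\lambda(0)$ as an expression in $g_{20}$, $g_{11}$ and $\dot\alpha_2(0)$; reparameterizing via $\tilde\lambda(\alpha):=\lambda(\sigma(\alpha))$ and using $\dot\sigma(\alpha^\ast)=-\tfrac{2g_{11}}{g_{20}}$, a direct sign check should show that $\tfrac{d}{d\alpha}\tilde\lambda(\alpha)\big|_{\alpha=\alpha^\ast}$ has the opposite sign of $g_{11}$. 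Combining this with \pref{propspec} and the assumption \eqref{stab} then gives the Morse index transitions (a) and (b), confirming the exchange of stability between $\Gamma_1$ and $\Gamma_2$ typical of a transcritical bifurcation.

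The main obstacle I anticipate is the sign bookkeeping in this last step: the eigenvalue derivative furnished by \tref{thmcrosbif}(c) naturally lives in the $s$-variable of $\Gamma_2$, and one has to chase the sign of $g_{20}$ through the factor $\dot\sigma(\alpha^\ast)=-2g_{11}/g_{20}$ before comparing with \cref{trbif}, where the analogous $\alpha$-derivative on $\Gamma_1$ equals $g_{11}$. Once the $\alpha$-derivative of the critical Floquet multiplier along $\Gamma_2$ is rewritten as a negative multiple of $g_{11}$, the stability assertions follow as in the proof of \cref{trbif} with minimal extra work.
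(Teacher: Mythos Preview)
Your approach is essentially the same as the paper's: invert $\alpha_2$ via the inverse function theorem to define $\phi_2(\alpha)=\gamma_2(\alpha_2^{-1}(\alpha))$, compute the tangent by the chain rule, and then track the critical eigenvalue along $\Gamma_2$ using \lref{lemma:eigder}. One correction: the eigenvalue derivative along $\Gamma_2$ is supplied by \cref{cortransbif}, not \tref{thmcrosbif}(c) (the latter refers to $\Gamma_1$); \cref{cortransbif} gives the precise value $\dot\lambda(0)=\tfrac{g_{20}}{2}$ in the $s$-variable, and your chain-rule conversion then yields $\tfrac{d}{d\alpha}\tilde\lambda(\alpha^\ast)=\tfrac{g_{20}}{2}\cdot\bigl(-\tfrac{2g_{11}}{g_{20}}\bigr)=-g_{11}$, confirming the opposite sign you anticipated. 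The paper instead keeps the analysis in the $s$-variable and does a four-way case split on the signs of $g_{20}$ and $-g_{20}/g_{11}$ before concluding that only the sign of $g_{11}$ matters; your reparametrization-first route reaches the same conclusion a bit more directly.
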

\begin{proof}
	Note that \cref{cortransbif} immediately applies, since \lref{lemgder} guarantees
	\begin{align*}
		z'(G_{20}(\hat\xi^\ast)^2)
		&=
		\iprod{\hat\eta^\ast,G_{20}(\hat\xi^\ast)^2}_\theta\\
		&=
		\sum_{t=0}^{\theta-1}\int_\Omega\sprod{\eta^\ast_{t+1}(x),\left[D_1^2\sF_t(\phi_t^\ast,\alpha)(\xi^\ast_t)^2\right](x)}\d\mu(x) = g_{20}.
	\end{align*}
	Given the branch $\Gamma_2$ from \tref{thmcross}(b), one has $\dot\alpha_2(0)=-\tfrac{g_{20}}{2g_{11}}\neq 0$. Therefore, we can, thanks to the inverse function theorem, define $\phi_2(\alpha):=\gamma_2(\alpha_2^{-1}(\alpha))$ in a neighborhood of $\alpha^\ast$, which w.l.o.g.\ will also be denoted as $A_0$. By construction, each $\phi_2(\alpha)$ is a $\theta$-periodic solution of \eqref{deq}, and has the properties
	\begin{align*}
		\phi_2(\alpha^\ast)&=\phi^\ast,&
		\dot\phi_2(\alpha^\ast)&=\tfrac{1}{\dot\alpha_2(0)}\dot\gamma_2(\alpha^\ast)=-2\tfrac{g_{11}}{g_{20}}\hat\xi^\ast.
	\end{align*}
	The statements on the Morse index along $\Gamma_2$ result from \lref{lemma:eigder} as in the proof of \cref{bifurmorse}. In the notation used there, \cref{cortransbif} leads to $\dot\rho(0)=\tfrac{g_{20}}{2}$ along $\Gamma_2$. 
	\begin{itemize}
		\item In case $g_{20}>0$ the Morse index increases. If $-\tfrac{g_{20}}{g_{11}}>0$ (equivalently $g_{11}<0$), then $\alpha_2^{-1}$ increases and we conclude an increase in the Morse index of $\phi_2(\alpha)$ as $\alpha$ grows through $\alpha^\ast$. If $-\tfrac{g_{20}}{g_{11}}<0$ (equivalently $g_{11}>0$), then $\alpha_2^{-1}$ decreases and we derive a decrease in the Morse index of $\phi_2(\alpha)$ as $\alpha$ grows through $\alpha^\ast$. 

		\item In case $g_{20}<0$ the Morse index decreases. If $-\tfrac{g_{20}}{g_{11}}>0$ (equivalently $g_{11}>0$), then $\alpha_2^{-1}$ increases and we conclude a decrease in the Morse index of $\phi_2(\alpha)$ as $\alpha$ grows through $\alpha^\ast$. If $-\tfrac{g_{20}}{g_{11}}<0$ (equivalently $g_{11}<0$), then $\alpha_2^{-1}$ decreases and we derive an increase in the Morse index of $\phi_2(\alpha)$ as $\alpha$ grows through $\alpha^\ast$. 
	\end{itemize}
	In conclusion, the sign of the coefficient $g_{11}$ determines an increase resp.\ decrease in $m_\ast(\phi_2(\alpha))$ as $\alpha$ grows through $\alpha^\ast$. 
\end{proof}

\begin{figure}[ht]
	\begin{center}
		\includegraphics[scale=0.6]{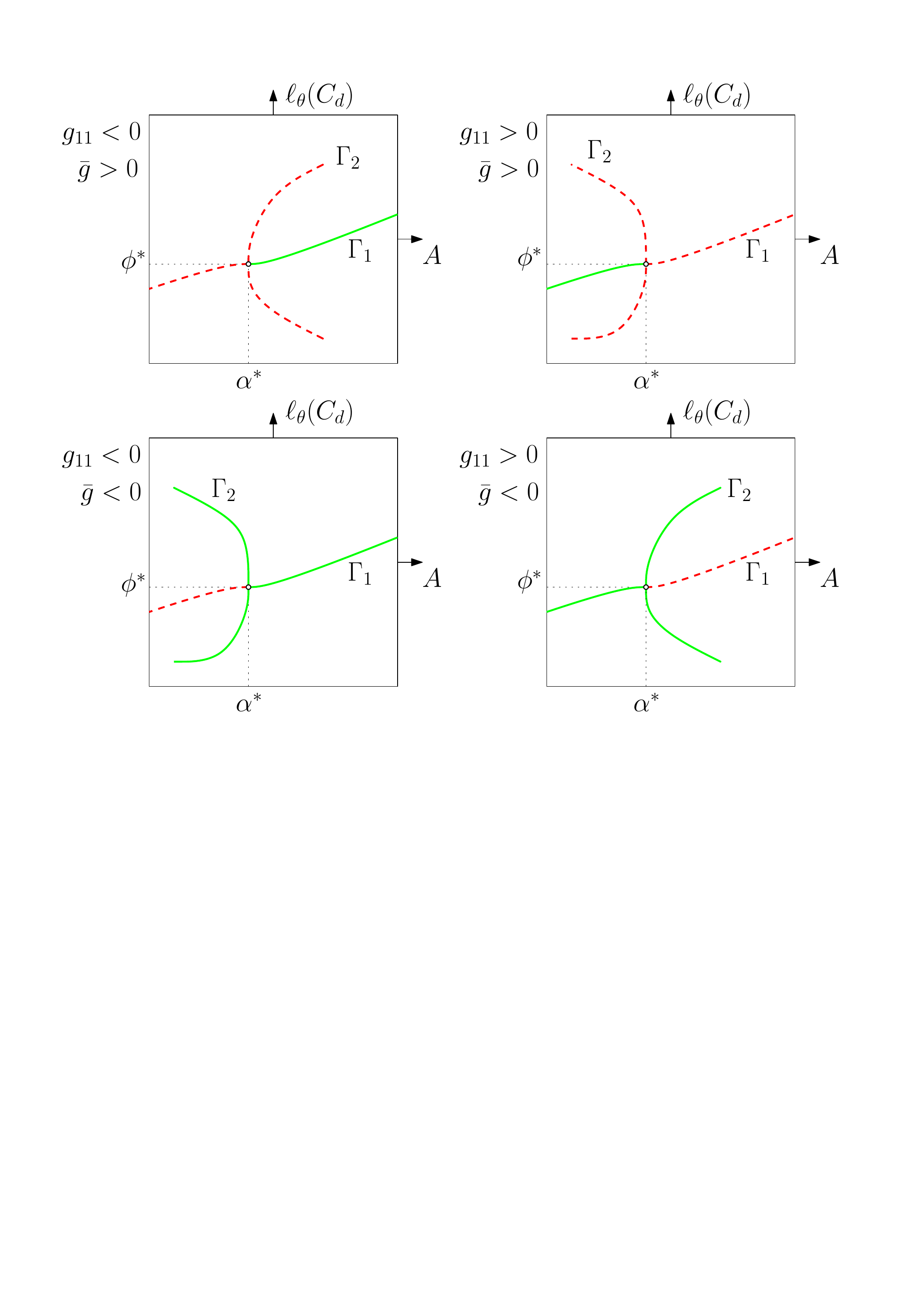}
	\end{center}
	\caption{Subcritical $(\bar g/g_{11}>0)$ and supercritical $(\bar g/g_{11}<0)$ pitchfork bifurcation of $\theta$-periodic solutions to \eqref{deq} from a branch $\Gamma_1$ into $\Gamma_2$ described in \pref{proppitch}, as well as the exchange of stability from unstable (dashed line) to exponentially stable (solid)}
	\label{fig6}
\end{figure}
The complementary situation $g_{20}=0$ leads to a, in general, nongeneric bifurcation where the branch $\Gamma_2$ can no longer be written as graph over the $\alpha$-axis (see \fref{fig6}). However, symmetry properties of the right-hand side of \eqref{deq} might enforce $g_{20}=0$ to hold. For instance, the condition that
\begin{equation}
	u\mapsto\sF_t(\phi_t^\ast-u,\alpha^\ast)+\sF_t(\phi_t^\ast+u,\alpha^\ast)
	\quad\text{is affine-linear}
	\label{symcond}
\end{equation}
implies $D_1^2\sF_t(\phi_t^\ast,\alpha^\ast)=0$ for all $0\leq t<\theta$. Moreover, the subsequent result is also crucial in the framework of \sref{sec43}.
\begin{prop}[pitchfork bifurcation]\label{proppitch}
	Let $m\geq 3$, and suppose $\bar\psi\in\ell_\theta(C_d)$ is the uniquely determined solution of the linear system of Fredholm integral equations
	\begin{equation}
		\begin{cases}
			\bar\psi_0=D_1\sF_{\theta-1}(\phi_{\theta-1}^\ast,\alpha^\ast)\bar\psi_{\theta-1}+D_1^2\sF_{\theta-1}(\phi_{\theta-1}^\ast,\alpha^\ast)(\xi^\ast_{\theta-1})^2,\\
			\bar\psi_1=D_1\sF_0(\phi_0^\ast,\alpha^\ast)\bar\psi_0+D_1^2\sF_0(\phi_0^\ast,\alpha^\ast)(\xi^\ast_0)^2,\\
			\quad\vdots\\
			\bar\psi_{\theta-1}=D_1\sF_{\theta-2}(\phi_{\theta-2}^\ast,\alpha^\ast)\bar\psi_{\theta-2}+D_1^2\sF_{\theta-2}(\phi_{\theta-2}^\ast,\alpha^\ast)(\xi^\ast_{\theta-2})^2,\\
			0=\sum_{t=0}^{\theta-1}\int_\Omega\sprod{\eta^\ast_{t}(x),\bar\psi_t(x)}\d\mu(x).
		\end{cases}
		\label{wbar}
	\end{equation}
	Under the additional assumptions
	\begin{align}
		&
		\sum_{t=0}^{\theta-1}\int_\Omega\sprod{\eta^\ast_{t+1}(x),[D_1^2\sF_t(\phi_t^\ast,\alpha^\ast)(\xi^\ast_t)^2](x)}\d\mu(x)
		=
		0,
		\label{crossrel}\\
		\bar g
		:=&
		\sum_{t=0}^{\theta-1}\int_\Omega\sprod{\eta^\ast_{t+1}(x),[D_1^3\sF_t(\phi_t^\ast,\alpha^\ast)(\xi^\ast_t)^3](x)}\d\mu(x)
		\notag\\
		&+
		3\sum_{t=0}^{\theta-1}\int_\Omega\sprod{\eta^\ast_{t+1}(x),[D_1^2\sF_t(\phi_t^\ast,\alpha^\ast)\xi^\ast_t\bar\psi_t](x)}\d\mu(x)
		\neq
		0
		\notag
	\end{align}
	we have $\dot\alpha_2(0)=0$, $\ddot\alpha_2(0)=-\frac{\bar g}{3g_{11}}$, $\ddot\gamma_2(0)=\bar\psi$, and locally in $\cB_\eps(\phi^\ast)\tm A_0$ (cf.~\fref{fig6}):
	\begin{itemize}
		\item[(a)] \emph{Subcritical case}: If $\bar g/g_{11}>0$, then $\phi_1(\alpha)$ is the unique $\theta$-periodic solution for $\alpha\geq\alpha^\ast$, and \eqref{deq} possesses exactly two $\theta$-periodic solutions distinct from $\phi_1(\alpha)$ for $\alpha<\alpha^\ast$. 
		In case \eqref{morse}, the Morse index along $\Gamma_2$ is given by $m_\ast(\phi_1(\alpha))+\sgn g_{11}$ for all $\alpha<\alpha^\ast$. 
		In particular, under \eqref{stab}, $g_{11}>0$ implies that $\Gamma_2$ is unstable, while $g_{11}<0$ implies exponential stability of $\Gamma_2$,

		\item[(b)] \emph{Supercritical case}: If $\bar g/g_{11}<0$, then $\phi_1(\alpha)$ is the unique $\theta$-periodic solution for $\alpha\leq\alpha^\ast$, and \eqref{deq} possesses exactly two $\theta$-periodic solutions distinct from $\phi_1(\alpha)$ for $\alpha>\alpha^\ast$. 
		In case \eqref{morse}, the Morse index along $\Gamma_2$ is given by $m_\ast(\phi_1(\alpha))-\sgn g_{11}$ for all $\alpha>\alpha^\ast$. 
		In particular, under \eqref{stab}, $g_{11}>0$ implies that $\Gamma_2$ is exponentially stable, while $g_{11}<0$ implies instability of $\Gamma_2$, 
	\end{itemize}
	where the occurring derivatives are given by \eqref{derf20} and
	\begin{align*}
		&[D_1^3\sF_t(\phi_t^\ast,\alpha^\ast)v^3](x)
		\\
		&
		=D_2^3G_t\intoo{x,\int_\Omega f_t(x,y,\phi_t^\ast(y),\alpha^\ast)\d\mu(y),\alpha^\ast}
		\intoo{\int_\Omega D_3f_t(x,y,\phi_t^\ast(y),\alpha^\ast)v(y)\d\mu(y)}^3\\
		&+3D_2^2G_t\intoo{x,\int_\Omega f_t(x,y,\phi_t^\ast(y),\alpha^\ast)\d\mu(y),\alpha^\ast}\intoo{\int_\Omega D_3f_t(x,y,\phi_t^\ast(y),\alpha^\ast)v(y)\d\mu(y)}
		\\
		&
		\qquad\intoo{\int_\Omega D_3^2f_t(x,y,\phi_t^\ast(y),\alpha^\ast)v(y)^2\d\mu(y)}\\
		&+D_2G_t\intoo{x,\int_\Omega f_t(x,y,\phi_t^\ast(y),\alpha^\ast)\d\mu(y),\alpha^\ast}\int_\Omega D_3^3f_t(x,y,\phi_t^\ast(y),\alpha^\ast)v(y)^3\d\mu(y)
	\end{align*}
	for all $t\in\Z$, $x\in\Omega$ and $v\in C_d$. 
\end{prop}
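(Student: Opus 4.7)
The plan is to apply the abstract pitchfork bifurcation result from Appendix~\ref{appA} (the analogue of \tref{thmcrosbif} and \cref{cortransbif} handling the degenerate case $g_{20}=0$) to the cyclic equation $G(\hat\phi,\alpha)=0$. The abstract framework from Step~(I) of the proof of \tref{thmfold} is already in place: since $m\geq 3$, $G\in C^3$, its partial derivative $D_1G(\hat\phi^\ast,\alpha^\ast)$ is Fredholm of index~$0$ with $N(D_1G)=\spann\{\hat\xi^\ast\}$, and the bounded linear functional $z'(\hat v):=\iprod{\hat\eta^\ast,\hat v}_\theta$ satisfies $N(z')=R(D_1G)$ by \cref{cordual}. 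Throughout I abbreviate $G_{ij}:=D_1^iD_2^jG(\hat\phi^\ast,\alpha^\ast)$.

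First I identify $\bar\psi$. Reading the block structure \eqref{propG1}--\eqref{propG2} componentwise, the first $\theta$ equations of \eqref{wbar} are exactly $D_1G(\hat\phi^\ast,\alpha^\ast)\bar\psi=-G_{20}(\hat\xi^\ast)^2$, while the trailing scalar identity is the normalization $\iprod{\hat\eta^\ast,\bar\psi}_\theta=0$, i.e.\ $\bar\psi\in N(z')$. The right-hand side lies in $R(D_1G)=N(z')$ precisely because hypothesis \eqref{crossrel} combined with \lref{lemgder} reads $z'(G_{20}(\hat\xi^\ast)^2)=0$; thus $\bar\psi$ exists and is uniquely determined modulo $\spann\{\hat\xi^\ast\}$, with the normalization pinning down the representative in $N(z')$.

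Next I compute the curve data. Since $g_{20}=0$, \tref{thmcross}(b) already gives $\dot\alpha_2(0)=0$ and $\dot\gamma_2(0)=\hat\xi^\ast$, so I expand $\gamma_2(s)=\phi^\ast+s\hat\xi^\ast+\tfrac{s^2}{2}\ddot\gamma_2(0)+o(s^2)$ and $\alpha_2(s)=\alpha^\ast+\tfrac{s^2}{2}\ddot\alpha_2(0)+o(s^2)$. Differentiating $G(\gamma_2(s),\alpha_2(s))\equiv 0$ twice at $s=0$ and using $(B_3)$, which forces $D_2G(\hat\phi^\ast,\alpha^\ast)\equiv 0$ and kills all mixed terms at this order, yields $D_1G\,\ddot\gamma_2(0)+G_{20}(\hat\xi^\ast)^2=0$; together with the projective normalization $\ddot\gamma_2(0)\in N(z')$ this identifies $\ddot\gamma_2(0)=\bar\psi$. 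A third differentiation at $s=0$ followed by application of $z'$ and translation through \lref{lemgder} produces
\[
z'(G_{30}(\hat\xi^\ast)^3)+3\,z'(G_{20}\,\hat\xi^\ast\,\bar\psi)+3\,g_{11}\,\ddot\alpha_2(0)=0,
\]
whence $\ddot\alpha_2(0)=-\bar g/(3g_{11})$. Since $\ddot\alpha_2(0)\neq 0$, the implicit function theorem applied to $s\mapsto\alpha_2(s)$ yields exactly two preimages $s=\pm\sqrt{2(\alpha-\alpha^\ast)/\ddot\alpha_2(0)}$ for each $\alpha$ on the side of $\alpha^\ast$ matching $\sgn\ddot\alpha_2(0)=-\sgn(\bar g/g_{11})$. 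These parametrize the two bifurcating $\theta$-periodic solutions and yield the sub-/supercritical dichotomy.

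For the Morse index assertions I apply \lref{lemma:eigder} to the smooth family $I_{C_d^\theta}+D_1G(\gamma_2(s),\alpha_2(s))$, obtaining a simple eigenvalue curve $\lambda(s)$ with $\lambda(0)=1$. The first-order formula gives $\dot\lambda(0)=z'(G_{20}(\hat\xi^\ast)^2)=0$, so the stability change is decided at second order. Differentiating the eigenpair relation twice, projecting via $z'$ and substituting $\dot\gamma_2(0)=\hat\xi^\ast$, $\ddot\gamma_2(0)=\bar\psi$, $\dot\alpha_2(0)=0$ and $\ddot\alpha_2(0)=-\bar g/(3g_{11})$ produces an explicit formula $\ddot\lambda(0)=c\,\bar g$ with a nonzero constant $c$ of determined sign. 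Via \pref{propspec} this governs the motion of the critical Floquet multiplier of $\tilde\Xi_\theta(\alpha_2(s))$ off the unit circle, while the remainder of the Floquet spectrum persists inside $B_1(0)$ under \eqref{stab} by the continuity argument from the proof of \tref{poinc}(c). Matching this second-order sign against the known Morse index jump along $\Gamma_1$ (\cref{trbif}) recovers the stated values $m_\ast(\phi_1(\alpha))\pm\sgn g_{11}$ along $\Gamma_2$ in each of the four sub-cases. The main obstacle is precisely this second-order eigenvalue bookkeeping: first-order transversality fails by hypothesis, so one must push the calculation to $\ddot\lambda(0)$ and then carefully track the signs of $\ddot\lambda(0)$, $\bar g$, $g_{11}$ and $\ddot\alpha_2(0)$ through each of the four cases in (a)--(b) to recover the stated Morse indices.
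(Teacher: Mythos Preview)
Your proposal is correct and follows essentially the same route as the paper: both reduce to the abstract pitchfork result \cref{corpitch} applied to $G(\hat\phi,\alpha)=0$, with \lref{lemgder} translating the abstract quantities $z'(G_{20}(\hat\xi^\ast)^2)$, $z'(G_{30}(\hat\xi^\ast)^3)$, $z'(G_{20}\hat\xi^\ast\bar\psi)$ into the stated sums, and then a sign analysis of $\ddot\lambda(0)$ along $\Gamma_2$ combined with \cref{trbif} yields the Morse indices. The paper simply cites \cref{corpitch} for the values of $\ddot\alpha_2(0)$, $\ddot\gamma_2(0)$ and $\ddot\lambda(0)=\tfrac{2}{3}\bar g$ (so your constant $c$ is explicitly $2/3>0$) and then carries out the four-case Morse-index bookkeeping in detail, whereas you sketch both computations inline; but the logic is identical.
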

We point out that under the symmetry condition \eqref{symcond}, the linear system \eqref{wbar} has the trivial solution $\bar\psi$, and the expression for $\bar g$ simplifies to the first sum.
\begin{proof}
	(I) \cref{corpitch} applies, since \lref{lemgder} guarantees
	\begin{align*}
		z'(G_{20}(\hat\xi^\ast)^2)
		=
		\iprod{\hat\eta^\ast,G_{20}(\hat\xi^\ast)^2}_\theta
		=
		\sum_{t=0}^{\theta-1}\int_\Omega\sprod{\eta^\ast_{t+1}(x),\left[D_1^2\sF_t(\phi_t^\ast,\alpha)(\xi^\ast_t)^2\right](x)}\d\mu(x)
	\end{align*}
	and
	\begin{align*}
		z'(G_{30}(\hat\xi^\ast)^3)+3z'(G_{20}\hat\xi^\ast\hat{\bar\psi})
		&=
		\iprod{\hat\eta^\ast,G_{30}(\hat\xi^\ast)^3}_\theta + 3\iprod{\hat\eta^\ast,G_{20}\hat\xi^\ast\hat{\bar\psi}}_\theta\\
		&=
		\sum_{t=0}^{\theta-1}\int_\Omega\sprod{\eta^\ast_{t+1}(x),\left[D_1^3\sF_t(\phi_t^\ast,\alpha)(\xi^\ast_t)^3\right](x)}\d\mu(x) \\
		&+ 3\sum_{t=0}^{\theta-1}\int_\Omega\sprod{\eta^\ast_{t+1}(x),\left[D_1^2\sF_t(\phi_t^\ast,\alpha)\xi^\ast_t\bar\psi_t\right](x)}\d\mu(x) = \bar{g}.
	\end{align*}
	The claim on the solution structure in $\cB_\eps(\phi^\ast)\tm A_0$ follows with the solutions $\phi_1(\alpha)=\gamma_1(\alpha_1^{-1}(\alpha))\in\ell_\theta(C_d)$ of \eqref{deq} from \tref{thmcross}(a) and the properties of $\Gamma_2$ determined by $\dot\alpha_2(0)=0$, $\ddot\alpha_2(0)=-\frac{\bar g}{3g_{11}}$, which contains the remaining $\theta$-periodic solutions. 
	
	(II) Along the branch $\Gamma_2$, we get from \lref{lemma:eigder} and \cref{corpitch} that $\ddot\lambda(0)=\tfrac{3}{2}\bar g$. For $\bar g>0$, the critical Floquet multiplier leaves the closed unit disk for $s\neq 0$, and we thus obtain a Morse index along $\Gamma_2$ given by
	\begin{equation}
		m_\ast(\Gamma_2)=m_\ast(\phi^\ast)+1.
		\label{no428}
	\end{equation}
	Conversely, for $\bar g<0$ the critical Floquet multiplier enters the open unit disk for $s\neq 0$, and thus we obtain an upper Morse index along $\Gamma_2$ given by
	\begin{equation}
		m^\ast(\Gamma_2)=m^\ast(\phi^\ast)-1.
		\label{no429}
	\end{equation}

	(III) Suppose that \eqref{morse} holds. First, let $\tfrac{\bar g}{g_{11}}>0$ and $\alpha<\alpha^\ast$. The case $g_{11}>0$ requires $\bar g>0$ and thus
	$
		m_\ast(\Gamma_2)
		\stackrel{\eqref{no428}}{=}
		m_\ast(\phi^\ast)+1
		\stackrel{\eqref{morse1}}{=}
		m_\ast(\phi_1(\alpha))+1.
	$
	The case $g_{11}<0$ leads to $\bar g<0$. Since the solutions on $\Gamma_2$ for $s\neq 0$ resp.\ the solutions $\phi_1(\alpha)$ are hyperbolic, it follows that
	$$
		m_\ast(\Gamma_2)
		=
		m^\ast(\Gamma_2)
		\stackrel{\eqref{no429}}{=}
		m^\ast(\phi^\ast)-1
		\stackrel{\eqref{morse2}}{=}
		m^\ast(\phi_1(\alpha))-1
		=
		m_\ast(\phi_1(\alpha))-1,
	$$
	which settles the subcritical situation. Second, let $\tfrac{\bar g}{g_{11}}<0$ and $\alpha>\alpha^\ast$. The case $g_{11}>0$ requires $\bar g<0$, and so hyperbolicity of the solution branches implies
	$$
		m_\ast(\Gamma_2)
		=
		m^\ast(\Gamma_2)
		\stackrel{\eqref{no429}}{=}
		m^\ast(\phi^\ast)-1
		\stackrel{\eqref{morse1}}{=}
		m^\ast(\phi_1(\alpha))-1
		=
		m_\ast(\phi_1(\alpha))-1.
	$$
	The case $g_{11}<0$ leads to $\bar g>0$ and
	$
		m_\ast(\Gamma_2)
		\stackrel{\eqref{no428}}{=}
		m_\ast(\phi^\ast)+1
		\stackrel{\eqref{morse2}}{=}
		m_\ast(\phi_1(\alpha))+1, 
	$
	which settles also the supercritical situation. 

	(IV) Stability properties of the solutions on $\Gamma_2$ result from \cref{trbif}. 
\end{proof}

Although our assumptions were formulated so as not to require the explicit knowledge of $\theta$-periodic solutions $\phi_0(\alpha)$ to \eqref{deq} in a whole neighborhood $A_0\subseteq A$ of $\alpha^\ast$, such knowledge is exceedingly helpful. 
\begin{remark}[equation of perturbed motion]\label{remshift}
	Let $\phi: A_0\to B_\eps(\phi^\ast)\subset\ell_\theta(C_d)$, $\eps>0$, be of class $C^m$ with $\phi(\alpha^\ast)=\phi^\ast$, and consider the \emph{equation of perturbed motion}
	\begin{equation}
		\tag{$\tilde\Delta_\alpha$}
		\boxed{u_{t+1} = \tilde\sF_t(u_t,\alpha) := \sF_t(u_t+\phi(\alpha)_t,\alpha) - \phi(\alpha)_{t+1},}
		\label{deqp}
	\end{equation}
	which is $\theta$-periodic, has the trivial solution for all $\alpha\in A_0$ and satisfies $(H_1$--$H_2)$ on a (possibly smaller) neighborhood $A_0$ of $\alpha^\ast$. For $1\leq i\leq m$ and $t\in\Z$ one obtains
	\begin{align*}
		D_1^i\tilde\sF_t(0,\alpha^\ast) & = D_1^i\sF_t(\phi_t^\ast,\alpha^\ast),&
		D_2^i\tilde\sF_t(0,\alpha^\ast) & = 0, \\
		D_1D_2\tilde\sF_t(0,\alpha^\ast) & = D_1D_2\sF_t(\phi_t^\ast,\alpha^\ast) + D_1^2\sF_t(\phi_t^\ast,\alpha^\ast)\dot\phi(\alpha^\ast)_t,
	\end{align*}
	and thus $(\tilde\Delta_\alpha)$ satisfies $(B_3)$. Besides the value $\dot\phi(\alpha^\ast)\in C_d^\theta$, no further information on $\phi$ is needed; $\dot\phi(\alpha^\ast)$ is given by \tref{poinc}(b) if $1\notin\sigma_\theta(\alpha^\ast)$, which in turn typically holds provided the bifurcating branch is of period greater than $\theta$. The punch line is that \eqref{deq} undergoes a crossing curve bifurcation (in the sense of \tref{thmcross}) at $(\phi^\ast,\alpha^\ast)$ if and only if \eqref{deqp} does at $(0,\alpha^\ast)$. The same holds for transcritical and pitchfork bifurcations ({\`a} la \pref{proptrans} resp.\ \pref{proppitch}). With $\tilde\Gamma_2=\svector{\gamma_2}{\alpha_2}(S)$ as the nontrivial solution branch of equation $(\tilde\Delta_\alpha)$ near $(0,\alpha^\ast)$, the branches of \eqref{deq} around $(\phi^\ast,\alpha^\ast)$ are $\Gamma_1 = \{(\phi(\alpha),\alpha)\in\ell_\theta(C_d)\tm\R:\,\alpha\in A_0\}$ and $\Gamma_2 = \svector{\gamma_2 + \phi \circ \alpha_2}{\alpha_2}(S)$.
\end{remark}
\subsection{Period doubling}
\label{sec43}
Assume that a solution $\phi^\ast\in\ell_\theta(C_d)$ of a $\theta$-periodic difference equation $(\Delta_{\alpha^\ast})$ possesses a Floquet multiplier $\nu\neq 1$, but $\nu^l=1$ holds for some $l\in\N$. Then the spectral mapping theorem shows $1\in\sigma_{l\theta}(\alpha^\ast)$, which means that \eqref{nohyp} holds with the period $\theta$ replaced by $l\theta$. Therefore, provided the further assumptions of our above results from Sects.~\ref{sec41}--\ref{sec42} are satisfied, they ensure that $l\theta$-periodic solutions to \eqref{deq} bifurcate from $\phi^\ast$. 

The case $l=2$ (i.e.\ a Floquet multiplier $-1$) deserves particular attention. Since $\Xi_{2\theta}(\alpha^\ast)$ has the eigenvalue $(-1)^2=1$, the $2\theta$-periodic sequences $\xi^\ast$ and $\eta^\ast$ from assumption $(B_2)$ actually satisfy
\begin{align*}
	\xi_{t+\theta}^\ast&=-\xi_t^\ast,&
	\eta_{t+\theta}^\ast&=-\eta_t^\ast\fall t\in\Z
\end{align*}
as a result of \pref{proplin}(c) and \pref{proplindual}(c). Due to the $\theta$-periodicity of $\phi^\ast$ and $\sF_t$, this implies 
\begin{align*}
	g_{20}
	=&
	\sum_{t=0}^{\theta-1}\int_\Omega\sprod{\eta^\ast_{t+1}(x),[D_1^2\sF_t(\phi_t^\ast,\alpha^\ast)(\xi^\ast_t)^2](x)}\d\mu(x)\\
	&+
	\sum_{t=0}^{\theta-1}\int_\Omega\sprod{\eta^\ast_{t+\theta+1}(x),[D_1^2\sF_{t+\theta}(\phi_{t+\theta}^\ast,\alpha^\ast)(\xi_{t+\theta}^\ast)^2](x)}\d\mu(x)
	=
	0
\end{align*}
and consequently neither \tref{thmfold} (fold bifurcation) nor \pref{proptrans} (transcritical bifurcation) apply. This leads to the territory of \pref{proppitch} guaranteeing a pitchfork bifurcation of $2\theta$-periodic solutions from $\phi^\ast\in\ell_\theta(C_d)$. For this reason, one speaks of a \emph{flip} or \emph{period doubling bifurcation}. Here, the bifurcation indicators simplify to
$$
	g_{11}=2\sum_{t=0}^{\theta-1}\int_\Omega\sprod{\eta^\ast_{t+1}(x),[D_1D_2\sF_t(\phi_t^\ast,\alpha^\ast)\xi^\ast_t](x)}\d\mu(x),
$$
and since every solution $\bar\psi\in\ell_\theta(C_d)$ of \eqref{wbar} also solves this linear equation for $\theta$ replaced by $2\theta$, one obtains
	\begin{align*}
		\bar g
		=&
		2\sum_{t=0}^{\theta-1}\int_\Omega\sprod{\eta^\ast_{t+1}(x),[D_1^3\sF_t(\phi_t^\ast,\alpha^\ast)(\xi^\ast_t)^3](x)}\d\mu(x)\\
		&+
		6\sum_{t=0}^{\theta-1}\int_\Omega\sprod{\eta^\ast_{t+1}(x),[D_1^2\sF_t(\phi_t^\ast,\alpha^\ast)\xi^\ast_t\bar\psi_t](x)}\d\mu(x)
		\neq
		0, 
	\end{align*}
i.e.\ the bifurcation indicators double their values. 
\subsection{Bifurcation of symmetric solutions}
This subsection explains how symmetry properties of the functions $G_t,f_t$ in $x,y$ affect the bifurcating branches. 
Suppose the habitat $\Omega\subset\R^\kappa$ is symmetric, i.e. $\Omega=-\Omega$. Given $u\in C_d$, we define the involution $u^-(x):=u(-x)$ and the projections
\begin{align*}
	u^o&:=\tfrac{1}{2}(u-u^-),& 
	u^e&:=\tfrac{1}{2}(u+u^-)
\end{align*}
on $C_d$. This yields a unique decomposition of $u$ into its \emph{odd} and \emph{even parts}, respectively, that is, if odd resp.\ even functions $v^o$, $v^e\in C_d$ with $u = v^o + v^e$ exist, then $v^o = u^o$ and $v^e = u^e$. Inspired by this, we will call an IDE \eqref{deq} \emph{even} if 
\begin{align*}
	G_t(-x,z_2,\alpha)&=G_t(x,z_2,\alpha),&
	f_t(-x,-y,z_1,\alpha)&=f_t(x,y,z_1,\alpha)
\end{align*}
holds for all $0\leq t<\theta_0$, $x,y\in\Omega$, $z_1\in U^1_t$, $z_2\in U^2_t$ and $\alpha\in A$.

\begin{prop}\label{prop_odd-even}
	If \eqref{deq} is even, then the following hold: 
	\begin{enumerate}
		\item $\phi$ is a solution of \eqref{deq} if and only if $\phi^-$ is.

		\item Assume $u\in U_t$ is even. Then for every $i,j\in\N_0$, $i+j\leq m$, the function $D_1^{i}D_2^{j}\sF_t(u,\alpha)v^1\cdots v^i\in C_d$ is odd if an odd number of $v^k$, $1\leq k\leq i$ are odd and the rest are even, and even if an even number of $v^k$, $1\leq k\leq i$ are odd and the rest are even. 
	\end{enumerate}
\end{prop}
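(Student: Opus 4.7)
The plan is to reduce both parts of the proposition to the single functorial identity
$$
    \sF_t(u,\alpha)^- = \sF_t(u^-,\alpha) \fall u\in U_t,\,0\leq t<\theta_0,\,\alpha\in A,
$$
i.e.\ the bounded linear involution $R:C_d\to C_d$, $Ru:=u^-$, commutes with every right-hand side $\sF_t(\cdot,\alpha)$ (where I tacitly assume $u^-\in U_t$ whenever $u\in U_t$, which is natural under the symmetry hypothesis). Part (a) is then immediate: applying $R$ to $\phi_{t+1}=\sF_t(\phi_t,\alpha)$ yields $\phi_{t+1}^- = \sF_t(\phi_t,\alpha)^- = \sF_t(\phi_t^-,\alpha)$, so $\phi^-$ is a solution whenever $\phi$ is, and the converse follows from $R^2=I_{C_d}$.

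For the key identity I would compute $\sF_t(u,\alpha)(-x)$ directly from \eqref{ury}. The assumed evenness $G_t(-x,\cdot,\alpha)=G_t(x,\cdot,\alpha)$ strips the outer sign, and the relation $f_t(-x,-y,z,\alpha)=f_t(x,y,z,\alpha)$ (equivalently, after substituting $y\mapsto-y$, $f_t(-x,y,z,\alpha)=f_t(x,-y,z,\alpha)$) reduces matters to the change of variables $\int_\Omega f_t(x,-y,u(y),\alpha)\d\mu(y) = \int_\Omega f_t(x,y,u(-y),\alpha)\d\mu(y)$. This holds as soon as $\mu$ is invariant under reflection $y\mapsto-y$, which is automatic for the Lebesgue measure on a symmetric $\Omega$ and for symmetric Nyström grids as discussed in \eref{exmeasure}; I would therefore take it as a standing assumption of this subsection.

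For (b), I would differentiate the commutation identity, regarded as an equality of $C^m$-maps $U_t\tm A\to C_d$, $i$ times in the first argument and $j$ times in the second. Since $R$ is bounded linear it commutes with Fr\'echet differentiation in $\alpha$, and since the derivative of $u\mapsto Ru$ is $R$ itself, iterating the chain rule in $u$ yields
$$
    R\bigl[D_1^iD_2^j\sF_t(u,\alpha)v^1\cdots v^ip^1\cdots p^j\bigr]
    =
    D_1^iD_2^j\sF_t(Ru,\alpha)\,Rv^1\cdots Rv^i\,p^1\cdots p^j
$$
for all $v^k\in C_d$, $p^l\in P$. Setting $u$ even, so that $Ru=u$, and taking each $v^k$ to be purely even or purely odd, the multilinearity of the derivative in $v^1,\ldots,v^i$ gives $Rv^1\cdots Rv^i = (-1)^n v^1\cdots v^i$, where $n$ counts the odd inputs. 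Hence $R$ acts on the iterated derivative as multiplication by $(-1)^n$, which is exactly the claimed parity.

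The main obstacle is purely notational: the commutation identity is a bookkeeping exercise that interleaves three distinct symmetries (of $G_t$ in $x$, of $f_t$ in the joint variable $(x,y)$, and the reflection invariance of $\mu$), and one must be careful to apply them in the right order. Beyond this step the argument is mechanical, as part (b) only combines the chain rule with symmetric multilinearity of the Fr\'echet derivatives and requires no additional estimates.
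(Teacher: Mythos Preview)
Your proposal is correct and follows essentially the same route as the paper: for (a) the paper computes $\phi_{t+1}^-(x)$ directly via the change of variables $y\mapsto -y$ and the symmetry of $G_t,f_t$, which is precisely your commutation identity $\sF_t(u,\alpha)^-=\sF_t(u^-,\alpha)$; for (b) the paper observes that the partial derivatives of $G_t$ and $f_t$ inherit the same symmetries and re-applies the change of variables, whereas you obtain this in one stroke by differentiating the commutation identity. Your explicit flag that the argument requires reflection invariance of $\mu$ is well taken---the paper invokes this implicitly through the cited change-of-variables theorem.
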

As a consequence of (b), it holds that for even IDEs, $\sF_t(u,\alpha)$ is an even function whenever $u$ is, that is, the set $\set{u\in U_t \, | \, \text{$u_t$ is even}}$ is positively invariant under $\sF_t(\cdot,\alpha)$ for each $0\leq t<\theta_0$, $\alpha\in A$, justifying the choice of nomenclature. 

Given $\theta\in\N$, we denote a sequence $\phi\in\ell_\theta(C_d)$ as \emph{odd} if each of its members $\phi_t\in C_d$, $t\in\Z$, is odd, and \emph{even} if all members are even. Due to
$$
	\phi_t^- 
	=
	\tfrac{1}{2}(\phi_t+\phi_t^-) - \tfrac{1}{2}(\phi_t-\phi_t^-)
	=
	\phi_t^e - \phi_t^o,
$$
it follows that for any solution $\phi$ of \eqref{deq} which is not even, $\phi^-$ is a distinct solution.
\begin{proof}
	Let $\alpha\in A$. 

	(a) If $\phi$ is a solution of \eqref{deq}, then for all $x\in\Omega$ one has
\begin{align*}
	\phi_{t+1}^-(x) 
	& \stackrel{\eqref{ury}}{=} 
	G_t\intoo{-x,\int_\Omega f_t(-x,y,\phi_t(y),\alpha)\d\mu(y),\alpha} \\
	& = 
	G_t\intoo{-x,\int_\Omega f_t(-x,-y,\phi_t(-y),\alpha)\d\mu(y),\alpha} \\
	& = 
	G_t\intoo{x,\int_\Omega f_t(x,y,\phi_t(-y),\alpha)\d\mu(y),\alpha}
	\stackrel{\eqref{ury}}{=}
	\sF_t(\phi_t^-,\alpha)(x), 
\end{align*}
where we have used the change of variables formula \cite[p.~332, 9.3.1~Thm.]{rana:02} applied with the reflection $\rho: x\in\Omega\mapsto-x\in\Omega$ satisfying $\rho(\Omega) = -\Omega = \Omega$.

	(b) is again an immediate consequence of the above change of variables formula together with the observation that both $D_2^{i}D_3^{j}G_t(-x,z_2,\alpha) = D_2^{i}D_3^{j}G_t(x,z_2,\alpha)$ and $D_3^{i}D_4^{j}f_t(-x,-y,z_1,\alpha) = D_3^{i}D_4^{j}f_t(x,y,z_1,\alpha)$ hold for all orders $i+j\leq m$. 
\end{proof}
\begin{corollary}\label{cor_eigv-odd-or-even}
	Consider an even, $\theta$-periodic solution $\phi^\ast$ of $(\Delta_{\alpha_\ast})$. If $(\lambda,\xi_0)$ is a simple eigenpair of $\Xi_{\theta}(\alpha^\ast)$, then the eigenfunction $\xi_0$ is either odd or even. Moreover, if $(\lambda,\eta_0)$ is a simple eigenpair of $\Xi_{\theta}(\alpha^\ast)'$, then $\eta_0$ is either odd or even.
\end{corollary}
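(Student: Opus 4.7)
The key observation is that the reflection $R:C_d\to C_d$, $(Ru)(x):=u(-x)$, is an involution whose eigenspaces for the eigenvalues $\pm1$ are precisely the subspaces of even and odd functions. Hence any operator commuting with $R$ preserves the decomposition $C_d=C_d^e\oplus C_d^o$, and an eigenvector belonging to a simple eigenvalue of such an operator must be either even or odd (else its nonzero odd and even parts would yield two linearly independent eigenvectors for the same eigenvalue).

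The plan is first to show that $\Xi_\theta(\alpha^\ast)$ commutes with $R$. Since $\phi^\ast$ is even, Proposition~\ref{prop_odd-even}(b) with $i=1$, $j=0$, applied to $u=\phi_t^\ast$, asserts that $D_1\sF_t(\phi_t^\ast,\alpha^\ast)$ sends odd functions to odd and even functions to even; equivalently, $D_1\sF_t(\phi_t^\ast,\alpha^\ast)R=RD_1\sF_t(\phi_t^\ast,\alpha^\ast)$ for each $0\leq t<\theta$. Composing yields $\Xi_\theta(\alpha^\ast)R=R\Xi_\theta(\alpha^\ast)$. If $(\lambda,\xi_0)$ is a simple eigenpair, then $R\xi_0$ is again an eigenvector for $\lambda$, so $R\xi_0=c\xi_0$, and $R^2=I_{C_d}$ forces $c=\pm 1$, giving the claim for $\xi_0$.

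For the dual operator one first observes that $R$ is self-dual with respect to the pairing \eqref{dpair}, i.e.\ $R'=R$: this follows from the change-of-variables formula applied to $\iprod{Ru,v}=\int_\Omega\sprod{u(-y),v(y)}\d\mu(y)$, exploiting the reflection-invariance of $\mu$ that is already implicit in the proof of Proposition~\ref{prop_odd-even}. Taking duals in the commutation relation $\Xi_\theta(\alpha^\ast)R=R\Xi_\theta(\alpha^\ast)$ and using $R'=R$ yields $R\Xi_\theta(\alpha^\ast)'=\Xi_\theta(\alpha^\ast)'R$. The very same simplicity argument as above then forces any eigenvector $\eta_0$ for a simple eigenvalue of $\Xi_\theta(\alpha^\ast)'$ to be odd or even.

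The only genuine subtlety is the verification that the derivative $D_1\sF_t(\phi_t^\ast,\alpha^\ast)$ commutes with $R$; after that, everything reduces to the elementary linear-algebraic fact that a simple eigenvector of an operator commuting with an involution lies in one of the involution's eigenspaces. Since the required commutation has essentially been established in Proposition~\ref{prop_odd-even}(b), no serious obstacle is anticipated.
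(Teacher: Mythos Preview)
Your proof is correct and follows essentially the same approach as the paper: both rely on \pref{prop_odd-even}(b) to see that $\Xi_\theta(\alpha^\ast)$ preserves the odd/even decomposition, and then use simplicity to conclude. Your phrasing via the involution $R$ and the commutation $\Xi_\theta(\alpha^\ast)R=R\Xi_\theta(\alpha^\ast)$ is a clean repackaging of the paper's direct decomposition argument, and your treatment of the dual case (dualizing the commutation relation via $R'=R$) is slightly more explicit than the paper's ``analogous argument'', but the underlying idea is identical.
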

Although this result fits into symmetric bifurcation theory, the following ad hoc proof is available. 
\begin{proof}
	Assume $\xi_0\in C_d$ is neither odd nor even, and consider $\xi_0^o$ and $\xi_0^e$, its unique decomposition into non-zero odd and even parts. By construction, we have
	$$
	\lambda\xi_0^o + \lambda\xi_0^e = \lambda\xi_0 = \Xi_{\theta}(\alpha^\ast)\xi_0 = \Xi_{\theta}(\alpha^\ast)\xi_0^o + \Xi_{\theta}(\alpha^\ast)\xi_0^e.
	$$
	By \pref{prop_odd-even} and finite induction, $\Xi_{\theta}(\alpha^\ast)\xi_0^o$ is odd and $\Xi_{\theta}(\alpha^\ast)\xi_0^e$ is even. Thanks to the uniqueness of the decomposition, one necessarily has $\Xi_{\theta}(\alpha^\ast)\xi_0^o = \lambda\xi_0^o$ and $\Xi_{\theta}(\alpha^\ast)\xi_0^e=\lambda\xi_0^e$. Due to $\xi_0^o\neq 0$ and $\xi_0^e\neq 0$, it follows that $(\lambda,\xi_0^o)$ and $(\lambda,\xi_0^e)$ are distinct eigenpairs of $\Xi_{\theta}(\alpha^\ast)$, contradicting the simplicity of the eigenvalue $\lambda$. The claim follows for $\Xi_{\theta}(\alpha^\ast)$, and also for $\Xi_{\theta}(\alpha^\ast)'$ by an analogous argument.
\end{proof}

We will consider for any fixed multiple $\theta\in\N$ of $\theta_0$ the set
$$
	\fC 
	:=
	\set{(\phi,\alpha)\in(\ell_\theta(C_d)\setminus\set{0}) \tm A
	\left|
	\begin{array}{l} 
		\phi\text{ is an entire solution of \eqref{deq} and}\\ 
		\text{$(\phi,\alpha)$ is not a bifurcation point of \eqref{deq}}
	\end{array}
	\right.},
$$
that is, the set of $\theta$-periodic nontrivial solutions that are not $\theta$-periodic bifurcation points, and for each $\theta$-periodic bifurcation point $(\phi^\ast,\alpha^\ast)$ of \eqref{deq} the family
$$
	\fC_{(\phi^\ast,\alpha^\ast)} 
	:=
	\set{C \subset \fC 
	\left|
	\begin{array}{l}
		C\text{ is a connected component of $\fC$ containing a sequence}\\
		((\phi_n,\alpha_n))_{n\in\N}\text{ satisfying }\lim_{n\to\infty}(\phi_n,\alpha_n)=(\phi^\ast,\alpha^\ast)
	\end{array}
	\right.}.
$$
\begin{prop}\label{prop_branches-odd-even}
	Suppose \eqref{deq} is even and satisfies $\sF_t(0,\alpha)\equiv 0$ on $\Z$ for all $\alpha\in A$. If the assumptions of \tref{thmcross} are satisfied in a point $(0,\alpha^\ast)$, with $(1,\xi_0^\ast)$ a simple eigenpair of $\Xi_\theta(\alpha^\ast)$, then exactly one of the following hold:
	\begin{enumerate}
		\item $\xi_0^\ast$ is even. If the assumptions for a transcritical bifurcation from \pref{proptrans} hold, then for all $C\in\fC_{(0,\alpha^\ast)}\neq\emptyset$ and all $(\phi,\alpha)\in C$, $\phi$ is even.

		\item $\xi_0^\ast$ is odd, and $\fC_{(0,\alpha^\ast)} = \set{C_+,\,C_-}$ with disjoint branches $C_+\cap C_- = \emptyset$, where $(\phi,\alpha)\in C_+\Leftrightarrow(\phi^-,\alpha)\in C_-$ for all $(\phi,\alpha)\in \ell_\theta(C_d)\tm A$. In particular, a pitchfork bifurcation takes place at $(0,\alpha^\ast)$; as a consequence, due to the validity of \eqref{crossrel} the assumptions of \pref{proptrans} do not hold. Moreover, both branches possess the same total population at each time step, i.e.\
		\begin{multline*}
			\set{\intoo{\int_\Omega\phi_t(x)\d\mu(x),\alpha} \in \R^d\tm A | \, (\phi,\alpha)\in C_+}\\
			=
			\set{\intoo{\int_\Omega\phi_t(x)\d\mu(x),\alpha} \in \R^d\tm A | \, (\phi,\alpha)\in C_-}
			\fall t\in\Z.
		\end{multline*}	
	\end{enumerate}
\end{prop}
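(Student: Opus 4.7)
The plan is to leverage the parity dichotomy of \cref{cor_eigv-odd-or-even} and split according to whether $\xi_0^\ast$ is even or odd, then derive each case from the uniqueness portions of the bifurcation theorems in \sref{sec42}. A preliminary observation: algebraic simplicity of the Floquet multiplier $1$ forces $\xi_0^\ast$ and the dual eigenfunction $\eta_0^\ast$ to share a parity---if the parities differed, the pairing $\iprod{\hat\eta^\ast,\hat\xi^\ast}_\theta$ would vanish by oddness of the integrand on the symmetric domain $\Omega$ (using reflection invariance of $\mu$), contradicting simplicity. Since $D_1\sF_t(0,\alpha^\ast)$ preserves parity by \pref{prop_odd-even}(b), this common parity propagates through all $\xi_t^\ast$ and $\eta_t^\ast$ via the representations in \pref{proplin}(c) and \pref{proplindual}(c).

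For case (a), with $\xi_0^\ast$ even, the tangent $-2(g_{11}/g_{20})\hat\xi^\ast$ of the transcritical branch $\phi_2$ from \pref{proptrans} is even. Since \eqref{deq} is even, $\phi_2(\alpha)^-$ is itself a $\theta$-periodic solution in the neighborhood furnished by \pref{proptrans}, distinct from the trivial branch for $\alpha\neq\alpha^\ast$, so local uniqueness forces $\phi_2(\alpha)^-=\phi_2(\alpha)$. To globalize, I fix $C\in\fC_{(0,\alpha^\ast)}$ and consider $C_{\text{even}}:=\set{(\phi,\alpha)\in C:\phi^-=\phi}$. This set is closed by continuity of $\phi\mapsto\phi-\phi^-$, nonempty (it contains points of the local branch $\Gamma_2$), and I claim relatively open in $C$: at any $(\phi,\alpha)\in C_{\text{even}}$ the pair is not a bifurcation point by the defining condition of $\fC$, so \tref{poinc} furnishes a unique smooth $\theta$-periodic continuation, which by \pref{prop_odd-even}(a) applied to this continuation together with uniqueness must consist of even solutions. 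Connectedness of $C$ then forces $C_{\text{even}}=C$.

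For case (b), with $\xi_0^\ast$ odd, \pref{prop_odd-even}(b) shows $D_1^2\sF_t(0,\alpha^\ast)(\xi_t^\ast)^2$ is even (two odd arguments), hence its inner product with the odd $\eta_{t+1}^\ast$ is a pointwise odd scalar, and the symmetric-domain integral $g_{20}$ vanishes. This confirms \eqref{crossrel}, rules out the transcritical hypothesis of \pref{proptrans}, and, under the implicit nondegeneracy $\bar g\neq 0$, places us in \pref{proppitch}. The resulting branch $\gamma_2(s)=s\hat\xi^\ast+o(s)$ splits into the disjoint local arcs $L_+:=\gamma_2(S\cap(0,\infty))$ and $L_-:=\gamma_2(S\cap(-\infty,0))$; since $\gamma_2(s)^-$ has leading term $-s\hat\xi^\ast$, local uniqueness along $\Gamma_2$ forces $\gamma_2(s)^-=\gamma_2(\sigma(s))$ with $\sigma(s)=-s+o(s)$, so the involution $\iota:(\phi,\alpha)\mapsto(\phi^-,\alpha)$ interchanges $L_+$ and $L_-$. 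Defining $C_\pm$ as the connected components of $\fC$ containing $L_\pm$, continuity of $\iota$ yields $\iota(C_+)=C_-$, and the equality of total populations is immediate from $\int\phi_t\d\mu=\int\phi_t^-\d\mu$ under reflection invariance of $\mu$.

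The hardest step is the global disjointness $C_+\cap C_-=\emptyset$, since a priori the two pitchfork arms could reconnect along a path far from $(0,\alpha^\ast)$. My line of attack is to introduce the continuous functional $p(\phi,\alpha):=\iprod{\hat\eta^\ast,\phi}_\theta$, which by the expansion $p(\gamma_2(s),\alpha_2(s))=s\iprod{\hat\eta^\ast,\hat\xi^\ast}_\theta+o(s)$ is strictly positive on $L_+$ and strictly negative on $L_-$; if $C_+=C_-$, connectedness would yield a solution in $\fC$ where $p=0$, which one would aim to rule out by showing that every $\iota$-fixed (or $p$-vanishing) solution encountered along the component is either the trivial solution or itself a bifurcation point, hence excluded from $\fC$. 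Making this last implication precise---perhaps through a Rabinowitz-type global alternative combined with the $\iota$-equivariance of the problem---is where I expect the principal technical obstacle to lie.
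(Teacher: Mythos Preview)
Your overall architecture---parity dichotomy via \cref{cor_eigv-odd-or-even}, local uniqueness from \sref{sec42}, then globalization---matches the paper's, and your preliminary observation that $\eta_0^\ast$ shares the parity of $\xi_0^\ast$ (and hence your direct computation of $g_{20}=0$ in case (b)) is a clean addition the paper does not make explicit. However, two steps in your proposal do not go through as written.

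\medskip
\textbf{The openness step in (a).} You invoke \tref{poinc} at a point $(\phi,\alpha)\in C_{\text{even}}$ to obtain a locally unique continuation. But \tref{poinc} requires the weak hyperbolicity condition $1\notin\sigma_\theta(\alpha)$, and ``not a bifurcation point'' does not imply this; a point can satisfy $1\in\sigma_\theta(\alpha)$ yet fail to bifurcate. The paper avoids this gap by working with the complement: let $C_{ne}:=\{(\phi,\alpha)\in C:\phi\text{ not even}\}$, which is open in $C$ by continuity of $\phi\mapsto\phi-\phi^-$. If $C_{ne}\neq\emptyset$, pick a boundary point $(u^\dag,\alpha^\dag)\in C$; necessarily $u^\dag$ is even. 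A sequence $(u_n,\alpha_n)\to(u^\dag,\alpha^\dag)$ in $C_{ne}$ then yields a \emph{second} solution sequence $(u_n^-,\alpha_n)$ with the same limit $(u^\dag,\alpha^\dag)=((u^\dag)^-,\alpha^\dag)$ and $u_n\neq u_n^-$, so $(u^\dag,\alpha^\dag)$ is a bifurcation point---contradicting $(u^\dag,\alpha^\dag)\in\fC$. No spectral hypothesis is needed.

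\medskip
\textbf{The disjointness step in (b).} Your proposed functional $p$ and Rabinowitz-type argument are unnecessary, and the step you flag as ``the principal technical obstacle'' dissolves once you import the idea from (a). The paper first shows---by the \emph{same} boundary/bifurcation argument just described---that every $(\phi,\alpha)\in C_+$ has $\phi$ non-even, hence $\phi\neq\phi^-$. It follows that $(\phi^-,\alpha)$ lies in some component of $\fC$, and since $\iota:(\phi,\alpha)\mapsto(\phi^-,\alpha)$ is a homeomorphism of $\fC$ swapping the two local arcs $L_\pm$ of $\Gamma_2$, the component $C_-:=\iota(C_+)$ is the one containing $L_-$. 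That $\fC_{(0,\alpha^\ast)}$ has at most two elements is then read off from the local uniqueness in \tref{thmcross}: near $(0,\alpha^\ast)$ every nontrivial solution lies on $\Gamma_2\setminus\{(0,\alpha^\ast)\}=L_+\cup L_-$. Separately, your appeal to \pref{proppitch} ``under the implicit nondegeneracy $\bar g\neq 0$'' imports a hypothesis not present in the statement; the paper instead infers the pitchfork structure (and hence \eqref{crossrel}) from the existence of the two distinct components $C_\pm$, without ever assuming $\bar g\neq 0$.
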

\begin{proof}
	That $\fC_{(0,\alpha^\ast)}$ is nonempty follows directly from \tref{thmcross}, and that $\xi_0^\ast$ is either odd or even is due to \cref{cor_eigv-odd-or-even}. We consider each case.
	
	(a) Assume first that $\xi_0^\ast$ is even, and that \pref{proptrans} applies. By \tref{thmcross}, the locally unique nontrivial solution branch is $\phi_2: A_0\to B_\eps(0)$. Assume that $\phi_2(\alpha)$ is not even for some $\alpha\in A_0$. By \pref{prop_odd-even}(a), it follows that $\phi_2(\alpha)^-$ is a distinct nontrivial solution of \eqref{deq}. Evidently, $\left\|\phi_2(\alpha)^-\right\| = \left\|\phi_2(\alpha)\right\| < \eps$, contradicting the local uniqueness of $\phi_2$. Thus, $\phi_2(\alpha)$ is even for all $\alpha\in A_0$.

	Fix now $C\in\fC_{(0,\alpha^\ast)}$, and consider the subset
	$$
		C_{ne} := \set{(u,\alpha)\in C \, | \, \text{$u$ is not even}}.
	$$
	By the above, $C_{ne}\neq C$, and by the decomposition $u = u^o + u^e$, $u^o = \tfrac{1}{2}(u-u^-)$, $C_{ne}$ is open as a subset of $C$. Assume $C_{ne}\neq\emptyset$. Now $\emptyset\neq \partial C_{ne}\subset C$, so there exists $(u^\dag,\alpha^\dag)\in\partial C_{ne}$; necessarily, $u^\dag$ is even. This allows us to select some convergent sequence $((u_n,\alpha_n))_{n\in\N}$ in $C_{ne}$ with limit $(u^\dag,\alpha^\dag)$. But by an argument similar to the above, $(u_n^-,\alpha_n)_{n\in\N}$ is now also a sequence of $\theta$-periodic solutions converging towards $((u^\dag)^-,\alpha^\dag)=(u^\dag,\alpha^\dag)$, and as $u_n$ is not even for any $n\in\N$, one has $u_n\neq u_n^-$ for all $n\in\N$. Thus, $(u^\dag,\alpha^\dag)\in C$ is a bifurcation point, which is impossible by construction. It follows that $C_{ne}=\emptyset$, that is, $u$ is even for all $(u,\alpha)\in C$.

	(b) Assume $\xi_0^\ast$ is odd. Considering the solution branch $\svector{\gamma_2}{\alpha_2}: S\to B_\eps(0) \tm A_0$, there must exist, by construction, some $C_+\in\fC_{(0,\alpha_\ast)}$ such that $(\gamma_2(s),\alpha_2(s))\in C_+$ for all sufficiently small positive $s\in S$.
	We suppose that $\gamma_2(s)$ is even for all such $s$, that is, $\gamma_2(s)^o=0$ for all positive $s\in S$ close to $0$. As $\gamma_2(s)_0 = s\xi_0^\ast + \tilde{\gamma}_2(s)_0$, one necessarily has $\tilde{\gamma}_2(s)_0^o \equiv -s\xi_0^\ast$ near $0$. Due to $\dot{\tilde{\gamma}}_2(0) = 0$, this implies $\xi_0^\ast = (\dot{\tilde{\gamma}}_2(0)_0)^e$, which is impossible, since no non-zero function is both odd and even. Hence, with 
	$$
		C_{ne}^+ := \set{(u,\alpha)\in C_+ \, | \, \text{$u$ is not even}},
	$$
	we necessarily have $C_{ne}^+\neq\emptyset$. If $C_{ne}^+\neq C_+$, then again $C_{ne}^+$ is open as a subset of $C_+$ with $\partial C_{ne}^+\neq\emptyset$, and we again obtain a contradiction. Thus $C_{ne}^+=C_+$, that is, for any $(u,\alpha)\in C_+$, $u$ is not even, implying $u\neq u^-$. As $C_+$ contains no bifurcation points, each $\alpha$ can appear at most once in $C_+$, implying $(u^-,\alpha)\notin C_+$. By \pref{prop_odd-even}, there consequently exists $C_-\in\fC_{(0,\alpha^\ast)}\setminus\set{C_+}$ so that $(u^-,\alpha)\in C_-$, and by \tref{thmcross}, $\fC_{(0,\alpha^\ast)}$ consists exactly of the two disjoint components $C_+$ and $C_-$. It follows that $(0,\alpha^\ast)$ is a pitchfork bifurcation point; therefore, \pref{proptrans} cannot apply. As the conditions of \tref{thmcross} are assumed, \pref{proptrans} can only fail if \eqref{crossrel} holds. The final statement is another consequence of the change of variables formula from \cite[p.~332, 9.3.1~Thm.]{rana:02}. 
\end{proof}
\section{Applications}
\label{sec5}
Our setting is now simpler than above, since merely single IDEs rather than systems are considered. We write $C(\Omega)$ for the Banach algebra of continuous functions $u:\Omega\to\R$ equipped with the $\sup$-norm $\norm{\cdot}$, and
$$
	\ell_\theta:=\set{(\phi_t)_{t\in\Z}:\,\phi_t\in C(\Omega)\text{ and }\phi_{t+\theta}=\phi_t\text{ for all }t\in\Z}
$$
abbreviates the space of $\theta$-periodic sequences in $C(\Omega)$. 

The subsequent types of IDEs essentially serve two purposes: First, they illustrate all types of bifurcations introduced in \sref{sec4}. Second, they demonstrate different degrees of numerical effort in order to verify the required assumptions.
\subsection{Degenerate kernels}
\label{sec51}
The example class of degenerate kernel IDEs is the simplest one, since a bifurcation analysis can be kept on a purely analytical level and does not require numerical tools. For a logistic nonlinearity in \eqref{ide1}, this is demonstrated in \cite{bramburger:lutscher:18}, while our growth functions in nature are academic and not motivated by ecology. On the parameter space $A=\R$ we study Hammerstein IDEs with right-hand side 
$$
	\sF_t(u,\alpha)=\int_\Omega k(\cdot,y)g_t(y,u(y),\alpha)\d\mu(y), 
$$ 
whose kernels can be represented as $k(x,y) = \sum_{i=1}^n k_i(y)e_i(x)$ for all $x,y\in\Omega$ and some $n\in\N$. Here, $k_1,\ldots,k_n\in C(\Omega)$ and the functions $e_1,\ldots,e_n\in C(\Omega)$ are supposed to be linearly independent. This results in $\sF_t(u,\alpha)\in\spann\set{e_1,\ldots,e_n}$ for all $u,\alpha$, and the dynamical behavior of \eqref{deq} is immediately finite-dimensional, i.e.\ completely determined by a difference equation in $\R^n$.

Nevertheless, degenerate kernel IDEs can still be considered as infinite-dimen\-sional dynamical systems, and showcase our methods for periodic equations, dual operators and so on; they are somewhat artificial, to allow for explicit computation, but non-artificial examples are largely relegated to numerics.
\begin{example}[cosine kernel]\label{excos}
	On the habitat $\Omega=[-\tfrac{L}{2},\tfrac{L}{2}]$ equipped with the Lebesgue measure $\mu$ and real numbers $a,L>0$ satisfying $aL\leq\tfrac{1}{2}$, we suppose that $k(x,y):=k_0(x-y)$ is given as finite radius dispersal kernel 
	\begin{align}
		k_0:\R&\to\R,&
		k_0(r)&:=
		\frac{\pi a}{2}
		\begin{cases}
			\cos\intoo{\pi ar},&2a\abs{r}\leq 1,\\
			0,&2a\abs{r}>1
		\end{cases}
		\label{kerfinrad}
	\end{align}
	(cf.\ \cite{bramburger:lutscher:18,kot:schaeffer:86}), which is degenerate, because it can be written as
	$$
		k_0(x-y)
		=
		\frac{\pi a}{2}
		\begin{cases}
			e_1(x)e_1(y)+e_2(x)e_2(y),&2a\abs{x-y}\leq 1,\\
			0,&2a\abs{x-y}>1
		\end{cases}
	$$
	with the odd resp.\ even functions $e_1,e_2:[-\tfrac{L}{2},\tfrac{L}{2}]\to\R$, 
	\begin{align*}
		e_1(x)&:=\cos(\pi ax),&
		e_2(x)&:=\sin(\pi ax).
	\end{align*}
	In this context, it is handy to abbreviate $\omega_{ij}:=\int_{-\tfrac{L}{2}}^{\tfrac{L}{2}}e_1(x)^ie_2(x)^j\d x$, $i,j\in\N_0$, satisfying $\omega_{ij} > 0$ for even $j$ and $\omega_{ij}=0$ otherwise. Note that in both subsequent Exs.~\ref{exfold} and \ref{exaarset} equipped with the cosine kernel, the bifurcation points only depend on the product $aL\in(0,\tfrac{1}{2}]$, and not on $a,L$ individually. 
\end{example}
\subsubsection{Fold bifurcation}
Our first example is an autonomous IDE \eqref{deq} having the right-hand side
\begin{equation}
	\sF(u,\alpha):=\int_\Omega k(\cdot,y)\intoo{2\alpha+u(y)^2}\d\mu(y)
	\label{exfold1}
\end{equation}
defined on the sets $U_t\equiv C(\Omega)$. The partial derivatives compute as
\begin{align*}
	D_1\sF(u,\alpha)v&=2\int_\Omega k(\cdot,y)u(y)v(y)\d\mu(y),&
	D_2\sF(u,\alpha)&=2\int_\Omega k(\cdot,y)\d\mu(y)
\end{align*}
and $D_1^2\sF(u,\alpha)v^2=2\int_\Omega k(\cdot,y)v(y)^2\d\mu(y)$ for $u,v\in C(\Omega)$, $\alpha\in\R$. The variational equation \eqref{var} along a fixed-point $\phi^\ast$ becomes $v_{t+1}=2\int_\Omega k(\cdot,y)\phi^\ast(y)v_t(y)\d\mu(y)$, and has the dual variational equation (see \lref{lemdual})
$$
	v_t\stackrel{\eqref{derf10s}}{=}-2\phi^\ast\int_\Omega k(y,\cdot)v_{t+1}(y)\d\mu(y). 
$$
In general, it is difficult to verify a fold bifurcation in IDEs explicitly, since the fold point $(\phi^\ast,\alpha^\ast)$ is unknown and has to be determined numerically using e.g.\ path-following techniques. This situation simplifies for specific kernels.
\begin{example}[cosine kernel]\label{exfold}
	For the cosine kernel \eqref{kerfinrad} from \eref{excos}, the inclusion $\sF(u,\alpha)\in\spann\set{e_1,e_2}$ holds for all $u$ and $\alpha$, since
$$
	\sF(u,\alpha)
	\stackrel{\eqref{exfold1}}{=}
	\frac{\pi a}{2}\sum_{i=1}^2\int_{-\tfrac{L}{2}}^{\tfrac{L}{2}}e_i(y)(2\alpha+u(y)^2)\d y\,e_i. 
$$
Thus, \eqref{deq} is an even IDE. In a fixed point $\phi^\ast$ of $(\Delta_{\alpha^\ast})$ one has
\begin{align*}
	D_1\sF(\phi^\ast,\alpha^\ast)v
	&=
	\pi a\sum_{i=1}^2\int_{-\tfrac{L}{2}}^{\tfrac{L}{2}}e_i(y)\phi^\ast(y)v(y)\d y\,e_i,\\
	D_1^2\sF(\phi^\ast,\alpha^\ast)v^2
	&=
	\pi a\sum_{i=1}^2\int_{-\tfrac{L}{2}}^{\tfrac{L}{2}}e_i(y)v(y)^2\d y\,e_i,\\
	D_2\sF(\phi^\ast,\alpha^\ast)
	&=
	\pi a\sum_{i=1}^2\int_{-\tfrac{L}{2}}^{\tfrac{L}{2}}e_i(y)\d y\,e_i
	=
	\pi a\omega_{10}e_1
\end{align*}
and
$
	D_1\sF(\phi^\ast,\alpha^\ast)'v
	=
	\pi a\phi^\ast\sum_{i=1}^2\int_{-\tfrac{L}{2}}^{\tfrac{L}{2}}v(y)\d y\,e_i
$
	for all $v\in C[-\tfrac{L}{2},\tfrac{L}{2}]$. In order to apply \tref{thmfold}, consider the (potential) bifurcation point $(\phi^\ast,\alpha^\ast)$ with 
	\begin{align*}
		\phi^\ast
		&:=
		\frac{2}{a\pi\omega_{30}}e_1 =
		\frac{6}{\sin\frac{\pi a L}{2}(5 + \cos (\pi a L))}e_1,&
		\alpha^\ast
		&:=
		\frac{3}{9-8\cos(\pi aL)-\cos(2\pi aL)}.
	\end{align*}
	It is straightforward to show $N(D_1\sF(\phi^\ast,\alpha^\ast)-I_{C(\Omega)})=\spann\set{e_1}$. Hence, $(B_1$--$B_2)$ hold, and we define $\xi^\ast_0:=e_1$. For the dual operator of the derivative, one obtains the null space
$
	N(D_1\sF(\phi^\ast,\alpha^\ast)'-I_{C(\Omega)})=\spann\set{\phi^\ast \, e_1}=\spann\set{e_1^2}.
$
Setting $\eta^\ast_0:=e_1^2$, we can verify the conditions of \tref{thmfold}. Thanks to $aL\in(0,\tfrac{1}{2}]$, one obtains
\begin{align*}
	g_{01}
	&=
	\int_{-\tfrac{L}{2}}^{\tfrac{L}{2}}\eta_0^\ast(x)[D_2\sF(\phi^\ast,\alpha^\ast)](x)\d x
	=
	\pi a \omega_{10}\omega_{30}>0,\\
	g_{20}
	&=
	\int_{-\tfrac{L}{2}}^{\tfrac{L}{2}} \eta_0^\ast(x)[D_1^2\sF(\phi^\ast,\alpha^\ast)(\xi^\ast_0)^2](x)\d x
	=
	\pi a\omega_{30}^2>0. 
\end{align*}
Therefore, \tref{thmfold}(a) implies that \eqref{deq} with the right-hand side \eqref{exfold1} undergoes a subcritical fold bifurcation at $(\phi^\ast,\alpha^\ast)$ (see \fref{figfold1} (left)). Finally, one shows that $N(D_1\sF(\phi^\ast,\alpha^\ast)-\lambda I_{C(\Omega)})=\set{0}$ for all $\lambda\in\C$ with $|\lambda|>1$, and so \cref{bifurmorse} yields that the bifurcating branch $\Gamma^-$ growing in the direction of $-e_1$ is exponentially stable for all $\alpha<\alpha^\ast$ near $\alpha^\ast$, while the bifurcating branch $\Gamma^+$ growing in the direction of $e_1$ is unstable (with Morse index $1$) for all $\alpha<\alpha^\ast$ near $\alpha^\ast$. The graph of the total population in \fref{figfold1} (right) reflects the fold. 
	\begin{figure}
		\includegraphics[width=52mm]{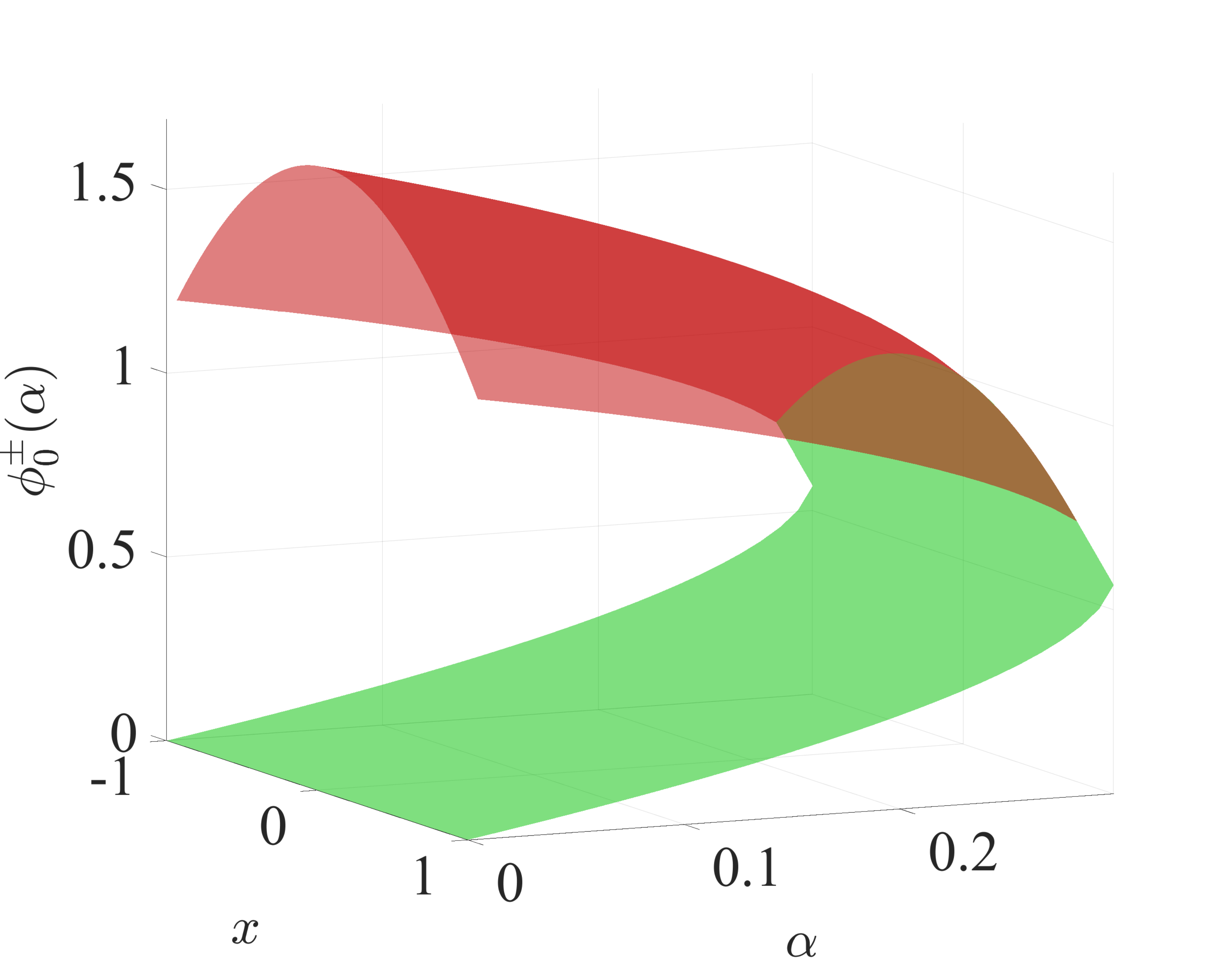}
		\includegraphics[width=52mm]{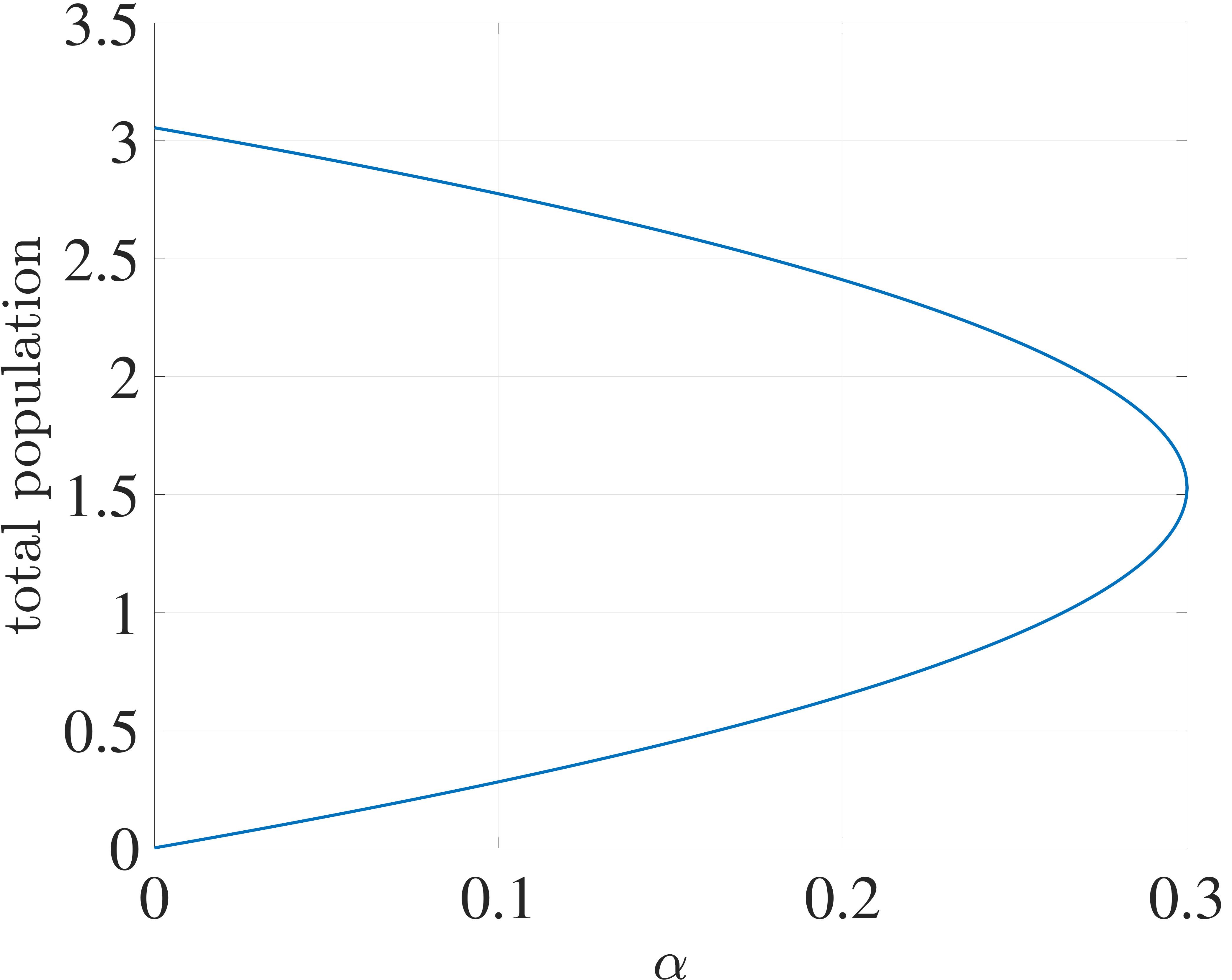}
		\caption{Branch of the subcritical fold bifurcation for \eqref{deq} with right-hand side \eqref{exfold1} and kernel \eqref{kerfinrad} (left). 
		Total population over $\alpha\in[0.0,0.3]$ with $a=\tfrac{1}{4}$, $L=2$ along the branch (right)}
		\label{figfold1}
	\end{figure}

	We point out that along the branches $\Gamma^+$ and $\Gamma^-$ further bifurcations of pitchfork and period doubling type can be observed for parameters $\alpha<\alpha^\ast$. Without providing an explicit analysis, we refer to \fref{figschem1} (left) for a schematic illustration. 
\end{example}
\subsubsection{Bifurcation of $2$-periodic solutions}
For $U_t\equiv C(\Omega)$ consider an autonomous IDE \eqref{deq} with right-hand side
\begin{equation}
	\sF(u,\alpha)
	:=
	\alpha\int_\Omega k(\cdot,y)u(y)\intoo{u(y)^2-1}\d\mu(y)
	\label{exflip1}
\end{equation}
satisfying the symmetry condition \eqref{symcond} with $\phi_t^\ast\equiv 0$ on $\Z$. A $2$-periodic solution born from a period doubling is explicitly constructed and shown to pitchfork bifurcate into solutions of the same period $2$.

For all $v\in C(\Omega)$, the derivatives of \eqref{exflip1} in the origin read as
\begin{align*}
	D_1\sF(0,\alpha)v&=-\alpha\int_\Omega k(\cdot,y)v(y)\d\mu(y),&
	D_2\sF(0,\alpha)&=0,\\
	D_1D_2\sF(0,\alpha)v&=-\int_\Omega k(\cdot,y)v(y)\d\mu(y),&
	D_1^2\sF(0,\alpha)&=0,\\
	D_1^3\sF(0,\alpha)v^3&=6\alpha\int_\Omega k(\cdot,y)v(y)^3\d\mu(y).&&
\end{align*}
\begin{example}[cosine kernel]\label{exaarset}
	Working again with the degenerate kernel \eqref{kerfinrad} studied in \eref{exfold}, the right-hand side \eqref{exflip1} becomes
	$$
		\sF(u,\alpha)
		=
		\frac{\pi a \alpha}{2}\sum_{i=1}^2\int_{-\tfrac{L}{2}}^{\tfrac{L}{2}}e_i(y)u(y)\intoo{u(y)^2-1}\d y\,e_i
	$$
	and is even. For all $v,\bar v\in C[-\tfrac{L}{2},\tfrac{L}{2}]$, the derivatives read as
	\begin{align*}
		D_1\sF(u,\alpha)v&=\frac{\pi a \alpha}{2}\sum_{i=1}^2\int_{-\tfrac{L}{2}}^{\tfrac{L}{2}}e_i(y)(3u(y)^2-1)v(y)\d y\,e_i,\\
		D_1^2\sF(u,\alpha)v\bar v&=3\pi a\alpha\sum_{i=1}^2\int_{-\tfrac{L}{2}}^{\tfrac{L}{2}}e_i(y)u(y)v(y)\bar v(y)\d y\,e_i,\\
		D_1^3\sF(u,\alpha)v^3&=3\pi a\alpha\sum_{i=1}^2\int_{-\tfrac{L}{2}}^{\tfrac{L}{2}}e_i(y)v(y)^3\d y\,e_i,\\
		D_2\sF(u,\alpha)&=\frac{\pi a}{2}\sum_{i=1}^2\int_{-\tfrac{L}{2}}^{\tfrac{L}{2}}e_i(y)u(y)(u(y)^2-1)\d y\,e_i,\\
		D_2^2\sF(u,\alpha)&=0,\\
		D_1D_2\sF(u,\alpha)v&=\frac{\pi a}{2}\sum_{i=1}^2\int_{-\tfrac{L}{2}}^{\tfrac{L}{2}}e_i(y)(3u(y)^2-1)v(y)\d y\,e_i. 
	\end{align*}
	First, we claim a period doubling at $(0,\alpha_0^0)$ with $\alpha_0^0:=\frac{2}{\pi a\omega_{20}}>0$. Indeed, one shows $N(D_1\sF(0,\alpha_0^0)+I_{C(\Omega)})=\spann\set{e_1}$ and $1\notin\sigma(D_1\sF(0,\alpha_0^0))$. As per \sref{sec43}, we set $\theta:=2$, and verify $N(\Xi_2(\alpha_0^0)-I_{C(\Omega)})=N(\Xi_2(\alpha_0^0)'-I_{C(\Omega)})=\spann\set{e_1}$, giving rise to $2$-periodic solutions $\xi^\ast = (\xi^\ast_t)_{t\in\Z}$ of \eqref{var} resp.\ $\eta^\ast = (\eta^\ast_t)_{t\in\Z} \in \ell_2$ of $(V'_{\alpha_0^0})$, satisfying $\xi^\ast_0 = \eta^\ast_0 = e_1$, $\xi^\ast_1 = \eta^\ast_1 = -e_1$. Noting that $D_1^2\sF(0,\alpha_0^0)=0$ implies that the quantity $\bar\psi$ from \eqref{wbar} is zero, we can compute the bifurcation conditions
	\begin{align*}
		g_{11}
		=&
		\sum_{t=0}^1\int_{-\tfrac{L}{2}}^{\tfrac{L}{2}}\eta^\ast_{t+1}(x)[D_1D_2\sF(0,\alpha_0^0)\xi^\ast_t](x)\d x = \pi a\omega_{20}^2 > 0,\\
		&\sum_{t=0}^1\int_{-\tfrac{L}{2}}^{\tfrac{L}{2}}\eta^\ast_{t+1}(x)
		[D_2^2\sF(0,\alpha_0^0)](x)\d x = 0, \\
		&
		\sum_{t=0}^1\int_{-\tfrac{L}{2}}^{\tfrac{L}{2}}\eta^\ast_{t+1}(x)
		[D_1^2\sF(0,\alpha_0^0)(\xi^\ast_t)^2](x)\d x = 0,\\
		\bar{g}
		=&
		\sum_{t=0}^1\int_{-\tfrac{L}{2}}^{\tfrac{L}{2}}\eta^\ast_{t+1}(x)
		[D_1^3\sF(0,\alpha_0^0)(\xi^\ast_t)^3](x)\d x + 0 
		=
		-12\omega_{40} < 0.
	\end{align*}
	Whence, \pref{proppitch}(b) guarantees a supercritical pitchfork bifurcation of $2$-periodic solutions from the trivial branch at $\alpha_0^0$, that is, a flip bifurcation. 

	Second, we verify a bifurcation along this branch of $2$-periodic solutions. This branch, parametrized by $\svector{\gamma}{\alpha}:S\to B_\eps(0)\tm\R$, $\varepsilon>0$, defined in a neighborhood $S\subseteq\R$ of $0$, is locally unique. Moreover, \pref{proppitch} suggests the ansatz $\gamma(s)_0 = se_1$, $\alpha(s) = \alpha_0^0 - \tfrac{g_{30}}{3g_{11}}s^2 + \tilde{\alpha}(s) = \alpha_0^0(1 + \tfrac{\omega_{40}}{\omega_{20}}s^2) + \tilde{\alpha}(s)$, where $\tilde{\alpha}(0)=\dot{\tilde{\alpha}}(0)=\ddot{\tilde{\alpha}}(0)=0$. Using this, one obtains $\alpha(s) = \alpha_0^0(1 + \tfrac{\omega_{40}}{\omega_{20}}s^2) + \tfrac{\omega_{40}^2 s^4}{\omega_{20}^2(\omega_{20}-s^2\omega_{40})}$ and $\gamma(s)_1=-se_1$, which can be re-parametrized as $\phi_\pm(\alpha)_t = \pm(-1)^t\sqrt{\tfrac{a\pi\omega_{20}\alpha-2}{a\pi\omega_{40}\alpha}}e_1$ for $\alpha\geq\alpha_0^0$ near $\alpha_0^0$. Concerning ourselves with the "upper" branch $\phi_+$ (but suppressing the plus sign for readability) and retaining $\theta=2$, we observe that the linear equation
\begin{align*}
	0 & = 
	\begin{bmatrix}
		D_1\sF(\phi(\alpha)_1,\alpha)(A_1e_1+B_1e_2) \\
		D_1\sF(\phi(\alpha)_0,\alpha)(A_0e_1+B_0e_2)		
	\end{bmatrix}
	-
	\begin{bmatrix}
		A_0e_1+B_0e_2\\
		A_1e_1+B_1e_2
	\end{bmatrix}\\
	&=
	\begin{bmatrix}
		((\pi a \omega_{20} \alpha - 3)A_1 - A_0)e_1 + (\frac{3 \pi a \alpha \omega_{20} \omega_{22}-\pi a \alpha \omega_{02} \omega_{40} -6 \omega_{22}}{2 \omega_{40}}B_1 - B_0)e_2\\ 
		((\pi a \omega_{20} \alpha - 3)A_0 - A_1)e_1 + (\frac{3 \pi a \alpha \omega_{20} \omega_{22}-\pi a \alpha \omega_{02} \omega_{40} -6 \omega_{22}}{2 \omega_{40}}B_0 - B_1)e_2
	\end{bmatrix}
\end{align*}
	has the unique (up to multiples of $A_0$ and $A_1$) solution $B_0=B_1=0$ and $A_0=A_1$ for $\alpha=2\alpha_0^0=:\alpha_1^0$; we write $\phi^\ast:=\phi(\alpha_1^0)=\bigl((-1)^t\sqrt{\tfrac{\omega_{20}}{2\omega_{40}}}e_1\bigr)_{t\in\Z}$. By \pref{proplin}, we have $N(\Xi_2(\alpha_1^0)-I_{C(\Omega)}) = \spann\set{e_1}$, and derive the $2$-periodic solution $\xi^\ast$ of $(V_{\alpha_1^0})$ satisfying $\xi^\ast_0=\xi^\ast_1=e_1$. The simple nature of our system allows us to determine $N(\Xi_2(\alpha_1^0)'-I_{C(\Omega)}) = \spann\set{(3{\phi^\ast_1}^2-1)e_1}$ using a similar equation, now yielding a $2$-periodic solution $\eta^\ast$ of $(V'_{\alpha_1^0})$ with $\eta^\ast_0 = (3{\phi^\ast_1}^2-1)e_1 = (3{\phi^\ast_0}^2-1)e_1 = \eta^\ast_1$. 

Before proceeding, we note that $D_2\sF(\phi^\ast_t,\alpha_1^0)\neq 0$ for all $t\in\Z$; hence, \tref{thmcross} fails to apply directly. As we know $\phi$ explicitly, this is not a significant hindrance; we pass over to the equation \eqref{deqp} of perturbed motion with
$$
	\psi_t
	:=
	\dot\phi(\alpha_1^0)_t
	= 
	(-1)^t\frac{\pi a \sqrt{\omega_{20}}^3}{8 \sqrt{2\omega_{40}}}e_1
	\fall t\in\Z.
$$
It remains to determine the unique solution $\bar{\psi}$ of \eqref{wbar}. We start by noting that there are infinitely many $\tilde\psi\in\ell_2$ solving all but the last line of \eqref{wbar}; it is not hard to demonstrate that $\tilde\psi_0 = 0$, $\tilde\psi_1 = 6\sqrt{\tfrac{2\omega_{40}}{\omega_{20}}}e_1$ represents such a solution. In order to satisfy also the last line of \eqref{wbar}, we note that
	$$
		\bar{\psi}_t
		:=
		\tilde\psi_t - \frac{\sum_{s=0}^1\int_{-\tfrac{L}{2}}^{\tfrac{L}{2}}\eta^\ast_{s}(x)\tilde\psi_s(x)\d x}{\sum_{s=0}^1\int_{-\tfrac{L}{2}}^{\tfrac{L}{2}}\eta^\ast_{s}(x)\xi^\ast_s(x)\d x}\,\xi_t^\ast
		=
		3(-1)^{t+1}\sqrt{\frac{2\omega_{40}}{\omega_{20}}}e_1\fall t\in\Z
	$$
	solves \eqref{wbar}, regardless of the choice of $\tilde\psi$. This yields the bifurcation conditions
	\begin{align*}
	g_{11}
	=&\sum_{t=0}^1\int_{-\tfrac{L}{2}}^{\tfrac{L}{2}}\eta^\ast_{t+1}(x)
	\intoo{
	[D_1D_2\sF(\phi_t^\ast,\alpha_1^0)\xi^\ast_t](x) + [D_1^2\sF(\phi_t^\ast,\alpha_1^0)\psi_t\xi^\ast_t](x)
	}\d x\\
	=&
	\,\pi a\omega_{20} > 0,\\
	&\sum_{t=0}^1\int_{-\tfrac{L}{2}}^{\tfrac{L}{2}}\eta^\ast_{t+1}(x)[D_1^2\sF(\phi_t^\ast,\alpha_1^0)(\xi^\ast_t)^2](x)\d x = 0,\\
	\bar{g}
	=&
	\sum_{t=0}^1\int_{-\tfrac{L}{2}}^{\tfrac{L}{2}}\eta^\ast_{t+1}(x)\intoo{[D_1^3\sF(\phi_t^\ast,\alpha_1^0)(\xi^\ast_t)^3](x)+3[D_1^2\sF(\phi_t^\ast,\alpha_1^0)\xi^\ast_t\bar\psi_t](x)}\d x\\
	=&
	-192\omega_{40} < 0. 
\end{align*}
	Thanks to \pref{proppitch}(b) this guarantees a supercritical pitchfork bifurcation into a $2$-periodic solution from $\phi^\ast$ at $\alpha=\alpha_1^0$, which is \emph{not} a period doubling. 
\end{example}
\begin{figure}
	\includegraphics[scale=0.6]{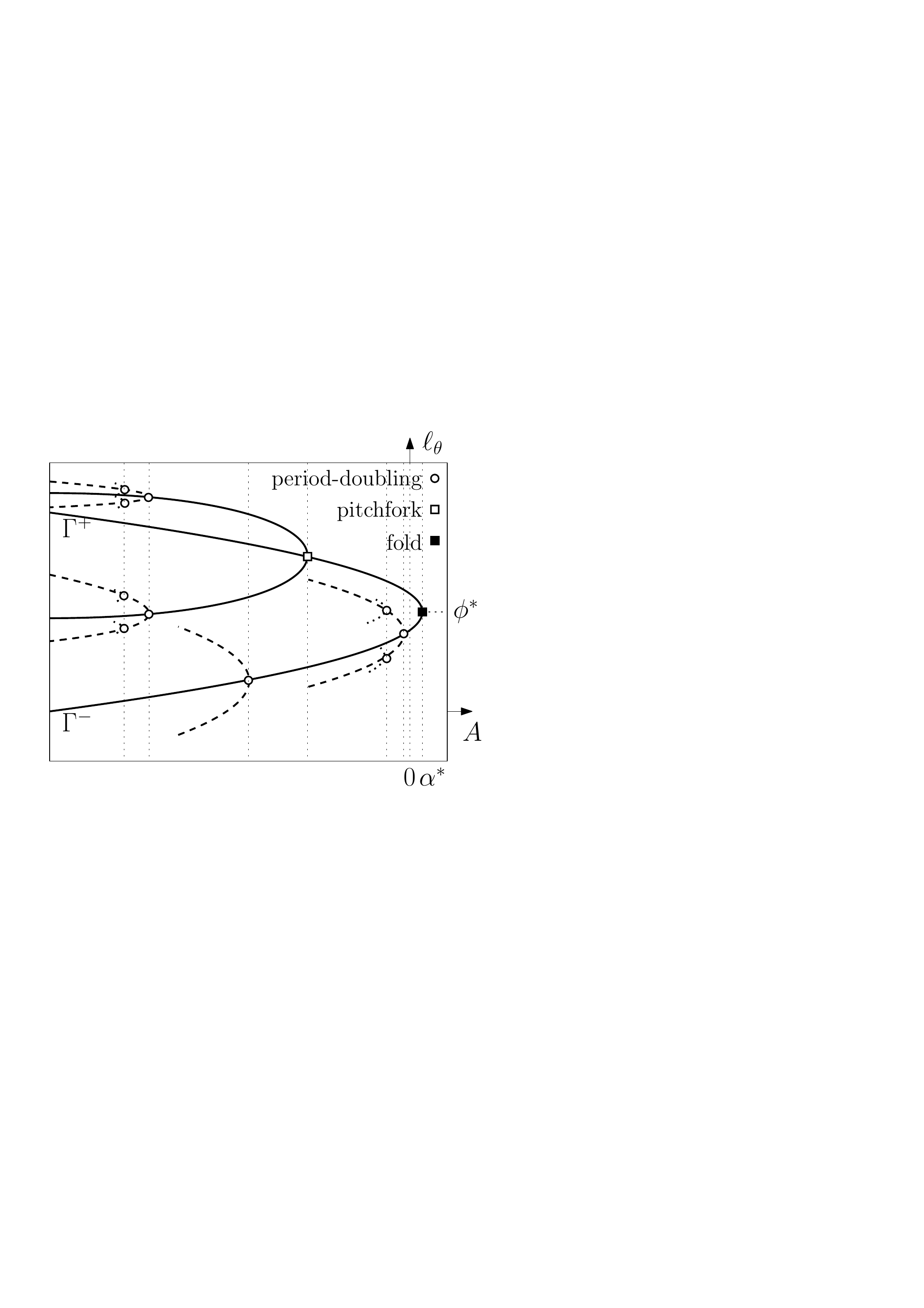}
	\includegraphics[scale=0.6]{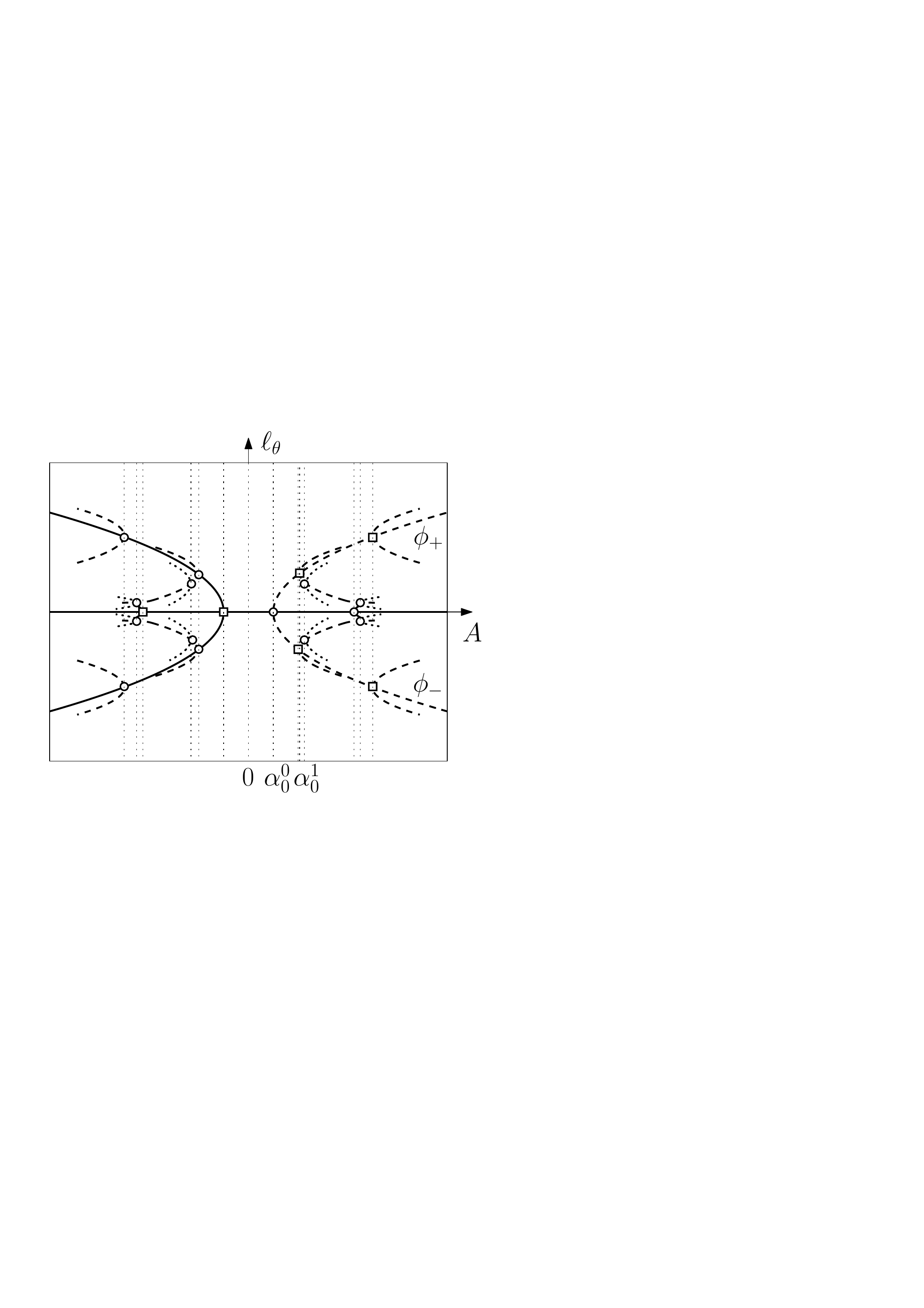}
	\caption{Schematic bifurcation diagrams for the cosine kernel \eqref{kerfinrad} with $aL\leq\tfrac{1}{2}$ illustrating branches of $\theta$-periodic solutions: 
	\eref{exfold} has a subcritical fold bifurcation of fixed points at $(\phi^\ast,\alpha^\ast)$ (left). 
	After a supercritical period doubling at $(0,\alpha_0^0)$, \eref{exaarset} has a supercritical pitchfork bifurcation of $2$-periodic solutions at $(\phi_\pm(\alpha_1^0),\alpha_1^0)$ (right). 
	Fixed point branches are solid $(\theta=1)$, branches of $2$-periodic solutions are dashed $(\theta=2)$ and $4$-periodic solutions are indicated by dotted lines $(\theta=4)$} 
	\label{figschem1}
\end{figure}
\subsection{Trivial branch analysis}
\label{sec52}
Models from ecology typically possess the zero solution. Hence, in order to determine critical parameter values it suffices to approximate eigenpairs of the linearization in $0$ numerically. This section features a flexible class of IDEs which exhibits a countable number of bifurcations from the trivial solution. The primary bifurcation is always transcritical and so is every second one. In between two transcritical bifurcations a pitchfork one occurs (at least in the autonomous case). This verifies an observation illustrated in \cite[Fig.~7]{kirk:lewis:97} for the first three branches. 

Concretely, we consider right-hand sides in \eqref{deq} of the form 
\begin{equation}
	\sF_t(u,\alpha):=G\intoo{\alpha\beta_t\int_\Omega f(\cdot,y,u(y))\d\mu(y)}
	\label{newrhs}
\end{equation}
with sufficiently smooth functions $f:\Omega\tm\Omega\tm U^1\to\R$, $G:U^2\to\R$ defined on open intervals $U^1, U^2\subseteq\R$ containing $0$ and satisfying the identities
\begin{align*}
	f(x,y,0)&\equiv 0\on\Omega\tm\Omega,&
	G(0)&=0,
\end{align*}
as well as the symmetry condition $D_3f(x,y,0)=D_3f(y,x,0)$ for all $x,y\in\Omega$. Moreover, $(\beta_t)_{t\in\Z}$ is assumed to be a $\theta$-periodic sequence in $(0,\infty)$ and $\alpha>0$ is the bifurcation parameter. Thus, \eqref{deq} has the trivial solution and we are in the set-up of \eqref{ury} with the functions
\begin{align*}
	G_t(x,z,\alpha)&=G(z),&
	f_t(x,y,z,\alpha)&=\alpha\beta_t f(x,y,z). 
\end{align*}

It is convenient to introduce the Fredholm integral operator $\sK\in L(\Omega)$, 
\begin{equation}
	\sK v:=G'(0)\int_\Omega D_3f(\cdot,y,0)v(y)\d\mu(y)\fall v\in C(\Omega),
	\label{defK}
\end{equation}
which is symmetric and therefore has the dual operator
\begin{equation}
	\sK'v=G'(0)\int_\Omega D_3f(y,\cdot,0)v(y)\d\mu(y)=\sK v\fall v\in C(\Omega).
	\label{defKs}
\end{equation}
Moreover, its eigenvalues $\lambda_i$, $i\in I$ from a countable index set $I=\set{0,1,\ldots}\subseteq\N_0$, are reals and might be ordered according to $\ldots<\lambda_2<\lambda_1<\lambda_0$, while $\xi^i\in C(\Omega)$, $i\in I$, denote the associated eigenfunctions. 
\subsubsection{Simplicity of eigenvalues}
A crucial assumption in our bifurcation analysis is the simplicity of critical eigenvalues. Here we provide a sufficient criterion guaranteeing that every eigenvalue of $\sK$ on a symmetric domain in $\R$ is simple, when $\mu$ is the Lebesgue measure on $\R$. 
\begin{prop}\label{prop_total-positivity}
	Equip $\Omega=[-\tfrac{L}{2},\tfrac{L}{2}]$ with the Lebesgue measure. If $G'(0)>0$ and
	\begin{align*}
		\det(D_3f(x_i,y_j,0))_{i,j=0}^n&\geq 0,&
		\det(D_3f(x_i,x_j,0))_{i,j=0}^n &> 0\fall n\in\N_0
	\end{align*}
	and all $-\tfrac{L}{2}<x_0<\ldots<x_n<\tfrac{L}{2}$, $\, -\tfrac{L}{2}<y_0<\ldots<y_n<\tfrac{L}{2}$ hold, then every eigenpair $(\lambda_i,\xi^i)$, $i\in I$, of $\sK$ satisfies: 
	\begin{enumerate}
		\item $\lambda_i$ is positive and simple, 
	
		\item $\xi^i\in C(\Omega)$ has exactly $i$ distinct zeros $x^i_0<\ldots<x^i_i$ and $i$ sign changes, 

		\item the zeros of $\xi^i$ and $\xi^{i+1}$ strictly interlace, that is, $x^{i+1}_0 < x^i_0 < x^{i+1}_1 < x^i_1 < \ldots < x^{i+1}_i < x^i_i < x^{i+1}_{i+1}$.
	\end{enumerate}
\end{prop}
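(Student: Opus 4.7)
The plan is to recognize $K(x,y):=G'(0)\,D_3f(x,y,0)$ as an \emph{oscillation kernel} in the Gantmacher--Krein sense and invoke the classical spectral theory of such kernels. Since $G'(0)>0$, the kernel $K$ inherits both inequalities of the hypothesis and is continuous and symmetric on $[-L/2,L/2]^2$. Under the Lebesgue measure, the operator $\sK$ defined in \eqref{defK} extends to a compact self-adjoint operator on $L^2([-L/2,L/2])$; by a standard bootstrap (continuity of $K$) every $L^2$-eigenfunction is already continuous, so the eigenpairs of $\sK$ considered on $C(\Omega)$ coincide with those on $L^2(\Omega)$, and in particular all eigenvalues $\lambda_i$ are real. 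The combination of weak total positivity $\det(K(x_i,y_j))_{i,j=0}^n\geq 0$ with strict positivity of the principal minors $\det(K(x_i,x_j))_{i,j=0}^n>0$, together with the symmetry of $K$, matches the definition of an oscillation kernel as used in Gantmacher--Krein, \emph{Oscillation Matrices, Oscillation Kernels and Small Vibrations of Mechanical Systems} (Chap.~II), or equivalently Karlin, \emph{Total Positivity} (Vol.~I, Chap.~5).

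Next, I would apply the Gantmacher--Krein oscillation theorem verbatim. For a continuous, symmetric oscillation kernel on a closed interval with Lebesgue measure, this theorem asserts that the eigenvalues are positive, algebraically simple and strictly ordered as $\lambda_0>\lambda_1>\ldots>0$; the $i$-th eigenfunction has exactly the claimed number of sign changes at simple interior nodes in $(-L/2,L/2)$; and the nodal sets of consecutive eigenfunctions $\xi^i$ and $\xi^{i+1}$ strictly interlace as described in (c). These three conclusions translate directly into assertions (a)--(c) of the proposition.

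The principal obstacle is expository rather than analytical: the result itself is classical, so the task reduces to carefully aligning the hypotheses of the oscillation theorem with those of the proposition, in particular justifying via the symmetry $K(x,y)=K(y,x)$ that strict positivity need only be assumed on the diagonal minors $\det(D_3f(x_i,x_j,0))$ rather than on all square minors. The substantial analytical content resides in the proof of the Gantmacher--Krein theorem itself, which rests on compound-operator arguments and sign-change counts of linear combinations of eigenfunctions; I would simply cite it.
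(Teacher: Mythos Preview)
Your proposal is correct and takes essentially the same approach as the paper: the paper's proof is a single line, ``Apply \cite[Thm.~4.1]{pinkus:96} to the kernel $k(x,y):=G'(0)D_3f(x,y,0)$'', where Pinkus's survey is precisely a modern presentation of the Gantmacher--Krein/Karlin oscillation-kernel theory you invoke. Your additional remarks on the $L^2$--$C(\Omega)$ bootstrap and on why strict positivity is only needed on the diagonal minors are welcome clarifications but not required, since the cited theorem already packages these.
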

\begin{proof}
	Apply \cite[Thm.~4.1]{pinkus:96} to the kernel $k(x,y):=G'(0)D_3f(x,y,0)$. 
\end{proof}
\begin{corollary}\label{cor_eig-alt-even-odd}
	If furthermore $D_3f(-x,-y,0)=D_3f(x,y,0)$ holds for arbitrary $x,y\in[-\tfrac{L}{2},\tfrac{L}{2}]$, then $\xi^i$ is even for even $i\in I$ and odd for odd $i\in I$.
\end{corollary}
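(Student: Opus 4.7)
My plan is to exploit the hypothesis $D_3f(-x,-y,0) = D_3f(x,y,0)$ to show that the operator $\sK$ defined in \eqref{defK} commutes with the reflection $R\in L(C(\Omega))$, $(Ru)(x):=u(-x)$. A direct calculation using the substitution $z=-y$ and the symmetry of $D_3f$ at $0$ gives
\begin{align*}
	(\sK u)(-x)
	&=G'(0)\int_{-L/2}^{L/2}D_3f(-x,y,0)u(y)\d y
	=G'(0)\int_{-L/2}^{L/2}D_3f(x,-y,0)u(y)\d y\\
	&=G'(0)\int_{-L/2}^{L/2}D_3f(x,z,0)u(-z)\d z
	=(\sK Ru)(x),
\end{align*}
so $\sK R=R\sK$. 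Hence for every eigenpair $(\lambda_i,\xi^i)$ of $\sK$, the function $R\xi^i$ is again an eigenfunction for the same eigenvalue $\lambda_i$.

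Simplicity of $\lambda_i$ from \pref{prop_total-positivity}(a) then forces $R\xi^i=c_i\xi^i$ for some $c_i\in\R$, and since $R^2=I_{C(\Omega)}$ we get $c_i^2=1$, i.e.\ $c_i\in\set{-1,+1}$. Thus every $\xi^i$ is either even or odd. It only remains to pin down the parity from the number of zeros, which by \pref{prop_total-positivity}(b) coincides with the number of sign changes of $\xi^i$ and equals $i$.

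The last step is a simple parity argument. A continuous even function cannot change sign at $0$ since $\xi^i(-\eps)=\xi^i(\eps)$ near $0$; consequently, its sign-changing zeros occur in symmetric pairs $\pm a$ with $a>0$, producing an \emph{even} total. A continuous odd function vanishes at $0$ and, because $\xi^i$ has only finitely many zeros (hence does not vanish identically on any neighborhood of $0$), the relation $\xi^i(-\eps)=-\xi^i(\eps)$ forces a sign change at $0$; the remaining sign changes again pair up as $\pm a$, producing an \emph{odd} total. Matching with the count $i$ yields: $\xi^i$ is even when $i$ is even, and odd when $i$ is odd.

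The only conceptual subtlety is ensuring that an odd eigenfunction genuinely changes sign at $0$ rather than vanishing on an interval, but this is guaranteed by \pref{prop_total-positivity}(b), which asserts that the zeros of $\xi^i$ are finite and isolated. Everything else is a bookkeeping of symmetries and dimensions.
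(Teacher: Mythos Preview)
Your proof is correct and follows essentially the same approach as the paper: first show each $\xi^i$ is either even or odd, then match the parity to $i$ via the zero/sign-change count from \pref{prop_total-positivity}(b). The only difference is that the paper obtains the even/odd dichotomy by invoking the general \cref{cor_eigv-odd-or-even}, whereas you establish the commutation $\sK R=R\sK$ directly; the subsequent parity-counting argument is the same (the paper phrases the odd-$i$ case as a contradiction).
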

\begin{proof}
	\cref{cor_eigv-odd-or-even} yields that every eigenfunction is odd or even. Fix $i\in I$ and assume first it is even. As odd functions cannot have an even number of zeros, it is immediate from \pref{prop_total-positivity} that $\xi^i$ is even. Assume next $i$ is odd, but that $\xi^i$ is even. As it has an odd number of zeros, necessarily $\xi^i(0) = 0$. Being even (and nowhere constant zero, as it has a finite number of zeros), it is either strictly positive or strictly negative near $0$, which is impossible, as this implies $\xi^i$ has at most $i-1$ sign changes, contradicting \pref{prop_total-positivity}. Accordingly, $\xi^i$ is odd. 
\end{proof}
\subsubsection{Bifurcations along the trivial branch}
The branchings of solutions to \eqref{deq} along zero are now determined by properties of \eqref{defK}, as we have derivatives 
\begin{align*}
	D_1\sF_t(0,\alpha)v&=\alpha\beta_t\sK v,&
	D_1D_2\sF_t(0,\alpha)v&=\beta_t\sK v,&
	D_2\sF_t(0,\alpha)=D_2^2\sF_t(0,\alpha)&=0
\end{align*}
for all $t\in\Z$ and $\alpha\in A$. We formulate a first \textbf{standing hypothesis}:
\begin{itemize}
	\item[$(Z_1)$] Suppose that $\sK\in L(C(\Omega))$ has a simple eigenpair $(\lambda,\xi_0^\ast)$ with $\lambda>0$ and set
	$$
		\alpha^\ast:=\frac{1}{\lambda\sqrt[\theta]{\beta_{\theta-1}\cdots\beta_0}}.
	$$
\end{itemize}
Let us clarify the bifurcation behavior of \eqref{deq} in $(0,\alpha^\ast)$. First, the resulting variational, resp.\ is dual difference equation
\begin{align*}
	v_{t+1}&=\alpha^\ast\beta_t\sK v_t,&
	v_t&=\alpha^\ast\beta_t\sK'v_{t+1}
	\stackrel{\eqref{defKs}}{=}
	\alpha^\ast\beta_t\sK v_{t+1}
\end{align*}
possess $\theta$-periodic solutions
\begin{align*}
	\xi_t^\ast&:=a_t\xi^\ast_0\in C(\Omega),&
	\eta_{t}^\ast&:=a_t^{-1}\xi^\ast_0\in C(\Omega),&
	a_t := (\alpha^\ast \lambda)^{t}\prod_{r=0}^{t-1}\beta_r > 0
\end{align*}
with $\eta^\ast_t\xi^\ast_t\equiv(\xi_0^\ast)^2$ on $\Z$. Thus, \eqref{deq} satisfies the bifurcation conditions $(B_1$--$B_3)$ at $(0,\alpha^\ast)$, and 
\begin{align*}
	g_{11} & := \sum_{t=0}^{\theta-1}\int_\Omega \eta^\ast_{t+1}(x)[D_1D_2\sF_t(0,\alpha^\ast)\xi^\ast_t](x)\d\mu(x) 
	= \sum_{t=0}^{\theta-1}\beta_t\int_\Omega \eta^\ast_{t+1}(x)[\sK\xi^\ast_t](x)\d\mu(x) \\
	& = \frac{1}{\alpha^\ast}\sum_{t=0}^{\theta-1}\int_\Omega \eta^\ast_{t+1}(x)\xi^\ast_{t+1}(x)\d\mu(x) 
	=
	\frac{\theta}{\alpha^\ast}\int_\Omega\xi^\ast_0(x)^2\d\mu(x) > 0.
\end{align*}
More can be said under an additional assumption tailor-made for various growth-dispersal and dispersal-growth right-hand sides, and their linear combinations.
\begin{itemize}
	\item[$(Z_2)$] There exist $c_2,d_2,c_3,d_3\in\R$ such that the representation
	\begin{equation}
		\begin{cases}
			D_1^2\sF_t(0,\alpha^\ast)v\bar v=c_2\alpha^\ast\beta_t\sK(v\bar v) + d_2\intoo{\alpha^\ast\beta_t}^2(\sK v)(\sK\bar v), \\
			D_1^3\sF_t(0,\alpha^\ast)v^3=c_3\alpha^\ast\beta_t\sK v^3 + d_3\intoo{\alpha^\ast\beta_t}^3(\sK v)^3
		\end{cases}
		\label{cdform}
	\end{equation}
	holds for all $t\in\Z$ and $v,\bar v\in C(\Omega)$. 
\end{itemize}
\begin{table}
	\begin{tabular}{r|c|cccc}
		growth function & $\hat g(z)$ & $c_2$ & $d_2$ & $c_3$ & $d_3$ \\
		\hline
		logistic & $z(1-z)$ & $-2$ & $-2$ & $0$ & $0$ \\
		Hassell & $\tfrac{z}{(1+z)^c}$ & $-2c$ & $-2c$ & $3(1+c)c$& $3(1+c)c$\\
		Ricker & $ze^{-z}$ & $-2$ & $-2$ & $3$ & $3$\\
	\end{tabular}
	\caption{Coefficients in commonly used growth functions $\hat g$, $c>0$}
	\label{tabgrowth}
\end{table}
\begin{remark}\label{remgrowth}
	One frequently encounters the situation $f(x,y,z)=k_0(x,y)g(z)$ with a continuous function $k_0:\Omega\tm\Omega\to\R$ and a $C^3$-growth function $g:U^1\to\R$ satisfying $g(0)=0$. In case $G''(0)g''(0)=0$ the coefficients in $(Z_2)$ explicitly compute as
	\begin{align*}
		c_2&=\tfrac{g''(0)}{g'(0)},&
		d_2&=\tfrac{G''(0)}{G'(0)^2},&
		c_3&=\tfrac{g'''(0)}{g'(0)},&
		d_3&=\tfrac{G'''(0)}{G'(0)^3}.
	\end{align*}
	This applies to both Hammerstein equations ($G(z)=z$, $g=\hat g$) and dispersal-growth equations ($G=\hat g$, $g(z)=z$) with the growth functions $\hat g$ from e.g.\ Tab.~\ref{tabgrowth}.
\end{remark}

Let us first provide criteria for transcritical bifurcations in \eqref{newrhs}. The assumption \eqref{cdform} leads to the following bifurcation indicators
\begin{align}
	g_{20} 
	= & 
	\sum_{t=0}^{\theta-1}\int_\Omega \eta^\ast_{t+1}(x) \intoo{c_2\alpha^\ast\beta_t[\sK(\xi^\ast_t)^2](x) + d_2\intoo{\alpha^\ast\beta_t[\sK\xi^\ast_t](x)}^2}\d\mu(x) 
	\notag\\
	\stackrel{\eqref{defK}}{=} & \sum_{t=0}^{\theta-1}c_2\int_\Omega \xi^\ast_t(y)^2 \int_\Omega \alpha^\ast\beta_tk(x,y)\eta^\ast_{t+1}(x)\d\mu(x)\d\mu(y)
	\notag\\
	& + \sum_{t=0}^{\theta-1}d_2\int_\Omega \eta^\ast_{t+1}(x)\xi^\ast_{t+1}(x)^2 \d\mu(x)
	\notag\\
	= & \sum_{t=0}^{\theta-1}\int_\Omega c_2\eta^\ast_t(x)\xi^\ast_t(x)^2 + d_2\eta^\ast_{t}(x)\xi^\ast_{t}(x)^2\d\mu(x)
	\notag\\
	= & \intoo{c_2+d_2}\intoo{\sum_{t=0}^{\theta-1}a_t}\int_\Omega\xi^\ast_0(x)^3\d\mu(x),
	\label{no1}\\
	g_{30} 
	= & 
	\sum_{t=0}^{\theta-1}\int_\Omega \eta^\ast_{t+1}(x) \intoo{c_3\alpha^\ast\beta_t[\sK(\xi^\ast_t)^3](x) + d_3\intoo{\alpha^\ast\beta_t[\sK\xi^\ast_t](x)}^3}\d\mu(x)
	\notag\\
	\stackrel{\eqref{defK}}{=} & \sum_{t=0}^{\theta-1}c_3\int_\Omega \xi^\ast_t(y)^3 \int_\Omega \alpha^\ast\beta_tk(x,y)\eta^\ast_{t+1}(x)\d\mu(x)\d\mu(y)
	\notag\\
	&+ \sum_{t=0}^{\theta-1}d_3\int_\Omega \eta^\ast_{t+1}(x)\xi^\ast_{t+1}(x)^3 \d\mu(x)
	\notag\\
	= & \sum_{t=0}^{\theta-1}\int_\Omega c_3\eta^\ast_t(x)\xi^\ast_t(x)^3 + d_3\eta^\ast_{t}(x)\xi^\ast_{t}(x)^3\d\mu(y)
	\notag\\
	= & \intoo{c_3+d_3}\intoo{\sum_{t=0}^{\theta-1}a_t^2}\int_\Omega\xi^\ast_0(x)^4\d\mu(x),
	\label{no2}
\end{align}
from which one immediately has
\begin{equation}
	g_{20}=0
	\quad\Leftrightarrow\quad
	c_2+d_2=0\text{ or }\int_\Omega\xi_0^\ast(x)^3\d\mu(x)=0.
	\label{degcase}
\end{equation}
\begin{prop}[transcritical bifurcation]\label{trivtrans}
	Suppose $(Z_1$--$Z_2)$ hold and $c_2+d_2\neq 0$. If $\int_\Omega\xi^\ast_0(x)^3\d\mu(x)\neq 0$, then there is a transcritical bifurcation at $(0,\alpha^\ast)$. 
\end{prop}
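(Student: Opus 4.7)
The plan is to apply Proposition~\ref{proptrans} directly, which in turn requires verifying the hypotheses of Theorem~\ref{thmcross} together with $g_{20}\neq 0$. The good news is that essentially all the required computations have already been carried out in the discussion preceding the proposition, so the proof amounts to collecting the relevant pieces and checking signs.

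First, I would invoke the preparatory work under $(Z_1)$: the bifurcation conditions $(B_1\text{--}B_3)$ are satisfied at $(0,\alpha^\ast)$ with the explicit $\theta$-periodic solutions $\xi_t^\ast = a_t\xi_0^\ast$ and $\eta_t^\ast = a_t^{-1}\xi_0^\ast$ (noting $a_t>0$), and one has
$$
  g_{11} = \frac{\theta}{\alpha^\ast}\int_\Omega\xi_0^\ast(x)^2\d\mu(x) > 0.
$$
Since the right-hand side \eqref{newrhs} yields $D_2\sF_t(0,\alpha^\ast) = 0$ and $D_2^2\sF_t(0,\alpha^\ast) = 0$ for all $t\in\Z$, the second crossing-curve condition in Theorem~\ref{thmcross} (the vanishing $D_2^2$-sum) is trivially satisfied. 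Hence Theorem~\ref{thmcross} applies and produces two branches $\Gamma_1,\Gamma_2$ intersecting at $(0,\alpha^\ast)$, with $\Gamma_1$ necessarily coinciding with the trivial branch by the remark following that theorem (since $\sF_t(0,\alpha)\equiv 0$).

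To upgrade this to a transcritical bifurcation via Proposition~\ref{proptrans}, it remains to verify $g_{20}\neq 0$. Using assumption $(Z_2)$ and the explicit shape \eqref{cdform}, the formula already derived in \eqref{no1} gives
$$
  g_{20} \;=\; (c_2+d_2)\intoo{\sum_{t=0}^{\theta-1}a_t}\int_\Omega\xi_0^\ast(x)^3\d\mu(x).
$$
The middle factor is strictly positive since each $a_t>0$ under $(Z_1)$, the first factor is nonzero by the standing hypothesis $c_2+d_2\neq 0$, and the third is nonzero by assumption $\int_\Omega\xi_0^\ast(x)^3\d\mu(x)\neq 0$. Therefore $g_{20}\neq 0$, and Proposition~\ref{proptrans} yields a transcritical bifurcation at $(0,\alpha^\ast)$, together with the structural information that $\Gamma_2$ is a smooth graph over the $\alpha$-axis with tangent slope $-2g_{11}/g_{20}$ at $\alpha^\ast$.

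There is no real obstacle: once the explicit shape of the variational and dual variational solutions is exploited to simplify $g_{20}$ via \eqref{defK}--\eqref{defKs} and Fubini, the proposition is essentially a book-keeping corollary of the computations preceding its statement. If desired, one could add a brief remark that the biological sign (sub- vs.\ supercritical) of the transcritical branch is governed by the sign of $(c_2+d_2)\int_\Omega\xi_0^\ast(x)^3\d\mu(x)$ together with $g_{11}>0$, via Proposition~\ref{proptrans}(a)--(b), but this is not needed for the bare bifurcation statement.
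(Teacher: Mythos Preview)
Your proof is correct and follows exactly the paper's approach: the paper's own proof is the single sentence ``The assumptions of \pref{proptrans} are satisfied,'' and you have simply spelled out in detail why those assumptions (namely $(B_1$--$B_3)$, $g_{11}\neq 0$, the vanishing $D_2^2$-sum, and $g_{20}\neq 0$ via \eqref{no1}) all hold from the preceding discussion.
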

\begin{proof}
	The assumptions of \pref{proptrans} are satisfied. 
\end{proof}

For many popular IDEs, one indeed observes that a stability loss from the trivial branch occurs via a transcritical bifurcation; this is indeed a special case of the above. Denoting the \emph{spectral radius} of $\sK$ by $r(\sK)>0$, we obtain: 
\begin{prop}[primary bifurcation]\label{primary_positive}
	Suppose that $(Z_2)$ holds with $c_2+d_2\neq 0$ and
	$
		\alpha^\ast=\frac{1}{r(\sK)\sqrt[\theta]{\beta_{\theta-1}\cdots\beta_0}}. 
	$
	If 
	$$
		G'(0)D_3f(x,y,0)>0\fall x,y\in\Omega,
	$$
	then the trivial solution of \eqref{deq} is exponentially stable for $0<\alpha<\alpha^\ast$ and unstable for $\alpha>\alpha^\ast$. In particular, a transcritical bifurcation into a nontrivial branch $\phi_0^0$ of $\theta$-periodic solutions takes place at $(0,\alpha^\ast)$, where, near $\alpha^\ast$, $\phi_0^0$ is unstable for $\alpha<\alpha^\ast$ and exponentially stable for $\alpha>\alpha^\ast$.
\end{prop}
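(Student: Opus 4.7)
The plan is to deduce the result from the Krein-Rutman theorem, combined with \pref{trivtrans} for the bifurcation itself and \pref{proptrans} together with \cref{trbif} for the stability along the two branches. Under the hypothesis $G'(0)D_3f(x,y,0)>0$, the integral operator $\sK$ from \eqref{defK} is compact and \emph{strongly positive}: for any nonnegative, not identically zero $v\in C(\Omega)$ one has $\sK v(x)>0$ for every $x\in\Omega$. Applying the Krein-Rutman theorem \cite[p.~228, Thm.~19.3]{deimling:85}, I would obtain that $r(\sK)>0$ is a simple eigenvalue of $\sK$ with a strictly positive eigenfunction $\xi_0^\ast$, and that every other $\mu\in\sigma(\sK)$ satisfies $\abs{\mu}<r(\sK)$. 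This verifies $(Z_1)$ with $\lambda=r(\sK)$; the strict positivity of $\xi_0^\ast$ in turn gives $\int_\Omega\xi_0^\ast(x)^3\d\mu(x)>0$. Together with the assumption $c_2+d_2\neq 0$ and formula \eqref{no1} this implies $g_{20}\neq 0$, so \pref{trivtrans} applies and delivers the transcritical bifurcation at $(0,\alpha^\ast)$.

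To obtain the sharp stability statements, I would next verify the spectral conditions \eqref{morse} and \eqref{stab} at $\alpha^\ast$. The variational equation in $0$ is spatially autonomous, so
$$
	\Xi_\theta(\alpha)=\alpha^\theta\beta_0\cdots\beta_{\theta-1}\sK^\theta,
$$
and the spectral mapping theorem yields
$$
	\sigma_\theta(\alpha)=\set{\alpha^\theta\beta_0\cdots\beta_{\theta-1}\mu^\theta:\,\mu\in\sigma(\sK)}.
$$
At $\alpha=\alpha^\ast$ the factor $\alpha^{\ast\theta}\beta_0\cdots\beta_{\theta-1}$ equals $r(\sK)^{-\theta}$, so $\sigma_\theta(\alpha^\ast)=\set{(\mu/r(\sK))^\theta:\,\mu\in\sigma(\sK)}$. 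The Krein-Rutman strict dominance of $r(\sK)$ now forces $\sigma_\theta(\alpha^\ast)\cap\S^1=\set{1}$ together with $\sigma_\theta(\alpha^\ast)\setminus\set{1}\subset B_1(0)$, and the algebraic simplicity of $r(\sK)$ in $\sigma(\sK)$ transfers to algebraic simplicity of $1$ in $\sigma_\theta(\alpha^\ast)$ via the Riesz projection onto the corresponding generalized eigenspace.

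The conclusion is then a matter of collecting signs. For $\alpha<\alpha^\ast$ the dominant element of $\sigma_\theta(\alpha)$ lies in $B_1(0)$ and drags the rest along, so the trivial solution is exponentially stable; for $\alpha>\alpha^\ast$ the same element exceeds $1$, making $0$ unstable. \cref{trbif}(a) with $g_{11}>0$ then records the Morse-index increase along the trivial branch, while \pref{proptrans}(a) applied with $g_{11}>0$ yields that $\phi_0^0$ is unstable for $\alpha<\alpha^\ast$ and exponentially stable for $\alpha>\alpha^\ast$. The main obstacle is the Krein-Rutman step: one needs the full strict-dominance portion (strong positivity rather than mere positivity of $\sK$) in order to secure simultaneously the simplicity of the Floquet multiplier $1$ at $\alpha^\ast$ and the confinement of the remainder of $\sigma_\theta(\alpha^\ast)$ to the open unit disk; everything that follows is an assembly of earlier results in this section.
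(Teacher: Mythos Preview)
Your proposal is correct and follows essentially the same route as the paper: Krein--Rutman strong positivity of $\sK$ yields simplicity of $r(\sK)$ with a strictly positive eigenfunction, hence $g_{20}\neq 0$ and a transcritical bifurcation via \pref{trivtrans}, after which $g_{11}>0$ combined with \cref{trbif} and \pref{proptrans} settles the stability exchange. You are in fact more explicit than the paper in verifying the spectral conditions \eqref{morse} and \eqref{stab} via the formula $\Xi_\theta(\alpha)=\alpha^\theta\beta_0\cdots\beta_{\theta-1}\sK^\theta$, whereas the paper leaves this implicit.
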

\begin{remark}[basic reproduction number]
	The critical value $\alpha^\ast$ in \pref{primary_positive} is a threshold parameter. For models studied in Sects.~\ref{sec523} and \ref{sec524} it distinguishes between extinction $(0<\alpha\leq\alpha^\ast)$ and persistence $(\alpha^\ast<\alpha)$ of a population, since the primary branch $\phi_0^0$ consists of globally attractive $\theta$-periodic solutions (w.r.t.\ solutions having positive values). The reciprocal of $\alpha^\ast$ might be interpreted as extension of the \emph{basic population turnover number} introduced in \cite{jin:thieme:16,thieme:20} to periodic problems. Since the IDEs with right-hand side \eqref{ury} studied here address populations having non-overlapping generations, this basic population turnover number coincides with the notorious \emph{basic reproduction number} $R_0$ (cf.~\cite{alzoubi:10,bacaer:09,bacaer:dats:12,cushing:ackleh,cushing:henson} in ecological and \cite[p.~353]{brauer:castillo:01} in epidemic models). Nevertheless, one can also formulate IDEs describing populations having overlapping generations, which involve e.g.\ one integral for newborns and one integral for the rest. In such a model, the basic turnover number does not coincide with the basic reproduction number and we refer to \cite[Rem.~3.14]{thieme:20} for the relation between the two. 
\end{remark}
\begin{proof}
	Let $C_+(\Omega)$ denote the solid cone of nonnegative functions in $C(\Omega)$. For every nonzero $v\in C_+(\Omega)$ there exists a $x_0\in\Omega$ and a $c>0$ such that $v(x)>c$ holds in a neighborhood $U\subseteq\Omega$ of $x_0$. Moreover, $\gamma:=G'(0)\min_{x,y\in\Omega}D_3f(x,y,0)>0$ due to the compactness of $\Omega$ and we arrive at
	$$
		[\sK v](x)
		\stackrel{\eqref{defK}}{=}
		G'(0)\int_\Omega D_3f(x,y,0)v(y)\d\mu(y)\geq c\gamma\mu(\Omega)>0\fall x\in\Omega; 
	$$
	thus, $\sK v$ is an interior point of $C_+(\Omega)$. This shows that $\sK$ is strongly positive and the Krein-Rutman theorem \cite[p.~228, Thm.~19.3]{deimling:85} applies. Hence, $r(\sK)>0$ is a simple eigenvalue of $\sK$, $|\lambda|<r(\sK)$ for all $\lambda\in\sigma(\sK)\setminus\set{r(\sK)}$, with an eigenfunction $\xi^\ast_0\in C(\Omega)$ such that $\inf_{x\in\Omega}\xi_0^\ast(x)>0$. Inserting this into the above yields $g_{20}\neq 0$, implying a transcritical bifurcation. In combination with $g_{11}>0$ as computed earlier and applying \cref{trbif}, \pref{proptrans} yields the claimed stability properties. 
\end{proof}

To describe the degenerate case $\int_\Omega\xi_0^\ast(x)^3\d\mu(x)=0$ in \eqref{degcase}, and to establish a pitchfork bifurcation, is not as straightforward and requires further preparations. For the solution $\bar\psi$ of \eqref{wbar}, a manipulation as above shows that one must solve
$$
	\begin{cases}
		\bar\psi_{t+1} = \alpha^\ast\beta_t\sK\intoo{\bar\psi_t + c_2(\xi_t^\ast)^2} + d_2(\xi_{t+1}^\ast)^2\fall 0\leq t<\theta-2, \\
		\bar\psi_0 = \alpha^\ast\beta_{\theta-1}\sK\intoo{\bar\psi_{\theta-1} + c_2(\xi^\ast_{\theta-1})^2} + d_2(\xi_0^\ast)^2, \\
		0 = \sum_{t=0}^{\theta-1}\int_\Omega \eta^\ast_{t}(x)\bar\psi_t(x)\d\mu(x).
	\end{cases}
$$
If $D_1^2\sF_t(0,\alpha^\ast)\equiv 0$ on $\Z$ holds, then $\bar\psi = 0$, and the condition $c_3+d_3\neq 0$ becomes sufficient for a pitchfork bifurcation at $(0,\alpha^\ast)$. Without this --- oppressively strong --- assumption, the following simplified scheme for computing $\bar\psi$ is advisable.
\begin{prop}\label{barwprop}
	Suppose $(Z_1$--$Z_2)$ hold with $c_2+d_2\neq 0$, and that either $\theta$ is odd or $-\lambda\not\in\sigma(\sK)$. If $g_{20}=0$, then the Fredholm integral equation of the second kind
	\begin{equation}
		\bar w = \lambda^{-\theta}\sK^\theta \bar w + (\xi_0^\ast)^2
		\label{fredeq}
	\end{equation}
	possesses a unique solution $\bar w\in R(\sK-\lambda I_{C(\Omega)})$. Moreover, one has
	\begin{align*}
		\bar g 
		:=
		g_{30} &+ 3\sum_{t=0}^{\theta-1} \int_\Omega \eta^\ast_{t+1}(x)[D_1^2\sF_t(0,\alpha^\ast)\xi^\ast_t\bar\psi_t](x)\d\mu(x) \\
		=
		g_{30} &+ 3\sum_{t=0}^{\theta-1} \int_\Omega \xi^\ast_0(x)^2\Bigl(\Bigr.(c_2+d_2)^2\lambda^{-t}\intoo{\sum_{r=0}^{t-1}a_{\theta+r-t}a_r + \sum_{r=t}^{\theta-1}a_ra_{r-t}}[\sK^t\bar w](x) \\
		& - (c_2+d_2)^2\xi^\ast_t(x)^2 + c_2d_2\intoo{\xi^\ast_{t+1}(x)^2 - \alpha^\ast\beta_t[\sK(\xi^\ast_t)^2)](x)}\Bigl.\Bigr) \d\mu(x). 
	\end{align*}
\end{prop}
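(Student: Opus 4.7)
Since $\sK$ is compact and self-dual (\eqref{defK}--\eqref{defKs}), so is $\sK^\theta$, and the factorization $\sK^\theta-\lambda^\theta I_{C(\Omega)}=(\sK-\lambda I_{C(\Omega)})\sum_{j=0}^{\theta-1}\lambda^j\sK^{\theta-1-j}$ gives $R(\sK^\theta-\lambda^\theta I_{C(\Omega)})\subseteq R(\sK-\lambda I_{C(\Omega)})$. Under the hypothesis that either $\theta$ is odd or $-\lambda\notin\sigma(\sK)$, the only real $\mu\in\sigma(\sK)$ with $\mu^\theta=\lambda^\theta$ is $\mu=\lambda$, so by simplicity of $\lambda$ both operators share the one-dimensional kernel $\spann\{\xi_0^\ast\}$; being Fredholm of index~$0$, their ranges then coincide. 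The Fredholm alternative (\pref{prop26}) now reduces solvability of \eqref{fredeq} to $\int_\Omega\xi_0^\ast(x)^3\d\mu(x)=0$, which follows from $g_{20}=0$ together with $c_2+d_2\neq 0$ via \eqref{no1}. Solutions form an affine line $\bar w_0+\R\xi_0^\ast$ that meets the codimension-$1$ annihilator $R(\sK-\lambda I_{C(\Omega)})=\{v\in C(\Omega):\int_\Omega v\xi_0^\ast\d\mu=0\}$ in exactly one point.

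\textbf{Closed-form $\bar\psi$.} Under $(Z_2)$ and $\sK\xi_t^\ast=\lambda\xi_t^\ast$, the cyclic system \eqref{wbar} rewrites as $\bar\psi_{t+1}=\alpha^\ast\beta_t\sK\bar\psi_t+F_t$ with inhomogeneity $F_t=\tfrac{c_2 a_t a_{t+1}}{\lambda}\sK(\xi_0^\ast)^2+d_2 a_{t+1}^2(\xi_0^\ast)^2$. Iterating from $0$ to $\theta$ and imposing $\bar\psi_\theta=\bar\psi_0$ produces a closed equation $(I_{C(\Omega)}-\lambda^{-\theta}\sK^\theta)\bar\psi_0=\Phi$ with $\Phi\in\spann\{\sK^u(\xi_0^\ast)^2:0\leq u\leq\theta\}$ computable in closed form. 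Using $\sK^\theta\bar w=\lambda^\theta\bar w-\lambda^\theta(\xi_0^\ast)^2$ from \eqref{fredeq}, I would verify that the ansatz $\bar\psi_0=-c_2(\xi_0^\ast)^2+(c_2+d_2)\sum_{u=0}^{\theta-1}a_{\theta-u}\lambda^{-u}\sK^u\bar w+\kappa\xi_0^\ast$ solves this equation, with $\kappa\in\R$ fixed by the orthogonality line of \eqref{wbar}; the remaining $\bar\psi_t$ then follow from the iteration formula $\bar\psi_t=(a_t/\lambda^t)\sK^t\bar\psi_0+\sum_{s=0}^{t-1}(a_t/(a_{s+1}\lambda^{t-1-s}))\sK^{t-1-s}F_s$.

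\textbf{Reduction to the stated formula.} Applying $(Z_2)$ with $v=\xi_t^\ast$, $\bar v=\bar\psi_t$, pairing with $\eta_{t+1}^\ast$, and using $\sK=\sK'$, $\sK\xi_t^\ast=\lambda\xi_t^\ast$, and $\eta_{t+1}^\ast\xi_t^\ast=(\alpha^\ast\lambda\beta_t)^{-1}(\xi_0^\ast)^2$, each summand of $\bar g-g_{30}$ collapses to $3\bigl[c_2\int_\Omega(\xi_0^\ast)^2\bar\psi_t\d\mu+d_2\alpha^\ast\beta_t\int_\Omega\sK((\xi_0^\ast)^2)\bar\psi_t\d\mu\bigr]$. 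The $\kappa\xi_t^\ast$-contribution drops because $\int_\Omega\xi_0^\ast(x)^3\d\mu(x)=0$, so $\bar g$ is independent of $\kappa$. Substituting the explicit $\bar\psi_t$ from the previous step and repeatedly reducing powers $\sK^s\bar w$ with $s\geq\theta$ by the identity $\sK^\theta\bar w=\lambda^\theta\bar w-\lambda^\theta(\xi_0^\ast)^2$, the $\bar w$-dependent parts assemble, after re-indexing the emerging double sums, into the coefficient $(c_2+d_2)^2\lambda^{-t}\bigl(\sum_{r=0}^{t-1}a_{\theta+r-t}a_r+\sum_{r=t}^{\theta-1}a_r a_{r-t}\bigr)\sK^t\bar w$, while the remaining $(\xi^\ast)^2$-pieces reorganize into $-(c_2+d_2)^2(\xi_t^\ast)^2+c_2d_2\bigl[(\xi_{t+1}^\ast)^2-\alpha^\ast\beta_t\sK((\xi_t^\ast)^2)\bigr]$. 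The main obstacle is this final combinatorial reorganization: the triple sums produced by composing the iteration for $\bar\psi_t$ with the polynomial expression for $\bar\psi_0$ only collapse into the asserted symmetric coefficients after systematically exploiting $\theta$-periodicity of $(a_t)_{t\in\Z}$ and separating the $c_2$-, $d_2$-, and $c_2d_2$-contributions carefully.
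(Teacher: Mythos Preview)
Your proposal is correct and follows essentially the same route as the paper: Fredholm alternative via the factorization of $\sK^\theta-\lambda^\theta I_{C(\Omega)}$ for the existence of $\bar w$, a variation-of-constants expression for $\bar\psi_t$, and the duality manipulation $\sK'=\sK$ together with $\sK\xi_t^\ast=\lambda\xi_t^\ast$ to collapse the sum. The paper organizes the middle step slightly differently by introducing an auxiliary sequence $w_t$ solving $w_{t+1}=\alpha^\ast\beta_t\sK w_t+(\xi_{t+1}^\ast)^2$ and then setting $\bar\psi_t:=(c_2+d_2)w_t-c_2(\xi_t^\ast)^2$, which yields exactly your ansatz for $\bar\psi_0$; otherwise the arguments coincide.
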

\begin{proof}
	Let us first establish existence and uniqueness of the solution $\bar w$. Start by noting $\lambda^{-\theta}\sK^\theta - I_{C(\Omega)} = \lambda^{-\theta}(\sK - \lambda I_{C(\Omega)})\prod_{t=1}^{\theta-1}\intoo{\sK - \lambda e^{2\iota\pi t/\theta}I_{C(\Omega)}}$; as $\sigma(\sK)\setminus\set{0}$ consists only of real eigenvalues, $\prod_{t=1}^{\theta-1}\intoo{\sK - \lambda e^{2\iota\pi t/\theta}I_{C(\Omega)}}\in GL(C(\Omega))$ results from our assumptions. Consider for $\bar w\in R(\sK-\lambda I_{C(\Omega)})$ the equivalent system
$$
	(\xi_0^\ast)^2 = (\sK - \lambda I_{C(\Omega)})v, \qquad v = \lambda^{-\theta}\left(\prod_{t=1}^{\theta-1}\intoo{\sK - \lambda e^{\tfrac{2\pi t}{\theta}\iota}I_{C(\Omega)}}\right)\bar w.
$$
As the assumption $g_{20} = 0$ yields $\int_\Omega\eta_0^\ast(x)\xi_0^\ast(x)^2\d\mu(x) = 0$, one has the inclusion $(\xi_0^\ast)^2\in R(\sK-\lambda I_{C(\Omega)})$. This implies there is some unique $\bar v\in R(\sK-\lambda I_{C(\Omega)})$ such that $v = \bar v + \rho\xi_0^\ast$ solves the first part of the equation for all $\rho\in\R$. Note also that as $\sK\xi_0^\ast = \lambda\xi_0^\ast$ and $\lambda^{-\theta}\prod_{t=1}^{\theta-1}\intoo{\sK - \lambda e^{2\iota\pi t/\theta}I_{C(\Omega)}} = \sum_{t=0}^{\theta-1}\lambda^{-t-1}\sK^t$ holds, one obtains $\lambda^{-\theta}\intoo{\prod_{t=1}^{\theta-1}\intoo{\sK - \lambda e^{2\iota\pi t/\theta}I_{C(\Omega)}}}\rho\tfrac{\lambda}{\theta}\xi_0^\ast = \rho\xi_0^\ast$, which by invertibility guarantees 
	$$
		\lambda^{\theta}\Bigl(\prod_{t=1}^{\theta-1}(\sK - \lambda e^{2\iota\pi t/\theta}I_{C(\Omega)})\Bigr)^{-1}\rho\xi_0^\ast
		=
		\rho\tfrac{\lambda}{\theta}\xi_0^\ast\in N(\sK-\lambda I_{C(\Omega)}).
	$$
	Thus, the second equation has $\bar w = \lambda^{\theta}(\prod_{t=1}^{\theta-1}(\sK - \lambda e^{2\iota\pi t/\theta}I_{C(\Omega)}))^{-1}\bar v + \rho\tfrac{\lambda}{\theta}\xi_0^\ast$ as solutions, which all share the same unique projection into $R(\sK-\lambda I_{C(\Omega)})$, independent of $\rho$. Due to
	$$
	(\sK - \lambda I_{C(\Omega)})\prod_{t=1}^{\theta-1}(\sK - \lambda e^{\tfrac{2\pi t}{\theta}\iota}I_{C(\Omega)})
	=
	\intoo{\prod_{t=1}^{\theta-1}(\sK - \lambda e^{\tfrac{2\pi t}{\theta}\iota}I_{C(\Omega)})}(\sK - \lambda I_{C(\Omega)}),
	$$
	this projection is also a solution to the original equation, as required. Next, consider the system
	\begin{align*}
		w_0 &= \alpha\beta_{\theta-1}\sK w_{\theta-1} + (\xi_0^\ast)^2,&
		w_t &= \alpha\beta_{t-1}\sK w_{t-1} + (\xi_t^\ast)^2\fall 1\leq t<\theta
	\end{align*}
	of second kind integral equations. Using the variation of constants formula, 
	$$
		w_t := a_t\lambda^{-t}\intoo{\sum_{r=0}^{\theta-t-1}a_{\theta-r}\lambda^{-r}\sK^{t+r}\bar w + \sum_{r=\theta-t}^{\theta-1}a_{\theta-r}\lambda^{\theta-r}\sK^{t+r-\theta}\bar w}
		\fall 0\leq t< \theta
	$$
	provides a solution. Now define $\bar\psi_t := (c_2+d_2)w_t - c_2(\xi_t^\ast)^2$, $0\leq t < \theta$; it is immediate that it satisfies \eqref{wbar}. Finally, we compute
\begin{align*}
	& \sum_{t=0}^{\theta-1} \int_\Omega \eta^\ast_{t+1}(x)[D_1^2\sF_t(0,\alpha^\ast)\xi^\ast_t\bar\psi_t](x)\d\mu(x) \\
	\stackrel{\eqref{cdform}}{=} & 
	\sum_{t=0}^{\theta-1} \int_\Omega \eta^\ast_{t+1}(x)\intoo{c_2\alpha^\ast\beta_t[\sK\intoo{\xi^\ast_t\bar\psi_t}](x) + d_2(\alpha^\ast\beta_t)^2[\sK\xi^\ast_t](x)[\sK\bar\psi_t](x)}\d\mu(x) \\
	= & \sum_{t=0}^{\theta-1} \int_\Omega c_2\alpha^\ast\beta_t[\sK\eta^\ast_{t+1}](x)\xi^\ast_t(x)\intoo{(c_2+d_2)w_t(x) - c_2(\xi_t^\ast)^2(x)} \\
		& \quad+ \, d_2\eta^\ast_{t+1}(x)\xi^\ast_{t+1}(x)\alpha^\ast\beta_t[\sK\intoo{(c_2+d_2)w_t - c_2(\xi_t^\ast)^2}](x)\d\mu(x) \\
	= & \sum_{t=0}^{\theta-1} \int_\Omega \xi^\ast_0(x)^2\bigl((c_2+d_2)^2(w_t(x) - \xi^\ast_t(x)^2) \\
	&
	\quad+ c_2d_2\bigl(\xi^\ast_{t}(x) - \alpha^\ast\beta_t[\sK(\xi_t^\ast)^2](x)\bigr)\bigr) \d\mu(x) \\
	= & \sum_{t=0}^{\theta-1} \int_\Omega \xi^\ast_0(x)^2\Bigl(\Bigr.(c_2+d_2)^2\lambda^{-t}\intoo{\sum_{r=0}^{t-1}a_{\theta+r-t}a_r + \sum_{r=t}^{\theta-1}a_ra_{r-t}}[\sK^t\bar w](x) \\
		& \quad- (c_2+d_2)^2\xi^\ast_t(x)^2 + c_2d_2\intoo{\xi^\ast_{t+1}(x)^2 - \alpha^\ast\beta_t[\sK(\xi_t^\ast)^2](x)}\Bigl.\Bigr) \d\mu(x),
\end{align*}
where the final line follows from a tedious-yet-elementary index manipulation that is left to the interested reader.
\end{proof}
\begin{remark}\label{remtrans}
	The above computations also apply to right-hand sides 
	$$
		\sF_t(u,\alpha)=\alpha\beta_tG\intoo{\int_\Omega f(\cdot,y,u(y))\d\mu(y)}
	$$
	instead of \eqref{newrhs} with the only exception that the expressions $c_2+d_2$, $c_3+d_3$ in \eqref{no1}--\eqref{degcase} have to be replaced by $c_2+\lambda d_2$, $c_3+\lambda^2d_3$. 
\end{remark}

Let us next compare bifurcations from the trivial branch for such IDEs having the same growth function and the same kernel, first where growth precedes dispersal and second the other way around. Restricted to the cone of nonnegative continuous functions $C_+(\Omega)$, both classes have very simple dynamics. After the primary and transcritical bifurcation, all nontrivial solutions converge to a nonzero periodic solution in $C_+(\Omega)$ \cite[Thm.~5.1]{hardin:takac:webb:88}. Along this branch of globally attractive and positive solutions no further bifurcations occur (cf.\ Figs.~\ref{figlaplace1} and \ref{figlaplace1s}). From the trivial branch, however, countably many branches bifurcate, but all of them consist of biologically meaningless functions with negative values (cf.~Figs.~\ref{figlaplace2}, \ref{figlaplace3} and \ref{figlaplace2s}, \ref{figlaplace3s}).
\subsubsection{Growth-dispersal Beverton-Holt equation}
\label{sec523}
Consider a $\theta$-periodic scalar IDE \eqref{deq} with Hammerstein right-hand side
\begin{equation}
	\sF_t(u,\alpha):=\beta_t\int_\Omega k(\cdot,y)\frac{\alpha u(y)}{1+u(y)}\d\mu(y),
	\label{rhsbh}
\end{equation}
defined on the open and convex domain $U_t\equiv\set{u\in C(\Omega):\,\inf_{x\in\Omega}u(x)>-1}$. It is of the form \eqref{newrhs} with $G(z)=z$, $f(x,y,z)=k(x,y)\frac{z}{1+z}$ and thus
\begin{align*}
	G'(z)&=1,&
	D_3f(x,y,z)&=\frac{k(x,y)}{(1+z)^2}.
\end{align*}
Given $u\in U$, $\alpha>0$, we obtain the partial derivatives
\begin{align*}
	D_1\sF_t(u,\alpha)v&=\beta_t\int_\Omega k(\cdot,y)\frac{\alpha v(y)}{(1+u(y))^2}\d\mu(y),&&\\
	D_1D_2\sF_t(u,\alpha)v&=\beta_t\int_\Omega k(\cdot,y)\frac{v(y)}{(1+u(y))^2}\d\mu(y),&	D_2^2\sF_t(u,\alpha)&=0,\\
	D_1^2\sF_t(u,\alpha)v^2&=-2\beta_t\int_\Omega k(\cdot,y)\frac{\alpha v(y)^2}{(1+u(y))^3}\d\mu(y),&&\\
	D_1^3\sF_t(u,\alpha)v^3&=6\beta_t\int_\Omega k(\cdot,y)\frac{\alpha v(y)^3}{(1+u(y))^4}\d\mu(y).&&
\end{align*}
Because \eqref{deq} possesses the trivial solution, they become
\begin{align*}
	D_1\sF_t(0,\alpha)v&=\alpha\beta_t\sK v,&
	D_1^2\sF_t(0,\alpha)v^2&=-2\alpha\beta_t\sK v^2,\\
	D_1D_2\sF_t(0,\alpha)v&=\beta_t\sK v,&
	D_1^3\sF_t(0,\alpha)v^3&=6\alpha\beta_t\sK v^3
\end{align*}
and $D_2^2\sF_t(0,\alpha)=0$ for all $t\in\Z$, $v\in C(\Omega)$. 

If $(\lambda_i,\xi^i)$, $i\in I$, is a simple eigenpair of $\sK$ with $\int_\Omega\xi^i(x)^2\d\mu(x)=1$, then $(Z_1)$ holds with the critical parameter $\alpha^\ast$ given by
$$
	\alpha_i^0:=\frac{1}{\lambda_i\sqrt[\theta]{\beta_{\theta-1}\cdots\beta_0}}
$$
and so does the representation \eqref{cdform} with $c_2 = -2$, $c_3 = 6$, $d_2=d_3=0$ (for this, see \rref{remgrowth}). We have
\begin{align}
	g_{11}(i)
	=&
	\frac{\theta}{\alpha_i^0}\int_\Omega\xi^i(x)^2\d\mu(x) > 0,&
	g_{20}(i)
	=&
	-2\intoo{\sum_{t=0}^{\theta-1}(\alpha_i^0 \lambda)^{t}\prod_{r=0}^t\beta_r}\int_\Omega\xi^i(x)^3\d\mu(x),
	\notag\\
	&&
	g_{30}(i)
	=&
	\, 6\intoo{\sum_{t=0}^{\theta-1}(\alpha_i^0 \lambda)^{2t}\prod_{r=0}^t\beta_r^2}\int_\Omega\xi^i(x)^4\d\mu(x)
	\notag
\end{align}
for all $i\in I$ and $\bar g(i)$ can be calculated using the above and \pref{barwprop}.

A further analysis needs an explicit knowledge of the eigenpairs of $\sK$. For general kernels, this requires numerical methods from App.~\ref{appB}. An exception is the convenient, yet realistic Laplace kernel (cf.~\cite{lutscher:petrovskii:08,reimer:bonsall:maini:16,kirk:lewis:97}), whose eigenvalues are determined by a transcendental equation.
\begin{figure}
	\begin{minipage}{20mm}
		\begin{tabular}{@{}rr@{}}
			\hline
			$i$ & $\alpha_i^0$\\
			\hline
			$0$ & $1.74$\\
			$1$ & $5.12$\\
			$2$ & $12.73$\\
			$3$ & $25.13$\\
			$4$ & $42.42$\\
			\hline
		\end{tabular}
	\end{minipage}
	\begin{minipage}{105mm}
		\includegraphics[width=52mm]{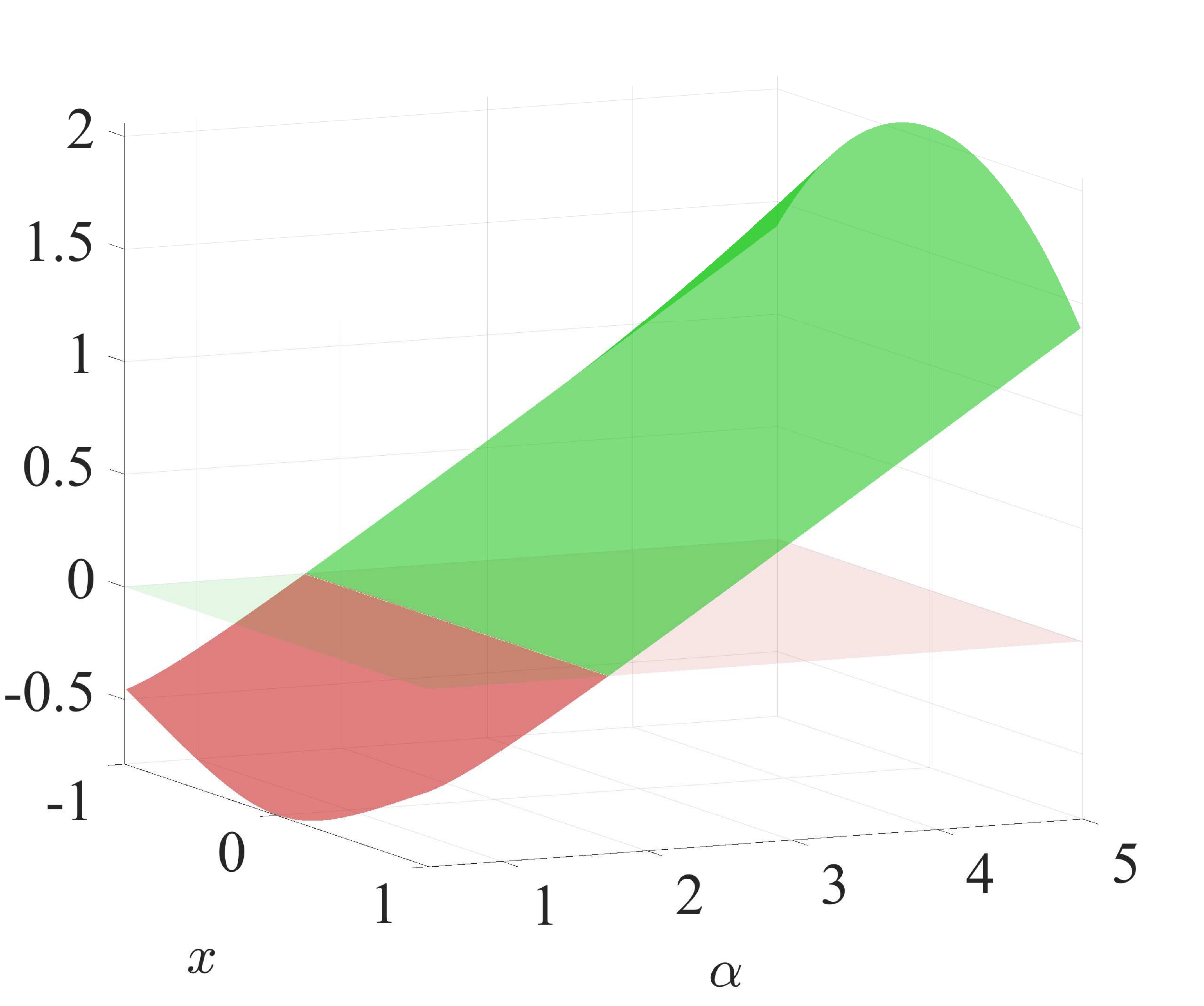}
		\includegraphics[width=52mm]{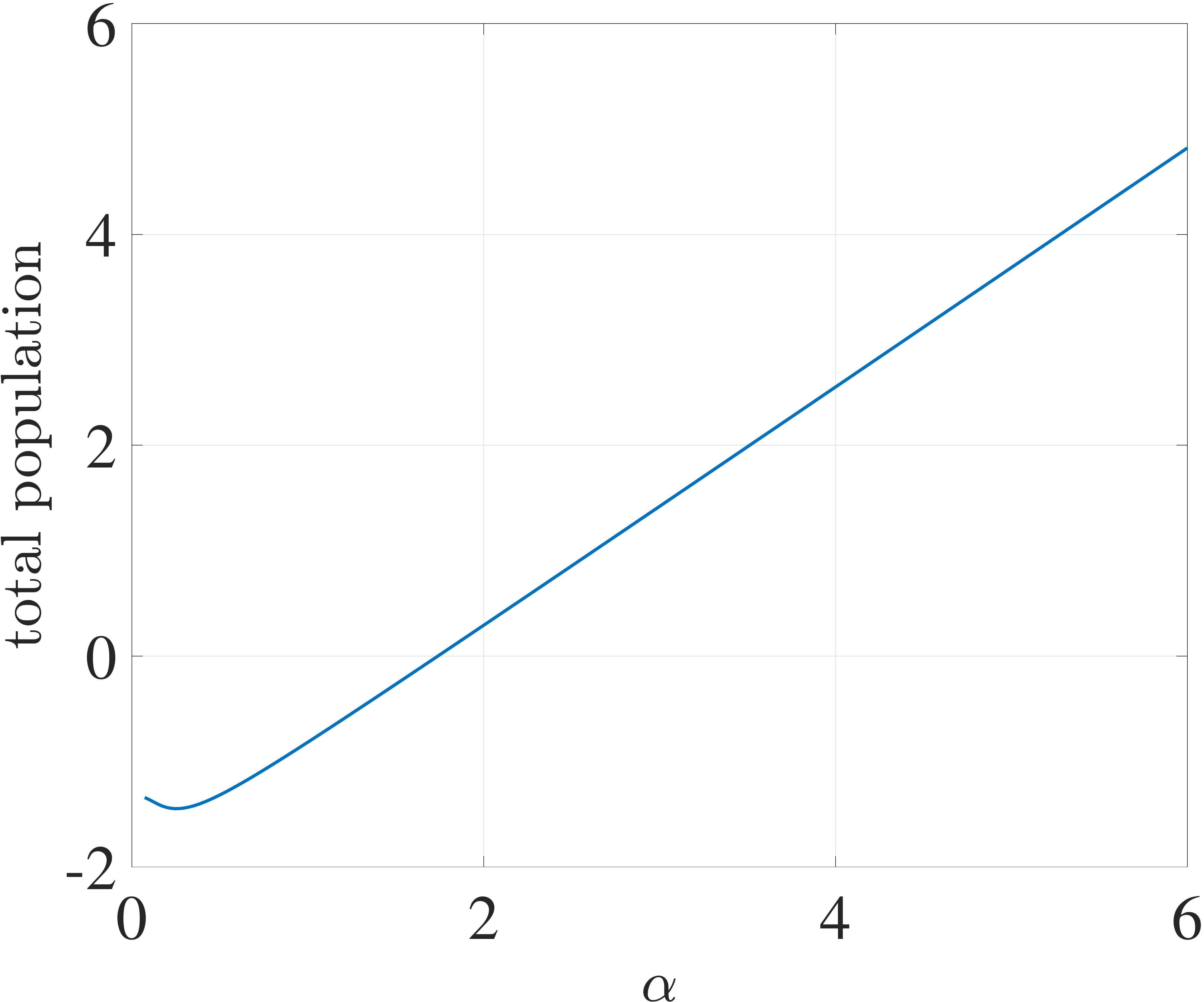}
	\end{minipage}
	\caption{First critical values $\alpha_i^0$ for bifurcations from the trivial branch (left). 
	Nontrivial branch $\phi_0^0$ of the primary transcritical bifurcation at $\alpha_0^0\approx 1.74$ for the Beverton-Holt growth-dispersal IDE with right-hand side \eqref{rhsbh}, Laplace kernel \eqref{kerlap} and $a=1$, $L=2$ (center). 
	Total population over $\alpha\in[0,6]$ along $\phi_0^0$ (right)}
	\label{figlaplace1}
\end{figure}
\begin{figure}
	\includegraphics[width=52mm]{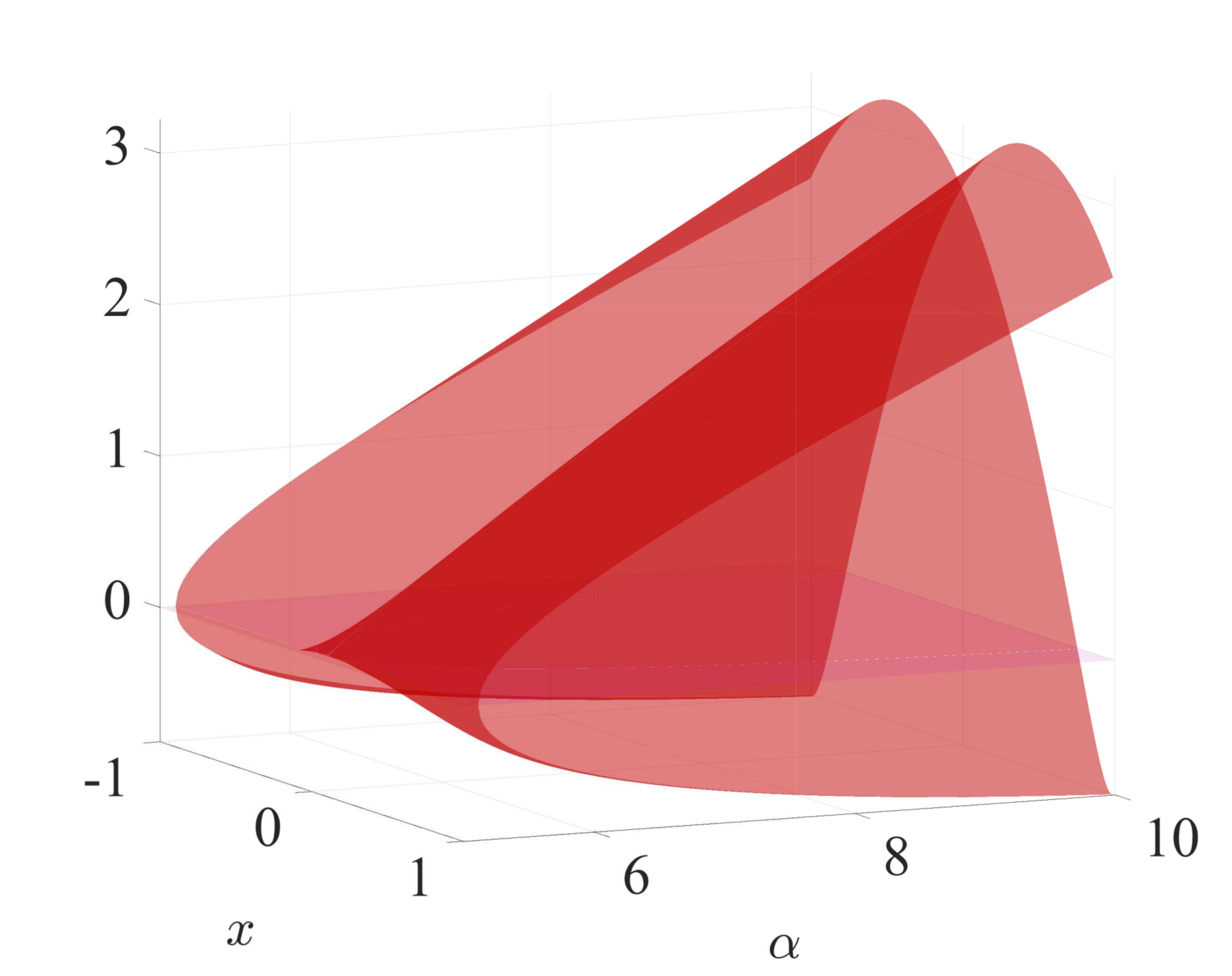}
	\includegraphics[width=52mm]{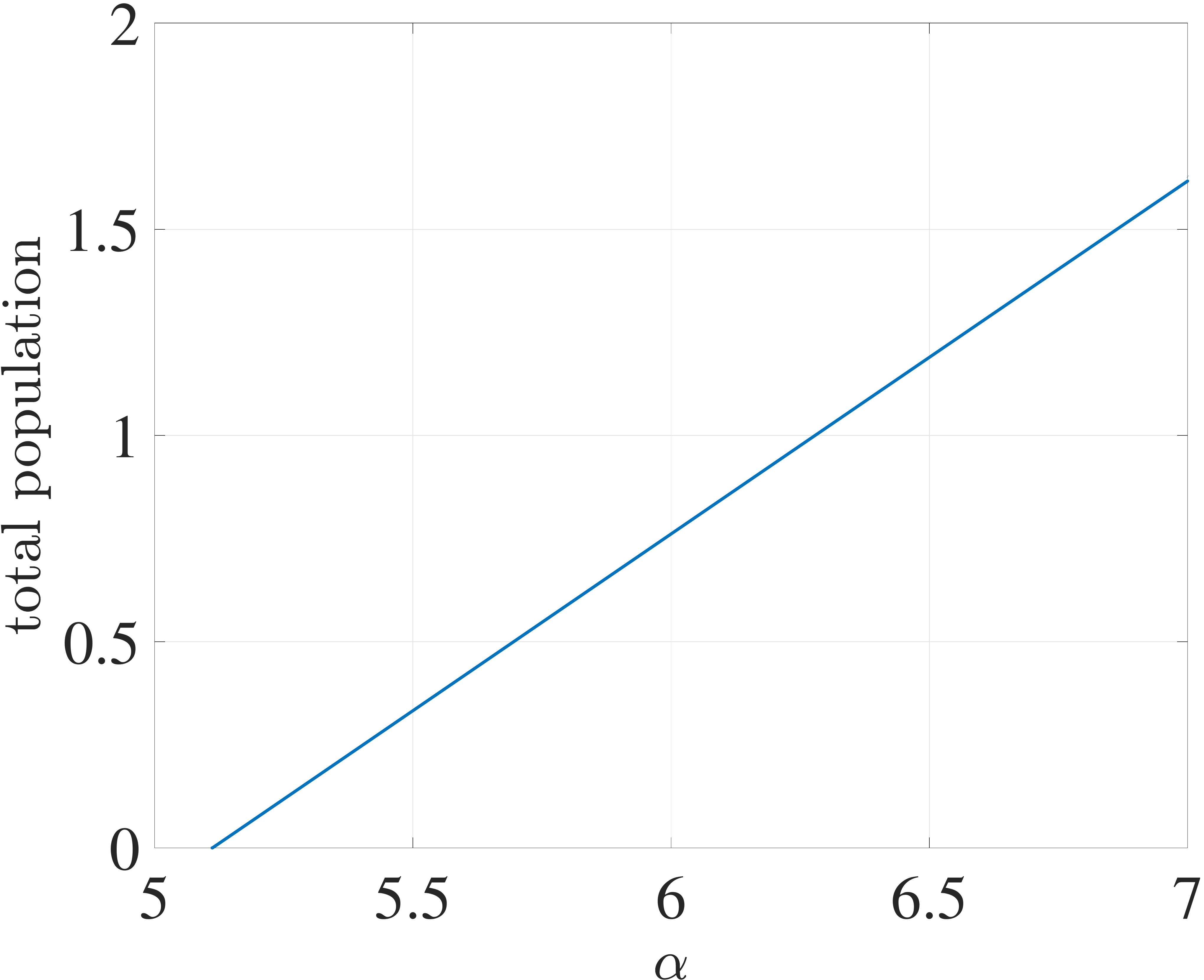}
	\caption{Branch $\phi_1^0$ of the supercritical pitchfork bifurcation at $\alpha_1^0\approx 5.12$ for the Beverton-Holt growth-dispersal IDE with right-hand side \eqref{rhsbh}, Laplace kernel \eqref{kerlap} and $a=1$, $L=2$ (left). 
	Total population over $\alpha\in[\alpha_1^0,7]$ along $\phi_1^0$ (right)}
	\label{figlaplace2}
\end{figure}
\begin{example}[Laplace kernel]\label{ex_lap-growth-dispersal}
	For $a,L>0$, $\Omega=[-\tfrac{L}{2},\tfrac{L}{2}]$ and $\mu$ being the Lebesgue measure, consider again the \emph{Laplace kernel} \eqref{kerlap}, which is symmetric and positive. It thus yields an operator $\sK\in L(C[-\tfrac{L}{2},\tfrac{L}{2}])$ as desired with index set $I=\N_0$ and furthermore an even IDE \eqref{deq}. Rather than working with \pref{prop_total-positivity}, the eigenpairs can be explicitly computed (cf.~\cite[Appendix~2]{reimer:bonsall:maini:16}): If
	\begin{itemize}
		\item $\tan(\tfrac{aL}{2}\nu)=\tfrac{1}{\nu}$ has the positive solutions $\nu_0<\nu_2<\ldots$, 

		\item $\cot(\tfrac{aL}{2}\nu)=-\tfrac{1}{\nu}$ has the positive solutions $\nu_1<\nu_3<\ldots$, 
	\end{itemize}
	then $\sigma(\sK)\setminus\set{0}=\set{\lambda_i\in\R:\,i\in\N_0}\subset(0,1)$ with the strictly decreasing sequence of simple eigenvalues $\lambda_i=\tfrac{1}{1+\nu_i^2}$ and the associate eigenfunctions 
	\begin{align*}
		\xi^{i}:[-\tfrac{L}{2},\tfrac{L}{2}]&\to\R,&
		\xi^{i}(x)
		&:=
		\begin{cases}
			\cos(a\nu_ix),&i\text{ is even},\\
			\sin(a\nu_ix),&i\text{ is odd}
		\end{cases}
		\fall i\in\N_0. 
	\end{align*}
	Also here the eigenvalues $\lambda_i$ depend only on the product $aL$, and not on $a,L$ individually. The associate eigenfunctions $\xi^{i}$ are even for even indices $i\in\N_0$ and odd for odd $i$. Finally we explicitly compute $\int_{-\tfrac{L}{2}}^{\tfrac{L}{2}} \xi^i(x)^3\d x\neq 0$ for all even $i$.
	
	\textbf{Case $i$ is even}: \pref{trivtrans} and the preceding computations thus yield
	\begin{align*}
		g_{20}(i)
		&
		\begin{cases}
			\neq 0\text{ for even }i\in\N_0,\\
			=0\text{ for odd }i\in\N_0,
		\end{cases}&
		g_{30}(i)
		&>
		0\fall i\in\N_0.
	\end{align*}
	Therefore, the trivial solution of \eqref{deq} with right-hand side \eqref{rhsbh} bifurcates transcritically into a $\theta$-periodic branch $\phi_i^0$ at the countably many critical parameters $\alpha_i^0$ for even $i$ (cf.\ \pref{proptrans}).

	\textbf{Case $i$ is odd}: One would suspect that for odd indices $i\in\N$, pitchfork bifurcations from the trivial branch take place at each $\alpha_i^0$. However, as $D_1^2\sF_t(0,\alpha_i^0)\neq 0$ for all $t\in\Z$, our earlier comment about the difficulty of verifying a pitchfork bifurcation applies. On the other hand, the structure of the Laplace kernel \eqref{kerlap} allows us to verify \pref{barwprop} explicitly. To do so, we begin with a useful remark: For any odd $i\in\N$, consider the corresponding eigenpair $\bigl(\tfrac{1}{1+\nu_i^2},\sin(a\nu_i \cdot)\bigr)$ of $\sK$. Suppressing the index $i$ for readability, we observe that as $\nu$ solves $\cot(\tfrac{aL}{2}\nu) = -\tfrac{1}{\nu}$, one has 
$$
	\frac{1}{\nu^2} = \cot(\tfrac{aL}{2}\nu)^2 = \frac{\cos(\tfrac{aL}{2}\nu)^2}{\sin(\tfrac{aL}{2}\nu)^2} = \frac{1 - \sin(\tfrac{aL}{2}\nu)^2}{\sin(\tfrac{aL}{2}\nu)^2},
$$
yielding $\sin(\tfrac{aL}{2}\nu) = \pm\tfrac{\nu}{\sqrt{1+\nu^2}}$ and $\cos(\tfrac{aL}{2}\nu) = \mp\tfrac{1}{\sqrt{1+\nu^2}}$, which allows for significant simplification of the upcoming calculations.

In order to find $\bar w$ from \pref{barwprop}, we define for each $\zeta\in\C$ the differential respectively boundary operators 
	\begin{align*}
		\mathfrak{d}_\zeta:C^2[-\tfrac{L}{2},\tfrac{L}{2}]&\to C[-\tfrac{L}{2},\tfrac{L}{2}],&
		[\mathfrak{d}_\zeta v](x)
		&:=
		v''(x) - a^2\bigl(1 - \tfrac{\zeta}{\lambda}\bigr)v(x), \\
		\mathfrak{b}:C^2[-\tfrac{L}{2},\tfrac{L}{2}]&\to\R^2,&
		[\mathfrak{b} v](x)
		&:=
		\begin{bmatrix}
			v'(-\tfrac{L}{2}) - a v(-\tfrac{L}{2})\\
			v'(\tfrac{L}{2}) + a v(\tfrac{L}{2})
		\end{bmatrix}.
	\end{align*}
	The key to solving the periodic eigenvalue problem for the Laplace kernel \eqref{kerlap} is the relation
	\begin{align*}
		\mathfrak{d}_0\sK v &= - a^2 v,&
		\mathfrak{b}\sK v &= \svector{0}{0}
		\fall v\in C[-\tfrac{L}{2},\tfrac{L}{2}]
	\end{align*}
	(cf.~\cite{jacobsen:mcadam:14} for a similar approach). Applying this to \eqref{fredeq} yields the linearly inhomogeneous $2\theta$-th order differential equation
	\begin{equation}\label{lapdeq}
		\mathfrak{d}_0^\theta \bar w = (-\tfrac{a^2}{\lambda})^\theta \bar w + \mathfrak{d}_0^\theta(\xi_0^\ast)^2.
	\end{equation}
	Additionally, $\mathfrak{d}_0 1 = - a^2$ and $\mathfrak{d}_0 \cos(2 a\nu \cdot) = - a^2\tilde{\lambda}^{-1}\cos(2 a\nu\cdot)$, where $\tilde{\lambda}:=\tfrac{1}{1+4\nu^2}$. From this, one may observe that $\tilde w(x) := B_0 + B_1\cos(2a\nu x)$ is a particular solution of \eqref{lapdeq}, where	
	\begin{align*}
		B_0 &:= \frac{\lambda^\theta}{2\lambda^\theta - 2},&
		B_1 &:= \frac{\lambda^\theta}{2\tilde\lambda^\theta - 2\lambda^\theta}. 
	\end{align*}
	Let $\zeta_t:=e^{2\pi\iota \frac{t}{\theta}}\in\C$ denote the $\theta$-th roots of unity. Regarding the linearly homogeneous differential equation $\mathfrak{d}_0^\theta \bar w = (-\tfrac{ a^2}{\lambda})^\theta \bar w$, its characteristic polynomial has exactly the roots $\zeta_0,\ldots,\zeta_{\theta-1}$, and so if $Z_t:=\sqrt{\tfrac{\zeta_t}{\lambda}-1}$, then the general solutions are of the form 
	$$
		x\mapsto\sum_{t=0}^{\theta-1}\intoo{C_t\cos(aZ_t x) + S_t\sin(aZ_t x)}
	$$
	with complex coefficients $C_t,S_t$ (cf.~\cite[p.~190, (14.5)~Thm.]{amann:90}). Next, we note that 
	\begin{align*}
		\mathfrak{d}_{\zeta_t} \cos(aZ_r \cdot) = & - a^2\tfrac{\zeta_r-\zeta_t}{\lambda}\cos(aZ_r \cdot), &
		\mathfrak{d}_{\zeta_t} \sin(aZ_r \cdot) = & - a^2\tfrac{\zeta_r-\zeta_t}{\lambda}\sin(aZ_r \cdot),\\
		\mathfrak{d}_{\zeta_t} 1 = & - a^2(1 - \tfrac{\zeta_t}{\lambda}), &
		\mathfrak{d}_{\zeta_t} \cos(2 a\nu \cdot) = & - a^2(\tilde{\lambda}^{-1} - \tfrac{\zeta_t}{\lambda})\cos(2 a\nu \cdot)
	\end{align*}
	and in particular $\mathfrak{d}_{\zeta_t}\cos(aZ_t \cdot) = \mathfrak{d}_{\zeta_t}\sin(aZ_t \cdot) = 0$. Making use of the identities
	\begin{align*}
		\prod_{t=1}^{\theta-1}(1 - \zeta_t)
		&=
		\theta,&
		\prod_{t=0}^{\theta-1}(p - \zeta_t q)
		&=
		p^\theta - q^\theta\fall p,q\in\C
	\end{align*}
	suggests the boundary conditions
	\begin{align*}
		0 & = \mathfrak{b}\mathfrak{d}_{\zeta_{\theta-1}}\cdots \widehat{\mathfrak{d}_{\zeta_t}}\cdots \mathfrak{d}_{\zeta_0} (\bar w - (\xi^\ast_0)^2) \\
		& = - a(- a^2)^{\theta-1}
		\begin{bmatrix}
			\zeta_t^{\theta-1}\theta\intoo{C_t\tilde C_t - S_t\tilde S_t} - \tilde B_t\\
			\zeta_t^{\theta-1}\theta\intoo{-C_t\tilde C_t - S_t\tilde S_t} + \tilde B_t
		\end{bmatrix}
		\fall 0\leq t<\theta, 
	\end{align*}
	where $\tilde B_t := \frac{(2+\zeta_t-\lambda)(\lambda-1)}{(\zeta_t-\lambda)(\zeta_t - \lambda\tilde{\lambda}^{-1})}$, 
	\begin{align*}
		\tilde C_t &:= \cos(\tfrac{aL}{2}Z_t) - Z_t\sin(\tfrac{aL}{2}Z_t),&
		\tilde S_t &:= Z_t\cos(\tfrac{aL}{2}Z_t) + \sin(\tfrac{aL}{2}Z_t)
	\end{align*}
	and the hat $\widehat{\cdot}$ denotes excluding the operator. We immediately obtain $S_t\tilde S_t = 0$ for all $t\in\Z$. Moreover, $\tilde B_t\neq 0$ for all $t$, by e.g.\ writing out its real and imaginary parts, implying $C_t\tilde C_t\neq 0$ for all $t$. Thus follows $C_t = \frac{\zeta_t\tilde B_t}{\theta \tilde C_t}$, leaving the $S_t$ unspecified. We can therefore define the function
	$$
		\bar w(x) := B_0 + B_1\cos(2a\nu x) + \sum_{t=0}^{\theta-1}C_t\cos(aZ_t x).
	$$
	Since $\bar w$ is even, $z'(\bar w) = \int_{-L/2}^{L/2}\xi_0^\ast(x)\bar w(x)\d x = 0$, showing that it is the requested unique $\bar w\in R(\sK-\lambda I_{C(\Omega)})$ from \pref{barwprop}. As $\mathfrak{d}_0 \cos(aZ_t\cdot) = - a^2\tfrac{\zeta_t}{\lambda}\cos(aZ_t\cdot)$ holds for all $t\in\Z$, returning to \eqref{fredeq} leads to
	$$
	\sK^t\bar w = B_0 + \tilde{\lambda}^tB_1\cos(2a\nu\cdot) + \lambda^t\sum_{r=0}^{\theta-1}\zeta_r^{-t}C_r\cos(aZ_r\cdot).
	$$
	Finally, we combine this with \pref{barwprop} to obtain the expression
	\begin{align}
		\bar g
		=&
		\intoo{c_3+d_3 - 3(c_2^2+c_2d_2+d_2^2)}\intoo{\tfrac{3 L}{8} + \tfrac{\lambda(5-2\lambda)}{4a}}
		\sum_{t=0}^{\theta-1}a_t^2
		\notag\\
		&+
		3\tfrac{(c_2+d_2)^2}{4a}
		\sum_{t=0}^{\theta-1}
		\intoo{\sum_{r=0}^{t-1}a_{\theta+r-t}a_r + \sum_{r=t}^{\theta-1}a_ra_{r-t}}\Biggr(\Biggr.
		2B_0 \tfrac{a L+2 \lambda}{\lambda^t}
		\notag\\
		&
		\qquad
		-B_1 (a L+2 \lambda (3-2 \lambda)) (\tfrac{\tilde\lambda}{\lambda})^{t}-\tfrac{8(3-\lambda)\lambda}{\theta}
		\notag\\
		&
		+8(1-\lambda)\sum_{r=1}^{\theta -1}
		C_r\zeta_r^{-t}\tfrac{(\lambda Z_r^2-2)\sin \left(\tfrac{aL}{2} Z_r\right) - 2 \lambda Z_r \cos\left(\tfrac{aL}{2} Z_r\right)}{Z_r \left(\lambda(Z_r^2+4)-4\right)} 
		\Biggl.\Biggr)
		\notag\\
		&
		-3c_2d_2\tilde{\lambda}^2\tfrac{
		36\lambda^4 - 78\lambda^3 - 16\lambda^2 + 96\lambda - 32 + aL(8-5\lambda)\tfrac{\lambda}{\tilde{\lambda}}	+ \tfrac{8(\lambda^2-3\lambda+2)^2}{e^{aL}}
		}{8a\lambda^3}
		\sum_{t=0}^{\theta-1}
		a_ta_{t+1}
		\label{gbarterm}
	\end{align}
	determining whether a super- or subcritical pitchfork bifurcation occurs. 
	
	This situation simplifies significantly in an autonomous setting, where $\theta=1$, $\beta_0=1$ and we additionally impose $c_2d_2 = 0$. Here, an elementary, albeit tedious, consequence of our assumptions and \eqref{gbarterm} yields
	\begin{equation}
		\bar g(i) 
		= 
		(c_3+d_3)\intoo{\tfrac{3 L}{8}+\tfrac{3+5\nu_i^2}{4a\left(1+\nu_i^2\right)^2}}
		-
		(c_2+d_2)^2\tfrac{\left(15 a L\left(1+\nu_i^2\right)^3+30+80\nu_i^2+66\nu_i^4\right)}{24 a\intoo{\nu_i+\nu_i^3}^2}
		\label{no3}
	\end{equation}
	and the following hold using \tref{thmcross} with Props~\ref{proptrans}, \ref{proppitch}: 
	\begin{itemize}
		\item $c_3+d_3\leq\frac{5(c_2+d_2)^2}{3}\Leftrightarrow\bar g(i)<0$, that is, a supercritical pitchfork bifurcation from the trivial branch takes place at each $\alpha_i^0$.\\
		One obtains this by observing that $\bar g(i) < 0$ if and only if 
		\begin{equation}\label{ineq_subpitchcond}
			c_3+d_3
			< 
			\tfrac{(c_2+d_2)^2}{3}\left(5 + \tfrac{5}{\nu ^2} + \tfrac{16 \nu ^2}{3 a L \left(1+\nu ^2\right)^2+6+10 \nu ^2}\right). 
		\end{equation}
		As $\tfrac{\d}{\d\nu}\bigl(\cot(\tfrac{aL}{2}\nu) + \tfrac{1}{\nu}\bigr) = -\tfrac{aL}{2}\csc(\tfrac{a L}{2}\nu) - \tfrac{1}{\nu^2} < 0$ wherever defined and 
		\begin{align*}
			\lim_{\nu\nearrow\tfrac{2\pi i}{a L}}
			\bigl(\cot(\tfrac{aL}{2}\nu) + \tfrac{1}{\nu}\bigr) &= -\infty,&
			\lim_{\nu\searrow\tfrac{2\pi i}{a L}}
			\bigl(\cot(\tfrac{aL}{2}\nu) + \tfrac{1}{\nu}\bigr) &= \infty,
		\end{align*}
		it follows that $\cot(\tfrac{aL}{2}\nu)=-\tfrac{1}{\nu}$ possesses exactly one solution in each interval $\bigl(\tfrac{2\pi i}{a L},\tfrac{2\pi (i+1)}{a L}\bigr)$. 
		Hence, as $i\to\infty$, one has $\nu_i\to\infty$, and, as is easily verified, the right-hand side of \eqref{ineq_subpitchcond} strictly monotonously decreases towards $\tfrac{5(c_2+d_2)^2}{3}$; thus, it is the largest number satisfying
		$$
			\tfrac{(c_2+d_2)^2}{3}\left(5 + \tfrac{5}{\nu ^2} + \tfrac{16 \nu ^2}{3 a L \left(1+\nu ^2\right)^2+6+10 \nu ^2}\right)
			>
			\tfrac{5(c_2+d_2)^2}{3}
			\quad\text{for all odd }i\in\N_0.
		$$

		\item Conversely, if $c_3+d_3>\frac{5(c_2+d_2)^2}{3}$, then there is some odd $i_0\in\N$ so that a supercritical pitchfork bifurcation from the trivial branch takes place at $\alpha_i^0$ if $i<i_0$, while a subcritical pitchfork bifurcation takes place at $\alpha_i^0$ if $i>i_0$.\\
		This readily results from the monotonicity of the right-hand side of \eqref{ineq_subpitchcond}.
	\end{itemize}

	\textbf{Conclusion} of \eref{ex_lap-growth-dispersal}: Since $c_2=-2$, $c_3=6$, $d_2=d_3=0$ (cf.\ \rref{remgrowth}), one has $c_3+d_3 = 6 \leq \tfrac{20}{3} = \tfrac{5(c_2+d_2)^2}{3}$ and we can summarize:
\begin{itemize}
	\item For every even $i\in\N_0$, a transcritical bifurcation of $\theta$-periodic solutions takes place at $(0,\alpha_i^0)$. \pref{prop_branches-odd-even} yields that until another bifurcation occurs along these nontrivial branches $\phi_i^0$, they each consist of only even functions. Due to \pref{primary_positive}, the trivial branch is exponentially stable for $\alpha < \alpha_0^0$, and unstable for $\alpha > \alpha_0^0$, with exponential stability being transferred to the nontrivial branch $\phi_0^0$ of $\theta$-periodic solutions (see \fref{figlaplace1} (center)). For $\alpha>\alpha_0^0$, this only biologically relevant branch consists of the nonnegative functions guaranteed by \cite[Thm.~5.1]{hardin:takac:webb:88}; for $\alpha<\alpha_0^0$ the functions $\phi_0^0(\alpha)$ have negative values.
	
	\item For every odd $i\in\N$ and $\theta=1$, a supercritical pitchfork bifurcation of fixed points takes place at $(0,\alpha_i^0)$. \pref{prop_branches-odd-even} yields that until another bifurcation occurs along these nontrivial branches $\phi_i^0$, they consist of "symmetric" pairs of branches yielding the same total population (see \fref{figlaplace2}). This explains that the curve capturing the total population in \fref{figlaplace2} (right) appears to be a single line. 
\end{itemize}
	Globally, the branches $\phi_i^0$, $i\geq 2$ even (from transcritical bifurcations), appear to be part of supercritical folds (see \fref{figlaplace3} (right)), occurring at parameters $\alpha<\alpha_i^0$. 
\end{example}
\begin{figure}
	\includegraphics[width=52mm]{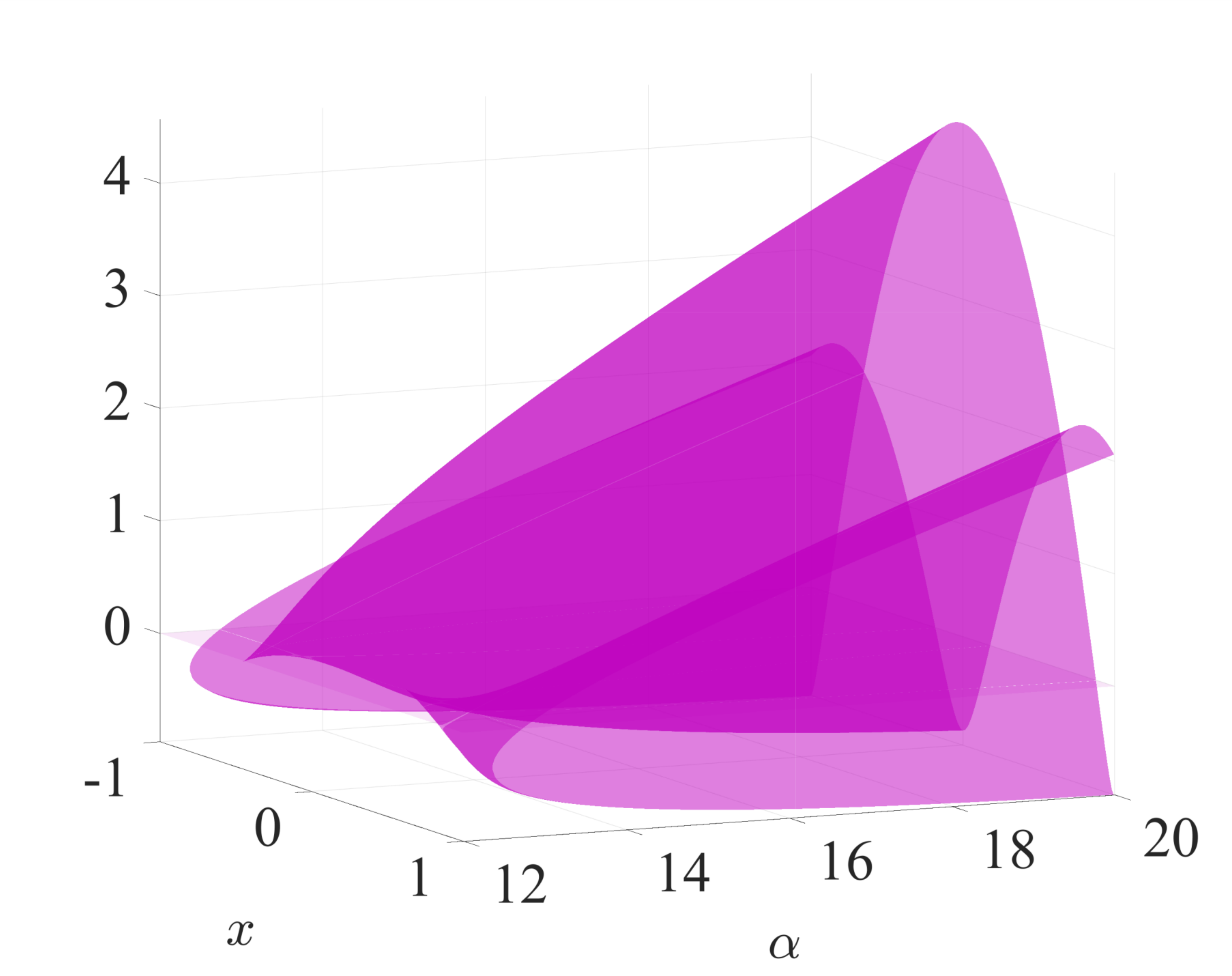}
	\includegraphics[width=52mm]{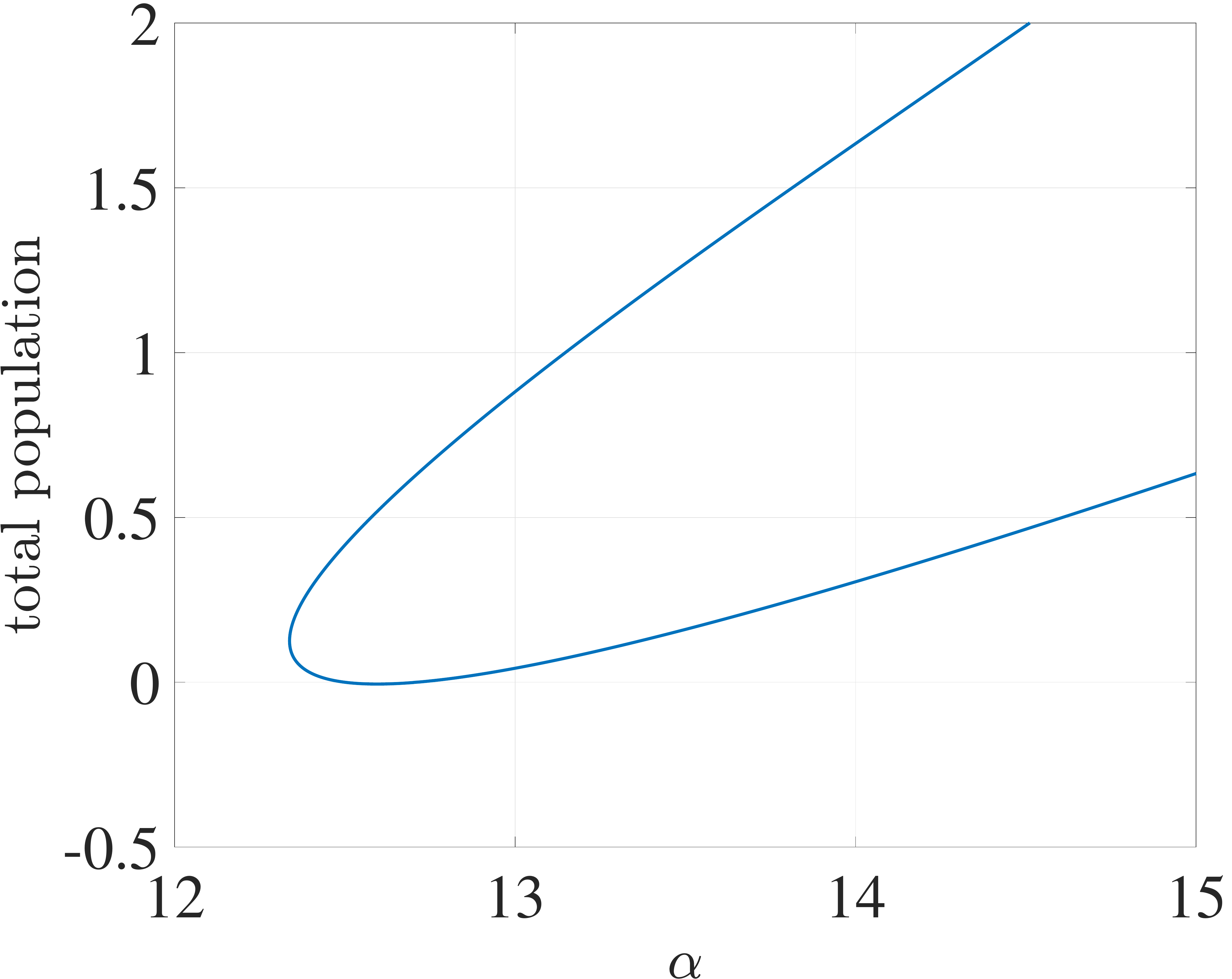}
	\caption{Branch $\phi_2^0$ of the transcritical bifurcation at $\alpha_2^0\approx 12.73$ for the Beverton-Holt growth-dispersal IDE with right-hand side \eqref{rhsbh}, Laplace kernel \eqref{kerlap} and $a=1$, $L=2$ as part of a fold (left). Total population over parameters $\alpha\in[12.4, 15]$ reflecting a fold in $\phi_2^0$ (right)}
	\label{figlaplace3}
\end{figure}
\subsubsection{Dispersal-growth Beverton-Holt equation}
\label{sec524}
Let us now focus on the $\theta$-periodic dispersal-growth IDE \eqref{deq} with right-hand side
\begin{equation}
	\sF_t(u,\alpha)
	:=
	\frac{\alpha\beta_{t+1}\int_\Omega k(\cdot,y)u(y)\d\mu(y)}{1+\int_\Omega k(\cdot,y)u(y)\d\mu(y)}
	\label{nodisgro}
\end{equation}
defined on $\set{u\in C(\Omega):\,\inf_{x\in\Omega}\int_\Omega k(x,y)u(y)\d\mu(y)>-1}\tm(0,\infty)$. It is not of the form stipulated by \eqref{newrhs}, \eqref{cdform}, but this problem can be circumvented as follows: 

Rather than relying on \rref{remtrans}, we apply the transformation $\bar u_t=\tfrac{1}{\alpha\beta_t}u_t$ leading to the modified IDE
\begin{equation}
	\tag{$\bar\Delta_{\alpha}$}
	\boxed{u_{t+1} = \bar\sF_{t}(u_t,\alpha),}
	\qquad
	\bar\sF_t(u,\alpha)
	:=
	\frac{\alpha\beta_t\int_\Omega k(\cdot,y)u(y)\d\mu(y)}{1+\alpha\beta_t\int_\Omega k(\cdot,y)u(y)\d\mu(y)},
	\label{nodisgromod}
\end{equation}
whose right-hand side is defined on the sets
\begin{align*}
	U_t&\equiv\set{u\in C(\Omega):\,\inf_{x\in\Omega}\int_\Omega k(x,y)u(y)\d\mu(y)>-\frac{1}{\bar\alpha\beta_t}},&
	A&=(0,\bar\alpha)
\end{align*}
depending on a fixed real $\bar\alpha>0$. It is immediate that $\phi\in\ell_\theta$ solves $(\Delta_\alpha)$ if and only if $\bar\phi\in\ell_\theta$ solves $(\bar\Delta_\alpha)$ whenever both are defined. Now the right-hand side of \eqref{nodisgromod} can be written as \eqref{newrhs} with functions $G(z)=\tfrac{z}{1+z}$, $f(x,y,z)=k(x,y)z$ and hence
\begin{align*}
	G'(z)&=1,&
	D_3f(x,y,z)&=k(x,y).
\end{align*}
Given $u\in U_t$, $\alpha\in(0,\bar\alpha)$, one has derivatives
\begin{align*}
	D_1\bar\sF_t(u,\alpha)v
	&=
	\frac{\alpha\beta_t}{\Bigl(1+\alpha\beta_t\int_\Omega k(\cdot,y)u(y)\d\mu(y)\Bigr)^2}\int_\Omega k(\cdot,y)v(y)\d\mu(y),&&\\
	D_1D_2\bar\sF_t(u,\alpha)v
	&=
	\frac{\beta_t-\alpha\beta_t^2\int_\Omega k(\cdot,y)u(y)\d\mu(y)}{\Bigl(1+\alpha\beta_t\int_\Omega k(\cdot,y)u(y)\d\mu(y)\Bigr)^3}\int_\Omega k(\cdot,y)v(y)\d\mu(y),&&&\\
	D_1^2\bar\sF_t(u,\alpha)v^2
	&=
	-\frac{2(\alpha\beta_t)^2}{\Bigl(1+\alpha\beta_t\int_\Omega k(\cdot,y)u(y)\d\mu(y)\Bigr)^3}\intoo{\int_\Omega k(\cdot,y)v(y)\d\mu(y)}^2,&&\\
	D_1^3\bar\sF_t(u,\alpha)v^3
	&=
	\frac{6(\alpha\beta_t)^3}{\Bigl(1+\alpha\beta_t\int_\Omega k(\cdot,y)u(y)\d\mu(y)\Bigr)^4}\intoo{\int_\Omega k(\cdot,y)v(y)\d\mu(y)}^3&&
\end{align*}
and along the trivial solution $\phi_0^0(\alpha)\equiv 0$ of $(\bar\Delta_\alpha)$ they simplify to
\begin{align*}
	D_1\bar\sF_t(0,\alpha)v&=\alpha\beta_t\sK v,&
	D_1^2\bar\sF_t(0,\alpha)v^2&=-2\intoo{\alpha\beta_t\sK v}^2,\\
	D_1D_2\bar\sF_t(0,\alpha)v&=\beta_t\sK v,&
	D_1^3\bar\sF_t(0,\alpha)v^3&=6\intoo{\alpha\beta_t\sK v}^3\fall t\in\Z,\,v\in C(\Omega).
\end{align*}
The earlier computations now hold with $c_2=c_3=0$, $d_2=-2$, $d_3=6$ (cf.\ \rref{remgrowth}). Indeed, for any eigenpair $(\lambda_i,\xi^i)$ of $\sK$, choose $\bar\alpha$ such that $\tfrac{1}{\bar\alpha^\theta}<\lambda_i^\theta\beta_0\cdots\beta_{\theta-1}$, and observe that our previous computations of $g_{11}(i)$, $g_{20}(i)$ and $\bar g(i)$ in \eqref{no1}, \eqref{no2} and \eqref{gbarterm} respectively \eqref{no3} remain unchanged. 
\begin{figure}
	\begin{minipage}{20mm}
		\begin{tabular}{@{}rr@{}}
			\hline
			$i$ & $\alpha_i^0$\\
			\hline
			$0$ & $1.74$\\
			$1$ & $5.12$\\
			$2$ & $12.73$\\
			$3$ & $25.13$\\
			$4$ & $42.42$\\
			\hline
		\end{tabular}
	\end{minipage}
	\begin{minipage}{105mm}
		\includegraphics[width=52mm]{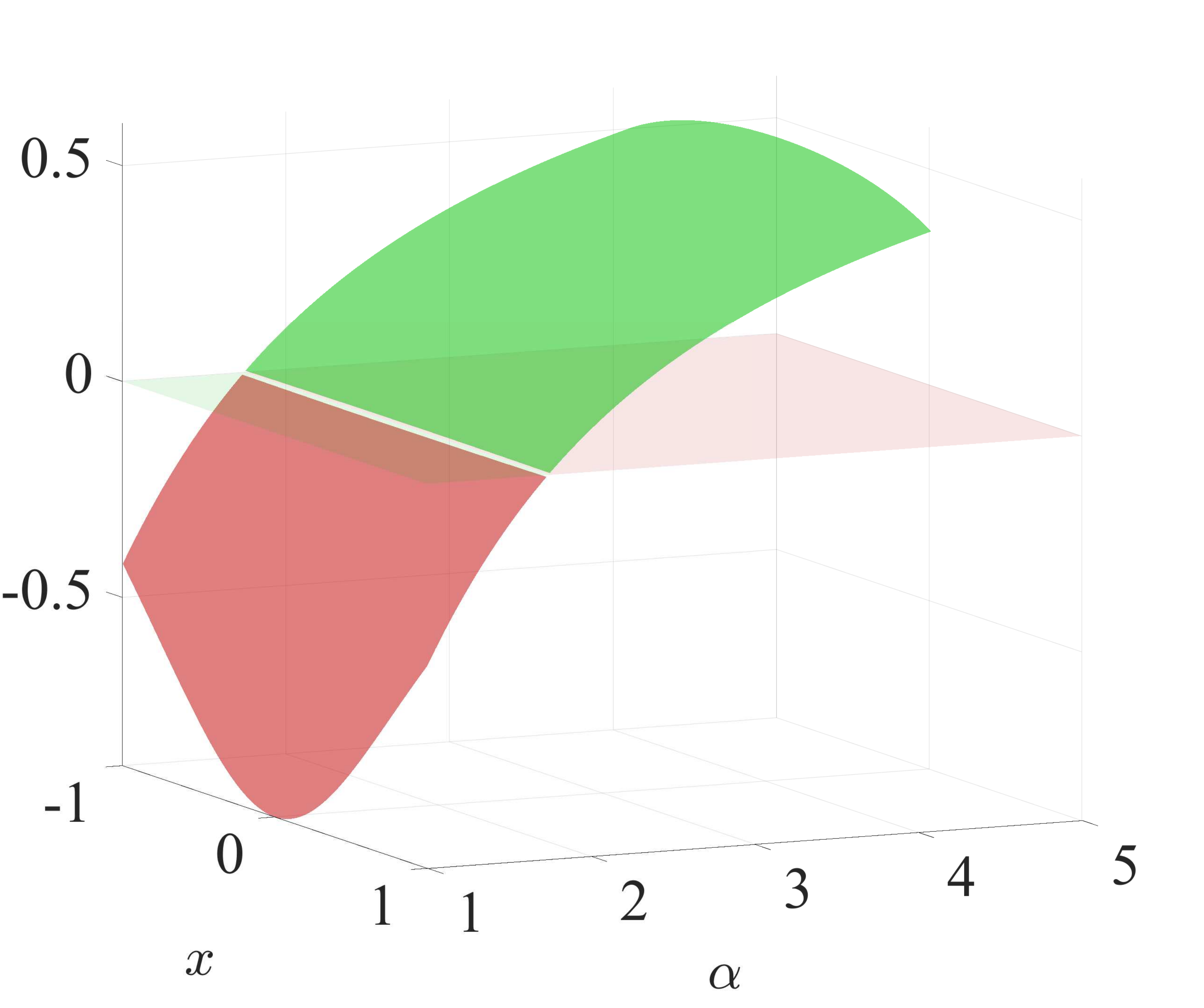}
		\includegraphics[width=52mm]{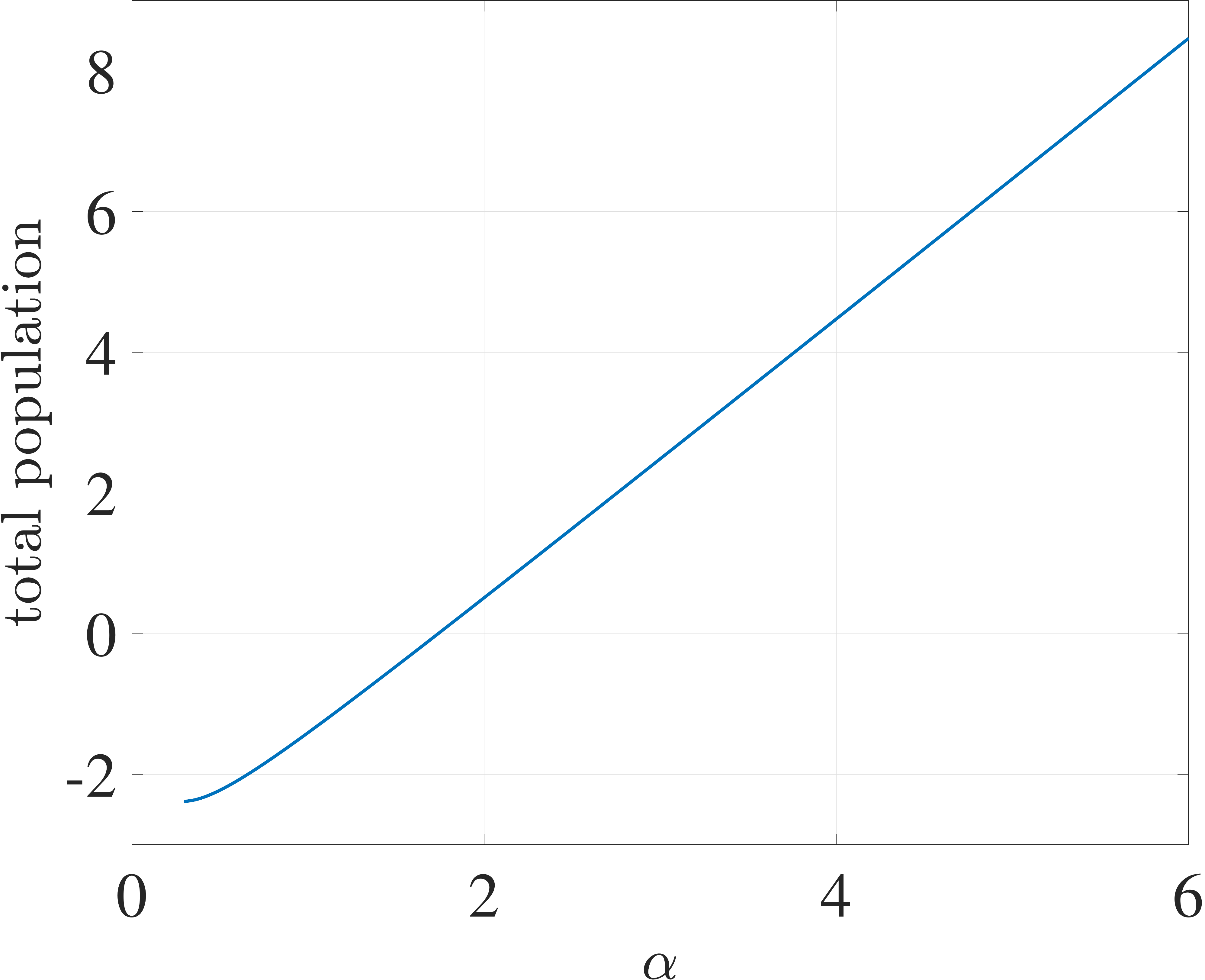}
	\end{minipage}
	\caption{First critical values $\alpha_i^0$ for bifurcations from the trivial branch (left). 
	Nontrivial branch $\phi_0^0$ of the primary transcritical bifurcation at $\alpha_0^0\approx 1.74$ for the Beverton-Holt dispersal-growth IDE with right-hand side \eqref{nodisgro}, Laplace kernel \eqref{kerlap} and $a=1$, $L=1$ (center). 
	Total population over $\alpha\in[0,6]$ along $\phi_0^0$ (right)}
	\label{figlaplace1s}
\end{figure}
\begin{figure}
	\includegraphics[width=52mm]{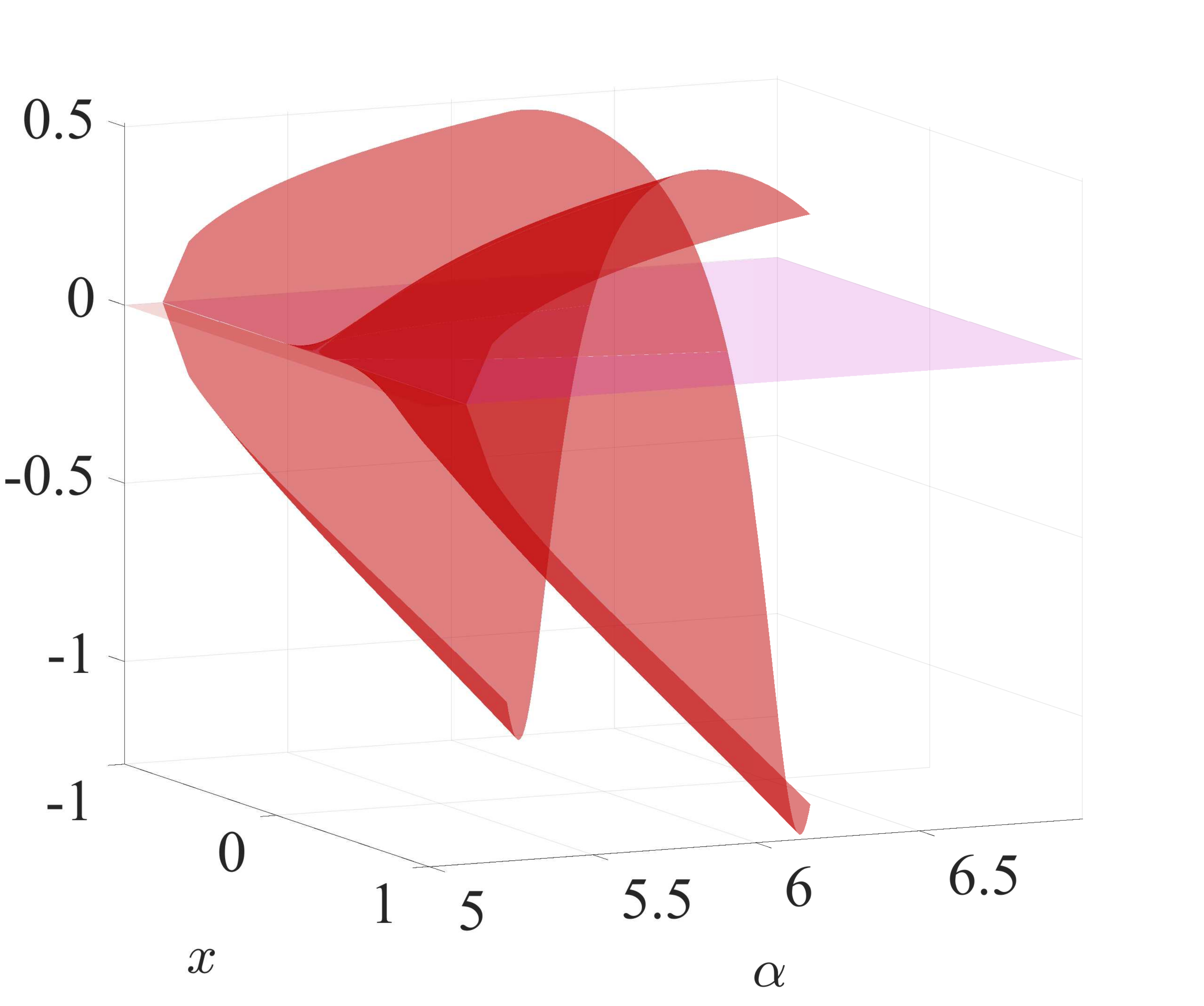}
	\includegraphics[width=52mm]{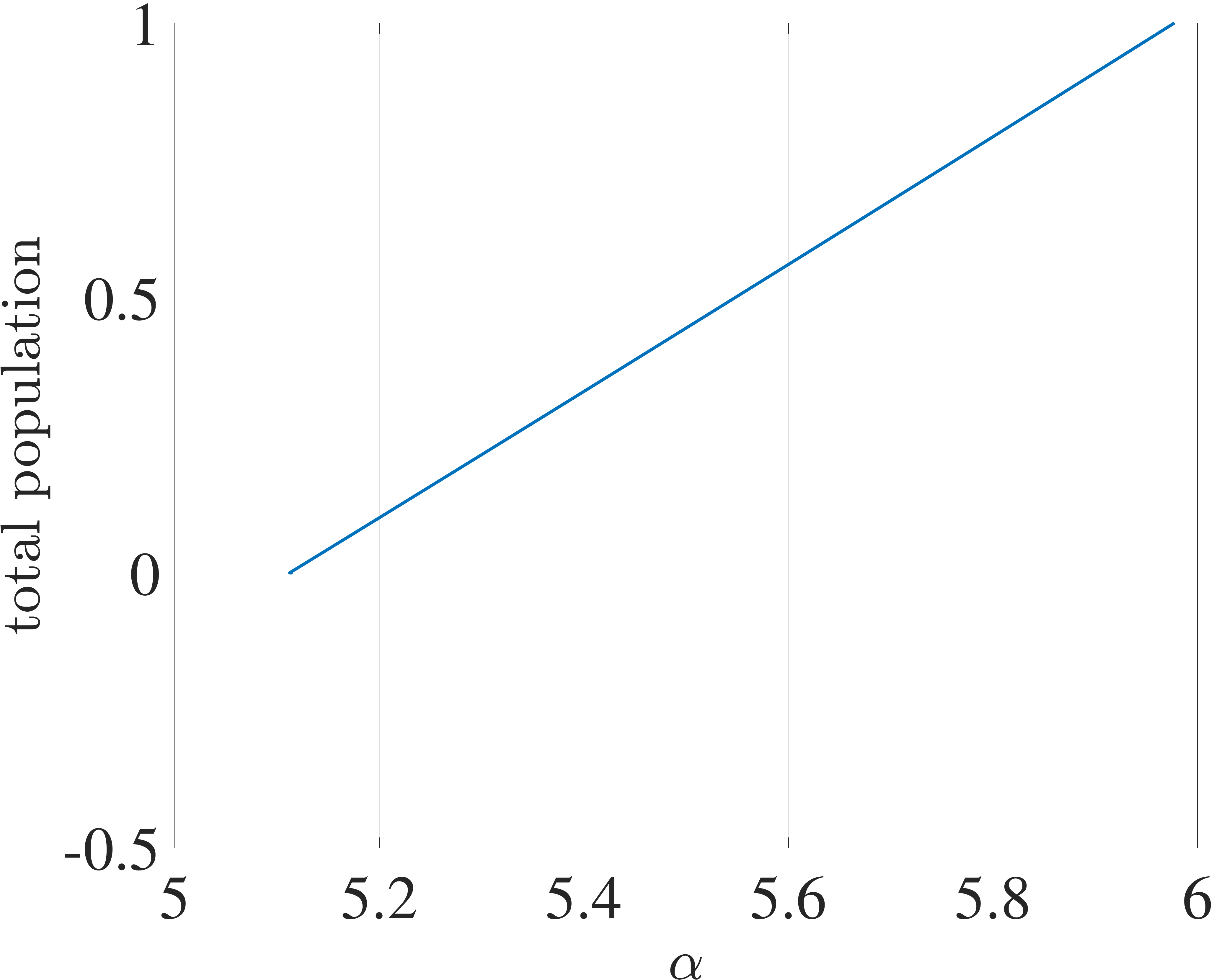}
	\caption{Branch $\phi_1^0$ of the supercritical pitchfork bifurcation at $\alpha_1^0\approx 5.12$ for the Beverton-Holt dispersal-growth IDE with right-hand side \eqref{nodisgro}, Laplace kernel \eqref{kerlap} and $a=1$, $L=2$ (left). Total population over $\alpha\in[5,6]$ along $\phi_1^0$ (right)}
	\label{figlaplace2s}
\end{figure}
\begin{example}[Laplace kernel]
	If we return to the Laplace kernel \eqref{kerlap}, then also \eqref{nodisgromod} becomes an even IDE. One sees by choosing $\bar\alpha>\alpha_i^0$ that the bifurcation behavior of \eqref{nodisgromod} at $(0,\alpha_i^0)$ is identical to that exhibited by the growth-dispersal IDE from \sref{sec523}. Moreover, it is clear that this behavior carries over to \eqref{deq}.
	In particular, we again observe countably infinitely many bifurcations along the trivial branch, alternating between transcritical and supercritical pitchfork bifurcations, with all the same remarks as for \eref{ex_lap-growth-dispersal}, as Figs.~\ref{figlaplace1s}--\ref{figlaplace3s} serve to illustrate.
\end{example}
\begin{figure}
	\includegraphics[width=52mm]{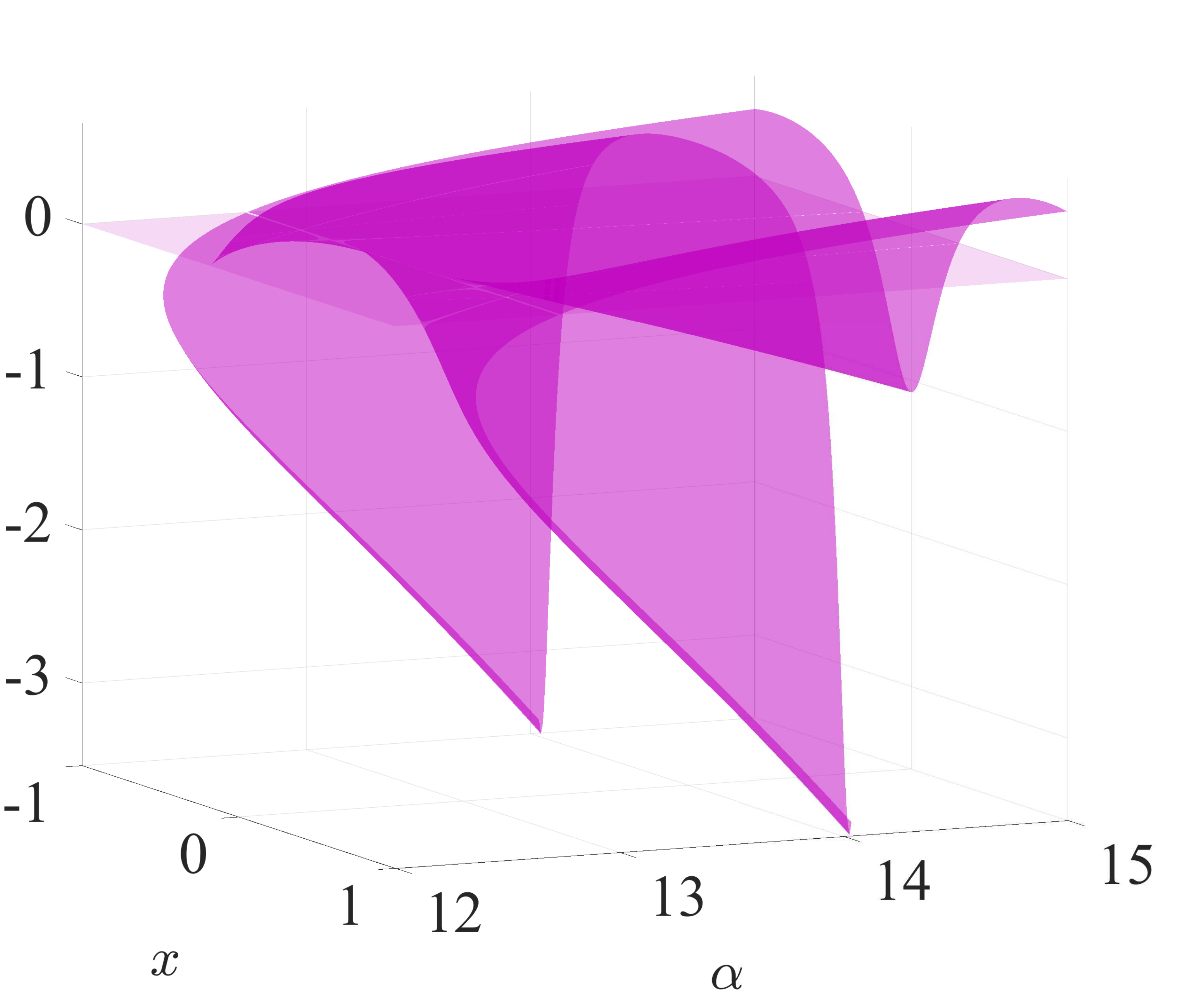}
	\includegraphics[width=52mm]{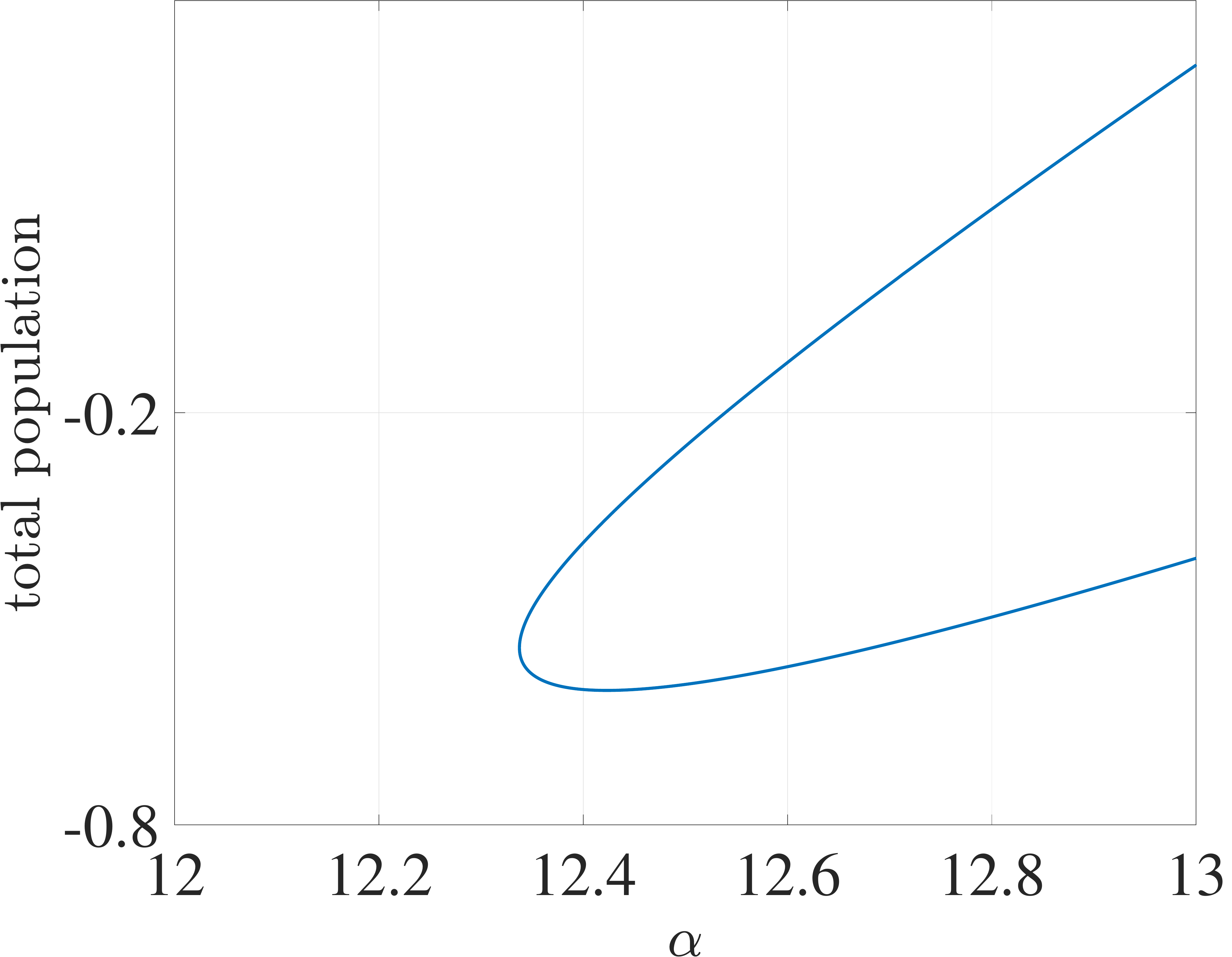}
	\caption{Branch $\phi_2^0$ of the transcritical bifurcation at $\alpha_2^0\approx 12.73$ for the Beverton-Holt dispersal-growth IDE with right-hand side \eqref{nodisgro}, Laplace kernel \eqref{kerlap} and $a=1$, $L=2$ as part of a fold (left). 
	Total population over parameters $\alpha\in[12,13]$ reflecting a fold in $\phi_2^0$ (right)}
	\label{figlaplace3s}
\end{figure}
\begin{figure}
	\includegraphics[scale=0.6]{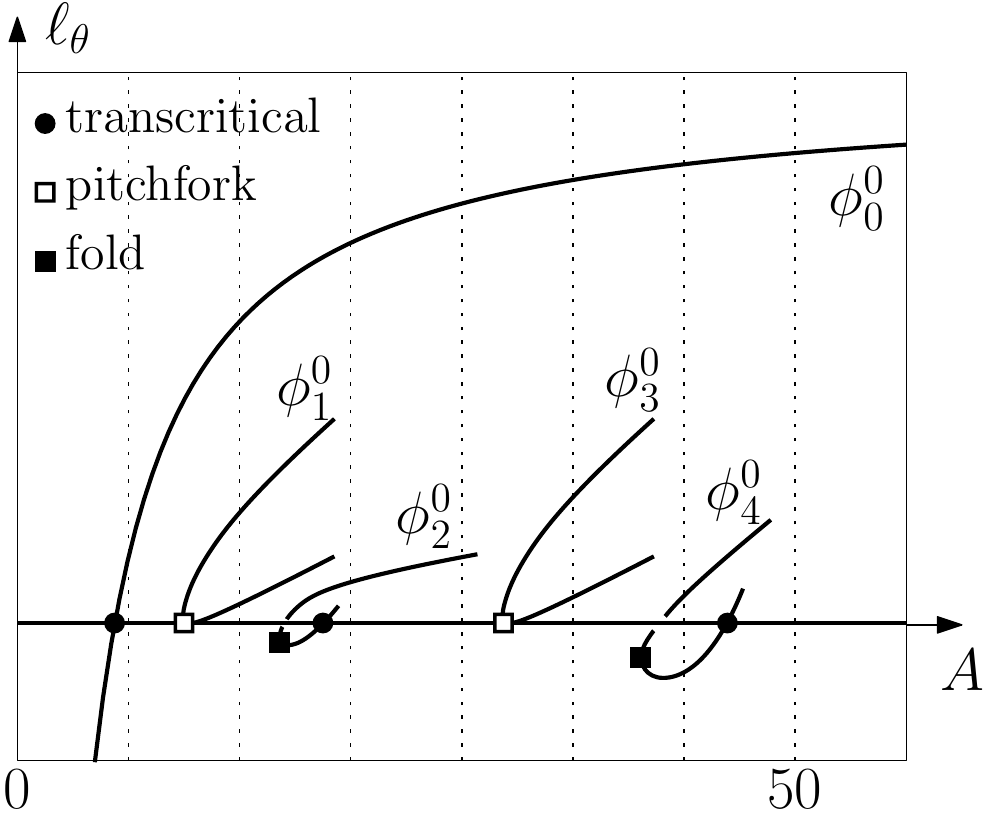}
	\includegraphics[scale=0.6]{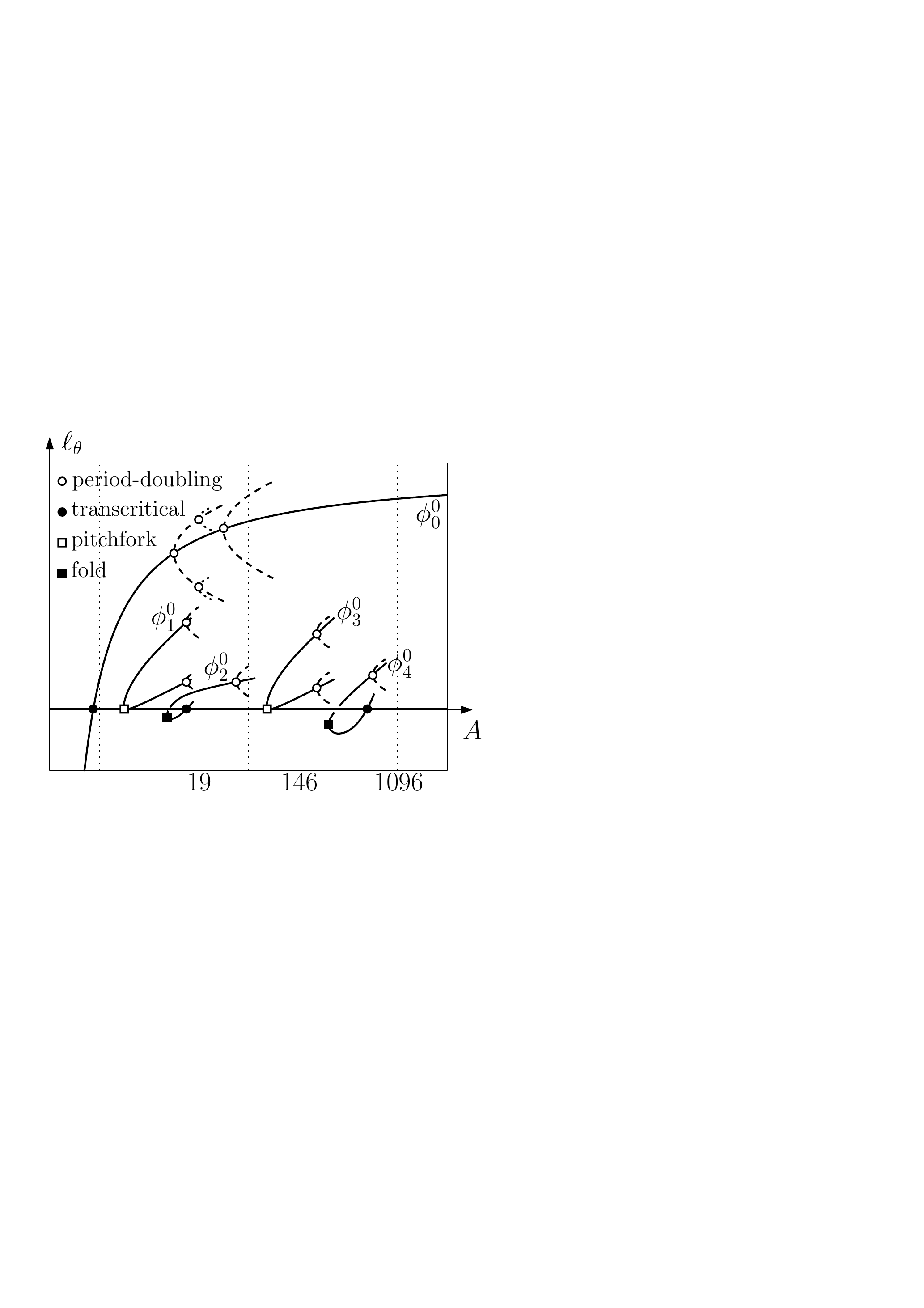}
	\caption{Schematic bifurcation diagrams: 
	Branches of $\theta$-pe\-riodic solutions in the Beverton-Holt IDE having the right-hand sides \eqref{rhsbh}/\eqref{nodisgro}, $\alpha\in[0,50]$ and the Laplace kernel \eqref{kergauss} with $a=1$, $L=2$ (left), as well as IDE with right-hand side \eqref{noricker} and $\alpha\in[0,3000]$ (logarithmic axis) for the Gau{\ss} kernel \eqref{kergauss} with $a=1$, $L=2$ (right). 
	Fixed point branches are solid $(\theta=1)$, branches of $2$-periodic solutions are dashed $(\theta=2)$ and $4$-periodic solutions are indicated by dotted lines $(\theta=4)$} 
	\label{figschem2}
\end{figure}
\subsection{Period doubling cascade}
\label{sec53}
Our final example is the numerically most challenging one, because solution branches are not known beforehand and must be computed using path-following methods. In addition, even within the biologically relevant cone, it exhibits the most complex dynamical behavior so far. 

We turn to autonomous scalar Ricker IDEs \eqref{deq} with right-hand side
\begin{equation}
	\sF(u,\alpha):=\alpha\int_\Omega k(\cdot,y)u(y)e^{-u(y)}\d\mu(y)
	\label{noricker}
\end{equation}
on $U_t\equiv C(\Omega)$, $A=(0,\infty)$. Solutions with different even periods were observed in \cite[Sect.~5]{kot:schaeffer:86} experimentally and in \cite[Fig.~10]{kirk:lewis:97} based on path-following, while \cite{andersen:91} discusses a dispersal-growth version of \eqref{noricker}. 

As Hammerstein operator, \eqref{noricker} fits in the framework of \eqref{newrhs} with $G(z)=z$, $f(x,y,z)=k(x,y)ze^{-z}$ and $\beta_t\equiv 1$ on $\Z$. This yields the derivatives
\begin{align*}
	G'(z)&=1,&
	D_3f(x,y,z)&=k(x,y)(1-z)e^{-z}. 
\end{align*}
Along the zero branch, we expect a behavior similar to what was observed above in \sref{sec523}. If $(\lambda,\xi^\ast)$ is a simple eigenpair of $\sK$ and $\alpha^\ast=\tfrac{1}{\lambda}>0$, then \eqref{cdform} is satisfied and \rref{remgrowth} yields $c_2=-2$, $c_3=3$, $d_2=d_3=0$. By \pref{trivtrans} the condition $\int_\Omega \xi^\ast(x)^3\d\mu(x)\neq 0$ is sufficient for a transcritical bifurcation at $\bigl(0,\tfrac{1}{\lambda}\bigr)$ --- in particular, \pref{primary_positive} yields a primary transcritical bifurcation with exchange of stability from the trivial branch at $\bigl(0,\tfrac{1}{r(\sK)}\bigr)$ --- while verifying a pitchfork bifurcation in case $\int_\Omega \xi^\ast(x)^3\d\mu(x) = 0$ requires computation of $\bar g$, as outlined in \pref{barwprop}. 

Concerning bifurcations along the nontrivial branches, let us suppose that $\phi^\ast$ denotes a $\theta_1$-periodic solution of $(\Delta_{\alpha^\ast})$ with right-hand side \eqref{noricker} and $\sigma_{\theta_1}(\alpha^\ast)\cap\S^1 = \set{-1}$ for some $\alpha^\ast>0$. Therefore, \tref{poinc} shows that $\phi^\ast$ can be continued to a smooth branch $\phi:B_\rho(\alpha^\ast)\to C(\Omega)$ of $\theta_1$-periodic solutions $\phi(\alpha)$ to the autonomous IDE \eqref{deq}, where $\psi=\dot\phi(\alpha^\ast)$ is uniquely determined by \eqref{poinc2}. In order to detect bifurcations along this global branch $\phi$, we make use of the corresponding equation of perturbed motion \eqref{deqp} and \rref{remshift}, which has the $\theta_1$-periodic right-hand sides 
$$
	\tilde\sF_t(u,\alpha)
	=
	\alpha\!\int_\Omega k(\cdot,y)
	\intcc{(u(y)+\phi(\alpha)_t(y))e^{-u(y)-\phi(\alpha)_t(y)}
	-
	\phi(\alpha)_t(y)e^{-\phi(\alpha)_t(y)}}\d\mu(y)
$$
and therefore the derivatives
\begin{align*}
	D_1\tilde\sF_t(0,\alpha^\ast)v&=-\alpha^\ast\int_\Omega k(\cdot,y)(\phi_t^\ast(y)-1)e^{-\phi_t^\ast(y)}v(y)\d\mu(y),\\
	D_1D_2\tilde\sF_t(0,\alpha^\ast)v&=\int_\Omega k(\cdot,y)\intcc{(1-\phi_t^\ast(y))\psi_t(y)+\phi_t^\ast(y)}
	e^{-\phi_t^\ast(y)}v(y)\d\mu(y),\\
	D_1^2\tilde\sF_t(0,\alpha^\ast)v^2&=\alpha^\ast\int_\Omega k(\cdot,y)(\phi_t^\ast(y)-2)e^{-\phi_t^\ast(y)}v(y)^2\d\mu(y),\\
	D_1^3\tilde\sF_t(0,\alpha^\ast)v^3&=-\alpha^\ast\int_\Omega k(\cdot,y)(\phi_t^\ast(y)-3)e^{-\phi_t^\ast(y)}v(y)^3\d\mu(y)
\end{align*}
and $D_2^2\tilde\sF_t(0,\alpha^\ast)=0$ for $t\in\Z$, $v\in C(\Omega)$. In order to verify bifurcation conditions for period doublings, we choose $\theta=2\theta_1$ and arrive at $1\in\sigma_\theta(\alpha^\ast)$ (cf.~\sref{sec43}). 

A more detailed analysis is possible for particular kernels. The subsequent one is popular in applications, but requires numerical methods throughout. For this purpose, our simulations are based on Nystr\"om discretizations (see App.~\ref{appB}) of the integral operators involved. To validate them, the midpoint rule ($N_n=100$), the trapezoidal rule ($N_n=101$), as well as the Chebyshev rule ($N_n=100$) are applied in order to compare the corresponding results. In any case, the simulations capture the behavior of the IDEs \eqref{deq} w.r.t.\ the measures $\mu_n$ given in App.~\ref{appB}. 
\begin{figure}
	\begin{minipage}{20mm}
	\begin{tabular}{@{}rr@{}}
		\hline
		$i$ & $\alpha_i^0$\\
		\hline
		$0$ & $1.36$\\
		$1$ & $3.31$\\
		$2$ & $13.23$\\
		$3$ & $78.08$\\
		$4$ & $617.01$\\
		\hline
	\end{tabular}
	\end{minipage}
	\begin{minipage}{105mm}
		\includegraphics[width=52mm]{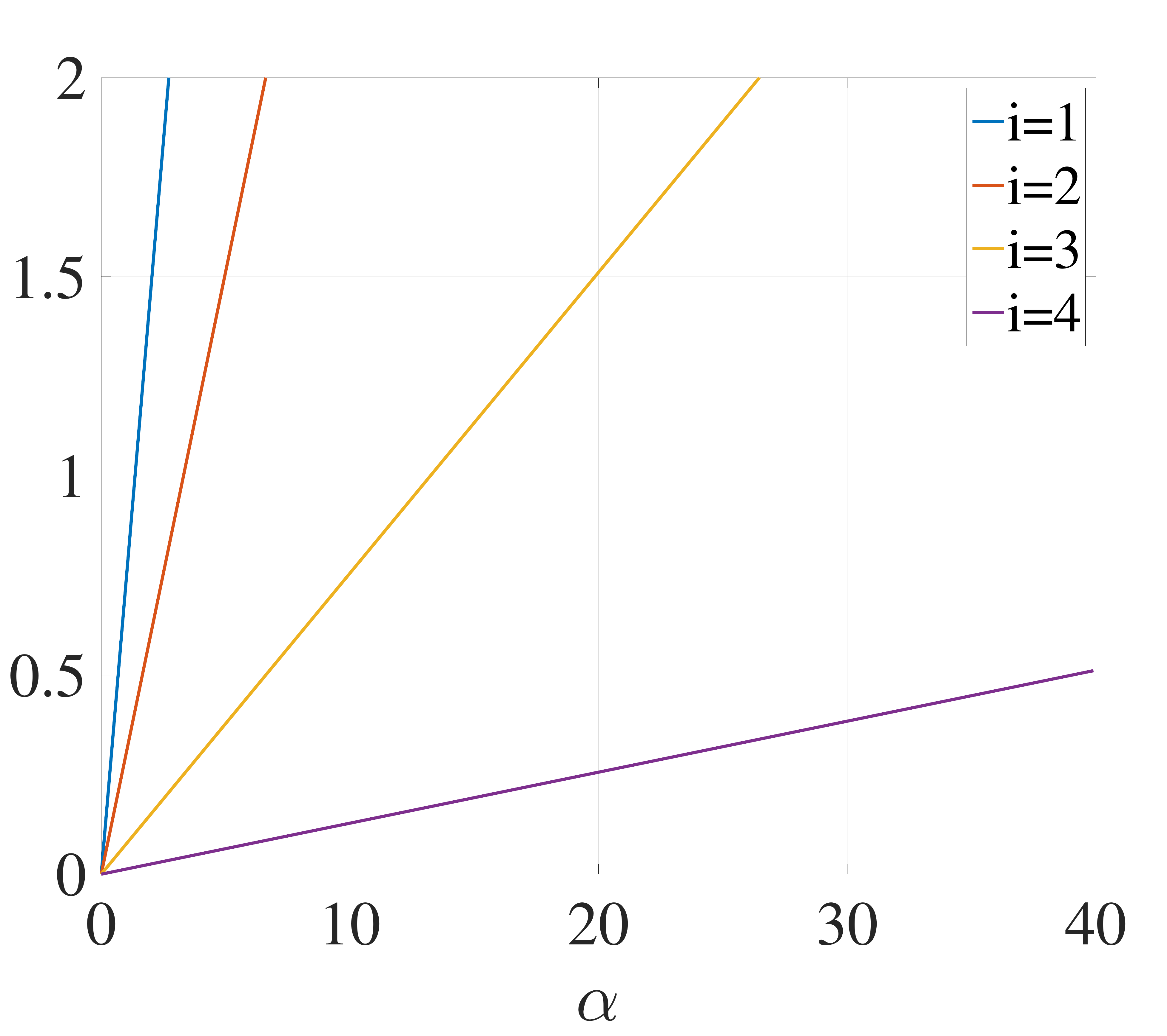}%
		\includegraphics[width=52mm]{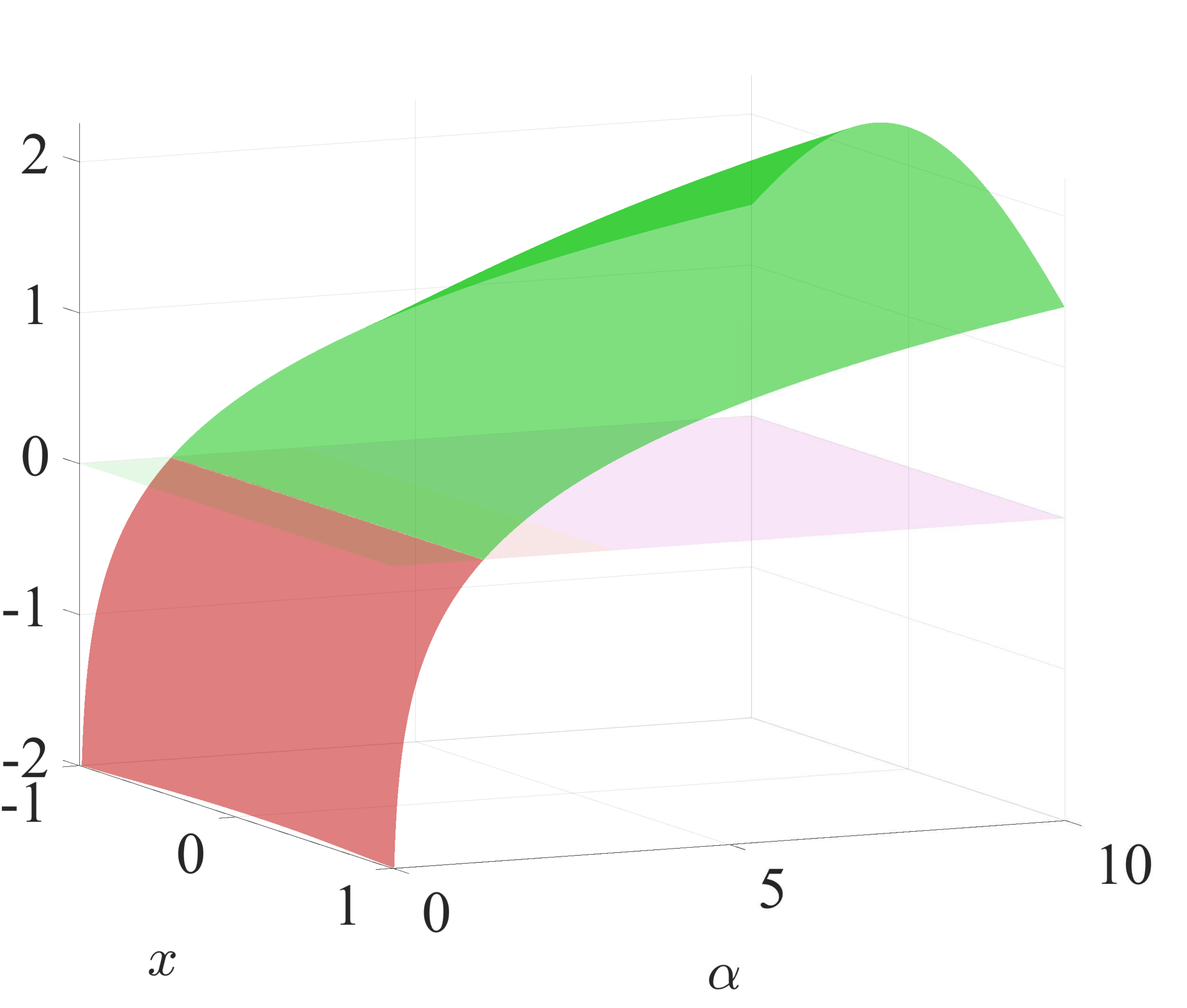}%
	\end{minipage}
		\caption{First critical values $\alpha_i^0$ for bifurcations from the trivial branch (left). 
		The $\alpha$-dependence of the corresponding eigenvalues $\lambda_{i-1}^0(\alpha)$ (center). 
		Primary transcritical bifurcation of the branch $\phi_0^0$ at $\alpha^0_0\approx 1.36$ for the Gau{\ss} kernel \eqref{kergauss} with $a=1$, $L=2$ (right)}
		\label{figgauss1}
\end{figure}

\begin{example}[Gau{\ss} kernel]
	Let $a,L>0$ and $\Omega=[-\tfrac{L}{2},\tfrac{L}{2}]$ be endowed with the Lebesgue measure. For the \emph{Gau{\ss} kernel}
	\begin{equation}
		k(x,y):=\tfrac{a}{\sqrt{\pi}}e^{-a^2(x-y)^2},
		\label{kergauss}
	\end{equation}
	we conclude total positivity from \cite[7.I(b)]{ando:87}. Hence, by \pref{prop_total-positivity} its eigenvalues
	$$
		0<\ldots<\lambda_i<\ldots<\lambda_1<\lambda_0
	$$
	form a strictly decreasing sequence and are all simple, while \cref{cor_eig-alt-even-odd} grants us that the eigenfunction $\xi^i$ corresponding to $\lambda_i$, $i\in\N_0$, alternates between being even or odd. Moreover, the resulting IDE \eqref{deq} becomes even. The above \pref{primary_positive} yields that the trivial branch loses stability to a transcritical bifurcation at the parameter $\alpha^0_0=\tfrac{1}{r(\sK)}>0$. Concerning the spectral radius, \cite[Thm.~6.1]{anselone:lee:74} yields the estimate
	$$
		\tfrac{1}{2}\erf(aL)\leq r(\sK)\leq 2\erf(\tfrac{aL}{2})
	$$
	with the \emph{error function} $\erf(x):=\tfrac{2}{\sqrt{\pi}}\int_0^xe^{-t^2}\d t$; this inclusion improves for small values of $aL>0$. At the countably many critical parameters $\alpha_i^0:=\tfrac{1}{\lambda_i}$, $i\in\N_0$, one has alternating transcritical and supercritical pitchfork bifurcations from the trivial solution (cf.~\fref{figgauss1}). The first such values $\alpha_i^0$ are listed in \fref{figgauss1} (left). However, as the eigenfunctions $\xi^i$ assume both negative and positive values for $i>0$, the bifurcations at these $\alpha_i^0$ do not lead to biologically relevant solutions, at least close to $(0,\alpha_i^0)$. Along the primary bifurcation branch $\phi_0^0$, which transcritically bifurcates from the trivial solution (see \fref{figgauss1} (right)), one detects a strictly increasing sequence $\alpha_i^1>\alpha_0^0$ of critical parameter values yielding supercritical period doubling bifurcations (see \fref{figgauss2} for the values (left) and the eigenvalue dependence on $\alpha$ (center)). These additional period doublings into unstable $2$-periodic solutions distinguishes the spatial Ricker model \eqref{noricker} from its scalar counterpart $u_{t+1}=\alpha u_te^{-u_t}$. In fact, there happens to be a period doubling cascade of positive functions as illustrated in \fref{figgauss2} (right). A schematic diagram containing several bifurcations for \eqref{deq} with right-hand side \eqref{noricker} is given in \fref{figschem2} (right). In particular, period doublings can be observed along any branch $\phi_i^0$, $i\in\N$.
	\begin{figure}
		\begin{minipage}{20mm}
		\begin{tabular}{@{}rr@{}}
			\hline
			$i$ & $\alpha_i^1$\\
			\hline
			$0$ & $10.32$\\
			$1$ & $31.67$\\
			$2$ & $6749.47$\\
			\hline
		\end{tabular}
		\end{minipage}
		\begin{minipage}{105mm}
			\includegraphics[width=52mm]{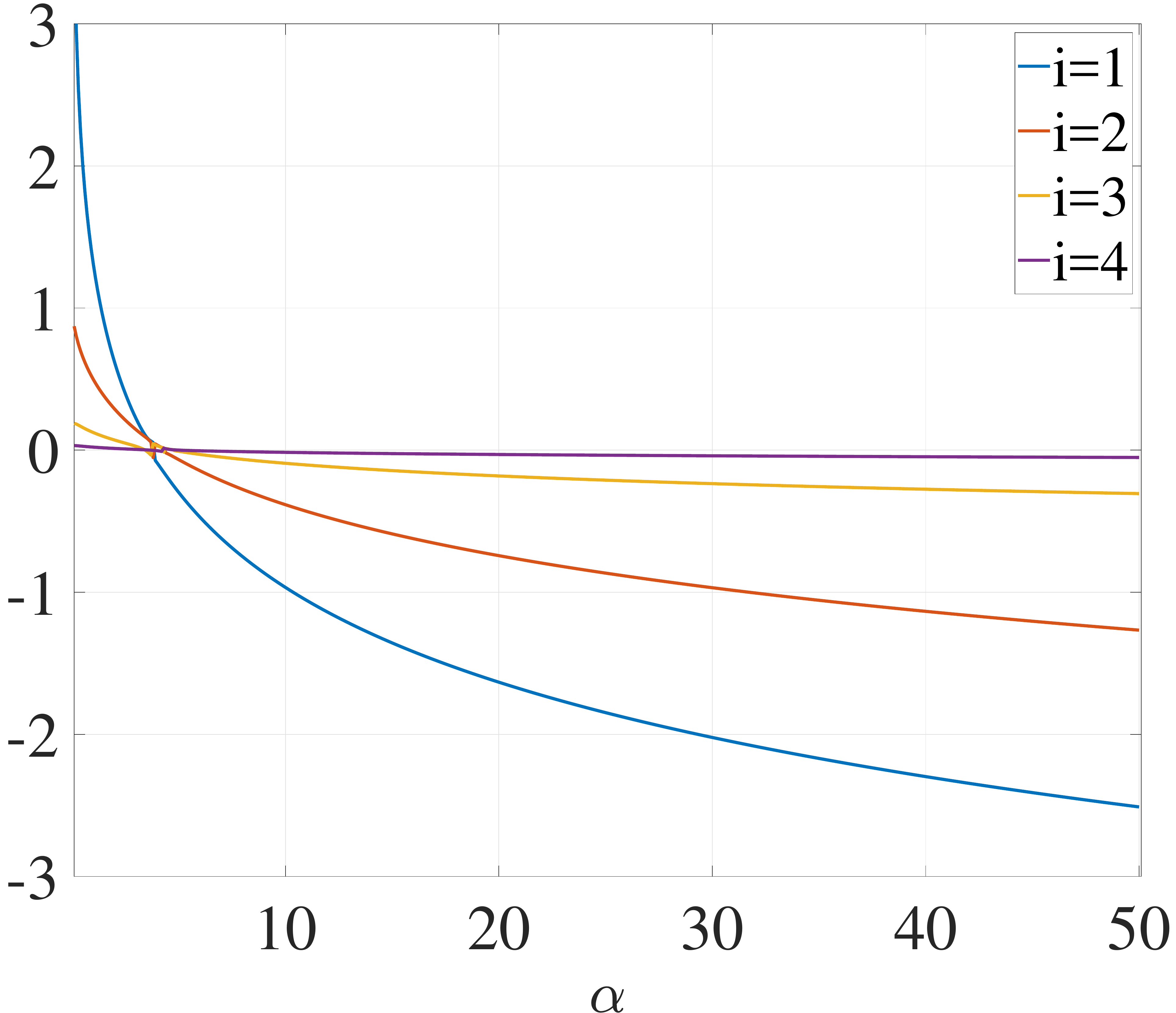}%
			\includegraphics[width=52mm]{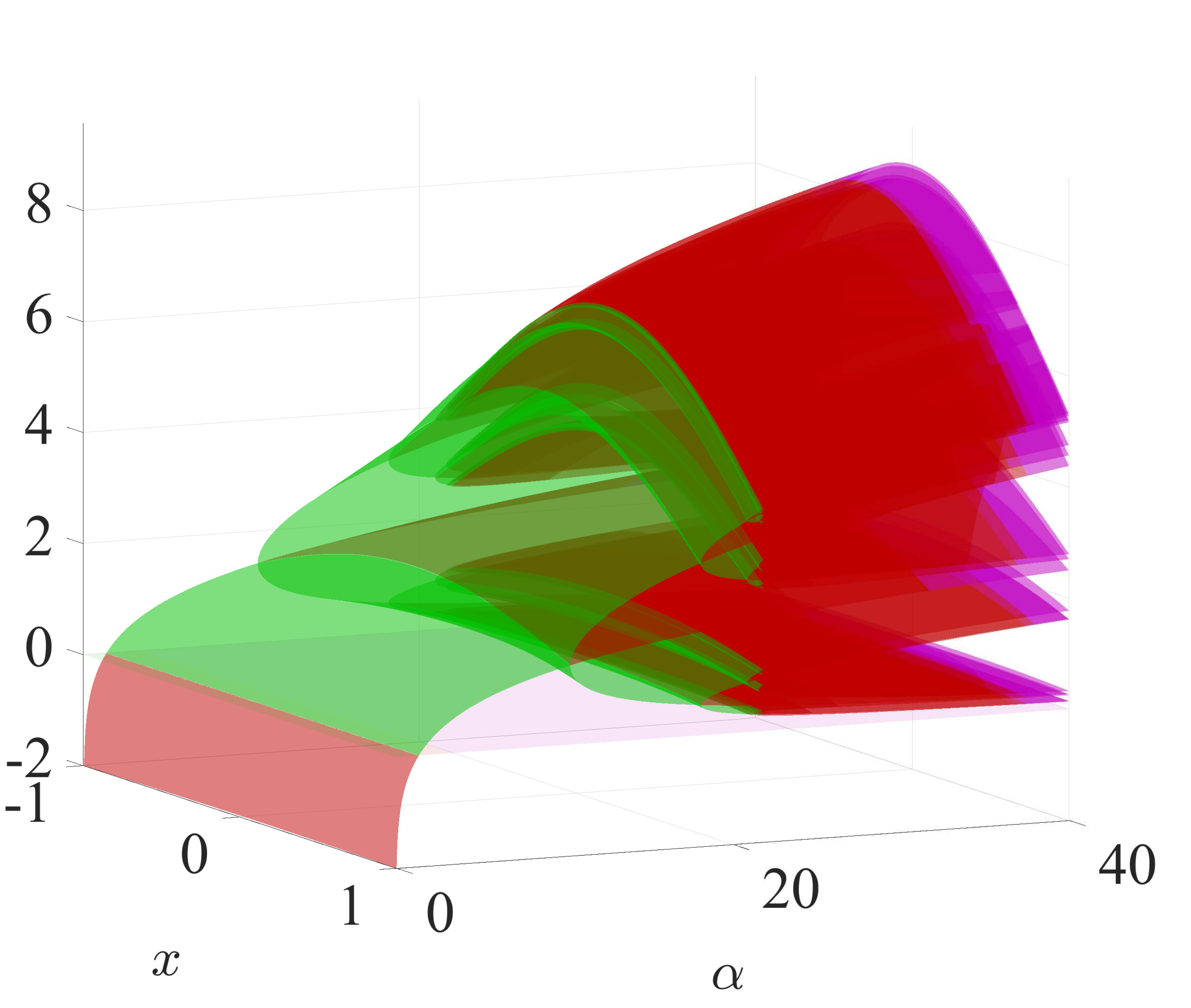}
		\end{minipage}
		\caption{First critical values $\alpha_i^1$ for bifurcations along the primary branch $\phi_0^0$ (left). 
		The $\alpha$-dependence of the corresponding eigenvalues $\lambda^1_{i-1}(\alpha)$ for $\alpha\geq 0.1$ (center).
		Period doubling cascade for the Ricker IDE with right-hand side \eqref{noricker} and $\alpha\in[0,40]$ for the Gau{\ss} kernel \eqref{kergauss} with $a=1$, $L=2$ (right)}
			\label{figgauss2}
	\end{figure}
\end{example}
\section{Concluding remarks and perspectives}
First, we did not tackle stability properties of periodic solutions $\phi^\ast$ at critical parameter values $\alpha^\ast$, i.e.\ at bifurcation points. Such results require reduction to a --- in our case --- $1$-dimensional $\theta$-periodic center manifold of $(\Delta_{\alpha^\ast})$, and we refer to the manuscript \cite{poetzsche:russ:19} having a particular focus on these issues, where the time-dependence can even be aperiodic. 

Second, this paper addressed simple Floquet multipliers crossing the stability boundary $\S^1$ at $\pm 1$ under parameter variation. The complementary situation of complex-conjugated pairs, leading to Neimark-Sacker bifurcations, is featured in the companion paper \cite{aarset:poetzsche:19}, where we provide conditions for bifurcations of discrete tori. 

Our results allow some immediate generalizations and transfers, which were neglected here for the sake of an accessible and compact presentation: 
\begin{itemize}
	\item Both the habitats $\Omega$ and the measures $\mu$ in \eqref{ury} can vary $\theta_0$-periodically in time, as long as they stay compact resp.\ finite. For general difference equations acting on the space of measures, and their applications in population dynamics, we refer to \cite{thieme:20}.

	\item Although the examples in \sref{sec5} were scalar $(d=1)$, our abstract results do apply to systems of IDEs like for instance predator-prey models \cite{kot:schaeffer:86,aarset:poetzsche:19}. Another field of applications is the analysis of models capturing an age- or size-structure of a population \cite{bacaer:09,bacaer:dats:12,cushing:ackleh,cushing:henson}, which are additionally equipped with a spatial component (cf.\ \cite{alzoubi:10,lutscher:lewis:04,robertson:cushing:12}). The specific algebraic form of such structured models might simplify our assumptions and their concrete verification. 

	\item Our tools are not restricted to IDEs in the space $C_d$ of $\R^d$-valued continuous functions on $\Omega$, but apply also to IDEs between $L^p(\Omega,\R^d)$ spaces, as
	$$
		\iprod{u,v}:=\int_\Omega\sprod{u(x),v(x)}\d\mu(x)\fall u\in L^{p'}(\Omega,\R^d),\,v\in L^p(\Omega,\R^d)
	$$
	induces a duality pairing $\iprod{L^{p'}(\Omega,\R^d),L^p(\Omega,\R^d)}$ with $p>1$, $\tfrac{1}{p}+\tfrac{1}{p'}=1$. Hence, only well-definedness and smoothness properties of the superposition and integral operators involved in \eqref{ury} remain to be verified. 
\end{itemize}
We point out that our assumptions require to evaluate integrals and to solve (systems of) integral equations. In case $\mu$ is the Lebesgue measure this necessitates numerical methods (e.g.\ \cite{atkinson:92,atkinson:97}, \cite[pp.~219ff]{kress:14}) to approximate integrals. The situation of a measure $\mu$ as in \eref{exmeasure}(2) allows a direct treatment and is therefore relevant in numerical discretizations, since integrals become weighted sums and the resulting linear equations can be solved exactly. 
\begin{appendix}
\renewcommand{\theequation}{\Alph{section}.\arabic{equation}}
\section{Abstract bifurcation results with one-dimensional kernel}
\label{appA}
We review the necessary machinery from local bifurcation theory for index $0$ Fredholm operators with one-dimensional kernel dating back to \cite{crandall:rabinowitz:71,crandall:rabinowitz:73}. While a fold bifurcation result can be found in standard references (cf.\ e.g.\ \cite{kielhoefer:12}), the subsequent \tref{thmcrosbif} due to \cite{liu:shi:wang:07} is of more recent origin. It does not rely on the knowledge of a given solution branch, but includes the classical transcritical and pitchfork bifurcations as special cases. Moreover, the behavior of critical eigenvalues along bifurcation branches is studied. 

Suppose throughout that $X,Z$ are real Banach spaces and $U\subseteq X$, $A\subseteq\R$ denote nonempty open neighborhoods of $u^\ast\in X$, $\alpha^\ast\in\R$ in the respective spaces. We deal with $C^m$-mappings $G:U\tm A\to Z$, $m\in\N$, vanishing at $(u^\ast,\alpha^\ast)$, i.e.\
\begin{equation}
	\boxed{G(u^\ast,\alpha^\ast)=0.}
	\label{zero}
\end{equation}
The pair $(u^\ast,\alpha^\ast)$ is called a \emph{bifurcation point} of the abstract equation 
\begin{equation}
	\tag{$O_\alpha$}
	\boxed{G(u,\alpha)=0,}
	\label{full}
\end{equation}
whenever there exists a parameter sequence $(\alpha_n)_{n\in\N}$ in $A$ having the limit $\alpha^\ast$ and distinct solutions $u_n^1,u_n^2\in U$, $n\in\N$, to $(O_{\alpha_n})$ with
$
	\lim_{n\to\infty}u_n^1
	=
	\lim_{n\to\infty}u_n^2
	=
	u^\ast.
$
Assume henceforth that $D_1G(u^\ast,\alpha^\ast)\in L(X,Z)$ is Fredholm of index $0$ and
\begin{equation}
	N(D_1G(u^\ast,\alpha^\ast))=\spann\set{\xi^\ast}
	\label{fred}
\end{equation}
holds for some $\xi^\ast\in X\setminus\set{0}$. There exists, for instance by the Hahn-Banach theorem, a continuous functional $z'\in Z'$ and a (fixed) closed subspace $X_1\subseteq X$ with 
\begin{align}
	N(z')&=R(D_1G(u^\ast,\alpha^\ast)),&
	X&=N(D_1G(u^\ast,\alpha^\ast))\oplus X_1.
	\label{nozero}
\end{align}

\begin{lemma}[{cf.~\cite[p.~38, Prop.~3.6.1]{buffoni:toland:12}}]\label{lemma:eigder}
	Suppose that $X\subseteq Z$ with the embedding operator $J$ satisfying
	\begin{equation}
		J\in L(X,Z). 
		\label{embed}
	\end{equation}
	If there is an open neighborhood $S\subseteq\R$ of $0$ and a $C^{m-1}$-mapping $\svector{\gamma}{\alpha}:S\to U\tm A$ satisfying $G(\gamma(s),\alpha(s))\equiv 0$ on $S$ and $	\svector{\gamma}{\alpha}(0)=\svector{u^\ast}{\alpha^\ast}$, then there exist a $\rho>0$ with $(-\rho,\rho)\subseteq S$ and a $C^{m-1}$-curve $\svector{\xi}{\lambda}:(-\rho,\rho)\to X\tm\C$ so that $\svector{\xi}{\lambda}(0)=\svector{\xi^\ast}{0}$ and 
	\begin{align}
		\tag{$E_\Gamma$}
		D_1G(\gamma(s),\alpha(s))\xi(s)&\equiv\lambda(s)J\xi(s),&
		z'(J\xi(s))&\equiv 1\on(-\rho,\rho)
		\label{eigen}
	\end{align}
	hold. Moreover, each $\lambda(s)\in\C$ is a simple eigenvalue of $D_1G(\gamma(s),\alpha(s))\in L(X,Z)$ and uniquely determined in a neighborhood of $0$. 
\end{lemma}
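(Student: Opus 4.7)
The plan is to apply the implicit function theorem to an augmented eigenvalue equation. I begin by normalizing $\xi^\ast$ (which spans $N(D_1G(u^\ast,\alpha^\ast))$) so that $z'(J\xi^\ast)=1$. Such a normalization is available whenever $J\xi^\ast\not\in R(D_1G(u^\ast,\alpha^\ast))=N(z')$, i.e.\ whenever $0$ is algebraically simple for the generalized eigenvalue problem $D_1G(u^\ast,\alpha^\ast)\xi=\lambda J\xi$; this transversality is implicit in the present setting and is the hypothesis underlying the Crandall--Rabinowitz style results to which this lemma is ultimately applied.

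Next, I introduce the auxiliary mapping $F:S\tm X\tm\R\to Z\tm\R$,
\begin{equation*}
	F(s,\xi,\lambda)
	:=
	\begin{bmatrix}
		D_1G(\gamma(s),\alpha(s))\xi - \lambda J\xi \\
		z'(J\xi) - 1
	\end{bmatrix}.
\end{equation*}
Since $G\in C^m$ and $(\gamma,\alpha)\in C^{m-1}$, $F$ is of class $C^{m-1}$, and $F(0,\xi^\ast,0)=0$ by \eqref{fred} and the normalization. The crucial step is to show that the partial derivative
\begin{equation*}
	D_{(\xi,\lambda)}F(0,\xi^\ast,0)(\eta,\mu)
	=
	\begin{bmatrix}
		T_0\eta - \mu J\xi^\ast\\
		z'(J\eta)
	\end{bmatrix},\qquad T_0:=D_1G(u^\ast,\alpha^\ast),
\end{equation*}
is a topological isomorphism $X\tm\R\to Z\tm\R$. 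For injectivity, $T_0\eta=\mu J\xi^\ast$ together with $J\xi^\ast\not\in R(T_0)$ forces $\mu=0$, hence $\eta=c\xi^\ast$ by \eqref{fred}, and $z'(J\eta)=c\,z'(J\xi^\ast)=c=0$ delivers $\eta=0$. For surjectivity, Fredholm index $0$ of $T_0$ combined with the transversality yields the decomposition $Z=R(T_0)\oplus\spann\set{J\xi^\ast}$; for any $(w,r)\in Z\tm\R$ I would write $w=w_0-\mu J\xi^\ast$ accordingly, solve $T_0\eta_0=w_0$ for a unique $\eta_0\in X_1$ using \eqref{nozero}, and set $\eta:=\eta_0+(r-z'(J\eta_0))\xi^\ast$ so that $z'(J\eta)=r$. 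Boundedness of the inverse then follows from the open mapping theorem.

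The implicit function theorem now furnishes $\rho>0$ and a unique $C^{m-1}$-curve $s\mapsto(\xi(s),\lambda(s))$ on $(-\rho,\rho)$ with $(\xi(0),\lambda(0))=(\xi^\ast,0)$ satisfying both components of \eqref{eigen}; since the IFT is applied over $\R$, one has $\lambda(s)\in\R\subset\C$ as required by the stated complex codomain. The simplicity claim follows from the fact that invertibility of $D_{(\xi,\lambda)}F$ at $(s,\xi(s),\lambda(s))$ is an open condition: by continuity of $s\mapsto D_1G(\gamma(s),\alpha(s))$ and of $(\xi,\lambda)(\cdot)$, this invertibility persists for all $|s|<\rho$ after possibly shrinking $\rho$, which is precisely algebraic simplicity of $\lambda(s)$ as an eigenvalue of $D_1G(\gamma(s),\alpha(s))$ relative to $J$. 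Local uniqueness of $\lambda(s)$ among eigenvalues near $0$ is part of the uniqueness clause of the IFT.

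The principal obstacle is the verification of the isomorphism property of $D_{(\xi,\lambda)}F(0,\xi^\ast,0)$, which rests entirely on the algebraic simplicity of the eigenvalue $0$ at $s=0$. Once this transversality is in place, the remaining assertions reduce to a routine application of the IFT together with continuity of spectra.
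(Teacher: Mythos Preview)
Your approach is correct and is precisely the standard implicit function theorem argument underlying the cited result; the paper does not supply its own proof but simply refers to \cite[p.~38, Prop.~3.6.1]{buffoni:toland:12}, whose proof proceeds exactly along the lines you sketch. Your observation that algebraic simplicity of the eigenvalue $0$ (i.e.\ $J\xi^\ast\notin R(D_1G(u^\ast,\alpha^\ast))$) is required but only implicit in the lemma's hypotheses is well taken; in the paper's applications one has $X=Z$ and $J=I_X$, so this reduces to the ordinary algebraic simplicity assumed in $(B_2)$.
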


It is handy to abbreviate $G_{ij}:=D_1^iD_2^jG(u^\ast,\alpha^\ast)$ and $g_{ij}:=z'(G_{ij}(\xi^\ast)^i)$ for every $i,j\in\N_0$, $i+j\leq m$. We now formulate the basic bifurcation results:
\begin{theorem}[abstract fold bifurcation, {\cite[p.~12, Thm.~I.4.1 and p.~29, (I.7.30)]{kielhoefer:12}}]\label{asnbif}
	If \eqref{zero}, \eqref{fred} and $g_{01}\neq 0$ hold, then there exist $\eps>0$, open convex neighborhoods $S\subseteq\R$ of $0$, $A_0\subseteq A$ of $\alpha^\ast$ so that 
	$
		\set{(u,\alpha)\in B_\eps(u^\ast)\tm A_0:\,G(u,\alpha)=0} = \Gamma,
	$
where $\Gamma=\svector{\gamma}{\alpha}(S)$ and $\svector{\gamma}{\alpha}:S\to B_\eps(u^\ast)\tm A_0$ is a $C^m$-curve satisfying 
	\begin{align*}
		\gamma(0)&=u^\ast,&
		\alpha(0)&=\alpha^\ast,&
		\dot\gamma(0)&=\xi^\ast,&
		\dot\alpha(0)&=0.
	\end{align*}
	Furthermore, in case $m\geq 2$ and $g_{20}\neq 0$, the pair $(u^\ast,\alpha^\ast)$ is a bifurcation point of \eqref{full}, one has $\ddot\alpha(0)=-\tfrac{g_{20}}{g_{01}}$ and
	\begin{enumerate}
		\item if $\frac{g_{20}}{g_{01}}<0$, then 
		$
			\#\set{u\in B_\eps(u^\ast):\,G(u,\alpha)=0}
			=
			\begin{cases}
				0,\quad\alpha<\alpha^\ast,\\
				1,\quad\alpha=\alpha^\ast,\\
				2,\quad\alpha>\alpha^\ast, 
			\end{cases}
		$

		\item if $\frac{g_{20}}{g_{01}}>0$, then 
		$
			\#\set{u\in B_\eps(u^\ast):\,G(u,\alpha)=0}
			=
			\begin{cases}
				0,\quad\alpha>\alpha^\ast,\\
				1,\quad\alpha=\alpha^\ast,\\
				2,\quad\alpha<\alpha^\ast,
			\end{cases}
		$

		\item if additionally \eqref{embed} holds, then the solution of \eqref{eigen} satisfies $\dot\lambda(0)=g_{20}$. 
	\end{enumerate}
\end{theorem}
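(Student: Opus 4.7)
The plan is to apply the implicit function theorem to $G$ viewed as a map $X \tm \R \to Z$, exploiting that the hypothesis $g_{01}\neq 0$ together with \eqref{fred} and \eqref{nozero} makes the total derivative $(v,p)\mapsto G_{10}v + pG_{01}$ surjective with one-dimensional kernel $\spann\set{(\xi^\ast,0)}$: indeed $G_{01}\notin R(G_{10})$ because $z'(G_{01})=g_{01}\neq 0$, which both complements the codimension-$1$ range of $G_{10}$ and forces $p=0$ in the kernel equation $G_{10}v+pG_{01}=0$. Concretely, I would split $X = \spann\set{\xi^\ast}\oplus X_1$ as in \eqref{nozero}, write $u=u^\ast + s\xi^\ast + w$ with $w\in X_1$, and solve $G(u^\ast + s\xi^\ast + w,\alpha)=0$ for $(w,\alpha)$ as $C^m$-functions of the scalar $s$; the derivative of $(w,\alpha)\mapsto G(u^\ast+s\xi^\ast+w,\alpha)$ at the base point restricts to the isomorphism $X_1\tm\R\to Z$, $(w,p)\mapsto G_{10}w+pG_{01}$. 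The local uniqueness of the resulting branch $\Gamma=\svector{\gamma}{\alpha}(S)$ is automatic from the implicit function theorem, after shrinking $\eps$ and $A_0$.

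Setting $\gamma(s):=u^\ast + s\xi^\ast + w(s)$, I read off $\gamma(0)=u^\ast$, $\alpha(0)=\alpha^\ast$, and $\dot\gamma(0)=\xi^\ast$. Differentiating $G(\gamma(s),\alpha(s))\equiv 0$ once at $s=0$ gives $G_{10}\xi^\ast + \dot\alpha(0)G_{01}=0$, so pairing with $z'$ yields $\dot\alpha(0)g_{01}=0$, hence $\dot\alpha(0)=0$. Differentiating once more and evaluating at $s=0$ produces $G_{20}(\xi^\ast)^2 + G_{10}\ddot\gamma(0) + G_{01}\ddot\alpha(0)=0$; applying $z'$ and using $z'\circ G_{10}=0$ delivers $g_{20} + g_{01}\ddot\alpha(0)=0$, i.e.\ $\ddot\alpha(0)=-g_{20}/g_{01}$.

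Statements (a) and (b) then follow from the qualitative behavior of the scalar curve $\alpha(\cdot)$: with $\dot\alpha(0)=0$ and $\ddot\alpha(0)\neq 0$ it has a strict local extremum at $0$, so after shrinking $S$ and $A_0$ the equation $\alpha(s)=\alpha$ admits exactly $0$, $1$, or $2$ preimages depending on the side of $\alpha^\ast$; the signs match the statement, and the implicit function theorem ensures these preimages exhaust the solutions of \eqref{full} in $B_\eps(u^\ast)\tm A_0$. For (c), I invoke \lref{lemma:eigder} to obtain a $C^{m-1}$-curve $(\xi(s),\lambda(s))$ satisfying \eqref{eigen} with $(\xi(0),\lambda(0))=(\xi^\ast,0)$, differentiate the eigenvalue identity once in $s$ at $s=0$ using $\dot\gamma(0)=\xi^\ast$ and $\dot\alpha(0)=0$, and obtain $G_{20}(\xi^\ast)^2 + G_{10}\dot\xi(0) = \dot\lambda(0)J\xi^\ast$; applying $z'$ with the normalization $z'(J\xi^\ast)=1$ and $z'\circ G_{10}=0$ yields $\dot\lambda(0)=g_{20}$.

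The only genuinely delicate step in the whole proof is the opening kernel/range analysis of the enlarged linearization, which is what licenses the single application of the implicit function theorem on $X\tm\R$; once that is in place, all remaining assertions reduce to differentiating identities and testing against the bounded functional $z'$ furnished by \eqref{nozero}.
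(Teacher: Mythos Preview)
The paper does not supply its own proof of this theorem: it is stated with a citation to Kielh\"ofer's monograph and no proof environment follows. Your proposal is a correct sketch of the standard Lyapunov--Schmidt argument, and it matches precisely what Kielh\"ofer does for parts (a) and (b); for part (c), your derivation (differentiate the eigenvalue identity along $\Gamma$, use $\dot\gamma(0)=\xi^\ast$, $\dot\alpha(0)=0$, then apply $z'$) is exactly the technique the paper itself employs in the proofs of \tref{thmcrosbif}(c) and Cors.~\ref{cortransbif}, \ref{corpitch}. One small imprecision: when you differentiate $G(\gamma(s),\alpha(s))\equiv 0$ the first time, $\dot\gamma(0)=\xi^\ast+\dot w(0)$ rather than $\xi^\ast$ alone, so the identity reads $G_{10}\dot w(0)+\dot\alpha(0)G_{01}=0$; applying $z'$ gives $\dot\alpha(0)=0$ as you say, and then $G_{10}\dot w(0)=0$ with $\dot w(0)\in X_1$ forces $\dot w(0)=0$, which is what actually yields $\dot\gamma(0)=\xi^\ast$.
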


The classical Crandall-Rabinowitz result, e.g.\ \cite[Thm.~1]{crandall:rabinowitz:71}, \cite[p.~18, Thm.~I.5.1]{kielhoefer:12}, is prototypical for further bifurcation phenomena. However, it requires a constant solution branch $G(u^\ast,\alpha)\equiv 0$ on $A$ or the a priori knowledge of a smooth solution branch. Here, we weaken this to the local assumption
\begin{equation}
	D_2G(u^\ast,\alpha^\ast)=0
	\label{a6cond}
\end{equation}
supplemented by a transversality condition, which guarantees that $G^{-1}(0)$ consists of two intersecting curves, one of them tangential to the $\alpha$-axis in $(u^\ast,\alpha^\ast)$: 
\begin{theorem}[abstract crossing curve bifurcation]\label{thmcrosbif}
	Let $m\geq 2$. If beyond \eqref{zero}, \eqref{fred} and \eqref{a6cond} also the \emph{transversality conditions}
	\begin{align*}
		g_{11}&\neq 0,&
		g_{02}&=0
	\end{align*}
	hold, then there exist $\eps>0$, open convex neighborhoods $S\subseteq\R$ of $0$ and $A_0\subseteq A$ of $\alpha^\ast$ such that 
	$
		\set{(u,\alpha)\in B_\eps(u^\ast)\tm A_0:\,G(u,\alpha)=0}
		=
		\Gamma_1\cup\Gamma_2, 
	$
	where the branches $\Gamma_1=\svector{\gamma_1}{\alpha_1}(S)$, $\Gamma_2=\svector{\gamma_2}{\alpha_2}(S)$ have the following properties: 
	\begin{enumerate}
		\item $\svector{\gamma_1}{\alpha_1}:S\to B_\eps(u^\ast)\tm A_0$ is a $C^{m-1}$-curve satisfying 
		\begin{align}
			\gamma_1(0)&=u^\ast,&
			\alpha_1(s)&=\alpha^\ast+s,&
			\dot\gamma_1(0)&=0,
			\label{thmcrosbif3}
		\end{align}
		
		\item $\svector{\gamma_2}{\alpha_2}:S\to B_\eps(u^\ast)\tm A_0$ is a $C^{m-1}$-curve satisfying 
		\begin{align}
			\gamma_2(s)&=u^\ast+s\xi^\ast+\tilde\gamma_2(s),&
			\alpha_2(s)&=\alpha^\ast-\frac{g_{20}}{2g_{11}}s+\tilde\alpha_2(s)
			\label{thmcrosbif4}
		\end{align}
		and $\tilde\gamma_2:S\to X_1$, $\tilde\alpha_2:S\to\R$ are $C^{m-2}$-functions fulfilling
		\begin{align*}
			\tilde\gamma_2(0)&=\dot{\tilde\gamma}_2(0)=0,&
			\tilde\alpha_2(0)&=\dot{\tilde\alpha}_2(0)=0, 
		\end{align*}

		\item if additionally \eqref{embed} holds, then the solution of $(E_{\Gamma_1})$ satisfies $\dot\lambda(0)=g_{11}$. 
	\end{enumerate}
\end{theorem}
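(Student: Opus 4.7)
My approach follows the strategy of \cite{shi:99,liu:shi:wang:07}: reduce \eqref{full} to a scalar bifurcation equation $\phi(s,\alpha)=0$ which has a non-degenerate saddle at $(0,\alpha^\ast)$, and extract two crossing smooth branches by anisotropic rescaling plus the implicit function theorem. Concretely, \eqref{fred} yields $X=\spann\set{\xi^\ast}\oplus X_1$, and \eqref{nozero} supplies a bounded projector $P:=I_Z-z_0\otimes z'$ onto $R(D_1G(u^\ast,\alpha^\ast))=N(z')$, where $z_0\in Z$ is fixed with $z'(z_0)=1$. Writing $u=u^\ast+s\xi^\ast+w$ with $w\in X_1$, the range equation $PG(u^\ast+s\xi^\ast+w,\alpha)=0$ is solved by the classical implicit function theorem for a $C^m$-function $w=w(s,\alpha)$ with $w(0,\alpha^\ast)=0$, since $P\circ D_1G(u^\ast,\alpha^\ast)|_{X_1}\to N(z')$ is an isomorphism. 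Differentiating this equation at $(0,\alpha^\ast)$ and invoking \eqref{a6cond} gives $\partial_sw(0,\alpha^\ast)=\partial_\alpha w(0,\alpha^\ast)=0$. The bifurcation equation then reads $\phi(s,\alpha):=z'\bigl(G(u^\ast+s\xi^\ast+w(s,\alpha),\alpha)\bigr)=0$, and a direct computation using $z'\circ D_1G(u^\ast,\alpha^\ast)=0$ yields $\phi(0,\alpha^\ast)=\partial_s\phi(0,\alpha^\ast)=\partial_\alpha\phi(0,\alpha^\ast)=0$, $\partial_s^2\phi(0,\alpha^\ast)=g_{20}$, $\partial_s\partial_\alpha\phi(0,\alpha^\ast)=g_{11}$ and $\partial_\alpha^2\phi(0,\alpha^\ast)=g_{02}=0$; the Hessian at $(0,\alpha^\ast)$ therefore has determinant $-g_{11}^2<0$.

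\textbf{The two branches by anisotropic rescaling.} For $\Gamma_2$, I would substitute $\alpha=\alpha^\ast+s\beta$: since $\phi$ and its first partials vanish at $(0,\alpha^\ast)$, a Taylor expansion with integral remainder provides the smooth extension
\[
\Psi(s,\beta):=\int_0^1(1-r)(\partial_s+\beta\partial_\alpha)^2\phi(sr,\alpha^\ast+sr\beta)\d r
\]
of $s^{-2}\phi(s,\alpha^\ast+s\beta)$ to $s=0$, with $\Psi(0,\beta)=\tfrac12 g_{20}+g_{11}\beta$. At $\beta^\ast:=-\tfrac{g_{20}}{2g_{11}}$ one has $\Psi(0,\beta^\ast)=0$ and $\partial_\beta\Psi(0,\beta^\ast)=g_{11}\neq 0$, so the implicit function theorem delivers a smooth function $\beta(s)$ with $\beta(0)=\beta^\ast$; setting $\alpha_2(s):=\alpha^\ast+s\beta(s)$ and $\gamma_2(s):=u^\ast+s\xi^\ast+w(s,\alpha_2(s))$ produces $\Gamma_2$ with the Taylor expansion \eqref{thmcrosbif4}. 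Dually, for $\Gamma_1$ I would substitute $s=t\sigma$ with $t:=\alpha-\alpha^\ast$ and apply the same procedure to $\Phi(\sigma,t):=t^{-2}\phi(t\sigma,\alpha^\ast+t)$, obtaining $\Phi(\sigma,0)=\tfrac{\sigma}{2}(g_{20}\sigma+2g_{11})$ and $\partial_\sigma\Phi(0,0)=g_{11}\neq 0$; the IFT yields $\sigma(t)$ with $\sigma(0)=0$, and $\alpha_1(s):=\alpha^\ast+s$, $\gamma_1(s):=u^\ast+s\sigma(s)\xi^\ast+w(s\sigma(s),\alpha^\ast+s)$ furnish $\Gamma_1$. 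The vanishing of $\dot\gamma_1(0)$ follows from $\sigma(0)=0$ together with $\partial_sw(0,\alpha^\ast)=\partial_\alpha w(0,\alpha^\ast)=0$. Exhaustion of the local zero set by $\Gamma_1\cup\Gamma_2$ follows because, for any fixed $M>1$, the two charts $(s,\beta)$ and $(\sigma,t)$ with $|\beta|,|\sigma|\leq M$ jointly cover a punctured neighborhood of $(0,\alpha^\ast)$ in $(s,\alpha)$-space, and on each chart the IFT solution is the unique zero since $\Psi(0,\cdot)$, respectively $\Phi(\cdot,0)$, is linear with a single root.

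\textbf{Statement (c) and main obstacle.} For the eigenvalue assertion, \lref{lemma:eigder} applied along $\Gamma_1$ produces a $C^{m-1}$ curve of simple eigenpairs $(\xi(s),\lambda(s))$ for $D_1G(\gamma_1(s),\alpha^\ast+s)$ with $(\xi(0),\lambda(0))=(\xi^\ast,0)$ and the normalization $z'(J\xi(s))\equiv 1$. Differentiating the eigenvalue relation $D_1G(\gamma_1(s),\alpha^\ast+s)\xi(s)=\lambda(s)J\xi(s)$ at $s=0$ and invoking $\dot\gamma_1(0)=0$ gives $G_{11}\xi^\ast+D_1G(u^\ast,\alpha^\ast)\dot\xi(0)=\dot\lambda(0)J\xi^\ast$; applying $z'$ (which annihilates $R(D_1G(u^\ast,\alpha^\ast))$ by \eqref{nozero}) collapses this to $\dot\lambda(0)=z'(G_{11}\xi^\ast)=g_{11}$. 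The main technical obstacle is twofold: first, making the Hadamard-type smooth divisions producing $\Psi$ and $\Phi$ rigorous while tracking the precise loss of smoothness (the double vanishing of $\phi$ at the critical point, together with $g_{02}=0$, is precisely what buys us the division by $s^2$ or $t^2$); second, the exhaustiveness argument, which requires the two charts to jointly cover the full punctured neighborhood of $(0,\alpha^\ast)$ and each chart-level IFT zero to be the only one present.
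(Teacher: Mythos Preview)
Your approach is essentially the Liu--Shi--Wang argument that the paper simply cites for parts (a) and (b), and your proof of (c) is identical to the paper's: differentiate $(E_{\Gamma_1})$ at $s=0$, use $\dot\gamma_1(0)=0$, and apply $z'$ to kill the $G_{10}\dot\xi(0)$ term. So the overall strategy is correct and matches the paper.

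There is, however, one slip in your exhaustiveness argument. You claim that $\Phi(\cdot,0)$ is ``linear with a single root'', but you computed $\Phi(\sigma,0)=\tfrac{\sigma}{2}(g_{20}\sigma+2g_{11})$, which is quadratic and (when $g_{20}\neq 0$) has \emph{two} roots, $\sigma=0$ and $\sigma=-2g_{11}/g_{20}$. The second root is precisely the $\Gamma_2$ branch seen in the $(\sigma,t)$ chart, so the IFT near $\sigma=0$ alone does not give uniqueness on the whole chart $|\sigma|\leq M$. The fix is standard: choose $M_1>0$ small enough that the $(\sigma,t)$ chart with $|\sigma|\leq M_1$ contains only the root $\sigma=0$, choose $M_2>1/M_1$ so that the $(s,\beta)$ chart with $|\beta|\leq M_2$ covers the complement, and note that on the latter chart $\Psi(0,\cdot)$ really is affine with the single root $\beta^\ast$. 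Then the two charts together exhaust a punctured neighborhood and each carries exactly one IFT branch.
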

\begin{proof}
	Referring to \cite[Cor.~2.3]{liu:shi:wang:07}, it only remains to establish (c). For that purpose, we obtain from \lref{lemma:eigder} that the simple eigenvalue $0$ of $D_1G(u^\ast,\alpha^\ast)$ is embedded into a $C^1$-curve $\lambda:(-\rho,\rho)\to\C$ satisfying $(E_{\Gamma_1})$. If we differentiate $(E_{\Gamma_1})$ on the interval $(-\rho,\rho)$, then $z'(J\dot\xi(s))\equiv 0$ and
	\begin{align}
		\dot\lambda(s)J\xi(s)
		\equiv\,&
		D_1^2G(\gamma_1(s),\alpha_1(s))\dot\gamma_1(s)\xi(s)+
		\dot\alpha_1(s)D_1D_2G(\gamma_1(s),\alpha_1(s))\xi(s)
		\notag\\
		&
		+D_1G(\gamma_1(s),\alpha_1(s))\dot \xi(s)-\lambda(s)J\dot \xi(s)
		\label{bifident}
	\end{align}
	follows. Setting $s=0$ now implies $\dot\lambda(0)J\xi^\ast\stackrel{\eqref{thmcrosbif3}}{=}G_{11}\xi^\ast+	G_{10}\dot \xi(0)$. Applying the functional $z'$ in combination with \eqref{nozero} yields the desired $\dot\lambda(0)=z'(G_{11}\xi^\ast)$. 
\end{proof}

Further information on the branch $\Gamma_2$ is provided next: 
\begin{corollary}[abstract transcritical bifurcation]\label{cortransbif}
	In case $g_{20}\neq 0$, then
	$$
		\#\set{u\in B_\eps(u^\ast):\,G(u,\alpha)=0}
		=
		\begin{cases}
			1,&\alpha=\alpha^\ast,\\
			2,&\alpha\neq\alpha^\ast.
		\end{cases}
	$$
	If additionally \eqref{embed} holds, then the solution of $(E_{\Gamma_2})$ satisfies $\dot\lambda(0)=\tfrac{g_{20}}{2}$. 
\end{corollary}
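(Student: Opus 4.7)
The plan is to exploit \tref{thmcrosbif}, which under the standing hypotheses already supplies the two branches $\Gamma_1,\Gamma_2$ whose union exhausts the solution set of \eqref{full} in $B_\eps(u^\ast)\tm A_0$. The only new ingredient is the assumption $g_{20}\neq 0$, which forces $\dot\alpha_2(0)=-\tfrac{g_{20}}{2g_{11}}\neq 0$. After possibly shrinking $S$ and $A_0$, the inverse function theorem therefore yields a $C^{m-1}$-function $\phi_2:=\gamma_2\circ\alpha_2^{-1}:A_0\to B_\eps(u^\ast)$ representing $\Gamma_2$ as a graph over the parameter axis, with $\phi_2(\alpha^\ast)=u^\ast$ and $\dot\phi_2(\alpha^\ast)=\tfrac{1}{\dot\alpha_2(0)}\dot\gamma_2(0)=-\tfrac{2g_{11}}{g_{20}}\xi^\ast\neq 0$.

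For the counting assertion I would set $\phi_1(\alpha):=\gamma_1(\alpha-\alpha^\ast)$ so that $\Gamma_1$ also appears as a graph, with $\phi_1(\alpha^\ast)=u^\ast$ and $\dot\phi_1(\alpha^\ast)=\dot\gamma_1(0)=0$ by \eqref{thmcrosbif3}. The two graphs meet at $\alpha=\alpha^\ast$ in the single point $u^\ast$, while their distinct tangent directions at $\alpha^\ast$ (one zero, one a nonzero multiple of $\xi^\ast$) imply $\phi_1(\alpha)\neq\phi_2(\alpha)$ for $\alpha\neq\alpha^\ast$ on a possibly smaller neighborhood of $\alpha^\ast$. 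Combined with the fact that $\Gamma_1\cup\Gamma_2$ captures every solution in $B_\eps(u^\ast)\tm A_0$, this gives exactly one solution at $\alpha^\ast$ and exactly two otherwise.

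For the eigenvalue derivative I would apply \lref{lemma:eigder} along the branch $\Gamma_2$, obtaining a curve $\svector{\xi}{\lambda}:(-\rho,\rho)\to X\tm\C$ with $\svector{\xi}{\lambda}(0)=\svector{\xi^\ast}{0}$ satisfying $(E_{\Gamma_2})$. Differentiating the first identity in $(E_{\Gamma_2})$ at $s=0$ (in analogy with \eqref{bifident}) and using $\lambda(0)=0$, $\dot\gamma_2(0)=\xi^\ast$ gives
\begin{equation*}
\dot\lambda(0)J\xi^\ast
=G_{20}(\xi^\ast)^2+\dot\alpha_2(0)G_{11}\xi^\ast+G_{10}\dot\xi(0).
\end{equation*}
Applying the functional $z'$ and using $z'\circ G_{10}=0$ via \eqref{nozero} together with the normalization $z'(J\xi^\ast)=1$ yields $\dot\lambda(0)=g_{20}+\dot\alpha_2(0)\,g_{11}$; plugging in $\dot\alpha_2(0)=-\tfrac{g_{20}}{2g_{11}}$ delivers $\dot\lambda(0)=\tfrac{g_{20}}{2}$.

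The only mild obstacle is bookkeeping: one must verify carefully that the two graph representations $\phi_1,\phi_2$ remain distinct on a full punctured neighborhood of $\alpha^\ast$ (and not merely infinitesimally), which follows from a standard Taylor-expansion argument using the opposite tangent slopes, and that the regularity classes from \tref{thmcrosbif} suffice to legitimately differentiate $(E_{\Gamma_2})$ once at $s=0$. Both are routine given $m\geq 2$.
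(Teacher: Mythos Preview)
Your proof is correct and follows essentially the same route as the paper. The paper is terser on the counting part---it simply notes that $g_{20}\neq 0$ forces $\dot\alpha_2(0)\neq 0$ and invokes \tref{thmcrosbif} for the solution structure---whereas you spell out the graph representations $\phi_1,\phi_2$ and the distinct-tangent argument explicitly; for the eigenvalue derivative, your computation (differentiate $(E_{\Gamma_2})$, apply $z'$, use $\dot\alpha_2(0)=-\tfrac{g_{20}}{2g_{11}}$) coincides with the paper's line for line.
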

\begin{proof}
	From $z'(G_{20}(\xi^\ast)^2)=g_{20}\neq 0$ follows $\dot\alpha_2(0)\neq 0$, and the assertion on the local solution structure of \eqref{full} results with \tref{thmcrosbif}. As in the proof of \tref{thmcrosbif}, we obtain the identity \eqref{bifident} with $\Gamma_2$ instead of $\Gamma_1$, and for $s=0$ results
	$$
		\dot\lambda(0)J\xi^\ast
		\stackrel{\eqref{thmcrosbif4}}{=}
		G_{20}(\xi^\ast)^2-\tfrac{g_{20}}{2g_{11}}G_{11}\xi^\ast+G_{10}\dot \xi(0), \quad z'(J\dot\xi(0)) = 0 
	$$
	We apply the functional $z'$, and thus $\dot\lambda(0)=g_{20}-\tfrac{g_{20}}{2g_{11}}g_{11}=\tfrac{g_{20}}{2}$ holds. 
\end{proof}

We point out that the degenerate case $g_{20}=z'(G_{20}(\xi^\ast)^2)=0$ yields the inclusion $G_{20}(\xi^\ast)^2\in R(G_{10})$. Due to $X_1=N(G_{10})^\perp$, the linear-inhomogeneous equation 
\begin{equation}
	G_{10}\bar{\psi}+G_{20}(\xi^\ast)^2=0
	\label{lineqn}
\end{equation}
possesses a unique solution $\bar{\psi}\in X_1$, which is crucial for
\begin{corollary}[abstract pitchfork bifurcation]\label{corpitch}
	In case $m\geq 3$ and 
	\begin{align*}
		g_{20}&=0,&
		g_{30}+3z'(G_{20}\xi^\ast\bar{\psi})\neq 0
	\end{align*}
	hold, then one has $\ddot\alpha_2(0)=-\tfrac{g_{30}+3z'(G_{20}\xi^\ast\bar{\psi})}{3g_{11}}$, $\ddot\gamma_2(0) = \bar{\psi}$ and: 
	\begin{enumerate}
		\item If $\ddot\alpha_2(0)>0$, then 
		$
			\#\set{u\in B_\eps(u^\ast):\,G(u,\alpha)=0}
			=
			\begin{cases}
				1,&\alpha\leq\alpha^\ast,\\
				3,&\alpha>\alpha^\ast, 
			\end{cases}
		$

		\item if $\ddot\alpha_2(0)<0$, then 
		$
			\#\set{u\in B_\eps(u^\ast):\,G(u,\alpha)=0}
			=
			\begin{cases}
				3,&\alpha<\alpha^\ast,\\
				1,&\alpha\geq\alpha^\ast,
			\end{cases}
		$
		
		\item if additionally \eqref{embed} holds, then the solution of $(E_{\Gamma_2})$ satisfies $\dot\lambda(0)=0$ and $\ddot\lambda(0)=\tfrac{2}{3}g_{30}+2z'(G_{20}\xi^\ast\bar{\psi})$. 
	\end{enumerate}
\end{corollary}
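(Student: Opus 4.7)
The plan is to apply \tref{thmcrosbif} to obtain the two branches $\Gamma_1,\Gamma_2$, and then extract the remaining information by taking successive derivatives of the identity $G(\gamma_2(s),\alpha_2(s))\equiv 0$ along $\Gamma_2$. Because $g_{20}=0$, the expansion \eqref{thmcrosbif4} immediately yields $\dot\alpha_2(0)=0$, while $\dot\gamma_2(0)=\xi^\ast$ and $\ddot\gamma_2(0)=\ddot{\tilde\gamma}_2(0)\in X_1$ are automatic. The second derivative of $G\circ(\gamma_2,\alpha_2)\equiv 0$ at $s=0$, after dropping every term containing $\dot\alpha_2(0)$ and using $G_{01}=0$ from \eqref{a6cond}, collapses to $G_{10}\ddot\gamma_2(0)+G_{20}(\xi^\ast)^2=0$, so $\ddot\gamma_2(0)$ is the unique element of $X_1$ solving \eqref{lineqn}, i.e.\ $\ddot\gamma_2(0)=\bar\psi$. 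The third derivative at $s=0$, again discarding $\dot\alpha_2(0)$-terms, reduces to
$$
	G_{30}(\xi^\ast)^3+3G_{20}\xi^\ast\bar\psi+3G_{11}\xi^\ast\ddot\alpha_2(0)+G_{10}\dddot\gamma_2(0)=0;
$$
applying $z'$ (which annihilates $R(G_{10})$ by \eqref{nozero}) yields $g_{30}+3z'(G_{20}\xi^\ast\bar\psi)+3g_{11}\ddot\alpha_2(0)=0$, from which the claimed formula for $\ddot\alpha_2(0)$ follows at once.

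For the counting statements (a), (b), I would use that by \tref{thmcrosbif} every solution in $B_\eps(u^\ast)\tm A_0$ lies on $\Gamma_1\cup\Gamma_2$. Here $\Gamma_1$ is a graph over $A_0$ by \eqref{thmcrosbif3}, contributing one solution per $\alpha$. On $\Gamma_2$ the expansion $\alpha_2(s)=\alpha^\ast+\tfrac12\ddot\alpha_2(0)s^2+O(s^3)$ is locally strictly convex resp.\ concave (after shrinking $S$), so $\alpha_2(s)=\alpha$ has two, one or zero solutions depending on which side of $\alpha^\ast$ the value $\alpha$ lies; distinct $s$ give distinct $\gamma_2(s)$ because $\dot\gamma_2(0)=\xi^\ast\neq 0$, and for $s\neq 0$ these are distinct from $\gamma_1(\alpha-\alpha^\ast)$ since $\dot\gamma_1(0)=0$ while $\gamma_2(s)-u^\ast$ has leading term $s\xi^\ast$. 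Summing the contributions from $\Gamma_1$ and $\Gamma_2$ yields the claimed counts.

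For (c), I would apply \lref{lemma:eigder} along $\Gamma_2$ to obtain a $C^{m-2}$-curve of simple eigenpairs $(\lambda(s),\xi(s))$ with $(\lambda(0),\xi(0))=(0,\xi^\ast)$. Differentiating $(E_{\Gamma_2})$ once at $s=0$ and using $\dot\alpha_2(0)=0$ gives $G_{20}(\xi^\ast)^2+G_{10}\dot\xi(0)=\dot\lambda(0)J\xi^\ast$; applying $z'$ and using $g_{20}=0$, $z'(J\xi^\ast)=1$ produces $\dot\lambda(0)=0$, and the identity itself then yields $G_{10}(\dot\xi(0)-\bar\psi)=0$, so $\dot\xi(0)-\bar\psi\in\spann\{\xi^\ast\}$. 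Combined with $g_{20}=0$ this gives the crucial equality $z'(G_{20}\xi^\ast\dot\xi(0))=z'(G_{20}\xi^\ast\bar\psi)$. Differentiating $(E_{\Gamma_2})$ a second time at $s=0$, killing all terms with $\dot\alpha_2(0)$, $\dot\lambda(0)$ or $\lambda(0)$, and applying $z'$ gives
$$
	\ddot\lambda(0)=g_{30}+z'(G_{20}\xi^\ast\bar\psi)+g_{11}\ddot\alpha_2(0)+2z'(G_{20}\xi^\ast\dot\xi(0));
$$
substituting the formula for $\ddot\alpha_2(0)$ and the equality above collapses this to $\ddot\lambda(0)=\tfrac{2}{3}g_{30}+2z'(G_{20}\xi^\ast\bar\psi)$, as claimed. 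The main obstacle is purely organisational: keeping track of the many terms in the third-order total derivative and in the second derivative of the eigenvalue equation. The key simplifications that tame this bookkeeping are (i) $\dot\alpha_2(0)=0$, which eliminates roughly half of the terms at $s=0$, and (ii) applying $z'$ at the end, which annihilates every contribution from the range of $G_{10}$ and leaves only scalar identities in $g_{11}$, $g_{30}$ and $z'(G_{20}\xi^\ast\bar\psi)$.
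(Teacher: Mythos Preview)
Your proposal is correct and follows essentially the same route as the paper: differentiate $G(\gamma_2(s),\alpha_2(s))\equiv 0$ to identify $\ddot\gamma_2(0)=\bar\psi$ and $\ddot\alpha_2(0)$, then differentiate $(E_{\Gamma_2})$ to get $\dot\lambda(0)$ and $\ddot\lambda(0)$, applying $z'$ each time to kill the $R(G_{10})$-contributions.

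There are two minor differences worth noting. First, the paper does not derive the formula for $\ddot\alpha_2(0)$ but simply cites \cite[(4.6)]{shi:99}; your third-derivative computation makes the argument self-contained. Second, for part~(c) the paper asserts $\dot\xi(0)=\bar\psi$ outright (referring to \cite[p.~28, (I.7.27)]{kielhoefer:12}), whereas you only show $\dot\xi(0)-\bar\psi\in\spann\{\xi^\ast\}$ and then observe that this suffices because $z'(G_{20}\xi^\ast\cdot c\xi^\ast)=cg_{20}=0$. Your treatment here is in fact the more careful one: the exact equality $\dot\xi(0)=\bar\psi$ depends on the particular normalisation of $\xi(s)$, while your argument bypasses this issue entirely. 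For (a) and (b) the paper simply defers to \tref{thmcrosbif}, while you spell out the solution-counting on $\Gamma_1$ and $\Gamma_2$; this is additional detail but not a different idea.
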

\begin{proof}
	\tref{thmcrosbif} yields the claim except the following: 

	(I) The expression for $\ddot\alpha_2(0)$ is shown in \cite[(4.6)]{shi:99}. In order to compute $\ddot\gamma_2(0)$, let us differentiate $G(\gamma_2(s),\alpha_2(s))\equiv 0$ and $z'(\tilde\gamma_2(s))\equiv 0$ on $S$ twice in $s=0$; this identity holds due to the construction of the solution branch and $\tilde\gamma_2(s)\in R(G_{10})=N(z')$ for every $s\in S$. Indeed, $z'(\ddot{\tilde\gamma}_2(0)) = z'(\ddot\gamma_2(0)) = 0$ shows that $\ddot\gamma_2(0)\in R(G_{10})$, while $G_{20}(\xi^\ast)^2+G_{10}\ddot\gamma_2(0)=0$ holds and implies $\ddot\gamma_2(0) = \bar{\psi}$, as $\bar{\psi}$ is the unique solution of \eqref{lineqn} in $R(G_{10})$. 

	(II) It results from \cref{cortransbif} that $\dot\lambda(0)=0$. Concerning the second derivative, we differentiate $(E_{\Gamma_2})$ twice in $s=0$ and apply the functional $z'$ in order to arrive at
	\begin{multline*}
		\ddot\lambda(0)
		=
		z'(G_{30}\dot\gamma_2(0)\dot\gamma_2(0)\xi^\ast) + 2\dot\alpha_2(0)z'(G_{21}\dot\gamma_2(0)\xi^\ast) + z'(G_{20}\ddot\gamma_2(0)\xi^\ast) \\
		+ 3z'(G_{20}\dot\gamma_2(0) \dot \xi(0)) + \dot\alpha_2(0)^2z'(G_{12}\xi^\ast) + 2\dot\alpha_2(0)z'(G_{11}\dot \xi(0))
		+ \ddot\alpha_2(0)z'(G_{11}\xi^\ast),
	\end{multline*}
	making use of $z'(J\dot\xi(0)) = z'(J\ddot\xi(0)) = 0$. First, as in \cite[p.~28, (I.7.27)]{kielhoefer:12}, one shows that $\dot \xi(0)=\ddot\gamma_2(0)=\bar{\psi}$, and second, using the above expressions for $\ddot\gamma_2$, $\ddot\alpha_2$ and \eqref{thmcrosbif4} together with $g_{20}=0$ implying $\dot\alpha_2(0) = 0$ yields
	\begin{align*}
		\ddot\lambda(0)
		= 
		g_{30} + 3z'(G_{20}\xi^\ast \bar{\psi} ) 
		-\tfrac{g_{30}+3z'(G_{20}\xi^\ast\bar{\psi} )}{3g_{11}}g_{11}
		=
		\tfrac{2}{3}g_{30}+2z'(D_1^2G(u^\ast,\alpha^\ast)\xi^\ast\bar{\psi}),
	\end{align*}
	as desired. 
\end{proof}
\section{Nystr\"om methods and numerical algorithms}
\label{appB}
Our simulations in \sref{sec5} rely on \emph{Nystr\"om discretizations} of IDEs over compact sets $\Omega\subset\R^\kappa$. This classical approach (see e.g.\ \cite[pp.~100ff]{atkinson:97}, \cite[pp.~219ff]{kress:14}) approximates integrals by quadrature (in dimension $\kappa=1$) or cubature ($\kappa>1$) formulas
\begin{equation}
	\int_\Omega u(y)\d y
	=
	\sum_{j=1}^{N_n}w_ju(\eta_j)+e_n(u)
	\label{noB1}
\end{equation}
with \emph{weights} $w_j>0$, \emph{nodes} $\eta_j\in\Omega$ and an error term $e_n(u)=O(\tfrac{1}{n^r})$ as $n\to\infty$, where $r>0$ denotes the \emph{convergence rate} of the method. Higher order methods require smooth integrands $u$. Simple examples of composite quadrature formulas over $\Omega=[a,b]$ are given in Tab.~\ref{tabquad} (cf.~\cite[pp.~361ff]{engeln:uhlig:96}). Furthermore, we refer to \cite[pp.~406ff]{engeln:uhlig:96} for various cubature rules over different domains in $\R^\kappa$, $\kappa>1$. 
\begin{table}
	\begin{tabular}{r|cccc}
		rule & nodes $\eta_j$ & weights $w_j$ & $r$ & $N_n$ \\
		\hline
		midpoint & $a+h(j-\tfrac{1}{2})$ & $h$ & $2$ & $n$ \\
		\multirow{2}{*}{trapezoidal} & \multirow{2}{*}{$a+h(j-1)$} & $\tfrac{h}{2}\text{ for }j\in\set{1,n+1}$ & $2$ & $n+1$\\
		& & $h$\text{ else} & & \\
		\multirow{2}{*}{Chebyshev} & $a+(j-\tfrac{\sqrt{3}+1}{2\sqrt{3}})h$ for $j\leq n$ & \multirow{2}{*}{$\tfrac{h}{2}$} & \multirow{2}{*}{$4$} & \multirow{2}{*}{$2n$}\\
		& $a+(j-n+\tfrac{\sqrt{3}-1}{2\sqrt{3}})h$ for $n<j$ & & &
	\end{tabular}
	\caption{Quadrature rules \eqref{noB1} with $h:=\tfrac{b-a}{n}$}
	\label{tabquad}
\end{table}

The IDEs \eqref{deq} are based on abstract integrals with finite measures $\mu$ (cf.~\eqref{ury}). The Lebesgue measure $\mu=\lambda_\kappa$ over $\Omega$ is typically used in models \cite{kot:schaeffer:86,lutscher:petrovskii:08,lutscher:19,reimer:bonsall:maini:16,kirk:lewis:97}, but also the measure $\mu_n(\Omega_n):=\sum_{\eta\in\Omega_n}w_{\eta}$ on finite subsets $\Omega_n\subset\Omega$ consisting of the nodes $\eta_j$ in \eqref{noB1} fits into our framework (see \eref{exmeasure}(2)). Thus, our bifurcation theory from \sref{sec4} applies to Nystr\"om discretizations as well by choosing the measure $\mu_n$ in \eqref{ury}. 

In doing so, for implementations we represent functions $u:\Omega_n\to\R$ as vectors $\upsilon\in\R^{N_n}$ via $\upsilon(j):=u(\eta_j)$ for $1\leq j\leq N_n$. Then the bilinear form \eqref{dpair} becomes
$$
	\iprod{\upsilon,\bar\upsilon}_n=\sum_{j=1}^{N_n}w_j\upsilon(j)\bar\upsilon(j)
	\fall\upsilon,\bar\upsilon\in\R^{N_n}, 
$$
the right-hand sides \eqref{ury} turn into
\begin{align*}
	\sF_t^n:\R^{N_n}\tm A&\to\R^{N_n},&
	\sF_t^n(\upsilon,\alpha)
	&=
	\intoo{G_t\intoo{\eta_i,\sum_{j=1}^{N_n}w_jf_t(\eta_i,\eta_j,\upsilon(j),\alpha),\alpha}}_{i=1}^{N_n} 
\end{align*}
and conveniently abbreviating $H_i^n:=D_2G_t\Bigl(\eta_i,\sum_{j=1}^{N_n}w_jf_t(\eta_i,\eta_j,\phi^\ast(j),\alpha),\alpha\Bigr)$, the $N_n\tm N_n$-matrix acting as
$$
	D_1\sF_t^n(\phi^\ast,\alpha)\upsilon
	=
	\left(H_i^n
	\sum_{j=1}^{N_n}w_jD_3f_t(\eta_i,\eta_j,\phi^\ast(j),\alpha)\upsilon(j)
	\right)_{i=1}^{N_n}
	\fall\upsilon\in\R^{N_n}
$$
is the derivative \eqref{derf10} in case $\mu=\mu_n$. 

Given this, we employ the $\theta$-periodic difference equation
\begin{equation}
	\boxed{\upsilon_{t+1}=\sF_t^n(\upsilon_t,\alpha)}
	\label{deqn}
\end{equation}
in $\R^{N_n}$ to approximate solutions $\phi_t\in C(\Omega)$ of \eqref{deq} equipped with the Lebesgue measure $\mu=\lambda_\kappa$, in terms of $\upsilon_t(j)\approx\phi_t(\eta_j)$. Then, for a fixed parameter $\alpha\in A$, 
\begin{itemize}
	\item $\theta$-periodic solutions $\phi^\ast$ of \eqref{deq} are the zeros of the nonlinear operator $G(\cdot,\alpha)$ from \eqref{Gdef}, where we replace $\sF_t$ by $\sF_t^n$. This requires to solve a nonlinear equation in $\R^{\theta N_n}$, for which we use the Newton solver \texttt{nsoli} from \cite{kelley:03}. Thanks to \cite{weiss:74}, this process preserves the convergence rate of quadrature schemes, i.e.\ one obtains $\max_{j=1}^{N_n}\abs{\upsilon_t(j)-\phi_t^\ast(\eta_j)}=O(\tfrac{1}{n^r})$. 

	\item The computation of Floquet multipliers to periodic solutions is based on the derivatives $D_1G(\phi^\ast,\alpha)$ given in \eqref{propG1}. Suitable methods for the resulting cyclic block matrices in $\R^{\theta N_n\tm\theta N_n}$ are described in \cite[p.~291ff, Chap.~8]{watkins:07}. Furthermore, convergence results for the eigenpairs when replacing the Fr{\'e}chet derivatives $D_1\sF_t$ by $D_1\sF_t^n$ are due to \cite{atkinson:75}.
\end{itemize}
Our numerical bifurcation analysis itself requires to detect and to continue solution branches along the real bifurcation parameter $\alpha\in A$. We suggest to achieve this using the pseudo-code from Alg.~\ref{algcont}, where $\sprod{\cdot,\cdot}$ denotes the Euclidean inner product. 
\begin{algorithm}[h]
 \KwData{$\alpha_0$ (initial parameter), $h>0$ (step size), $k_{\max}>0$ (number of steps)}
 \KwResult{Solution branch $(\hat\phi(\alpha_k),\alpha_k)$ over parameter range}
 Solve $G(\hat\phi_0,\alpha_0)=0$ for $\hat\phi_0$ using Newton iteration\;
 Set $(\hat\phi,\alpha)\leftarrow(\hat\phi_0,\alpha_0)$\;
 Solve $DG(\bar\phi_0)\binom{z_0}{\delta_0}=0$, $\sprod{z_0,z_0}+\delta_0^2=1$ for $\binom{z_0}{\delta_0}$ using Newton iteration\;
  \If{$\delta_0<0$}{
	$z_0\leftarrow -z_0$;\, $\delta_0\leftarrow -\delta_0$
   }
 \For{$k\leftarrow 0$ \KwTo $k_{\max}$}{
  Set $(\hat\phi_{k+1}',\alpha_{k+1}')\leftarrow(\hat\phi_k,\alpha_k)+h(z_k,\delta_k)$\;
  Use initial value $(\phi_{k+1}',\alpha_{k+1}')$ to solve $G(\hat\phi_{k+1},\alpha_{k+1})=0$, $\sprod{z_k,\hat\phi_{k+1}-\hat\phi_{k+1}'}+\delta_k(\alpha_{k+1}-\alpha_{k+1}')=0$ for $(\hat\phi_{k+1},\alpha_{k+1})$ using Newton iteration\;
  Solve $DG(\hat\phi_{k+1},\alpha_{k+1})\binom{z_{k+1}}{\delta_{k+1}}=0$, $\sprod{z_k,z_{k+1}}+\delta_k\delta_{k+1}=1$ for $(z_{k+1},\delta_{k+1})$\;
\If{$\delta_k\delta_{k+1}<0$}{
	fold detected\\
	determine critical pair $(\hat\phi^\ast,\alpha^\ast)$
   }
\If{ eigenvalue of $D_1G(\hat\phi_{k+1},\alpha_{k+1})$ crosses stability boundary}{
	determine critical pair $(\hat\phi^\ast,\alpha^\ast)$\\
	check and classify bifurcation using \sref{sec4}
   } }
 \caption{Pseudo-arc length continuation to compute a branch $(\hat\phi(\alpha_k),\alpha_k)$, $0\leq k\leq k_{\max}$, of $\theta$-periodic solutions to \eqref{deqn} with initially increasing $\alpha_k$}
 \label{algcont}
\end{algorithm}
\end{appendix}
\section*{Acknowledgments}
We would like to thank the referees for their perspective on our manuscript from different angles and the constructive criticism. Their opinion, hints and remarks greatly improved our presentation, scope and, after all, the overall quality of the paper.
\providecommand{\bysame}{\leavevmode\hbox to3em{\hrulefill}\thinspace}
\providecommand{\MR}{\relax\ifhmode\unskip\space\fi MR }
\providecommand{\MRhref}[2]{%
	\href{http://www.ams.org/mathscinet-getitem?mr=#1}{#2}
}
\providecommand{\href}[2]{#2}

\end{document}